\numberwithin{equation}{section}
\title[Tremors and horocycles]{Tremors and horocycle dynamics on the moduli space of translation surfaces}
\author{Jon Chaika}
\address{University of Utah, Salt Lake City, United States {\tt chaika@math.utah.edu}} 
\author{John Smillie} 
\address{University of Warwick, United Kingdom {\tt j.smillie@warwick.ac.uk}} 
\author{Barak Weiss}
\address{Tel Aviv University, Tel Aviv, Israel
{\tt barakw@post.tau.ac.il}}
\font\sb = cmbx8 scaled \magstep0 
\font\sn = cmssi8 scaled \magstep0
\font\si = cmti8 scaled \magstep0 
\long\def\comcol#1{{\color{purple}\si #1 }}
\long\def\redtext#1{\ifdraft{{\color{red} #1 }}\else\ignorespaces\fi}
\long\def\combaraknew#1{\ifdraft{{\color{cyan}\sb #1
    }}\else\ignorespaces\fi}
\long\def\combarak#1{\ifdraft{{\color{cyan}\sb #1 }}\else\ignorespaces\fi}
\newcommand\hol{\mathrm{hol}}
\newcommand\trem{\mathrm{trem}}
\newif\ifdraft\drafttrue
\newcommand\name[1]{\label{#1}{\ifdraft{\sn [#1]}\else\ignorespaces\fi}}
\newcommand\eq[2]{{\ifdraft{\ \tt [#1]}\else\ignorespaces\fi}\begin{equation}\label{#1}{#2}\end{equation}}
\newcommand {\equ}[1]{\eqref{#1}}
\newcommand{\MM}{{\mathcal{M}}}
\newcommand{\tremspace}{{\mathcal{T}}}
\newcommand{\HH}{{\mathcal{H}}}
\newcommand{\opp}{{\mathrm{opp}}}
\newcommand{\SF}{{\mathcal{SF}}}
\newcommand{\HHm}{{\mathcal{H}_{\mathrm{m}}}}
\newcommand{\LL}{{\mathcal{L}}}
\newcommand{\Q}{{\mathbb {Q}}}
\newcommand{\R}{{\mathbb{R}}}
\newcommand{\Dom}{{\mathrm{Dom}}}
\newcommand{\Z}{{\mathbb{Z}}}
\newcommand{\bS}{{\mathbb{S}}}
\newcommand{\Rel}{{\mathrm{Rel}}}
\newcommand{\N}{{\mathbb{N}}}
\newcommand{\diam}{{\operatorname{diam}}}
\newcommand{\dev}{\operatorname{dev}}
\newcommand{\Mod}{\operatorname{Mod}}
\newcommand{\SL}{\operatorname{SL}}
\newcommand {\ignore}[1]  {}
\newcommand{\spa}{{\rm span}}
\newcommand{\EE}{{\mathcal{E}}}
\newcommand{\dist}{{\rm dist}}
\newcommand{\df}{{\, \stackrel{\mathrm{def}}{=}\, }}
\newcommand{\FF}{{\mathcal{F}}}
\newcommand{\s}{{\bf y}}
\newcommand{\til}{\widetilde}
\newcommand{\supp}{{\rm supp}}
\newcommand{\sm}{\smallsetminus}
\newcommand{\vre}{\varepsilon}
\newcommand{\Leb}{{\mathrm{Leb}}}
\newtheorem{thm}{Theorem}[section]
\newtheorem{lem}[thm]{Lemma}
\newtheorem{prop}[thm]{Proposition}
\newtheorem{cor}[thm]{Corollary}
\newtheorem{claim}[thm]{Claim}
\newtheorem{remark}[thm]{Remark}
\newtheorem{example}[thm]{Example}
\newtheorem{dfn}[thm]{Definition}
\begin{document}
\maketitle
\tableofcontents

\begin{abstract}
We
introduce a `tremor' deformation on strata of translation
surfaces. Using it, we give new examples of 
behaviors of horocycle flow orbits $Uq$ in strata of translation surfaces. In the
genus 2 stratum $\HH(1,1)$ we find
orbits $Uq$ which are generic for a measure whose support is strictly
contained in $\overline{Uq}$ and find orbits which are not generic for
any measure. We also describe a horocycle orbit-closure whose Hausdorff
  dimension is not an integer.
\end{abstract}

\section{Introduction}\name{sec: introduction}
A surprisingly fruitful technique for studying certain mathematical objects is to
study dynamics on their moduli spaces. Examples of this
phenomenon occur in the study of integral values of indefinite quadratic
forms (motivating the study of dynamics of Lie group actions on
homogeneous spaces)  and billiard flows on polygonal tables
(motivating the study of the $\SL_2(\R)$-action on the moduli space of
translation surfaces). In both cases, far-reaching results regarding
the actions on the moduli spaces have been used to shed light on a
wide range of problems in number theory, geometry, and ergodic
theory. See \cite{zorich survey, Wright survey, KSS handbook} for surveys
of these developments.  

Let  $B \subset \SL_2(\R)$\index{b@$B$} be the subgroup of upper
triangular matrices, and let  
\eq{eq: hero introduced}{
U \df \{u_s: s
\in \R\}, \ \ \text{ where } \
u_s \df \left( \begin{matrix}1 & s \\ 0 & 1  \end{matrix} \right).
}\index{u@$U$}\index{u@$u_s$}
The $U$-action is an example of a unipotent flow and, in the case of
strata of translation surfaces, is also  
known as the horocycle flow. 
The actions of these groups on moduli spaces are fundamental in both dynamical
settings. For
homogeneous spaces of Lie groups, actions of subgroups such as $\SL_2(\R), B$
and $U$ are strongly constrained and much is known about invariant
measures and orbit-closures. For the action on a stratum $\HH$ 
of translation surfaces, fundamental papers of McMullen, Eskin,
Mirzakhani and Mohammadi \cite{McMullen-SL(2), EM, EMM} have shown that the 
invariant measures and 
orbit closures for the $\SL_2(\R)$-action and $B$-action on $\HH$ are severely
restricted and have remarkable geometric
features; in particular an orbit-closure is the image of a manifold under an
immersion.  This behavior is very much like the behavior observed in the homogeneous setting. 

In this
paper we examine the degree to which such regular behavior might hold for
the $U$-action or  horocycle flow on strata. 
We give examples showing that, with respect to
orbit-closures and the asymptotic behavior of individual orbits, the $U$-action on
$\HH$ has features which are absent in homogeneous dynamics.

In order to set the stage for this comparison we first recall some  results
about the dynamics of  unipotent flows on homogeneous spaces. 
 Special cases of these were proved by several authors and  the
results were proved in complete generality in celebrated work of Ratner (see
\cite{morris book} for a survey, and for the definitions
used in the statement below).

\begin{thm}[Ratner] \name{thm: Ratner}
Let $G$ be a connected Lie group, $\Gamma$ a
  lattice in $G$, $X=G/\Gamma$, and $U = \{u_s: s \in \R\}$ a one-parameter
  Ad-unipotent subgroup of $G$. 
\begin{enumerate}\item
For any $x \in X$, $\overline{Ux}= Hx$ is the orbit of a group $H$
satisfying $U \subset H \subset G$, and $Hx$ is the support of an
$H$-invariant probability measure $\mu_x$.  
\item \label{conc:gen}
Any $x \in X$ is {\em generic for $\mu_x$,} i.e. 
$$
\forall f \in C_c(X), \ \ \lim_{T\to \infty} \frac{1}{T} \int_0^T
f(u_sx)ds = \int_X f d\mu_x.
$$

\end{enumerate}
\end{thm}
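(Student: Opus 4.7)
My plan is to follow the structure of Ratner's program: first prove the auxiliary \emph{measure-classification} statement that every ergodic $U$-invariant probability measure $\mu$ on $X$ is the homogeneous measure on a closed orbit $Hx$ for some closed subgroup $H$ with $U \subset H \subset G$; then derive the orbit-closure statement (1) and the genericity statement (2) from it in turn.

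For measure classification, the decisive feature of unipotent flows is polynomial divergence: when $y$ is close to $x$ and we write $y = \exp(v) x$ for $v \in \goth{g}$ small, the $\goth{g}$-displacement between $u_s x$ and $u_s y$ is governed by $\Ad(u_s) v$, which is polynomial in $s$. I would take an ergodic $U$-invariant $\mu$, pick two nearby $\mu$-generic points $x, y$ not on the same $U$-orbit, flow them until this displacement reaches a prescribed macroscopic size at time $s = T(x, y)$, and extract a limit as the initial pair is chosen closer and closer. The Birkhoff ergodic theorem guarantees that $u_T x$ and $u_T y$ remain generic for $\mu$, so the limiting displacement produces an element $g \notin U$ that preserves $\mu$. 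One then checks that the resulting additional symmetries generate a closed subgroup $H \supset U$, and ergodicity forces $\mu$ to be supported on a single $H$-orbit.

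Conclusion (1) follows by taking any weak-$\ast$ limit $\mu$ of the empirical averages $\frac{1}{T}\int_0^T \delta_{u_s x}\, ds$; the Dani--Margulis non-divergence theorem for unipotent flows ensures such a limit exists as a probability measure. Decomposing $\mu$ into ergodic components and applying the classification identifies $\overline{Ux}$ with an orbit $Hx$. Conclusion \eqref{conc:gen} then follows because the classification, applied on $Hx$, implies that $\mu_x$ is the unique $U$-invariant probability measure there; together with tightness (again from non-divergence) this forces the empirical averages to converge to $\mu_x$ for every $f \in C_c(X)$.

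The principal obstacle is the drift step in the measure-classification argument. The central difficulty is to rule out the degenerate case in which the drift direction always lies inside $\Lie(U)$, which would produce no new invariance. Overcoming it requires either an entropy argument or the delicate ``$R$-property'' formulation, exploiting the polynomial nature of $\Ad(u_s) v$ to arrange that the relative displacement lands transverse to $\Lie(U)$ for a positive measure set of pairs. One must also carefully handle the structure of $G$ (for example via the Levi decomposition and the Jacobson--Morozov embedding of $U$ into $\SL_2$-triples) and keep track of closedness of the subgroups produced at each stage, since merely exponential divergence would not suffice to close up the symmetries in $G$.
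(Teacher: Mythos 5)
The paper does not prove this theorem at all; it is Ratner's classical result, stated as background and attributed with a citation to \cite{morris book}. So there is no proof in the paper against which to compare your sketch; I will instead assess the sketch on its own.

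Your outline of the measure classification step (polynomial divergence, shearing nearby generic trajectories, extracting extra invariance, closing up to a subgroup $H$) is a faithful caricature of Ratner's program, and you correctly flag where the real work lies, namely arranging the drift to land transverse to $\Lie(U)$. The gap is in the passage from measure classification to conclusions (1) and (2). You assert that the classification, applied on $Hx$, shows $\mu_x$ is the \emph{unique} $U$-invariant probability on $Hx$, and you then invoke unique ergodicity plus tightness. This is false in general: even after one identifies $\overline{Ux}=Hx$, the space $Hx$ typically carries many other ergodic $U$-invariant probability measures, supported on proper closed orbits $H'y\subset Hx$ (for a concrete instance, take $G=\SL_2(\R)\times\SL_2(\R)$ with a diagonally embedded $U$; Haar measure on $G/\Gamma$ coexists with Haar measures on closed diagonal $\SL_2(\R)$-orbits, all of which are $U$-invariant and ergodic). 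What is true is that $\mu_x$ is the unique $U$-ergodic measure of \emph{full support} in $Hx$, but that alone does not force the empirical averages $\frac{1}{T}\int_0^T\delta_{u_sx}\,ds$ to converge to it, since a priori the orbit could spend a positive proportion of time shadowing lower-dimensional homogeneous pieces.

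The missing ingredient is the Dani--Margulis linearization technique, which controls the time a unipotent trajectory can spend in thin neighborhoods of a countable family of lower-dimensional homogeneous subvarieties. Ratner's equidistribution theorem (your conclusion (2)) and, in fact, the precise identification of $\overline{Ux}$ as a single $Hx$ in (1) both require this mechanism (or an equivalent one), together with a countability argument showing that only countably many intermediate subgroups $H'$ can arise. Your plan as written treats these as routine consequences of nondivergence and tightness, and that is exactly where it would fail if carried out.
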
  
Statement (1) is known as 
the orbit-closure theorem, and statement
(2) is known as 
the genericity theorem.

\subsection{Main results}
We will introduce a method for constructing $U$-orbits with unexpected
properties, and apply it in the genus two stratum
$\HH(1,1)$. 

In the homogeneous setting, orbit-closures of unipotent flows are
manifolds. It was known (see \cite{examples}) that horocycle
orbit-closures could be manifolds with boundary  
in the setting of translation surfaces. We show here that they can be considerably
wilder.

\begin{thm}\name{thm: Hausdorff dim preliminary}
There is $q \in \HH(1,1)$ for which the orbit-closure $\overline{Uq}$
has non-integer Hausdorff dimension.  In fact, by appropriately
varying the initial point, $q$, we can construct an uncountable
nested chain of distinct horocycle orbit-closures of fractional
Hausdorff dimension.  
\end{thm}

We will give additional information about these orbit-closures in
Theorems \ref{thm: spiky fish} and \ref{thm: 
  Hausdorff dim} below.

Let $\EE_4 \subset \HH_1(1,1)$
denote the set of unit-area
surfaces which can be presented as two identical tori glued along a
slit (in the notation and terminology of McMullen \cite{McMullen-SL(2)},
 $\EE_4$ is the subset of area-one surfaces in the eigenform locus of
discriminant $D=4$).

From now on we write $G \df
\SL_2(\R)$\index{G@$G$} and $\EE \df \EE_4$\index{E@$\EE$}. The locus $\EE$ is
5-dimensional, is $G$-invariant, and
is the 
support of a $G$-invariant ergodic probability measure $\mu_\EE$. 
\index{m@$\mu_\EE$}

\begin{thm}\name{thm: 1}
There is $q
\in \HH(1,1)$ which is not contained in $\EE$ but which is  generic
for the measure $\mu_\EE$ supported on $\EE$. 
\end{thm}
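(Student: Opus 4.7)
The plan is to build $q$ as a tremor of a carefully chosen point $q_0 \in \EE$. First, invoke ergodicity of the $U$-action on $(\EE, \mu_\EE)$ — which for the $G$-invariant affine locus $\EE$ should follow from the classification of $U$-invariant measures inside an affine invariant manifold — together with the Birkhoff pointwise ergodic theorem to produce a point $q_0 \in \EE$ which is $U$-generic for $\mu_\EE$. This point will be the seed.

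Next, set $q \df \trem_t(q_0)$ for a suitably chosen tremor parameter $t$. Since $\EE$ is cut out by linear equations on the absolute period cohomology $H^1(S,\Sigma;\R^2)$ while a tremor modifies periods in a prescribed transverse direction (built from horizontal foliation data), one can pick $t$ so that the tremor leaves the tangent space $T_{q_0}\EE$, and hence $q \notin \EE$. The main work is then to show that $q$ remains generic for $\mu_\EE$ even though $q \notin \EE$.

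The crucial step is a (near-)equivariance identity of the form
\[
u_s \, \trem_t(q_0) \; = \; \trem_{t(s)}(u_s q_0),
\]
where the tremor parameter $t(s)$ stays in a fixed compact family as $s$ varies. This is natural because tremors are defined via the horizontal foliation, which is exactly the data preserved by $u_s$: the cocycle along which one tremors on $u_s q_0$ is the same data transported from $q_0$. Granted such an identity, for any $f \in C_c(\HH_1(1,1))$ one compares the Birkhoff averages
\[
\frac{1}{T}\int_0^T f(u_s q)\, ds \; = \; \frac{1}{T}\int_0^T f\bigl(\trem_{t(s)}(u_s q_0)\bigr)\, ds,
\]
and uses the uniform continuity of $f$ together with the compactness of the family $\{\trem_{t(s)}\}_s$ to show this differs negligibly from $\frac{1}{T}\int_0^T f(u_s q_0)\, ds$, which converges to $\int f\, d\mu_\EE$ by genericity of $q_0$.

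The main obstacle is making the commutation identity precise and controlling the tremor parameter along the orbit. Exact commutation with uniformly bounded $t(s)$ is what one really needs; if instead $t(s)$ drifts, one must supplement with quantitative estimates showing the drift does not spoil equidistribution, and must also verify that the tremored surface $q$ genuinely lies outside $\EE$ (so transversality of the tremor direction to $T_{q_0}\EE$ has to be checked at the specific $q_0$ chosen — a measure-theoretic argument, picking $q_0$ in the full-measure set where a chosen tremor direction is transverse, handles this).
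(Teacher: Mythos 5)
The high-level strategy — tremor a $U$-generic seed $q_0 \in \EE$ and use commutation of tremors with $u_s$ — is indeed the paper's approach, but your proposal has two serious gaps, one in the choice of seed and one in the Birkhoff-average comparison.

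\textbf{The seed cannot be a $\mu_\EE$-typical point.} You propose to pick $q_0$ via Birkhoff's theorem in a ``full-measure set where a chosen tremor direction is transverse.'' But a $\mu_\EE$-typical surface in $\EE$ has a \emph{uniquely ergodic} horizontal foliation (this is essentially a consequence of Masur's criterion; it is proved in the paper's Theorem \ref{thm: tremors bounded distance}(iii)). For such a surface the cone $C^+_q$ of foliation cocycles is a single ray $\R_{\geq 0}\cdot (dy)_q$, so the only available tremors are the horocycles $u_s$ themselves, which keep you inside $\EE$. In other words, there is no full-measure set of surfaces that admit an essential tremor — that set has measure zero. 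The paper must therefore take $q_0$ to be a \emph{special} surface (one with a minimal, non-uniquely-ergodic horizontal foliation, produced by Katok--Stepin), and then an extra ingredient is needed to verify that this measure-zero surface is nonetheless $U$-generic for $\mu_\EE$: the ergodic theorem alone does not say anything about a specific point. The paper quotes \cite[Thm.\ 10.1]{eigenform} for this.

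\textbf{The Birkhoff-average comparison is not that the tremored orbit shadows the original orbit.} You write that $f(\trem_{t(s)}(u_s q_0))$ differs negligibly from $f(u_s q_0)$ by uniform continuity. This is false: the tremor is a macroscopic deformation of a fixed size, and $\trem_\beta(u_s q_0)$ is typically \emph{not} close to $u_s q_0$. (Also, the exact commutation rule in \equ{eq: commutation 0} is $u_s\trem_\beta(q) = \trem_\beta(u_s q)$ with the \emph{same} class $\beta$ under the TCH identification, so there is no parameter drift to worry about — your concern there is moot.) The actual mechanism is different: for $\mu_\EE$-a.e.\ surface the cone $C^+$ is a ray, and by Corollary \ref{cor: semicontinuity} the cone $C^+_{u_s q_0}$ is close to the ray through $(dy)$ whenever $u_s q_0$ is close to such a typical surface — which by genericity of $q_0$ happens for a density-one set of times $s$. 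On that set, the tremor $\trem_\beta(u_s q_0)$ is close not to $u_s q_0$ but to the \emph{time-shifted} point $u_{s+s_0} q_0$ where $s_0 = L_{q_0}(\beta)$. The comparison is then with $\frac{1}{T}\int_0^T f(u_{s+s_0}q_0)\,ds$, which also converges to $\int f\, d\mu_\EE$. This quantitative control on the cone of foliation cocycles — not uniform continuity alone — is what makes the argument go through, and it is the content of Theorem \ref{thm: tremors bounded distance}(iii).

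Finally, proving $q = \trem_\beta(q_0) \notin \EE$ is handled in the paper via the rank-one structure of $\EE$ (Proposition \ref{prop: transverse rank one}), not via a generic transversality count as you suggest; since the seed $q_0$ is a measure-zero object, one cannot appeal to an a.e.\ transversality statement.
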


Since $\EE = \supp \, \mu_\EE$ is strictly contained in $\overline{Uq}$,
this orbit does not satisfy the analogue of Theorem \ref{thm:
  Ratner}(2). 
The next result shows that the analogue of Ratner's genericity theorem
fails dramatically in $\HH(1,1)$:
\begin{thm}\name{thm: 2}
There is a dense $G_\delta$ subset of $q \in \HH(1,1)$  and $f \in
C_c(\HH(1,1))$ so that \eq{eq: not generic}{
\liminf_{T \to \infty} \frac{1}{T} \int_0^T
f(u_sq)ds 
< 
\limsup_{T \to \infty} \frac{1}{T} \int_0^T
f(u_sq)ds.
}
In particular such points are not generic for any measure on $\HH(1,1)$, 
and there are such points whose forward and backward geodesic trajectories (i.e., in the notation \equ{eq: subgroups notation}, the sets $\{g_tq: t>0\}$ and $\{g_tq: t<0\}$) are both dense.
\end{thm}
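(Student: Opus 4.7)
The strategy is a Baire category argument combining Theorem~\ref{thm: 1} with the Masur--Veech measure $\mu_{\mathrm{MV}}$ on $\HH(1,1)$. Since $\mu_{\mathrm{MV}}$ is $G$-invariant, $U$-ergodic, of full support, and mutually singular to $\mu_\EE$ (the latter being supported on the proper subset $\EE$), there exists $f \in C_c(\HH(1,1))$ with $\int f\,d\mu_{\mathrm{MV}} < \int f\,d\mu_\EE$; pick constants $c_2 < c_1$ strictly between these two values. For $T > 0$, set
\[
\bar f_T(q) := \frac{1}{T}\int_0^T f(u_s q)\,ds,
\]
a function continuous in $q$, and define
\[
V_1 := \{q : \limsup_T \bar f_T(q) \geq c_1\}, \qquad V_2 := \{q : \liminf_T \bar f_T(q) \leq c_2\}.
\]
Each is a $G_\delta$ set, since $V_1 = \bigcap_n \bigcap_N \bigcup_{T>N}\{q : \bar f_T(q) > c_1 - 1/n\}$ is a countable intersection of open sets (and similarly for $V_2$), and any $q \in V_1 \cap V_2$ satisfies \equ{eq: not generic}.

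\emph{Density of $V_2$:} by Birkhoff's theorem the set of $\mu_{\mathrm{MV}}$-generic points for the $U$-action is $\mu_{\mathrm{MV}}$-conull, hence dense; each such $q$ has $\bar f_T(q) \to \int f\,d\mu_{\mathrm{MV}} < c_2$, so lies in $V_2$.

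\emph{Density of $V_1$:} Let $q^* \in \HH(1,1)\setminus \EE$ be the $\mu_\EE$-generic point produced by Theorem~\ref{thm: 1}, so $q^* \in V_1$. For $g_r := \diag(e^r, e^{-r})$, the conjugation identity $u_s g_r = g_r u_{se^{-2r}}$ together with $G$-invariance of $\mu_\EE$ gives, via change of variable,
\[
\bar f_T(g_r q^*) = \frac{1}{Te^{-2r}}\int_0^{Te^{-2r}} (f\circ g_r)(u_{s'}q^*)\,ds' \;\xrightarrow[T\to\infty]{}\; \int f\circ g_r\,d\mu_\EE = \int f\,d\mu_\EE.
\]
Combined with the trivial $U$-invariance of averages, every $g \in B$ satisfies $gq^* \in V_1$; hence $Bq^* \subseteq V_1$. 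By the Mirzakhani--Mohammadi theorem, $\overline{Bq^*}$ is an affine invariant submanifold of $\HH(1,1)$ that strictly contains $\EE = \supp\mu_\EE$ (since it also contains $q^*\notin\EE$), and the classification of such submanifolds in $\HH(1,1)$ then forces $\overline{Bq^*} = \HH(1,1)$. (Alternatively, more in line with the paper's methods, one uses the tremor construction directly to place $\mu_\EE$-generic points in every open subset of $\HH(1,1)$.) Hence $V_1$ is dense.

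By Baire, $V_1 \cap V_2$ is a dense $G_\delta$ in $\HH(1,1)$, each of whose points satisfies \equ{eq: not generic} and so is generic for no Borel probability measure. The set of $q$ with dense $g_t$-orbit is itself a dense $G_\delta$ (from $g_t$-ergodicity and full support of $\mu_{\mathrm{MV}}$), so intersecting it with $V_1 \cap V_2$ yields the remaining claim. \emph{The main obstacle} is the density of $V_1$, which reduces to showing $\overline{Bq^*} = \HH(1,1)$ for the specific $q^*$ from Theorem~\ref{thm: 1}; this either appeals to the classification of invariant submanifolds of $\HH(1,1)$, or, more robustly, is carried out via a tremor-based construction producing $\mu_\EE$-generic points throughout the stratum.
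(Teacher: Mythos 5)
Your proof is correct and matches the paper's overall template (Baire category applied to two $G_\delta$ sets determined by the competing values $\int f\,d\mu_\EE$ and $\int f\,d\mu_{\mathrm{MV}}$), but the crucial step — showing that $\mu_\EE$-generic points are dense in $\HH(1,1)$ — is proved by a genuinely different route. The paper produces an entire dense \emph{family} of $\mu_\EE$-generic points at once: it combines Theorem \ref{thm: dense tremors} (tremors of suitable surfaces in $\EE$ are dense in the stratum, via \cite[Thm.\ 2.1]{EMM} and McMullen's classification) with Theorem \ref{thm: tremors bounded distance}(iii) (tremors of generic surfaces remain generic). You instead start from a single $\mu_\EE$-generic $q^* \notin \EE$ supplied by Theorem \ref{thm: 1}, use the elementary conjugation identity $u_s g_r = g_r u_{se^{-2r}}$ plus $G$-invariance of $\mu_\EE$ to promote $q^*$ to the whole orbit $Bq^*$ of generic points, and then invoke EMM and McMullen's classification directly to get $\overline{Bq^*} = \HH(1,1)$. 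Both routes ultimately lean on the same two classification theorems, but yours is shorter and bypasses the tremor machinery entirely; the paper's approach fits within the framework it has spent the previous sections building, and Theorem \ref{thm: dense tremors} is reused elsewhere. One small point worth making precise in your argument: EMM gives that $\overline{Bq^*}$ is a \emph{finite disjoint union} of $G$-invariant affine loci, not necessarily a single one; the needed fact is that $\overline{Uq^*}$ is connected, closed, and contains both $\EE$ and $q^*\notin\EE$, so it lies in a single component $F_i$ of $\overline{Bq^*}$ which therefore strictly contains $\EE$, and McMullen's classification then forces $F_i = \HH(1,1)$.
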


One property of unipotent flows on homogeneous spaces
which played a crucial role in Ratner's work is `controlled 
divergence of nearby trajectories'.
The proof of Theorem \ref{thm: 1} 
shows that in strata, divergence of nearby trajectories can be erratic. 
We make this precise in \S \ref{subsec: erratic}, see Theorem
\ref{thm: erratic}.

The proofs of Theorems \ref {thm: Hausdorff dim preliminary}, \ref{thm: 1}, 
 and \ref{thm:
  2} rely on the tremor paths which we now
introduce (the geological nomenclature is inspired by
Thurston's earthquake paths, see \cite{Thurston earthquake}).

\subsection{Tremors}\name{subsec: tremors intro} 
We can describe the action of the horocycle flow on a translation
surface geometrically as giving us a family of surfaces obtained by changing the
flat structure on the original surface by shearing it horizontally.
An interesting modification of this procedure was studied by Alex Wright \cite{Wright
  cylinders}. Let $q \in \HH$, let $M_q$ be the corresponding surface,
and suppose $M_q$ contains a horizontal cylinder $C$. Then
one can deform $M_q$  by horizontally shearing the flat structure on $C$ and leaving
$M_q \sm C$
unchanged. This {\em cylinder shear} \index{cylinder shear} operation
defines a flow on the subset of the stratum corresponding to surfaces
containing a horizontal cylinder.  
This subset of $\HH$ is invariant under the horocycle flow and on it, 
the flow defined by the cylinder shear commutes with the
horocycle flow. 
The tremors we study in this paper are partially defined flows, defined on the set of surfaces whose horizontal foliation is not uniquely ergodic. Tremors  commute with
the horocycle flow on their domains of definition and are a common generalization of both
cylinder shears and the horocycle flow. Wright's analysis of cylinder
shears focused on shears that keep points inside a $G$-invariant
locus. On the other hand, we will study tremors that move 
points in a $G$-invariant locus away from that locus and we will use
these tremors to exhibit new
behaviors of the horocycle flow.

  We can think of both the cylinder shear and the horocycle flow as
  arising from transverse invariant 
  measures  to the horizontal foliation  $\mathcal{F}_q$ on the surface  $M_q$,
   where the amount 
 and location of shearing is  determined by the transverse measure.  
 If the cylinder shear flow takes $q$ to $q'$ then the
  relationship between their period coordinates (see \S \ref{subsec:
    strata} and \S \ref{subsec: atlas of
  charts}, where we will explain the
  notation and make our discussion more precise) is given by 
\eq{eq: horocycle}{
\hol^{(x)}_{q'}(\gamma) =
\hol^{(x)}_{q}(\gamma)+t \cdot \tau(\gamma), \ \ 
\hol^{(y)}_{ q'}(\gamma) = \hol^{(y)}_{q}(\gamma).
}
Here $\hol_q^{(x)}$ and $\hol_q^{(y)}$ denote the
cohomology classes corresponding to the transverse measures $dx$ and
$dy$ on $M_q$ respectively, $\gamma$ is an oriented closed curve or
path joining singularities on $M_q$, 
$t$ is the parameter for the cylinder shear flow, 
and $\tau$ is the cohomology class corresponding to the transverse
measure which is the restriction of $dy$ to the cylinder. 
The
horocycle flow is given in period coordinates as
\eq{eq: horocycle period coordinates}{
\hol^{(x)}_{u_s q}(\gamma) = \hol^{(x)}_{q}(\gamma)+s 
\cdot\hol^{(y)}_{q}(\gamma), \ \ \hol^{(y)}_{u_sq}(\gamma)=
\hol^{(y)}_{q}(\gamma).}
See Figure \ref{fig: horocycle} for an illustration of the geometric meaning of this change in period coordinates.

\begin{figure}
\includegraphics[scale=1.0]{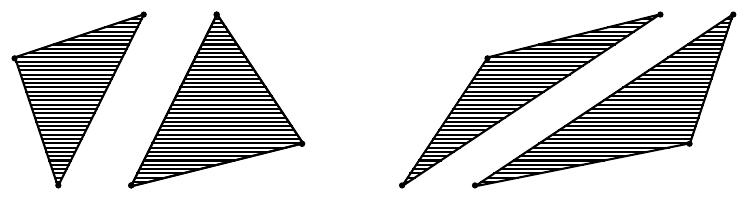}
\caption{The left hand side shows two triangles in $M_q$. The right hand side shows the corresponding triangles in $M_{q'}$ where $q'=u_1(q)$.}\label{fig: horocycle} 
\end{figure}

Recalling that some
surfaces may have additional transverse measures to the horizontal
foliation $\FF_q$, we will define a surface $q'$ via the formula
 \eq{eq: tremor3}{
\hol^{(x)}_{q'}(\gamma) = 
\hol^{(x)}_{q}(\gamma)+t \cdot \beta(\gamma), \ \ 
\hol^{(y)}_{ q'}(\gamma) = \hol^{(y)}_{ q}(\gamma),
}
  where $\beta$ is the cohomology class associated with a transverse
  measure on $\mathcal{F}_q$. In a sense that we will make precise in \S \ref{sec: TCH}, this means that $M_q$ is deformed by shearing nearby horizontal lines relative to each other, where the amount of shearing is specified by $\beta$ and $t$ (see Figure \ref{fig: trem2}). 
We write $\trem_{t,\beta}(q)$ for $q'$ and $\trem_\beta(q)$ for $\trem_{1,\beta}(q)$.
 We refer to a surface of the form $\trem_{t,\beta}(q)$ as a {\em tremor of $q$}.
 As we will show in \S \ref{sec: trem ode} and \S \ref{subsec: for more details},  $q'$ is uniquely determined by $q$, $t$ and $\beta$.  
\index{trem@$\trem_{\beta}(q)$}
\index{trem@$\trem_{t,\beta}(q)$}

 \begin{figure}
\includegraphics[scale=1.0]{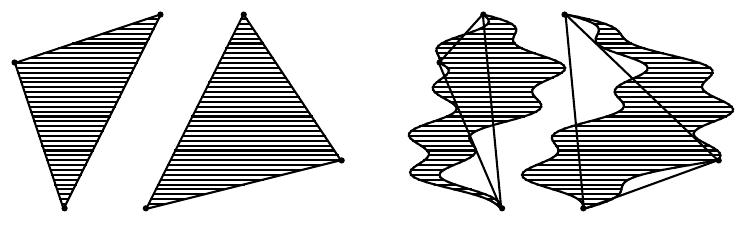}
\caption{ The right hand side shows how the two triangles change with respect to a tremor flow. The periods of the edges change via equation \eqref{eq: tremor3}. 
}\label{fig: trem2} 
\end{figure}
  We now give some additional definitions needed for stating our
  results. 
If the transverse measure corresponding to
$\beta$ is absolutely continuous with respect to $dy$ (see \S
\ref{subsec: ac}) we will say that both $\beta$ and the
tremor $\trem_{\beta}(q)$ are \emph{absolutely continuous.} 
If $q$ has no horizontal saddle connections and  
the transverse measure is not a scalar multiple of $dy$, we will say $\beta$ and $\trem_{\beta}(q)$
are \emph{essential}\index{essential tremor}. We will denote the subspace
of cohomology corresponding 
to signed transverse
measures on $\mathcal{F}_q$ by $\mathcal{T}_q$. This can be related to the tangent space to the
stratum, see \S \ref{subsec: orbifold} and \S \ref{subsec: foliation cycles}. \index{T@$\tremspace_q$}
If the transverse measure is non-atomic, i.e.\ assigns zero  
measure to all horizontal saddle connections or closed leaves, then the tremor path can  
be continued for all time, see Proposition \ref{prop: tremor domain of
  defn}. The case of atomic transverse measures presents   
some technical difficulties which will be discussed in \S \ref{sec:
  atomic tremors}.

\subsection{More detailed results}
The importance of tremor maps for the study of the horocycle flow is
that, where they are defined, they commute  
with the horocycle flow, i.e., $u_s
\trem_{\beta}(q)=\trem_\beta(u_sq)$. 
In particular we will see that for
many tremors, the surfaces 
$u_sq$ and $u_s \trem_{\beta}(q)$ stay  
close to each other, and this leads to the following:

\begin{thm}\name{thm: tremors bounded distance}
Let $\HH$ be any stratum, let $\HH_1$ be its subset of area-one
surfaces, and let $\LL \subset \HH_1$ be a closed $U$-invariant
set which is the support of  a $U$-invariant ergodic measure $\mu$. Let $q
\in \LL$, $\beta\in \tremspace_q$ and $q_1 = \trem_{\beta}(q)$. Then:
\begin{itemize}
\item[(i)]  If   $\beta$ is absolutely continuous then for the sup-norm
 distance $\dist$ on $\HH$ (see \S  
  \ref{subsec: sup norm}), we have
\eq{eq: bounded}{
\sup_{s \in\R} \dist(u_sq, u_sq_1) < \infty. 
}
\item[(ii)] 
If $\beta$ is absolutely continuous
then for any $q'$ in $\overline{Uq_1} \sm \LL$, the surface $M_{q'}$ has a non-uniquely ergodic horizontal
foliation. In particular, if $\LL
\neq \HH_1$ then  $Uq_1$ is not dense in $\HH_1$. 

\item[(iii)] 
 If  $\mu$-a.e. surface in $\LL$ has no horizontal saddle
connection and 
if $q$ is generic for $\mu$, then  $q_1$ is also generic for $\mu$. 
\end{itemize} 
\end{thm}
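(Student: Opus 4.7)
The plan is to exploit the commutation relation $u_s \trem_{p,\beta} = \trem_{u_sp,\beta}$ (after identifying $\beta$ along the horocycle orbit as discussed in \S\ref{sec: TCH}) together with the period-coordinate descriptions \equ{eq: tremor3} and \equ{eq: horocycle period coordinates}. For part (i), combining the two formulas yields, in any local chart containing both surfaces, $\hol^{(x)}_{u_sq_1}(\gamma) - \hol^{(x)}_{u_sq}(\gamma) = \beta(\gamma)$ and $\hol^{(y)}_{u_sq_1}(\gamma) = \hol^{(y)}_{u_sq}(\gamma)$, so the period-coordinate displacement is the fixed class $\beta$, independent of $s$. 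The sup-norm distance on $\HH$ (\S\ref{subsec: sup norm}) is controlled by $|\beta(\gamma)|$ over saddle connections $\gamma$ of $u_sq$ divided by $|\gamma|$. Absolute continuity means $\beta = \varphi \cdot dy$ with $\varphi \in L^\infty$, so $|\beta(\gamma)| \le \|\varphi\|_\infty \cdot |\hol^{(y)}_{u_sq}(\gamma)|$, and dividing by the saddle connection length yields a bound uniform in $s$, establishing \equ{eq: bounded}.

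For part (ii), I would argue by contradiction. Suppose $q_\infty \in \overline{Uq_1}\setminus\LL$ with $u_{s_n}q_1 \to q_\infty$. Since $q \in \LL$ and $\LL$ is closed and $U$-invariant, $u_{s_n}q \in \LL$; by part (i) this sequence lies within sup-norm distance $C$ of $u_{s_n}q_1$, so after extracting, $u_{s_n}q \to q^{\ast} \in \LL$. Passing to the limit gives $\hol^{(y)}_{q_\infty} = \hol^{(y)}_{q^{\ast}}$, so the two surfaces carry the same horizontal measured foliation, and $q_\infty$ is exhibited as the tremor of $q^{\ast}$ by a nonzero class $\beta^{\ast} \in \tremspace_{q^{\ast}}$ (the limit of $\beta$ under the identification). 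If the common horizontal foliation were uniquely ergodic, then $\tremspace_{q^{\ast}} = \R \cdot dy$ would force $\beta^{\ast} = c \cdot dy$, hence $q_\infty = u_c q^{\ast} \in \LL$, a contradiction. Thus $q_\infty$ has non-uniquely-ergodic horizontal foliation. The second assertion follows because when $\LL \neq \HH_1$ the open set $\HH_1 \setminus \LL$ contains surfaces with uniquely ergodic horizontal foliation, and these cannot lie in $\overline{Uq_1}$.

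For part (iii), write $u_sq_1 = \Phi_\beta(u_sq)$ where $\Phi_\beta(p) \df \trem_{p,\beta}$, so $\frac{1}{T}\int_0^T f(u_sq_1)\,ds = \frac{1}{T}\int_0^T (f\circ\Phi_\beta)(u_sq)\,ds$ for $f\in C_c(\HH_1)$. The no-horizontal-saddle-connection hypothesis guarantees $\Phi_\beta$ is defined and continuous on a full $\mu$-measure subset of $\LL$. Combining $U$-equivariance of $\Phi_\beta$ with $U$-ergodicity of $\mu$, one shows that at $\mu$-a.e.\ $p$ the class $\beta$ represents a scalar multiple of $dy_p$, with the scalar $c$ constant by ergodicity, so $\Phi_\beta$ coincides $\mu$-a.e.\ with $u_c$ and $\int(f\circ\Phi_\beta)\,d\mu = \int f\,d\mu$. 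Genericity of $q$ (applied to $f\circ\Phi_\beta$, after approximation to handle its discontinuities off the full-measure set) then yields the desired convergence. The main obstacle I anticipate is precisely this final step: justifying the $\mu$-a.e.\ reduction of $\Phi_\beta$ to a horocycle time requires a structural input on transverse measures at $\mu$-typical surfaces that goes beyond the literal ``no horizontal saddle connection'' hypothesis, and one must carefully handle the failure of $f\circ\Phi_\beta$ to be continuous everywhere.
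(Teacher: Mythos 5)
Your proofs of parts (i) and (ii) follow the same route as the paper. Two small points you gloss over: in (i), absolute continuity of $\nu$ does not automatically give a bounded Radon--Nikodym derivative (this is Lemma \ref{lem: absolutely continuous}, whose hypothesis is aperiodicity); the paper handles the case of surfaces with horizontal cylinders by decomposing $\nu$ into an aperiodic part and a cylinder-supported part and modifying the latter. In (ii), the crucial assertion that the limit $\beta^*$ of the transported cocycle is again a signed foliation cocycle on $q^*$ is precisely the semicontinuity statement (Proposition \ref{prop: semicontinuity} / Proposition \ref{prop: uniform ac closed}), which you invoke tacitly; the paper packages this into Proposition \ref{prop: proper on c-ac} (closedness of the set of $c$-a.c. tremors of a closed set). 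Both are the same argument in substance.

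For part (iii) there is a genuine gap, which you correctly sense but do not resolve. The statement ``at $\mu$-a.e.\ $p$ the class $\beta$ represents a scalar multiple of $dy_p$'' is not well-posed: $\beta$ is a fixed cohomology class transported along the $U$-orbit of $q$ by the tremor comparison homeomorphisms, and is not intrinsically defined at a $\mu$-typical $p$ off that orbit. Consequently ``$\Phi_\beta$ coincides $\mu$-a.e.\ with $u_c$'' cannot literally hold, and the ergodicity invocation to get constancy of $c$ does no work (the scalar is simply $s_0 = L_q(\beta)$, fixed in advance). What is true, and what the paper proves, is an approximate version: (a) $\mu$-a.e.\ surface in $\LL$ has a \emph{uniquely ergodic} horizontal foliation --- this is strictly stronger than ``no horizontal saddle connection'' and requires Masur's criterion together with the nondivergence results of \cite{MW}; (b) by the semicontinuity Corollary \ref{cor: semicontinuity}, whenever $u_sq$ is close to a uniquely ergodic surface in $\LL$, any foliation cocycle on $u_sq$ with signed mass $s_0$ is close to $s_0\,(dy)$, hence $\trem_{u_sq,\beta}$ is close to $u_{s_0}u_sq$; (c) combining nondivergence with genericity of $q$ gives, for any $\vre,\delta$ and all large $T$, a set $A\subset[0,T]$ of density $>1-\vre/4$ on which $\dist(u_sq_1,u_sq_2)<\delta$ with $q_2=u_{s_0}q$, and the Birkhoff averages are compared directly via this quantitative claim rather than by any $L^1(\mu)$ manipulation of $f\circ\Phi_\beta$. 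Without (a)--(c) your reduction to genericity of $q$ does not go through; in particular the approximation argument you gesture at to handle the discontinuities of $f\circ\Phi_\beta$ must be replaced by the pointwise closeness estimate, since $\Phi_\beta$ is only defined along $Uq$.
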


We will give examples of loci $\LL$ and points $q$ for which the
hypotheses of Theorem \ref{thm: tremors bounded 
  distance} are satisfied, namely we will find $\LL$ and $q$ for which:
\begin{itemize}
\item[(I)]
The locus $\LL$ is $G$-invariant and is the support of a $G$-invariant
ergodic 
measure $\mu$ and the orbit $Uq$ is generic for $\mu$.
\item[(II)]
The surface $M_q$ has no horizontal saddle connections and the
transverse measure corresponding to $dy$
on $M_q$ is not ergodic  (and hence $q$ admits essential absolutely
continuous 
tremors). 
\item[(III)] 
There is an essential absolutely continuous tremor $q_1$ of $q$ which
is not in $\LL$.  
\end{itemize}
There are many examples of strata $\HH$ and loci
$\LL$ for which these properties hold. One particular example which we
will study in detail is $\LL=\EE 
\varsubsetneq \HH_1(1,1)$ (see \S \ref{subsec: locus E} for more
information on $\EE$). 
Namely we will prove the following result which, 
in conjunction with Theorem \ref{thm: tremors bounded distance}, 
immediately implies Theorem \ref{thm: 1}. 
\begin{thm}\name{thm: more detailed}
There are points $q \in \EE$ satisfying (I), (II) and
(III) above. 
Moreover, for any  $q \in \EE$  which admits an essential
tremor $\beta 
\in \tremspace_q$, the points
$$q_r \df 
\trem_{r,\beta}(q)  \in \HH(1,1) \ (\text{where } r >0)$$
satisfy 
\eq{eq: 2nd assertion}{
0< r_1 < r_2 \implies
\overline{Uq_{r_1}} \neq \overline{Uq_{r_2}}.
}
\end{thm}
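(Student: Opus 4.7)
The statement contains two parts: (a) the existence of some $q \in \EE$ satisfying conditions (I)--(III); and (b) the injectivity of $r \mapsto \overline{Uq_r}$ on $r > 0$ for every $q \in \EE$ admitting an essential $\beta \in \tremspace_q$. I address (b) first, as it is the main novelty and drives the structure of the argument.

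For (b), suppose toward contradiction that $0 < r_1 < r_2$ and $\overline{Uq_{r_1}} = \overline{Uq_{r_2}}$. Since $q_{r_2} \in \overline{Uq_{r_1}}$, pick a sequence $s_n \in \R$ with $u_{s_n}q_{r_1} \to q_{r_2}$. The commutation of tremor with horocycle flow (\S\ref{sec: TCH}) gives
\[
u_{s_n}q_{r_1} = u_{s_n}\trem_{q,\beta}(r_1) = \trem_{u_{s_n}q,\beta}(r_1).
\]
Because $\beta$ is essential, $M_q$ has no horizontal saddle connections, so Proposition \ref{prop: tremor domain of defn} shows that $\trem_{\cdot,\beta}(t)$ is defined and continuous in period coordinates for every $t \in \R$, and satisfies the group law $\trem_\beta(t_1+t_2) = \trem_\beta(t_1)\circ\trem_\beta(t_2)$. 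Applying $\trem_{\cdot,\beta}(-r_1)$ to the convergent sequence,
\[
u_{s_n}q = \trem_{u_{s_n}q_{r_1},\beta}(-r_1) \;\longrightarrow\; \trem_{q_{r_2},\beta}(-r_1) = q_{r_2-r_1}.
\]
But $u_{s_n}q \in \EE$ because $\EE$ is closed and $U$-invariant, so the limit $q_{r_2-r_1}$ lies in $\EE$. On the other hand, $\EE$ is an affine $\SL_2(\R)$-invariant submanifold (hence affinely cut out in period coordinates), and the $x$-tangent directions of $\EE$ inside $\tremspace_q$ form the one-dimensional line $\R\cdot dy$: any transverse measure on $M_q$ not proportional to $dy$ shears the two tori in the slit-presentation of $q \in \EE$ asymmetrically and hence breaks the eigenform structure. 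Since $\beta$ is essential ($\beta \notin \R\cdot dy$), we get $q_r = \trem_{q,\beta}(r) \notin \EE$ for every $r \neq 0$. This contradicts $q_{r_2-r_1} \in \EE$ and proves (b).

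For (a), (I) follows from the $U$-ergodicity of $\mu_\EE$ (a standard property of affine invariant measures) and Birkhoff's theorem, giving a $\mu_\EE$-conull set of $U$-generic surfaces. The first half of (II) --- no horizontal saddle connections --- is also $\mu_\EE$-conull. The second half --- $dy$ not ergodic on $M_q$ --- is non-generic, but holds on a nonempty sub-locus of $\EE$: using the two-torus structure of $\EE$ (\S\ref{subsec: locus E}), a Keane--Veech-type construction produces surfaces whose horizontal foliation is minimal but not uniquely ergodic, so $dy$ splits as $\mu_1+\mu_2$ with $\mu_1,\mu_2$ distinct and ergodic. Condition (III) is then automatic at such a $q$: $\tremspace_q = \R\mu_1 \oplus \R\mu_2$ meets the $\EE$-tangent directions in exactly $\R\cdot dy = \R(\mu_1+\mu_2)$, so any essential $\beta$ gives $q_1 = \trem_{q,\beta}(1) \notin \EE$, and $\beta$ may be chosen absolutely continuous. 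Producing a single $q$ satisfying all of (I)--(III) simultaneously is the content of the explicit construction in \S\ref{subsec: locus E}: one exhibits a non-uniquely-ergodic $q \in \EE$ whose $U$-orbit is additionally generic for $\mu_\EE$ (which does not follow directly from Birkhoff, since the non-uniquely-ergodic locus is $\mu_\EE$-null).

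The main obstacle is the structural fact used at the end of (b) and in (III): that for essential $\beta$ at $q \in \EE$, the entire tremor path $\{q_r : r \neq 0\}$ misses $\EE$. This reduces, via the affine period-coordinate description of $\EE$, to the linear-algebraic computation that the $x$-tangent subspace $W \subset H^1(M,\Sigma;\R)$ of $\EE$ meets $\tremspace_q$ in precisely $\R\cdot dy$; the geometric content is that shearing only one of the two tori of the eigenform presentation destroys the identical-tori condition defining $\EE$. The remaining ingredients --- commutation of tremor with $u_s$, continuity and invertibility of tremor flows for $\beta$ with no atoms on horizontal saddle connections, parallel-transport identification of $\beta$ along $U$-orbits, and $U$-ergodicity of $\mu_\EE$ --- are developed in earlier sections and are used as black boxes.
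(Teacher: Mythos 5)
Your argument for the injectivity statement (b) takes a genuinely different route from the paper's, and unfortunately has a gap at its central step. You pick $u_{s_n}q_{r_1} \to q_{r_2}$ and want to ``apply $\trem_{\cdot,\beta}(-r_1)$ to the convergent sequence.'' The map $q' \mapsto \trem_{q',\beta}(-r_1)$ is well-defined along the $U$-orbit $Uq_{r_1}$ (identifying $\beta$ via TCH as in Proposition \ref{prop: commutation tremors}), but this map does \emph{not} automatically extend continuously to $\overline{Uq_{r_1}}$. What continuity (Proposition \ref{prop: continuity}) gives is continuity of $(\til q, v, s) \mapsto \trem_{\til q,v}(s)$ on $T(\HHm)\times\R$; to use this along your sequence you must verify that the \emph{pairs} $(u_{s_n}q_{r_1}, \beta)$ converge in $T(\HH)$, i.e.\ that the period-coordinate representatives of the TCH-transported $\beta$ converge as $s_n \to \pm\infty$. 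That is precisely the content of the semicontinuity and properness machinery (Propositions \ref{prop: semicontinuity}, \ref{prop: uniform ac closed}, \ref{prop: proper on c-ac}, Lemma \ref{lem: automatic absolutely continuous}), and even invoking it only yields, along a subsequence, a limit $\beta_\infty \in \tremspace_{q_{r_2}}$ of bounded mass --- \emph{not} the specific class $\beta$ transported by TCH from $q$ to $q_{r_2}$. So the identity $\lim u_{s_n}q = q_{r_2-r_1}$ is not established; one would further need Corollary \ref{cor: how to reach from locus} (uniqueness of tremor presentations) to pin $\beta_\infty$ down. The paper sidesteps all of this by the dilation argument in Proposition \ref{prop: properly nested}: the sets $\tremspace_{q_0}^{(0)}(\rho)$ are nonempty, bounded, and scale linearly in $\rho$, and a bounded set cannot be invariant under a nontrivial dilation. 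That argument never has to track limit points of transported cohomology classes.

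There is a second gap. You use ``any transverse measure not proportional to $dy$ shears the two tori asymmetrically, hence $q_r \notin \EE$ for every $r\neq 0$.'' The tangent-space computation only gives this for $r$ small (first part of Proposition \ref{prop: transverse rank one}); to get it for \emph{all} $r\neq 0$ you must use that $\overline{Uq}=\EE$ and renormalize by $\til g_t$ to reduce to the small-$r$ case (second part of Proposition \ref{prop: transverse rank one}). The density $\overline{Uq}=\EE$ in turn is not an a priori hypothesis of the theorem; it has to be derived, which is where the paper invokes \cite[Thm.~10.1]{eigenform}.

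Finally, for the existence part (a), you correctly note that condition (I) does not follow from Birkhoff plus ergodicity alone, since the Katok--Stepin locus is $\mu_\EE$-null. You gesture at ``an explicit construction'' but the actual resolution in the paper is \cite[Thm.~10.1]{eigenform} combined with the observation that a non-uniquely-ergodic $q$ cannot be a real-Rel deformation of a lattice surface (because lattice surfaces have uniquely ergodic horizontal foliations). That piece is essential and missing from your sketch.
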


\begin{remark}\name{remark: other loci} 
Theorem \ref{thm: more detailed} is also true
if $\EE$ is replaced with any of the other eigenform loci $\EE_D
\subset \HH(1,1)$. See \S \ref{subsec: nested} for more details. 
\end{remark}

For certain $q \in \EE$ and  $\beta \in \tremspace_q$, we can 
give a complete description of the 
closure of $Uq_1$ where $q_1 = 
\trem_\beta(q)$. To state this result we will need a
measurement of the size of a tremor and to do this 
we introduce the {\em total variation} $|L|_q(\beta)$ of $\beta \in
\tremspace_q$, see \S \ref{subsec: length} for the definition. 
Also we say
that $q \in \EE$ is {\em aperiodic}\index{aperiodic} if the horizontal foliation of $M_q$ is not
periodic, i.e. it is either minimal or contains a horizontal slit separating the
surface into two tori so that the restriction of the horizontal foliation to each torus is minimal. 

\begin{thm}\name{thm: spiky fish}
For any $a>0$ there is $q_0\in\EE$ and an essential tremor $q_1 = \trem_{\beta_0}(q_0) \in\HH(1,1)$ of
$q_0$ such that 
\eq{eq: spiky fish}{\begin{split}
\overline{Uq_1} & = \overline{
\{\trem_{\beta}(q): q \in \EE \text{ is aperiodic},\  \beta \in
  \tremspace_{q} ,\ |L|_q(\beta)  \leq a\}
} \\
& \subset \{\trem_{\beta}(q): q \in \EE,\  \beta \in
  \tremspace_{q} ,\ |L|_q(\beta)  \leq a\}.
\end{split}
}
Moreover, setting
$q_r \df \trem_{r,\beta_0}(q_0)$, we have that the orbit-closure
$\overline{Uq_r}$ admits the description in \equ{eq:
  spiky fish} with the constant $a$ replaced by $ra$, and the points $q_r$ satisfy the
following strengthening of 
\equ{eq: 2nd assertion}: 
\eq{eq: 3rd assertion}{
  0< r_1 < r_2 \implies
  \overline{Uq_{r_1}} \varsubsetneq \overline{Uq_{r_2}}. }
\end{thm}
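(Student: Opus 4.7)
The plan is to build $q_1$ as an essential tremor of a carefully chosen $q \in \EE$, then combine the commutation of tremors with the horocycle flow with $U$-ergodicity on $\EE$ to obtain the description \equ{eq: spiky fish}, and finally deduce the nesting \equ{eq: 3rd assertion} from the total-variation bookkeeping.

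For the easy direction, pick $q \in \EE$ generic for $\mu_\EE$ with no horizontal saddle connection, as provided by Theorem~\ref{thm: more detailed} via properties (I) and (II). Choose an essential absolutely continuous $\beta \in \tremspace_q$ normalized so that $|L|_q(\beta)=a$, and set $q_1 \df \trem_{q,\beta}$. By the commutation identity $u_s q_1 = \trem_{u_s q,\beta}$ (with $\beta$ identified along the orbit via period coordinates), by the $U$-invariance of $|L|$ (since the horocycle flow only changes the $dx$-part of a transverse measure), and by the fact that each $u_s q$ remains aperiodic, the orbit $Uq_1$ and hence its closure lie in the closure of the set appearing as the middle term of \equ{eq: spiky fish}. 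The second inclusion of \equ{eq: spiky fish} is a semicontinuity statement: if $\trem_{p_n,\beta_n}\to r$ with $p_n \in \EE$ aperiodic and $|L|_{p_n}(\beta_n)\leq a$, then by closedness of $\EE$ and compactness in period coordinates, after passing to a subsequence $p_n\to p\in\EE$ and $\beta_n\to\beta$, with $\beta\in\tremspace_p$ (upper semicontinuity of the foliation-cycle cone) and $|L|_p(\beta)\leq a$ (lower semicontinuity of total variation); continuity of the tremor construction identifies $r=\trem_{p,\beta}$.

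The main work is the reverse inclusion $\overline{\{\trem_{q',\beta'}: q'\in\EE \text{ aperiodic},\ |L|_{q'}(\beta')\leq a\}}\subset\overline{Uq_1}$. I need every pair $(p,\beta')$ with $p\in\EE$ aperiodic and $|L|_p(\beta')\leq a$ to be accessible as a limit of some $u_{s_n} q_1 = \trem_{u_{s_n}q,\beta}$. Density of $Uq$ in $\EE$ yields $s_n$ with $u_{s_n} q\to p$, but I still need the parallel transport of $\beta$ along these times to converge to $\beta'$. This is a cocycle/recurrence question for the transverse-measure bundle over $(\EE, U, \mu_\EE)$: I would use strong recurrence of the horocycle flow, combined with the rich dynamics of the $U$-action on $\EE$ (in particular the non-trivial monodromy of foliation cycles around close-enough returns, derivable from the ergodicity of $\mu_\EE$), to show that the accumulated parallel transports of $\beta$ are dense in the sphere $\{|L|=a\}\subset\tremspace_p$, and then reach $|L|<a$ limits by choosing $s_n$ where part of the mass of $\beta$ escapes onto horizontal saddle connections that appear in the limit $p$. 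I expect this monodromy-plus-escape-of-mass argument to be the main obstacle.

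The nesting \equ{eq: 3rd assertion} then follows mechanically. For $r>0$, the scaled tremor $q_r=\trem_{q,r\beta}$ has $|L|_q(r\beta)=ra$, and the previous argument describes $\overline{Uq_r}$ as in \equ{eq: spiky fish} with $a$ replaced by $ra$. Containment $\overline{Uq_{r_1}}\subseteq\overline{Uq_{r_2}}$ for $r_1<r_2$ is immediate from these descriptions. For strictness, pick $\beta'\in\tremspace_p$ at some aperiodic $p\in\EE$ with $r_1 a<|L|_p(\beta')\leq r_2 a$, which is possible because $\tremspace_p$ is at least two-dimensional at such $p$. Then $\trem_{p,\beta'}\in\overline{Uq_{r_2}}$ by the description, while $\trem_{p,\beta'}\notin\overline{Uq_{r_1}}$ by the second inclusion of \equ{eq: spiky fish} applied to $r_1 a$, provided the map $(p,\beta')\mapsto\trem_{p,\beta'}$ is injective enough that $|L|$ is well defined as a function on the set of tremors.
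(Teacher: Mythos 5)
Your opening moves (the containment of $Uq_1$ in the spiky fish via commutation of tremors with the horocycle flow and $U$-invariance of $|L|$, and the closedness of the right-hand side via semicontinuity of foliation cocycles) match the paper's argument, which uses Proposition~\ref{prop: proper on c-ac}, Lemma~\ref{lem: automatic absolutely continuous}, and Corollary~\ref{cor: total variation continuous}. Your handling of \equ{eq: 3rd assertion} is also broadly correct, though the injectivity you flag is not a footnote: the paper uses Corollary~\ref{cor: how to reach from locus} (uniqueness of the balanced-tremor representation) and Proposition~\ref{prop: properly nested} (a scaling argument on the total-variation parameter) to make strictness rigorous.

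The serious gap is in the reverse inclusion, which is the substance of the theorem. Your plan is to fix $q$ generic for $\mu_\EE$, set $q_1=\trem_{q,\beta}$ with $|L|_q(\beta)=a$, and show $Uq_1$ is dense in $\overline{\SF}_{(\leq a)}$ by letting $u_{s_n}q\to p$ and arguing, via ``strong recurrence'' and ``monodromy of foliation cycles derivable from ergodicity of $\mu_\EE$,'' that the transported $\beta$ accumulates onto all of the sphere $\{|L|=a\}$ in $\tremspace_p$. This sketch does not stand up. The transport of $\beta$ along the horocycle orbit is through the TCH (equivalently, period coordinates), which is a trivial cocycle: $\beta$ is carried unchanged. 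What actually governs the limit is the geometry of the convergence $u_{s_n}q\to p$ and the shape of the cone $C^+_p$, not any monodromy group; indeed for uniquely ergodic $p$, Corollary~\ref{cor: semicontinuity} forces the limit to be $a\,\hol_p^{(y)}$, with no freedom at all. For non-uniquely-ergodic $p$ the limit depends delicately on the approach, and you have given no mechanism for realizing arbitrary ergodic-decomposition coefficients. The paper circumvents this entirely: it does not construct $q_1$ directly, but produces it by a Baire-category argument (Theorem~\ref{thm: spiky fish 1}), whose nonemptiness step is proved via an explicit chain of approximations --- Proposition~\ref{prop: tor dense in min} (using Proposition~\ref{prop: auxiliary} to approximate essential tremors of minimal surfaces by tremors of slit-torus surfaces), Proposition~\ref{prop: min H dense in min} (the checkerboard construction of \S\ref{subsec: checks}, which tunes an ``area imbalance'' to hit a prescribed total variation and slit length), and Proposition~\ref{prop: min dense} (reduction to Hedlund's theorem on $\HH(0)$). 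This geometric machinery is the actual content of the proof and is not replaceable by a recurrence/ergodicity heuristic. Note also that the paper only establishes a posteriori (part (ii) of the proof of Theorem~\ref{thm: spiky fish 1}) that any $q_1$ with dense orbit has the form $\trem_{q,\beta}$ with $q\in\EE^{(\mathrm{min})}$ and $\beta\in C^{+,\mathrm{erg}}_q$, $L_q(\beta)=a$; it does not claim, and you have not shown, that an arbitrary such $q_1$ (in particular one built from a generic $q$) has the desired orbit closure.
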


The following more explicit result
implies Theorem \ref{thm: Hausdorff dim preliminary}. Its proof relies on \cite{CHM}. 

\begin{thm}\name{thm: Hausdorff dim}
  Let $q_1\in\HH(1,1)$ be the point described in Theorem \ref{thm: spiky fish}. Then
  the Hausdorf dimension of the horocycle orbit closure of $q_1$ satisfies
  $$
5.5 \leq \dim \overline{Uq_1} < 6.
$$
\end{thm}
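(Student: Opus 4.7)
The strategy is to use the explicit description of $\overline{Uq_1}$ from Theorem~\ref{thm: spiky fish} to translate the Hausdorff dimension computation into a question about the geometry of the bounded-length tremor bundle over the eigenform locus $\EE$. The upper bound will come from a direct dimension count; the lower bound will require an analysis of how the fixed cohomology class $\beta$ is transported along the $U$-orbit of the base point $q$.

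\textbf{Upper bound.} From~\eqref{eq: spiky fish} we have the containment
$$
\overline{Uq_1} \subset S_a \df \{\trem_{q, \beta} : q \in \EE,\ \beta \in \tremspace_q,\ |L|_q(\beta) \leq a\}.
$$
The locus $\EE$ has real dimension $5$. For aperiodic $q \in \EE$ the horizontal foliation has two ergodic components (the two subtori), so $\tremspace_q$ is $2$-dimensional, and once we quotient by the direction $\beta \parallel dy$ (which corresponds to the horocycle flow and so lies tangent to $\EE$), only a $1$-parameter family of essential tremors remains. I would parametrize $S_a$ locally in period coordinates as a Lipschitz image of a bounded subset of $\EE \times \R$, yielding $\dim S_a \leq 6$. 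For the strict inequality, I would use that the total variation $|L|_q(\beta)$ detects the masses of the two ergodic components: the constraint $|L|_q(\beta) \leq a$ forces the allowed essential tremor interval at $q$ to shrink proportionally to the minimum of the component masses, and a covering argument that adapts the box size to this quantity produces a count $N(\varepsilon) = o(\varepsilon^{-6})$.

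\textbf{Lower bound.} Observe that $u_s q_1 = \trem_{u_s q,\beta}$, so along the orbit the base point $u_s q$ equidistributes in $\EE$ while the cohomology class $\beta$ is parallel-transported in period coordinates. Choose a sequence $s_n \to \infty$ with $u_{s_n} q \to q'$ in $\EE$; then $u_{s_n} q_1$ accumulates on surfaces of the form $\trem_{q', \beta'_\infty}$, where $\beta'_\infty$ is a limit of iterates of $\beta$ under the monodromy (Kontsevich--Zorich cocycle) along the closing-up loops in $\EE$. The set of accessible $\beta'_\infty$ at a fixed $q'$ is organized by the dynamics of this cocycle, and I would show using a Frostman-type mass distribution argument, together with the quantitative recurrence of $u_s q$ to return sections in $\EE$, that this accumulation set has Hausdorff dimension at least $\tfrac{1}{2}$ inside the $1$-dimensional essential-tremor fiber. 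Combining the $5$-dimensional equidistribution of the base in $\EE$ with this $\tfrac{1}{2}$-dimensional transverse slice, and applying a Marstrand-type product lower bound, we conclude $\dim \overline{Uq_1} \geq 5.5$.

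\textbf{Main obstacle.} The hard step is the lower bound, specifically producing the $\tfrac{1}{2}$-dimensional transverse Cantor set of accessible tremor parameters. The difficulty is that the limits $\beta'_\infty$ are constrained to satisfy $|L|_{q'}(\beta'_\infty) \leq a$ and to be attained along genuine $\liminf$ sequences of the horocycle orbit, not just abstract cocycle orbits. I expect the argument to require a symbolic coding of the $U$-action on $\EE$ (via Rauzy--Veech or a cross-section to the geodesic flow) so that the return-time statistics govern both the base recurrence in $\EE$ and the cocycle stretch, producing a self-similar Cantor set whose dimension can be computed via a pressure equation whose value is $\tfrac{1}{2}$.
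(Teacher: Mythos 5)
Your two halves take routes that are fundamentally different from the paper's, and each contains a gap worth flagging.

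\textbf{Lower bound.} You aim for a split of the form ``$5$-dimensional base (all of $\EE$) $\times$ $\tfrac12$-dimensional fiber,'' where the fiber dimension $\tfrac12$ comes from a Cantor set of accumulation points of the cohomology class under a symbolically coded cocycle, to be established via a pressure equation. You correctly identify this as the hard step, and it is substantially harder than what is actually needed. The paper decomposes the other way: the base is taken to be only the \emph{non--uniquely-ergodic} horizontally minimal surfaces in $\EE$, which by Cheung--Hubert--Masur \cite{CHM} already has Hausdorff dimension $4.5$ (a cross-section argument plus the fact that for a.e.\ $q$ the set $\Theta_q$ of non--uniquely-ergodic directions has dimension $\tfrac12$, then Proposition \ref{prop: H-dim facts}(3)); the extra degree of freedom is the \emph{magnitude} of the tremor, a genuine one-parameter family since $\trem_{q,r\beta}$ ranges over a segment as $r$ varies in $[0,\eta]$. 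The map $\trem_{q,\beta}\mapsto(q,\|\beta\|)$ is Lipschitz in period coordinates on a suitable compact piece, surjects onto a set of dimension $4.5$ times an interval, and Proposition \ref{prop: H-dim facts} finishes. You do not need any pressure equation or symbolic coding; the $\tfrac12$ is already sitting in the base via CHM, not in the fiber. Your transversal-Cantor-set approach might eventually work, but as stated it is a plan with an unresolved central claim, whereas the intended lower bound is the easy half.

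\textbf{Upper bound.} Your claim that the essential tremor interval at $q$ ``shrinks proportionally to the minimum of the component masses'' is not the operative mechanism, and I don't see how to turn it into a uniform covering estimate. The ergodic component masses of a fixed non--uniquely-ergodic $q$ are fixed constants; they do not supply a parameter that decays as you refine a cover. What does decay is the size of the projected fiber $P^-\bigl(\{\beta\in\tilde C^+_q(t):L_q(\beta)\leq a\}\bigr)$ as the geodesic excursion time $t$ grows (Lemma \ref{lem: Schwartzman joined2}(c) gives a short side $\ll e^{-2t}$). This must be balanced against the measure of base points with large excursion time, which is controlled by quantitative non-divergence (Athreya, Proposition \ref{prop: jayadev thesis}: exponential decay $\ll e^{-\delta t}$). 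The explicit exponent $\delta/5$ in the paper comes precisely from trading off these two rates, together with a technical covering lemma for convex sets (Proposition \ref{prop: upper bounds Hdim}). Your outline has the right shape --- a cover whose box size adapts to a quantity measuring ``how non--uniquely-ergodic'' the base point is --- but without the quantitative nondivergence input there is no way to convert the covering count into $o(\vre^{-6})$, and the specific quantity you name does not give a decaying scale.
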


\subsection{Acknowledgements}
The authors gratefully acknowledge the support of BSF grant 2016256, ISF
grant 2095/15, Wolfson Research Merit Award, NSF grants DMS-135500,
DMS-452762, DMS-1440140 a Warnock chair, and Poincar\'e chair. The
authors thank BIRS-CMO, CIRM, Fields Institute, IHP and
MSRI for their hospitality. The authors
are grateful to Matt Bainbridge and Yair Minsky for
helpful discussions. The authors thank the anonymous referees for their careful reading and excellent comments that greatly improved the accuracy and readability of the paper.

\section{Basics}\name{sec: basics}
In this section we review basic concepts and set up notation. Some readers will find it useful to skip this section on a first reading, and refer back to it as needed.   The main differences between our treatment and other treatments are the attention paid to orbifold loci and the terminology introduced in \S \ref{subsec: transverse}.

\subsection{Strata and period coordinates}\name{subsec: strata}
There are several possible approaches for defining the topology and
geometric structure on strata, see \cite{FM, MT, 
  Wright survey, yoccoz survey, zorich survey}.  For the most part we
follow the approach of \cite{eigenform}, where the reader can find
additional details.

Let $M$  be a compact oriented surface of genus $g$ and 
let $\Sigma \subset M$ be a non-empty finite set with $k$ elements. We make the
convention that the points of $\Sigma$ are labeled as $p_1, \ldots, p_k$.  Let $\mathbf{r}$
be a list of $k$ non-negative integers
satisfying $\sum r_j = 2g-2. $ 
A {\em  translation surface of type $ \mathbf{r}$} is given by an
atlas on $M$ of orientation preserving charts   $\mathcal{A}=\left(
  \psi_\alpha, U_\alpha\right)_{\alpha \in \mathcal{A}}$, where 
the 
$U_\alpha \subset M\sm \Sigma$ are open and cover $M \sm \Sigma$, the
transition maps $\psi_\alpha \circ \psi_\beta^{-1}$ are restrictions of 
translations to the appropriate domains, and such that the planar
structure in a neighborhood of 
each $p_j \in \Sigma$ completes to a cone angle {\em singularity} of
total cone angle $2\pi(r_j +1)$. 
A {\em
  translation equivalence}\index{translation equivalence} between translation surfaces is a
homeomorphism $h$ which preserves the labels and the translation
structure.

These charts determine a metric
on $M$ and a measure which we denote by $\Leb$. 
\index{Leb@$\Leb$} 
These charts also allow us to define natural `coordinate' vector
fields $\partial_x$ and $\partial_y$ and 1-forms $dx$ and $dy$ 
on $M$. The (partially defined) flow corresponding to
$\partial_x$ will be called the {\em horizontal straightline
  flow}\index{horizontal straightline flow}, and we will denote the
trajectory parallel to $\partial_x$ starting at $p \in M_q$ by $ t \mapsto
\Upsilon^{(p)}(t)$\index{U@$\Upsilon^{(p)}_t$}. 
The
corresponding foliation of $M \sm \Sigma$, which we denote by
$\FF$\index{F@$\FF$}, will be called the {\em horizontal
  foliation}. \index{horizontal foliation} 
If we remove from $M$ the horizontal trajectories that hit
singular points,  
then the straightline flow becomes an actual flow defined on a dense
$G_\delta$ subset of
full Lebesgue measure. If this flow is {\em minimal}\index{minimal},
i.e. all infinite horizontal straightline flow trajectories are dense,
we will say that {\em $\FF$ is minimal} or that $M$ is {\em horizontally minimal.}

Fix $\mathbf{r}$ of length $k$, and $g$ satisfying the relation $\sum r_j =
2g-2$. Choose a surface $S$ of genus $g$ and a set $\Sigma
\subset S$ of cardinality $k$, whose elements are labelled by $1, \ldots, k$ (note that we use the same symbol
$\Sigma$ to denote finite subsets of $S$ and of $M$, this should cause no confusion).
We refer to $(S, \Sigma)$ as the {\em model
  surface}.\index{model surface}
A {\em marking map} \index{marking map} of a translation surface $M$
is a homeomorphism $\varphi: (S,\Sigma)\to (M,\Sigma)$ which preserves
labels on $\Sigma$.
We say that two markings maps $\varphi: (S,\Sigma)\to (M,\Sigma)$ and
$\varphi': (S,\Sigma)\to (M',\Sigma)$ are equivalent if there  
is a translation equivalence $h:M\to M'$ so that $h\circ\varphi$ is
isotopic to $\varphi'$ via an isotopy which fixes $\Sigma$. An equivalence class of translation surfaces
with marking maps is a {\em marked translation surface.} There is a
forgetful map which takes a marked translation surface, which is the
equivalence class of $\varphi: S \to M$, to the translation equivalence class of
$M$. We will denote this map by $\pi$ and usually denote an element of
$\pi^{-1}(q)$ by $\til q$. \index{p@$\pi$}
The set of translation self-equivalences of $M$ is a finite group
which we denote by $\Gamma_M$. 
\index{G@$\Gamma_M$} 
In particular we get a left action, by postcomposition,  of
$\Gamma_M$ on the set of marking maps of $M$. 

As we have seen a flat surface structure on $M$ determines two
natural 1-forms $dx$ and $dy$ and these 1-forms determine
cohomology classes in $H^1(M,\Sigma;\R)$ which we denote by
$\hol^{(x)}$ and $\hol^{(y)}$\index{hol@$\hol^{(x)},
  \hol^{(y)}$}. Specifically for an oriented curve $\gamma$ we 
have $\hol^{(x)}(\gamma)=\int_\gamma dx$ and
$\hol^{(y)}(\gamma)=\int_\gamma dy$. We can combine these classes to
obtain an $\R^2$-valued cohomology class $\hol_M=(\hol^{(x)},\hol^{(y)})$ in
$H^1(M,\Sigma;\R^2)$\index{hol@$\hol_M$}. Conversely, any
$\R^2$-valued cohomology class 
gives rise to two $\R$-valued cohomology classes via the
identification $\R^2 = \R \oplus \R$. We denote the corresponding
direct sum decomposition by 
\eq{eq: x and y splitting}{
H^1(M,
\Sigma; \R^2) = H^1(M,
\Sigma; \R_x) \oplus H^1(M,
\Sigma; \R_y).
  }\index{H1@$H^1(M,
    \Sigma; \R_x), \  H^1(M,
\Sigma; \R_y)$}

Now consider a marked translation surface $\til q$ with choice of
marking map $\varphi: (S,\Sigma)\to (M,\Sigma)$, where $M = M_{\til
  q}$ is the underlying translation surface.  In this situation
we have a distinguished element $\hol_{\til q}=\varphi^*(\hol_M) \in
H^1(S, \Sigma ; \R^2)$\index{hol@$\hol_{\til q}$}  given by using the map $\varphi$ to pull back 
the cohomology class $\hol_M$ from $H^1(M, \Sigma ; \R^2)$ to $H^1(S,
\Sigma ; \R^2)$. 
More concretely if $\gamma$ is an oriented curve in $S$ with endpoints in
$\Sigma$ then $\hol_{\til q}(\gamma)=\hol_M(\varphi(\gamma))$. 
The cohomology class $\hol_{\til q}$ is independent of the choice 
 of the particular representative in the equivalence class $\til q$.  
We write $\dev(\til q)$\index{dev@$\dev$} for the cohomology class
$\hol_{\til q}\in H^1(S, \Sigma ; \R^2)$.

\subsection{An atlas of charts on $\HHm$}\name{subsec: atlas of
  charts}
Let $\HHm = \HHm(\mathbf{r})$ 
denote the collection of equivalence classes of marked translation surfaces  of a fixed 
type $\mathbf{r}$. 
Let $\HH = \HH(\mathbf{r})$
denote the collection of
translation equivalence classes of translation surfaces.
We will use the developing map defined above to
equip these sets with a topology, via a local coordinate system which is referred to
as {\em period coordinates}.

We caution the reader that different variants of these definitions can
be found 
in the literature, and they might not be equivalent to our
definitions, specifically as regards the question of whether or not
points of $\Sigma$ are labelled. Our terminology and notation follows \cite{eigenform}, but we introduce some additional notation related to comparison maps, which will be useful in \S \ref{subsec: polygonal tremors} and \S \ref{sec: TCH}. Readers who are familiar with these notions may choose to skip this subsection. 

A  \emph{geodesic triangulation}\index{geodesic triangulation} of a
translation surface is 
a decomposition of the surface
into triangles whose sides are saddle connections, and whose vertices
are singular points, which need not be distinct.
The existence of a geodesic triangulation of any translation surface
is proved in \cite[\S 4]{MS}. 
Let $\varphi:(S,\Sigma)\to (M,\Sigma)$ be a marking map, let $\til q$
be the corresponding marked translation surface, and let $\tau$\index{t@$\tau$} 
denote the pullback of a geodesic triangulation\index{triangulation}
with vertices in 
$\Sigma$, from $(M,\Sigma)$ to $(S,\Sigma)$. The cohomology class 
$\hol_{\til q}$ assigns coordinates in $\R^2$ to edges of the
triangulation and thus can be thought of as
giving a map from the triangles of $\tau$ to triangles in $\R^2$
(well-defined up to translation), 
 and so each triangle in $\tau$ has a Euclidean structure
 coming from $M$.
 Let $U_\tau$ be the collection of all cohomology classes
which map each triangle of $\tau$ into a positively oriented
non-degenerate triangle in $\R^2$. Each $\beta\in U_\tau$ gives a
translation surface $M_{\tau,\beta}$ built by gluing together the
corresponding triangles in $\R^2$ along parallel edges, as well as a
distinguished marking map, which we denote by $\varphi_{\tau,\beta}: (S,\Sigma)\to
(M_{\tau,\beta},\Sigma)$, which is the unique map   taking each triangle of the
triangulation $\tau$ of $S$ to the corresponding triangle of the
triangulation of $M_{\tau,\beta}$
and which is affine on each triangle (with respect to the Euclidean
structure coming from $M$). Let $\til q_{\tau, \beta}$ denote the
marked translation surface corresponding to the marking map
$\varphi_{\tau, \beta} .$ 
Let
$$V_\tau \df \{\til q_{\tau, \beta} :\beta \in U_\tau \} \ \text{ and
   } \ 
\Psi_\tau : U_\tau \to V_\tau, \ \ \Psi_\tau(\beta) = \til q_{\tau, 
  \beta}.$$
By construction, $\beta$ agrees with $\dev(\til q_{\tau, \beta})$ on edges of
$\tau$, and these edges generate $H_1(S, \Sigma)$. 
Thus the map
$$\Phi_\tau : V_\tau \to U_\tau, \ \ \Phi_\tau(\til q) =\dev(\til q)$$
is an inverse to
$\Psi_\tau$ (and in particular $\Psi_\tau$ is injective).
  The collection of maps $\{\Phi_\tau\}$ gives an atlas of
  charts for
  $\HHm$ and the collection of maps $\{\Psi_\tau\}$ gives an inverse atlas
  for $\HHm$.  These charts give $\HHm$ a manifold structure for which
  the map $\dev$ is a local diffeomorphism. In fact this atlas
  determines an affine structure on $\HHm$ so that $\dev$ is an affine
  map. 
 
We denote the tangent space of $\HHm$ at $\til q \in \HHm$ by $T_{\til
  q}(\HHm)$ \index{T@$T_{\til q}(\HHm)$} 
and by $T(\HHm)$ \index{T@$T(\HHm)$}  the tangent bundle of $\HHm$. 
Using the fact
that the developing map 
is a local diffeomorphism we can identify the tangent space at each
point of $\HHm$ with $H^1(S, \Sigma ; \R^2)$ so $T(\HHm)=\HHm\times
H^1(S, \Sigma; \R^2)$. We say that two tangent vectors $v_i \in
T_{\til q_i}(\HHm) \ (i=1,2)$, or two
subspaces $V_i \subset T_{\til q_i}(\HHm)$ are {\em
  parallel}\index{parallel} if they map to the same element or
subspace of $H^1(S,
\Sigma; \R^2)$. We say that a sub-bundle of $T(\HHm)$ is {\em
  flat}\index{flat sub-bundle} if
the fibers over different points are parallel, and that a sub-bundle
of $T(\HH)$ is {\em flat} if each of the connected components of its pullback
to $T(\HHm)$ is flat.

Let 
\eq{eq: x and y splitting model surface}{
 H^1(S,\Sigma;\R^2) = H^1(S,\Sigma;\R_x) \oplus
 H^1(S,\Sigma;\R_y)}
be the analogue of \equ{eq: x and y splitting} for the model
surface $S$. 
This decomposition  determines a foliation of $H^1(S,\Sigma; \R^2)$, whose leaves are pre-images of points under the projection  $H^1(S,\Sigma; \R^2) \to H^1(S,\Sigma; \R_y)$. The pullback of  this foliation to $\HHm$ is the {\em horospherical foliation} (or `strong unstable foliation', see \cite{mahler, SSWY} for more information).  We denote the horospherical leaf of a point $\til q \in \HHm$ by $W^{uu}(\til q)$. \index{horospherical foliation}

Using the explicit marking maps
$\varphi_{\tau,\beta}:(S,\Sigma)\to (M_{\tau,\beta},\Sigma)$, we
get explicit {\em comparison maps}\index{comparison maps} between 
surfaces $M_{\tau,\beta}$ and $M_{\tau,\beta'} \in V_\tau$, taking triangles affinely to triangles, and having the form
$$\varphi_{\tau,\beta,\beta'} \df \varphi_{\tau,\beta}
\circ\varphi_{\tau,\beta'}^{-1}:M_{\tau,\beta'}\to 
M_{\tau,\beta}.$$\index{phi@$\varphi_{\tau,\beta,\beta'}$}The maps
$\varphi_{\tau,\beta,\beta'}$ are continuous and piecewise 
affine but are not in general affine mappings since they may have different derivatives on different triangles. If, in addition, $M_{\tau, \beta}$ and $M_{\tau, \beta'}$ are in the same horospherical leaf, then the comparison map $\varphi_{\tau,\beta,\beta'}$ sends horizontal straightline leaves on $M_{\tau, \beta'}$ to horizontal straightline leaves on $M_{\tau, \beta}$, preserving the vertical distance between plaques (but the length measure on the leaves may be distorted). See 
Figure \ref{fig: comparison}.

\begin{figure}
\includegraphics[scale=1.0]{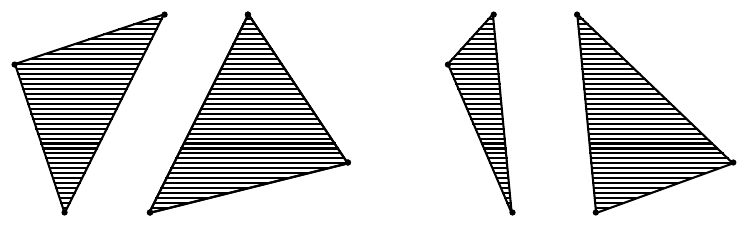}
\caption{ The left hand side shows two triangles in $M_{\tau, \beta}$. The right hand side shows their images under the comparison map. In this case the two surfaces are in the same horospherical leaf.
}
\label{fig: comparison} 
\end{figure}

Let  $\Mod(S, \Sigma)$ be the group of  isotopy classes of homeomorphisms
of $S$ which fix $\Sigma$ pointwise. This is the {\em
  pure mapping class group}\index{mapping class group}. 
It acts on the right on marking maps
by
pre-composition, and this induces a well-defined action on $\HHm$
(note that $\Gamma_M$ acts on the left). It also acts on 
$T(\HHm)=\HHm\times H^1(S, \Sigma; \R^2)$ by
$\gamma: (\varphi,\beta) \mapsto
(\varphi\circ\gamma,\gamma^*(\beta))$.   
The developing
map is $\Mod(S, \Sigma)$-equivariant with respect to these two right
actions and thus 
the action of an element of $\Mod(S, \Sigma)$ on $\HHm$,
when expressed in charts, is linear. This implies that the
$\Mod(S, \Sigma)$-action preserves the affine structure  on $\HHm$. 
This action is properly discontinuous, but not free. Elements with
nontrivial stabilizer groups correspond to surfaces
with nontrivial translation
equivalences.

The group $\Mod(S, \Sigma)$ acts transitively on isotopy classes of
marking maps hence each fiber of the forgetful map $\pi : \HHm \to \HH$ is
a $\Mod(S, \Sigma)$-orbit. We can thus view $\HH$ as the quotient
$\HHm/\Mod(S, \Sigma)$, and equip it with the quotient topology. 
The horospherical foliation on $\HHm$ descends to a 
 well-defined equivalence relation on $\HH$, and we denote the equivalence class of $q \in \HH$ by $W^{uu}(q).$ Loosely speaking, $W^{uu}(q)$ is the set of translation surfaces whose horizontal measured foliation is the same as that of $M_q$. 
 
Viewed as a map between topological spaces the forgetful  map is
typically {\em not} a covering map due to  
 to the presence of translation surfaces in $\HH$ with non-trivial
 translation equivalences.
  To make this map behave more like a covering map we work in the
category of orbifolds.  

\subsection{The affine orbifold structure of a stratum}\name{subsec: orbifold}
An {\em orbifold} structure on a space $X$ is given by an atlas of
inverse charts. This consists of a collection 
of open sets $W_j$ that cover $X$, a collection of maps $\phi_j:
U_j\to W_j$ where $U_j$ are open sets in a vector space $V$, 
and a collection of finite groups $\mathcal{G}_j$ acting linearly on the sets $U_j$
so that  each $\phi_j$ induces a homeomorphism from $U_j/\mathcal{G}_j$ to
$W_j$. Furthermore we require that the transition maps on overlaps respect the group actions. The local groups $\mathcal{G}_j$ give
rise to a local group $\mathcal{G}_x$, depending only on $x \in X$,
and well-defined up to a conjugation. 
More information is contained in \cite[Definitions 2.1 \&  2.2]{orbifolds}. 
If we require that the overlap functions and finite group actions respect the affine structure then we get an {\em affine orbifold}.

 The {\em singular
  set} of an orbifold is the set of points where the local group is
not the identity. The singular set has a stratification into
submanifolds which we will call {\em orbifold
  substrata},\index{orbifold substrata}
defined as the connected components of the subsets of the stratum on
which the local group is constant. We 
will denote the orbifold substratum corresponding to $\mathcal{G}_q$
by $\mathcal{O}_q$.

We now modify our construction of the atlas for $\HHm$ to give an affine
orbifold atlas for $\HH$. 
Let $q\in\HH$, let $M= M_q$ be the underlying translation surface, and
let $\Gamma_q = \Gamma_{M}$ be the 
group of translation equivalences of $M_q$. In order to construct an inverse chart in a neighborhood of $q$ we choose a
marking map $\varphi: (S,\Sigma)\to  (M,\Sigma)$. By pulling back a
triangulation from the quotient of $M$ by
$\Gamma_q$, we can find a geodesic
triangulation $\tau'$ of $M$ which is $\Gamma_M$-invariant, and we let
$\tau = \varphi^{-1}(\tau')$ be the pullback of this triangulation to $S$. As before, let
$U_\tau$ be the set of cohomology classes compatible with 
$\tau$. Let
  ${\mathcal G}_q$\index{Gq@${\mathcal G}_q$} be the (conjugacy class of the) subgroup of
  $\Mod(S,\Sigma)$ corresponding 
  to the isotopy classes of the elements
  $\{\varphi^{-1}\circ h \circ \varphi: h \in \Gamma_q\}$.
Since $\tau'$ is $\Gamma_q$-invariant, the group ${\mathcal G}_q$ acts
on $U_\tau$, and the maps $\pi \circ \Psi_\tau: U_\tau \to \HH$ induce maps from 
$U_\tau/{\mathcal G}_q$ to $\HH$. By possibly replacing $U_\tau$ by a
smaller neighborhood $U'_q \subset U_\tau$ on which this induced map
is injective, we get a 
collection of inverse charts for an
orbifold atlas for $\HH$.

The {\em tangent bundle} of an orbifold is defined  in
\cite[Prop. 4.1]{orbifolds}. It is itself an orbifold, and is equipped
with a projection map $T(X)\to X$, such that the fiber over $x$ 
can be identified with the quotient of a vector space by a linear
action of $\mathcal{G}_x$. 
The projection map $T(X) \to X$ is a bundle map in the category of
orbifolds. Note that its fibers
can vary from point to point. 

We denote the orbifold tangent space of  $\HH$ at $q$ by $T_q(\HH)$, \index{T@$T_q(\HH)$} 
and the tangent bundle of $\HH$  by $T(\HH)$.\index{T@$T(\HH)$}  
We can identify $T(\HH)$ with the quotient of the tangent bundle of $\HHm$
under the action of the pure mapping class group.
%
The bundle $T(\HH)$ has a canonical $\Mod(S, \Sigma)$-invariant
splitting coming from the decomposition
\eqref{eq: x and y splitting model surface}
and we  refer
to the summands  as the {\em horizontal and vertical 
sub-bundles}\index{horizontal sub-bundle}\index{vertical sub-bundle}.
Thus the horizontal sub-bundle is given by the tangent spaces to horospherical leaves in $\HHm.$


Since $\HH$  is the quotient of an affine manifold $\HHm$ by a group
acting affinely and properly discontinuously it inherits  the
structure of an {\em affine} orbifold. A map between affine orbifolds
is {\em affine} if it can be expressed by affine maps in local
charts.



\combaraknew{The statement and proof of the following proposition
  contained inaccuracies stemming from a confusion between $\HH$ and
  $\HHm$, note that $\mathcal{O}_q$ lives in $\HH$ but $\Phi_\tau$
  takes values in $\HHm$. I corrected the statement and omitted the
  proof.}

With the above description of the orbifold tangent bundle of $\HH$, we
obtain a description of the sub-bundle corresponding to the orbifold
substrata.  
\begin{prop}\name{orbifold properties} Let $ q\in\HH$ be a
  surface with a nontrivial local group $\mathcal{G}_q$
  and let $\mathcal{O}_q$ be the corresponding orbifold
  substratum. A choice of 
  $\til q \in \pi^{-1}(q)$ gives a component $\til{\mathcal{O}}_q$ of $ \pi^{-1} (\mathcal{O}_q)$ and a subgroup $\mathcal{G} \subset \Mod(S, \Sigma)$ in the conjugacy class $\mathcal{G}_q$, such that $\til{\mathcal{O}}_q$
  is an affine submanifold of $\HHm$, and  
  its tangent space $T_{\til q}(\til{\mathcal{O}}_q)$ at
  $\til q$ is identified via the developing map with the set of
  vectors in $H^1(S, \Sigma; \R^2)$ fixed by $\mathcal{G}$. 
\end{prop}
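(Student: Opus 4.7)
The plan is to work in the orbifold chart around $\til q$ constructed in \S \ref{subsec: orbifold}. Choose a marking map $\varphi : (S,\Sigma) \to (M_q,\Sigma)$ representing $\til q$ and pull back via $\varphi$ a $\Gamma_q$-invariant geodesic triangulation of $M_q$ to obtain $\tau$ on $S$; this gives the chart $\Psi_\tau: U_\tau \to V_\tau \subset \HHm$ with inverse $\Phi_\tau = \dev|_{V_\tau}$. By construction, $\mathcal{G}_q \subset \Mod(S,\Sigma)$ acts on $U_\tau$ by its linear action on $H^1(S,\Sigma;\R^2)$. The strategy is to show that $\Phi_\tau$ identifies $\til{\mathcal{O}_q} \cap V_\tau$ with $U_\tau \cap W$, where $W \df H^1(S,\Sigma;\R^2)^{\mathcal{G}_q}$ is the (linear) fixed subspace of $\mathcal{G}_q$. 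Since the same argument applies in a chart around any point of $\til{\mathcal{O}_q}$, this would immediately yield both conclusions: $\til{\mathcal{O}_q}$ is locally the trace of a linear subspace in every affine chart, hence an affine submanifold, and $T_{\til q}(\til{\mathcal{O}_q})$ is identified with $W$.

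The key equivalence to establish is: for $\til q' \in V_\tau$, one has $\pi(\til q') \in \mathcal{O}_q$ if and only if $\dev(\til q') \in W$. For the ``$\Rightarrow$'' direction, if $q' \df \pi(\til q') \in \mathcal{O}_q$ then $\mathcal{G}_{q'} = \mathcal{G}_q$, so every $g \in \mathcal{G}_q$ is represented by a translation self-equivalence of $M_{q'}$; that equivalence acts trivially on period coordinates, so $\dev(\til q')$ is fixed by $g$. For the ``$\Leftarrow$'' direction, if $\dev(\til q')$ is fixed by each $g \in \mathcal{G}_q$, then in the chart $\Phi_\tau$ we have $\til q' \cdot g = \til q'$ in $\HHm$, which by the standard correspondence between the $\Mod(S,\Sigma)$-stabilizer of $\til q'$ and $\Gamma_{M_{q'}}$ produces an element of $\Gamma_{M_{q'}}$ realizing $g$; thus $\mathcal{G}_q \subseteq \mathcal{G}_{q'}$.

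The main obstacle I anticipate is the reverse containment $\mathcal{G}_{q'} \subseteq \mathcal{G}_q$ for $\til q'$ near $\til q$, which is needed to upgrade ``$\mathcal{G}_{q'}$ contains $\mathcal{G}_q$'' to ``$q' \in \mathcal{O}_q$''. This is a general feature of orbifold atlases, and in this setting it follows from the fact that the $\Mod(S,\Sigma)$-action on $\HHm$ is properly discontinuous: we can shrink $V_\tau$ so that the only elements of $\Mod(S,\Sigma)$ that fix any point of $V_\tau$ already lie in $\mathcal{G}_q$. Equivalently, translation self-equivalences of nearby surfaces deform continuously with the surface, so for $q'$ sufficiently close to $q$ every element of $\Gamma_{M_{q'}}$ extends (via the marking) to an element of $\Gamma_{M_q}$. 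Once this is in hand, the equivalence above shows $\Phi_\tau(\til{\mathcal{O}_q} \cap V_\tau) = U_\tau \cap W$, which is an affine submanifold of $U_\tau$, and since $U_\tau$ is open in $W$ about $\dev(\til q)$, the tangent space at $\til q$ is precisely $W$, as claimed.
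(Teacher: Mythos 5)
The paper leaves the proof of this proposition to the reader, so there is no argument to compare against; your proof is correct and is the natural one: identify $\dev(\til{\mathcal{O}_q})$ locally with the $\mathcal{G}_q$-fixed subspace $W$ using the $\Mod(S,\Sigma)$-equivariance of $\dev$, and invoke proper discontinuity to rule out the local group jumping up in a neighborhood of $\til q$. You correctly isolate the one real subtlety (the containment $\mathcal{G}_{q'}\subseteq \mathcal{G}_q$ for $\til q'$ near $\til q$) and resolve it.

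Two small points worth tightening. First, the step ``$g^*(\dev(\til q'))=\dev(\til q')$ implies $\til q'\cdot g=\til q'$'' silently uses that $g$ preserves the chart domain $V_\tau$; this holds because $\tau$ is pulled back from a $\Gamma_q$-invariant triangulation, so each $g\in\mathcal{G}_q$ has a representative fixing $\tau$ setwise. State this explicitly. Second, the phrase ``translation self-equivalences of nearby surfaces deform continuously with the surface'' is misleading — $\Gamma_{M_{q'}}$ is a finite group and does not vary continuously; what is true (and is what proper discontinuity delivers) is that the stabilizer in $\Mod(S,\Sigma)$ can only shrink in a small enough neighborhood of $\til q$. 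With those clarifications your argument is complete, and the global statement follows exactly as you say by running the same argument in an adapted chart around each point of $\til{\mathcal{O}_q}$, noting that the fixed subspace has constant dimension since the local groups along $\mathcal{O}_q$ are all conjugate.
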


The proof is left to the reader.


We will need explicit formulas for the projections
onto the tangent space to an orbifold substratum, and onto a normal sub-bundle. 
Let $M_q$ be a surface with a non-trivial group of translation
equivalences, and choose a chart as above about $M_q$. Choose a marking map of $M_q$ and let  $\mathcal{G}_{q}$ be
the corresponding local group acting on this chart.
%
Define $P^+: H^1(S,\Sigma;\R^2)\to H^1(S,\Sigma;\R^2)$ by 
\eq{eq: formulae projections}{
P^+(\beta) \df
\frac{1}{|\mathcal{G}_q|} \sum_{\gamma\in \mathcal{G}_q}\gamma^*(\beta).
}\index{P@$P^+$}
By Proposition \ref{orbifold properties},  $P^+$ is a projection of
$H^1(S,\Sigma;\R^2)$ onto the tangent space to the substratum. 
The kernel of $P^+$, which we denote by
$\mathscr{N}(\mathcal{O}_q)$,\index{N@$\mathscr{N}(\mathcal{O}_q)$} 
is a natural choice for a normal 
bundle to $\mathcal{O}_q$. We denote by $P^-\df \mathrm{Id}-P^+$
\index{P@$P^-$} the projection onto the normal space  
to the orbifold substratum. Note that $P^\pm$ depend on the orbifold substratum $\mathcal{O}_q$ (via $\mathcal{G}_q$) but 
this will be suppressed in the notation. 
It will also be useful to further decompose the normal bundle into its
intersections with the horizontal and
vertical sub-bundles, and we denote these sub-bundles by
$\mathscr{N}_x(\mathcal{O}_q)$ and
$\mathscr{N}_y(\mathcal{O}_q)$. \index{N@$\mathscr{N}_x(\mathcal{O}_q)$}
\index{N@$\mathscr{N}_y(\mathcal{O}_q)$}

\begin{prop} \name{prop: tangent and normal} Given an orbifold
  sub-locus $\mathcal{O}$, the bundles $T(\mathcal{O})$, 
  $\mathscr{N}(\mathcal{O})$, $\mathscr{N}_x(\mathcal{O})$ and
  $\mathscr{N}_y(\mathcal{O})$ are flat, and each has
  a volume form which is well-defined (independent of a marking). 
\end{prop}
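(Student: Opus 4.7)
The plan is to treat flatness and well-definedness of the volume forms as two separate issues.

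For flatness, I would start from Proposition \ref{orbifold properties}, which identifies the fiber $T_{\til q}(\til{\mathcal{O}_q})$ of the pulled-back sub-bundle, via the developing map, with the fixed subspace $V^+ \df \mathrm{Fix}(\mathcal{G}_q) \subset H^1(S, \Sigma; \R^2)$. On each connected component of $\pi^{-1}(\mathcal{O})$ the local group $\mathcal{G}_q$, viewed as a specific subgroup of $\Mod(S, \Sigma)$ (not merely up to conjugacy), is constant: the orbifold sub-locus $\mathcal{O}$ is by definition a locus on which the local group is constant, and choosing a connected component of its preimage resolves the conjugation ambiguity coming from the choice of marking. Hence $V^+$ is the same subspace of $H^1(S, \Sigma; \R^2)$ over all of the component, which is exactly the flatness of $T(\mathcal{O})$. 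Since the projection $P^+$ defined in \equ{eq: formulae projections} depends only on $\mathcal{G}_q$, the same reasoning yields flatness of $\mathscr{N}(\mathcal{O}) = \ker P^+$. The horizontal and vertical sub-bundles are flat by the definition of the splitting \equ{eq: x and y splitting model surface}, and intersecting them with the flat sub-bundle $\mathscr{N}(\mathcal{O})$ produces the flat sub-bundles $\mathscr{N}_x(\mathcal{O})$ and $\mathscr{N}_y(\mathcal{O})$.

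For the volume forms, the key observation is that the integer cohomology $H^1(S, \Sigma; \Z^2)$ sits as a full-rank lattice in $H^1(S, \Sigma; \R^2)$. The projection $P^+$ is defined over $\Q$, since each $\gamma^*$ for $\gamma \in \mathcal{G}_q$ preserves the integer lattice and the averaging is carried out over a finite group. Consequently both $V^+ = \mathrm{Image}(P^+)$ and $V^- = \ker(P^+)$ are $\Q$-rational subspaces of $H^1(S, \Sigma; \R^2)$; a rational subspace of a rational vector space meets the integer lattice in a full-rank lattice of the subspace, and the associated covolume measure furnishes the desired volume form on each of $T(\mathcal{O})$ and $\mathscr{N}(\mathcal{O})$. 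The decomposition into horizontal and vertical parts comes from the rational splitting $\Z^2 = \Z_x \oplus \Z_y$, so the subspaces $\mathscr{N}_x(\mathcal{O})$ and $\mathscr{N}_y(\mathcal{O})$ are likewise $\Q$-rational and inherit volume forms by the same construction.

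Independence from the choice of marking then follows because any two markings of a surface differ by some $\gamma \in \Mod(S, \Sigma)$, and such $\gamma$ acts on $H^1(S, \Sigma; \R^2)$ by a $\Z$-linear automorphism preserving the integer lattice. This action carries $V^+$, $V^-$ and their horizontal and vertical intersections for one marking to the corresponding subspaces for the other marking, and transports the lattice on each subspace to the lattice constructed from the new marking. Since the volume form associated to a lattice in a real vector space is preserved by any lattice-preserving linear isomorphism, the constructed volume forms descend to well-defined volume forms on the bundles over $\HH$.

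The main obstacle I anticipate is not mathematical but bookkeeping: one must carefully distinguish the concrete subgroup $\mathcal{G}_q \subset \Mod(S, \Sigma)$, which depends on the marking, from its conjugacy class, which depends only on $q \in \HH$, so that both the flatness claim on $\HHm$ and the independence-of-marking claim on $\HH$ are formulated with the correct level of ambiguity and verified against the appropriate group action.
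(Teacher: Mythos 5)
Your argument is correct and follows essentially the same approach as the paper: both proofs exploit that $P^+$ is a rational operator (since $\mathcal{G}_q$-averaging preserves $H^1(S,\Sigma;\Z^2)$), so the relevant subspaces contain full-rank lattices whose covolumes pin down a marking-independent normalization of Lebesgue measure, with the horizontal/vertical splitting being rational as well. The only difference is one of emphasis: you spell out the flatness argument (constancy of $\mathcal{G}_q$ along components of $\pi^{-1}(\mathcal{O})$) that the paper leaves implicit in Proposition \ref{orbifold properties}, while the paper's proof concentrates on the volume form.
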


\begin{proof}
To see that the bundles in the statement are flat, note that $\Mod(S, \Sigma)$ acts on $H^1(S, \Sigma; \R)$ and $H^1(S, \Sigma; 
\R^2)$ by linear transformations, and thus the set of vectors fixed by a subgroup $\mathcal{G}$ is a linear subspace. Now flatness follows using Proposition \ref{orbifold properties}.

The map $P^+$ respects the splitting of
cohomology into horizontal and vertical factors, i.e., it commutes
with the two projections onto the summands in 
\equ{eq: x and y splitting model surface}. Moreover, since the
$\Mod(S,\Sigma)$-action on $H^1(S, \Sigma; \R^2)$ preserves $H^1(S,
\Sigma; \Z^2)$,  it 
takes integral classes to rational classes, i.e., is defined over
$\Q$. It thus induces a map
 $$  H^1(S,\Sigma;\R_x) \supset H^1(S,\Sigma;\Z_x) \stackrel{P^+}{\longrightarrow} 
 H^1(S,\Sigma;\Q_x)\subset H^1(S,\Sigma;\R_x)$$
 (with the obvious notations $\Z_x, \Q_x$ for the
corresponding summands), and a corresponding map for
the second summand $\Z_y, \Q_y, \R_y$. The kernels of these 
maps are lattices in $\mathscr{N}_x(\mathcal{O})$ and
$\mathscr{N}_y(\mathcal{O})$ which are parallel. This means that the
Lebesgue measure on $\mathscr{N}_x(\mathcal{O})$, coming from the affine structure
of Proposition \ref{orbifold properties}, has a natural normalization
which does not depend on the choice of a particular lift
$\til{\mathcal{O}} \to \mathcal{O}$.
\end{proof}

Affine structures do not give
a metric geometry but some familiar notions from the theory of
Riemannian manifolds have analogues
for affine manifolds.
Thus an {\em
  affine geodesic} \index{affine geodesic} is a path in an affine
manifold $N$
parametrized by an open interval in the real line which has the
property that in any affine chart the parametrization is linear. We can also describe affine geodesics by saying that
the tangent vector to the curve is invariant under parallel translation.
Affine geodesics are projections of orbits of a partially defined flow
on the tangent bundle which we call the {\em affine geodesic flow}. 
An affine geodesic has a maximal domain of definition which is a
connected open subset of $\R$, which may or may not coincide with
$\R$. We denote by $\Dom(\til q,v)\subset \R$\index{Dom@$\Dom(\til
  q,v)$} the maximal domain of definition of the affine geodesic which
is tangent at time $t =0$ to $v \in T_{\til q}(\HHm)$.

The space of marked translation surfaces with area one is a
submanifold 
$\HHm_{,1}$ of $\HHm$, which is invariant
under $\Mod(S,\Sigma)$. We refer to the quotient orbifold as the {\em
  normalized stratum} \index{normalized stratum} and denote it by
$\HH_1$. 
The normalized stratum is a codimension one sub-orbifold of $\HH$ but
it is not an affine sub-orbifold. 
The developing map $\dev$ maps $\HHm_{,1}$ into a quadric in
$H^1(S, \Sigma; \R^2)$, and the tangent space $T_{\til q}(\HHm_{,1})$  is a
linear subspace of  
$H^1(S,\Sigma; \R^2)$ on which area is constant to first order. This
subspace varies with $\til q$. 
Nevertheless it is often quite useful to use the ambient affine coordinates to 
discuss it.

The intersection of horospherical leaves in $\HH_m$ with $\HH_{\mathrm{m},1}$ give the {\em horospherial foliation} \index{horospherical foliation} of $\HH_{\mathrm{m},1}$. Its leaves are of codimension one in the horospherical leaves of $\HHm$. In general if we consider  a vector
tangent to $\HH_1$ then the affine geodesic determined by this vector need
not lie in $\HH_1$ but in the particular case of vectors tangent to the horospherical foliation (e.g., horocycles and tremors) it
will be the case that these paths lie in $\HH_1$.

\subsection{The action of $G = \SL_2(\R)$ on strata}\name{subsec: G}


We now check that the linear action of $G$  
induces an affine action on  charts.
There is a natural left action of $G$ on $H^1(S,\Sigma;\R^2)$ which is
given by the action of $G$ on the coefficient system, i.e. by
postcomposition of $\R^2$ valued 1-cochains. 
Let $\tau$ be
a triangulation of $S$, and let $U_\tau \subset H^1(S, \Sigma; \R^2)$ be defined as
in \S \ref{subsec: atlas of charts}. For $\beta \in U_\tau$ 
and $g \in G$, we see that 
$g \beta \df g \circ \beta\in U_\tau$. Let $\varphi_{\tau,\beta,g  \beta}:M_\beta\to
M_{g \beta}$ be the comparison map. Notice that it has the same
derivative on each triangle, namely its derivative is everywhere
equal to the linear map $g$.  In particular, the 
comparison map $\varphi_{\tau, \beta, g  \beta}$ does not depend on $\tau$. 
  We will call it the {\em
  affine comparison map corresponding to $g$}\index{affine comparison
  map} and denote it by 
$\psi_g$.\index{p @ $\psi_g$} The action of $g$ on $\HHm$ can now be expressed as
replacing a marking map $\varphi: S \to M$ by $\psi_g \circ \varphi: S
\to gM$. Other affine maps $M_q \to M_{gq}$ with 
derivative $g$ can be 
obtained by composing $\psi_g$ with translation
equivalences.  
Since the $G$-action commutes with the $\Mod(S, \Sigma)$-action, $G$ acts on $\HH$ and
preserves its orbifold stratification. Additionally, the
normal and tangent bundles of Propositions \ref{orbifold properties} and \ref{prop:
  tangent and normal} are
$G$-equivariant. 


We introduce some notation for subgroups of $G$. 
 Recall the group $U
= \{u_s: s\in \R\}$ introduced in \equ{eq: hero introduced}. We will
also use the following 
notation for other subgroups: 
\eq{eq: subgroups notation}{
g_t = \left( \begin{matrix} e^t & 0 \\ 0 & e^{-t} \end{matrix}
\right), \ \ \ \ r_\theta = \left(\begin{matrix}
    \cos \theta & -\sin \theta \\ \sin \theta & \cos
    \theta \end{matrix} \right)
}\index{g@$g_t$} \index{gg@$\til g_t$} \index{r@$r_\theta$}
and
\eq{BU}{ B=\left\{ \left( \begin{matrix} a& b \\ 0 &
        a^{-1} \end{matrix}\right): a>0, b\in\R \right\}.
}
With this notation we note 
that the $U$-action is given in period coordinate charts by 
$$
\hol^{(x)}_{u_s \til q}(\gamma) =
\hol^{(x)}_{\til q}(\gamma)+s\cdot \hol^{(y)}_{\til q} (\gamma), \ \ 
\hol^{(y)}_{u_s \til q}(\gamma) = \hol^{(y)}_{\til q}(\gamma);
$$
this now gives a precise meaning to equation \equ{eq: horocycle period
  coordinates}. We see in particular that horocycle orbits are linearly parametrized 
  affine geodesics.
  
  Our next goal is to give a precise meaning to equation
\equ{eq: tremor3}, by defining transverse
measures and their associated 
cohomology class.

\subsection{Transverse (signed) measures and foliation
  cocycles}\name{subsec: transverse} 
In this section we define transverse measures and cocycles and
cohomology classes associated with a non-atomic transverse
measure. It will be useful to include 
signed transverse measures. 
In some settings
it is  useful to pass to limits of non-atomic transverse measures, and
these limits may be certain atomic transverse measures.   
In \S \ref{sec: atomic tremors} we will discuss the case of these
atomic transverse measures. 

Let $M$ be a translation surface, let $\theta \in \mathbb{S}^1$ be a
direction (i.e., a 
 unit vector $(\cos \theta, \sin \theta) \in
\R^2$), and let $\FF_\theta$ denote the foliation of $M$ obtained by
pulling back the foliation of $\R^2$ by lines parallel to $\theta$. 
A {\em transverse arc} to $\FF_\theta$ is a piecewise
smooth curve $\gamma:(a,b)\to M \sm \Sigma$ of finite length which is everywhere
transverse to leaves of  
$\FF_\theta$. 
A {\em transverse measure}\index{transverse measure} on $\FF_\theta$
is a family $\{\nu_\gamma\}$ where $\gamma$ ranges over the transverse
arcs, the $\nu_\gamma$ are finite regular 
Borel measures defined
on $\gamma$ which are invariant under 
 isotopy along leaves and so that if $\gamma'\subset \gamma$ then
$\nu_{\gamma'}$ is the restriction of $\nu_\gamma $ to $\gamma'$ (in \S\ref{sec: atomic tremors} these two requirements will be referred to respectively as {\em invariance} and {\em restriction}). 
Since transverse measures are defined via measures, the usual notions
of measure theory (absolute 
continuity, Radon-Nikodym theorem, etc.) make sense for transverse
measures (or a pair of transverse measures). In particular it makes
sense to speak of atoms of a  transverse measure, and we will say that
$\nu$ is {\em non-atomic} \index{non-atomic transverse measure} if
none of the $\nu_\gamma$ have atoms. In this paper, if transverse
measures have atoms we require that the atoms be supported
on closed loops, each of which is a closed leaf, or a union of saddle connections  
that meet at angles $\pm \pi$ (see \S \ref{sec: atomic tremors} for  the
complete definition). These are the atomic transverse measures that can
arise as limits of non-atomic transverse measures. We remark that in
the literature, there are several different conventions regarding
atomic transverse measures. 

A (finite) {\em signed measure} on $X$ is a map from Borel subsets of $X$ to
$\R $ satisfying all
the properties satisfied by a measure. 
Recall that every signed measure has a canonical Hahn decomposition, 
 i.e.\ a unique representation $\nu =\nu^+-\nu^-$ as a difference 
of mutually singular finite  measures. A {\em signed transverse measure}\index{signed
  transverse measure} is a system
$\{\nu_\gamma\}$ of signed measures, satisfying the same hypotheses as
a signed measure; or equivalently, the difference of two transverse 
measures $\{\nu^+_\gamma\}, \, \{\nu^-_\gamma\}$. In what 
follows, the words `measure' and `transverse
measure' always refer to non-negative measures (i.e. measures for
which $\nu^-=0$). When we want to allow general signed measures we
will include the word `signed'. We say that
$\nu$ is {\em non-atomic} if $\nu^\pm$ are 
both non-atomic. The sum $\nu^+(X) + \nu^-(X)$ is called the {\em total
  variation} of $\nu$.

If $M$ is a translation surface, $\FF_\theta$ is a directional
foliation on $M$, and $\nu$ is a non-atomic signed transverse measure
on $\FF_\theta$, we  
have a map $\beta_\nu$ from transverse line segments to real
numbers, defined as follows. If $\gamma$ is a transverse oriented line segment and the
(counterclockwise) angle between the direction $\theta$ and the direction of 
$\gamma$ is in $(0, \pi)$, set $\beta_\nu(\gamma)=\nu(\gamma)$.
If the angle is in $(-\pi, 0)$  
 set $\beta_\nu(\gamma)=-\nu(\gamma)$. 
  We extend this to all straight line segments by stipulating that
  $\beta_\nu(\gamma)=0$  for any line segment $\gamma$ that is
  contained in a leaf of the foliation. By linearity we extend $\beta_\nu$ to
  finite concatenations of oriented straight line 
  segments. Similarly we can define $\beta_\nu(\gamma)$ for an oriented
  piecewise smooth curve $\gamma$, where the sign of an intersection
  is measured using the derivative of $\gamma$. \index{b@$\beta_\nu$}

By a
{\em polygon decomposition}\index{polygon decomposition} of a
translation surface $M$, we mean a 
decomposition into 
simply connected polygons for which all the vertices are singular
points. As we saw every $M$
admits a geodesic triangulation which is a special case of a polygon
decomposition. 
  Let $\beta_\nu$ be as in the preceding paragraph. Any element
  $\alpha \in H_1(M,\Sigma)$ has a representative $\til \alpha$ that is a
  concatenation of edges of a polygon decomposition. The invariance
  property of a transverse measure ensures that the value 
  $\beta_\nu(\til \alpha)$ depends only on $\alpha$ and not on the
  representative $\til \alpha$; in particular it does not depend on
  the cell decomposition used, and 
  $\beta_\nu$ is a cochain and defines a cohomology class in
  $H^1(M,\Sigma;\mathbb{R})$. We have defined a mapping $\nu \mapsto
  \beta_\nu$ from non-atomic signed transverse measures to $H^1(M, \Sigma;
  \R^2)$, and in \S \ref{sec: atomic tremors} we will explain how to
  extend this map to atomic transverse measures. We will be primarily 
  interested in transverse measures to the horizontal foliation. An element of
  cohomology which corresponds to a transverse
  measure (resp., a signed transverse measure) to the horizontal foliation will be called a
  \emph{foliation cocycle}\index{foliation cocycle} (respectively,
  {\em signed foliation cocycle}), \index{signed foliation cocycle}
  and $\beta_\nu$ will be called the 
  {\em (signed) foliation cocycle corresponding to $\nu$.}

  Identifying $\R$
  with $\R_x$ and $H^1(M, \Sigma; \R)$ with the first summand in
  \equ{eq: x and y splitting}, we identify the collection of all signed
  foliation cocycles with a subspace $\tremspace_q \subset H^1(M,
  \Sigma; \R_x)$\index{T@$\tremspace_q$}, and the collection of all
  foliation cocycles with a 
  cone $C_q^+ \subset \tremspace_q$. \index{C@$C_q^+$} We refer to
  these respectively as 
  the {\em space of signed foliation cocycles}\index{space of signed
    foliation cocycles} and the {\em cone of
    foliation cocycles}.\index{cone of foliation cocycles} The Hahn
  decomposition of transverse measures implies that every $\beta \in
  \tremspace_q$ can be written uniquely as $\beta = \beta^+ - \beta^-$ for
  $\beta^\pm \in C^+_q$. 
For every $q$, the 1-form $dy$ gives rise to a {\em canonical
  transverse measure}\index{canonical transverse measure} and 
to the corresponding cohomology class $\hol_q^{(y)}$. When we want to
think of this class as a foliation cocycle, we will denote it by $dy$
or $(dy)_q$, and refer to it as the {\em canonical foliation
  cocycle}. \index{canonical foliation cocycle} 
  \index{D@$(dy)_q$}

As discussed above for the horizontal direction, we can define a
 (partially defined) straightline flow in direction $\theta$ by lifting the vector field on $\R^2$ in direction $\theta$ and following lines parallel to $\theta$.  We write $\FF_\theta$ for the foliation by lines in direction $\theta$ and write $\FF$ for $\FF_0$. We say
that a finite Borel measure $\mu$ on $M$ is {\em
  $\FF_\theta$-invariant} if it is invariant under the straightline 
flow in direction $\theta$. We have the following well-known relationship between
transverse measures and 
invariant measures. 

\begin{prop}\name{prop:trans to meas}
For each non-atomic transverse measure $\nu$ on $\FF_{\theta}$ there
exists an $\FF_\theta$-invariant 
measure $\mu_\nu$ with 
\eq{eq: formula above}{\mu_\nu(A) =  \nu(v) \cdot \ell(h)
}
for
every isometrically embedded rectangle $A$ with one side $h$
parallel
to $\theta$, and another  side $v$ orthogonal to $\theta$, where $\ell$ is
the Euclidean length.  The map $\nu \mapsto \mu_\nu$  is a bijection
between non-atomic transverse measures and $\FF_\theta$-invariant
measures that assign zero measure to leaves. It extends to a bijection
between non-atomic signed transverse measures and
$\FF_\theta$-invariant signed measures assigning zero measure to leaves. 

\end{prop}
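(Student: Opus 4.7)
The plan is to construct $\mu_\nu$ from $\nu$ by a flow-box construction, recover $\nu$ from an $\FF_\theta$-invariant $\mu$ by a disintegration procedure, and then extend to signed measures via Hahn decomposition. First I would cover $M \setminus \Sigma$ minus the (countable) union $L_\theta$ of trajectories of $\FF_\theta$ hitting singularities by a countable family of open flow boxes $R_i = v_i \times h_i$, where $v_i$ is an open transverse arc and $h_i$ is an open segment parallel to $\theta$, glued by the local straightline flow. On each $R_i$ declare $\mu_\nu$ to be the product $\nu_{v_i} \otimes \ell|_{h_i}$, where $\ell$ is Euclidean length. The isotopy-invariance axiom of transverse measures ensures that the definitions agree on overlaps $R_i \cap R_j$: the identification of two transverse sections of the same flow box is an isotopy through transverse arcs, and the flow direction is Euclidean on each box. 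Carathéodory extension then produces a Borel measure on $M \setminus (\Sigma \cup L_\theta)$, and non-atomicity of $\nu$ forces $\mu_\nu(L_\theta) = 0$, so $\mu_\nu$ extends to all of $M$.

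Next I would check $\FF_\theta$-invariance: inside any flow box the flow acts by translation along $h_i$ and Lebesgue measure on $h_i$ is translation invariant, so $\mu_\nu$ is invariant on the complement of $\Sigma \cup L_\theta$; since this set has $\mu_\nu$-measure zero, invariance extends to all of $M$. For the displayed formula \equ{eq: formula above}, any isometrically embedded rectangle $A$ is a single flow box in the above sense, so the formula is tautological.

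For the inverse direction, given an $\FF_\theta$-invariant finite Borel measure $\mu$ with $\mu$(leaves of $\FF_\theta$)$=0$, I define $\nu$ on a transverse arc $\gamma$ by choosing any $T > 0$ for which the map $\gamma \times [0,T] \to M$ given by the flow is an embedding, and setting $\nu_\gamma(B) = \mu(B \times [0,T])/T$ for Borel $B \subset \gamma$. Independence of $T$ is immediate from flow invariance of $\mu$ and a telescoping argument, and the consistency and isotopy-invariance properties of $\{\nu_\gamma\}$ follow from the same two ingredients. Non-atomicity of the resulting $\nu$ is equivalent to the hypothesis that $\mu$ gives zero mass to each leaf. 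The two constructions are manifestly inverse on flow boxes, hence on $M$ by the regularity of Radon measures.

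Finally, for the signed case I would apply Hahn decomposition to get $\nu = \nu^+ - \nu^-$ and set $\mu_\nu \df \mu_{\nu^+} - \mu_{\nu^-}$; conversely, given an $\FF_\theta$-invariant signed measure $\mu$ with Hahn decomposition $\mu = \mu^+ - \mu^-$, the uniqueness of Hahn decomposition together with the fact that the pushforward under the flow preserves $\mu$ forces $\mu^\pm$ themselves to be $\FF_\theta$-invariant, so the previous bijection applies to each. The main obstacle I foresee is the book-keeping around singularities and the set $L_\theta$: one must justify carefully that the non-atomicity hypothesis rules out mass concentrated along saddle connections or closed leaves parallel to $\theta$, so that the flow-box construction really captures all of $\mu_\nu$ and so that the inverse construction produces a measure that is non-atomic in the sense defined in \S\ref{subsec: transverse}.
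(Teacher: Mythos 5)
Your proposal is correct and follows the same two-step strategy as the paper: a Carath\'eodory extension of the pre-measure defined on rectangles (flow boxes) in the forward direction, and a disintegration/marginal argument in the reverse direction, with Hahn decomposition handling the signed case. The paper's proof is terser; your additional bookkeeping about the set $L_\theta$ of critical trajectories and the explicit formula $\nu_\gamma(B) = \mu(B \times [0,T])/T$ fills in details the paper leaves to the reader, but the method is the same.
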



It is clear from \equ{eq: formula above} that two different transverse 
measures give different measures to some rectangle, and so the
assignment is injective. To see that 
each $\FF_\theta$-invariant measure arises from a transverse
measure, partition $M$ into rectangles and use
disintegration of measures to define a transverse measure on each
rectangle.  
This transverse measure will be non-atomic if the invariant measure
gives zero measure to every horizontal leaf.  

The map $\nu \mapsto \beta_\nu$ is almost injective. More precisely,
we have: 

\begin{prop}[Katok]\name{prop:transverse measures}
If $M_q$ has no horizontal cylinders
and $\nu_1 \neq \nu_2$ are
distinct non-atomic signed transverse measures to the horizontal
foliation, then $\beta_{\nu_1} \neq 
\beta_{\nu_2}$, and moreover the 
restrictions of $\beta_{\nu_i}$ to the absolute period space $H_1(S)$
are different. 
\end{prop}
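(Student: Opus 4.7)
My plan is to prove the stronger second assertion: if $\nu_1\ne\nu_2$ then their foliation cocycles already differ upon restriction to $H_1(M_q;\R)$. The first assertion then follows, since the restriction map $H^1(M_q,\Sigma;\R)\to H^1(M_q;\R)$ is the dual of the inclusion $H_1(M_q;\R)\hookrightarrow H_1(M_q,\Sigma;\R)$. Setting $\nu\df\nu_1-\nu_2$, which by Hahn decomposition is itself a non-atomic signed transverse measure, it suffices to show: if $\beta_\nu$ pairs to zero with every absolute homology class, then $\nu=0$.

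Fix an arbitrary transverse arc $\gamma$ in $M_q$; the goal is to show $\nu|_\gamma=0$. Under the no-horizontal-cylinders hypothesis, the horizontal foliation has no closed leaves, so the classical decomposition of a measured foliation reduces to minimal components alone. I may therefore assume $\gamma$ lies in a single such component, handling the general case by subdividing $\gamma$ at the finitely many horizontal saddle connections it meets. The horizontal flow induces a first-return map $T\colon\gamma_0\to\gamma$, defined on the subset $\gamma_0$ of points whose forward horizontal orbit avoids $\Sigma$ and returns to $\gamma$; the complement of $\gamma_0$ in $\gamma$ is countable, and in particular has zero $|\nu|$-measure.

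The key construction: for each $p\in\gamma_0$, form the closed loop $\alpha_p$ obtained by concatenating the horizontal segment from $p$ to $T(p)$ with the subarc of $\gamma$ from $T(p)$ back to $p$. This loop avoids $\Sigma$, hence represents an absolute class in $H_1(M_q;\R)$. The contribution of the horizontal piece to $\beta_\nu$ is zero by definition, while the transverse piece contributes $\pm\nu$ of the corresponding subarc of $\gamma$; the absolute-cohomology hypothesis thus yields $\nu([T(p),p])=0$ for every $p\in\gamma_0$. Choosing a basepoint $p_0\in\gamma$ and setting $f(p)\df\nu([p_0,p])$, non-atomicity of $\nu^\pm$ makes $f$ continuous, and the identity above reads $f\circ T=f$. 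Minimality of the horizontal flow on the ambient component forces every forward $T$-orbit to be dense in $\gamma$, whence continuity and $T$-invariance of $f$ force $f$ to be constant and $\nu|_\gamma=0$.

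The main obstacle I anticipate is the last dynamical step: deducing density of every $T$-orbit from the bare hypothesis of no horizontal cylinders. This relies on the structure theorem for measured foliations on translation surfaces together with the standard principle that minimality of the surface flow on an invariant component translates into minimality of every first-return map to a transverse cross-section inside that component. The other ingredients, non-atomicity giving continuity of $f$ and the cohomological computation on $\alpha_p$, are essentially bookkeeping given the definitions in \S\ref{subsec: transverse}.
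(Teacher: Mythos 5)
The paper offers no proof of this proposition---it simply cites Katok's 1973 paper (\cite{transverse measures}), adding only that the passage from measures to signed measures follows from uniqueness of the Hahn decomposition. Your argument is therefore a self-contained alternative, and it is essentially correct. Setting $\nu = \nu_1 - \nu_2$, the closed loops $\alpha_p$ (first-return flow segment closed up along the transversal) lie in $M_q \setminus \Sigma$ and so represent absolute classes; the horizontal part pairs trivially with $\beta_\nu$, so the hypothesis $\beta_\nu|_{H_1} = 0$ forces $\nu([T(p),p])=0$, i.e.\ $T$-invariance of the continuous distribution function $f$. The dynamical ingredient you flag is the crux, and it is standard, but note that you do not actually need \emph{every} forward $T$-orbit to be dense: since $f$ is continuous and $T$-invariant, a single $p$ with dense forward $T$-orbit suffices, and such $p$ exist because any $p \in \gamma$ whose forward leaf avoids $\Sigma$ (a co-countable set, since each singular separatrix contributes one backward $T$-orbit) has a dense forward leaf by minimality of the component, and a dense leaf must accumulate along $\gamma$ near every interior point of $\gamma$ by a foliated-box argument, giving a dense $T$-orbit. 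Two small points worth tightening in a final write-up: the subdivision of $\gamma$ at crossings with horizontal saddle connections is legitimate precisely because $\nu$ is non-atomic, so the finitely many cut points carry no $|\nu|$-mass; and the no-cylinder hypothesis is used exactly where it must be---on a cylinder cross-section $T$ would be the identity and $T$-invariance of $f$ would yield no information, reflecting the genuine non-injectivity of $\nu\mapsto\beta_\nu$ in the presence of cylinders.
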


For a proof see \cite{transverse measures}. Katok considered measures
rather than signed measures, but the passage to
signed measures follows from the uniqueness of the Hahn
decomposition. It is easy to see that the injectivity of the assignment $\nu \mapsto \beta_\nu$ fails if the requirement that $M_q$ has no horizontal saddle connections is omitted. For more on this, see \S \ref{sec: atomic tremors}.

\subsection{The Sup-norm Finsler metric}\name{subsec: sup norm}
We now recall the sup-norm  Finsler metric
on $\HHm$ studied by Avila, Gou\"ezel and Yoccoz in \cite{AGY}. 
 Let $\|
\cdot \|$ denote the Euclidean norm on $\R^2$. 
For a translation surface $q$, denote by $\Lambda_q$ the
collection of saddle connections on $M_q$ and let $\ell_q(\sigma)=
\|\hol_q(\sigma)\|$ be the length of $\sigma \in \Lambda_q$. 
For $\beta\in H^1(M_q, 
\Sigma_q; \R^2)$ we set 
\eq{eq: define a norm downstairs}{
\|\beta\|_q \df \sup_{\sigma \in \Lambda_q}
\frac{\|\beta(\sigma)\|}{\ell_q(\sigma)}.
}\index{B@$\|\beta\|_q$}

We now define a Finsler metric for $\HHm$. Let $\varphi: (S, \Sigma)
\to (M_q, \Sigma)$ be a marking map, which represents $\til q \in
\HHm$. Recall that we can 
identify $T_{\til q}(\HHm)$ with $H^1(S, \Sigma ;
\R^2)$.  
Then $\|\varphi^* \beta\|_{\til q} =\| \beta\|_q$ 
is a norm on $H^1(S, \Sigma ; \R^2)$, or equivalently: 
\eq{eq: define a norm upstairs}{
\|\beta\|_{\til q} \df \sup_{\tau \in \Lambda_{\til q}}
\frac{\|\beta(\varphi(\tau))\|}{\ell_q(\varphi(\tau))}.
}\index{B@$\|\beta\|_q$}
Note that $\Lambda_{\til
  q}$ varies as $\til q$ changes, and that $\|\theta\|_{\til q}$ is
well-defined (i.e. 
depends on $\til q$ and not on the actual marking map $\varphi$). 
%
Recall that using period coordinates, the tangent bundle $T(\HHm)$ is
a product $\HHm \times H^1(S, \Sigma; \R^2)$. As shown in 
\cite[Prop. 2.11]{AGY}, the map 
\begin{equation}\label{eq: for lipschitz}T(\HH_{\mathrm{m}}) \to \R, \ \ (\til q,\beta) \mapsto
\|\beta\|_{\til q}
\end{equation}
 is
continuous.

The Finsler metric  defines a distance function on $\HHm$ which we
call the {\em sup-norm distance} \index{sup-norm distance} and define as follows \index{dist}
\eq{eq: Finsler integrate}{
\dist(\til q_0, \til q_1) \df \inf_{\gamma } \int_0^1
\|\gamma'(\tau)\|_{\gamma(\tau)} d\tau.
}
Here $\gamma$ ranges over smooth paths $\gamma:[0,1] \to
\HH$ with $\gamma(0)=\til q_0$ and $\gamma(1) = \til q_1$. 
This distance is symmetric since $\|\beta\|_{\til q}=\|-\beta\|_{\til q}$.

The following was shown in \cite[\S2.2.2]{AGY}:
\begin{prop}\name{prop: sup norm properties}
The metric $\dist$ is proper,
complete, and induces the topology on $\HHm$ given by period
coordinates. It is invariant under the action of the pure mapping class group. 
\end{prop}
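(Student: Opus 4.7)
The plan is to reduce the three assertions (correct topology, completeness, properness) to two geometric inputs: a local comparison of $\|\cdot\|_{\til q}$ with a fixed Euclidean norm coming from a triangulation, and a global length-distortion bound along Finsler-short paths.

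First, I would handle the topology claim. Given $\til q_0 \in \HHm$, choose a geodesic triangulation $\tau$ of $M_{q_0}$ with edges $e_1,\dots,e_N$. By the definition of period coordinates and continuity of edge lengths, there is an open neighborhood $V \subset \HHm$ of $\til q_0$ on which $\tau$ remains a geodesic triangulation and the lengths $\ell_q(e_i)$ are bounded away from $0$ and $\infty$. The edges span $H_1(S,\Sigma;\Z)$, so the evaluation map $\beta \mapsto (\beta(e_i))_{i=1}^N$ identifies $H^1(S,\Sigma;\R^2)$ with $(\R^2)^N$; let $\|\cdot\|_0$ denote the resulting sup-norm. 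The lower bound $\|\beta\|_{\til q} \geq c \|\beta\|_0$ on $V$ is immediate because the $e_i$ appear in the supremum defining $\|\beta\|_{\til q}$. The matching upper bound is more delicate: for any saddle connection $\sigma \in \Lambda_{\til q}$, one writes $\hol_q(\sigma)$ (and hence $\beta(\sigma)$) as an integer combination of the $\hol_q(e_i)$ coming from the way $\sigma$ crosses triangles of $\tau$, and one must control this combinatorial complexity by $\ell_q(\sigma)$. This is exactly the estimate of AGY \cite[\S2.2.2]{AGY}, and it yields $\|\beta(\sigma)\|/\ell_q(\sigma) \leq C \|\beta\|_0$ with a constant depending only on $\tau$ and on $V$. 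The two bounds show that $\|\cdot\|_{\til q}$ is Lipschitz-equivalent to a fixed norm on $V$, so the Finsler distance $\dist$ locally agrees with Euclidean distance in period coordinates, proving the topology assertion.

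Next, for completeness and properness, I would record the fundamental logarithmic length-distortion estimate: whenever $\gamma:[0,1] \to \HHm$ is a smooth path and $\sigma$ is a saddle connection that persists as a saddle connection along all of $\gamma$, the function $\tau \mapsto \ell_{\gamma(\tau)}(\sigma)$ is absolutely continuous and satisfies
\[
\left| \frac{d}{d\tau} \log \ell_{\gamma(\tau)}(\sigma) \right| \leq \|\gamma'(\tau)\|_{\gamma(\tau)},
\]
directly from the definition \equ{eq: define a norm upstairs}. Integrating and taking infimum over $\gamma$ yields
\[
|\log \ell_{\til q_1}(\sigma) - \log \ell_{\til q_0}(\sigma)| \leq \dist(\til q_0, \til q_1)
\]
for every saddle connection $\sigma$ that survives along some competing path. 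By a standard perturbation-of-triangulation argument, if $\dist(\til q_0, \til q_1)$ is small, a triangulation at $\til q_0$ persists to $\til q_1$, and so this estimate applies to the triangulation edges. In particular, along a Cauchy sequence $\til q_n$ the lengths of a fixed triangulation are eventually bounded above and below, hence by the first paragraph the period coordinates $\hol_{\til q_n}$ form a Cauchy sequence in $H^1(S, \Sigma; \R^2)$ and the limit cohomology class determines a translation surface $\til q_\infty$ with $\til q_n \to \til q_\infty$ in period coordinates, giving completeness.

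Finally, properness follows by combining the same logarithmic estimate with a Mumford-style compactness criterion for $\HHm$: a set $K \subset \HHm$ is precompact provided the absolute periods of points of $K$ are bounded and the systole of $K$ is bounded below. A closed $\dist$-ball around $\til q_0$ has bounded absolute periods (by the local equivalence of norms and a path-subdivision argument extending it globally) and, by the logarithmic estimate applied to the shortest saddle connection, has systole bounded below in terms of the systole of $\til q_0$ and the radius of the ball. The main obstacle in this plan is the upper-bound half of the first paragraph: controlling $\|\beta(\sigma)\|/\ell_q(\sigma)$ uniformly over all $\sigma$ by the finitely many triangulation periods. This is the geometric heart of AGY's construction, and is the reason their choice of norm (with the denominator $\ell_q(\sigma)$) is natural; it reflects the fact that a long saddle connection, when decomposed via a triangulation, accumulates length at a rate commensurate with the number of edges it reads, so its holonomy grows no faster than its Euclidean length.
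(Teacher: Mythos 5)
The paper's proof is a one-line citation to AGY \cite[\S2.2.2]{AGY}, and your argument is a correct reconstruction of the substance of exactly that reference: the local comparison of $\|\cdot\|_{\til q}$ with a fixed triangulation norm (the hard half being the uniform bound on $\|\beta(\sigma)\|/\ell_q(\sigma)$ over all saddle connections), the logarithmic length-distortion estimate obtained by integrating the norm inequality, and the Mumford-style compactness criterion. The only minor organizational difference is that AGY obtain completeness as a corollary of properness (compact closed balls imply Cauchy sequences converge), whereas you prove the two separately; both are valid, but the AGY ordering saves the Cauchy-sequence argument.
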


By Proposition \ref{prop: sup norm properties}, 
    in order to compute the length of a path $\rho$, one can  lift the path to $\HHm$ and measure its length there. 
Note that $\dist$ need not be invariant under parallel translation. 

\begin{proof}
The fact that the sup-norm distance is a Finsler metric giving the topology on
period coordinates is \cite[proof of Proposition 2.11]{AGY}.  
The fact that the metric is proper is \cite[Lemma 2.12]{AGY}.
Completeness is \cite[Corollary 2.13]{AGY}.  The metric is invariant under
the action of the mapping class group because its definition depends only on the collection
of saddle connections in $M_q$ which is independent of the marking.
\end{proof}

\

We will now compute the deviation of nearby $G$-orbits with respect to the
sup-norm distance. 
Let $\|g\|_{\mathrm{op}}, \ g^{\mathrm{t}}$ and $\mathrm{tr}(g)$
denote respectively the operator norm, transpose, and trace of $g \in 
G$. The
operator norm can be calculated   
in terms of the singular values of $g$.  Specifically the operator
norm is the square root of the the largest
eigenvalue of $g^{\mathrm{t}}g$. For a 2 by 2 matrix this eigenvalue can be
expressed in terms of the trace and determinant of $g^{\mathrm{t}}g$:
\begin{equation}
  \|g\|_{\mathrm{op}}=\sqrt{\frac{\mathrm{tr}(g^{\mathrm{t}}g)+
      \sqrt{\mathrm{tr}^2(g^{\mathrm{t}}g)-4}}{2}}   
\end{equation}

Recall the affine comparison map $\psi_g: M_q\to M_{gq}$ with
derivative $g$, from \S \ref{subsec: G}. For this map we have
$\hol(\psi(\sigma))=g(\hol(\sigma))$ and hence
$\|\sigma\|_{gq}=\|g(\hol(\sigma))\|_q$. 
From this it is not hard to deduce that 
$$
\|g\beta\|_{g\til q}\le
\|g\|_{\mathrm{op}}\cdot\|g^{-1}\|_{\mathrm{op}}\cdot\|\beta\|_{\til
  q}. 
$$

\begin{cor}[See \cite{AGY}, equation (2.13)] For any $s, t \in \R$ and any $\beta \in H^1(S, \Sigma; \R^2),$ we have 
$$\|u_s(\beta)\|_{u_s\til
    q}\le\left(1+\frac{s^2+|s|\sqrt{s^2+4}}{2}\right) \|\beta\|_{\til
    q}$$ and $$
\|g_t(\beta)\|_{g_t\til
    q}\le e^{2|t|} \|\beta\|_{\til
    q}.
$$
\end{cor}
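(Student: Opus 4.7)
The plan is to directly apply the general inequality
\[
\|g\beta\|_{g\til q} \;\le\; \|g\|_{\mathrm{op}}\,\|g^{-1}\|_{\mathrm{op}}\,\|\beta\|_{\til q}
\]
displayed just above the corollary, specializing $g$ to $u_s$ and then to $g_t$. For $g\in\SL_2(\R)$ the singular values are of the form $\sigma,\sigma^{-1}$ with $\sigma\ge 1$, so $\|g\|_{\mathrm{op}} = \|g^{-1}\|_{\mathrm{op}}$ and their product equals $\|g\|_{\mathrm{op}}^2$. What is left is to compute $\|g\|_{\mathrm{op}}$ via the trace--determinant formula for the operator norm recalled in the excerpt, which in the $\SL_2(\R)$ case reduces to
\[
\|g\|_{\mathrm{op}}^2 \;=\; \frac{\mathrm{tr}(g^{\mathrm{t}}g) + \sqrt{\mathrm{tr}^2(g^{\mathrm{t}}g)-4}}{2}.
\]

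For $g=u_s$: a direct multiplication gives
$u_s^{\mathrm{t}} u_s = \bigl(\begin{smallmatrix} 1 & s \\ s & 1+s^2 \end{smallmatrix}\bigr)$,
so $\mathrm{tr}(u_s^{\mathrm{t}}u_s) = 2+s^2$ and $\mathrm{tr}^2-4 = s^2(s^2+4)$. Substituting into the formula yields
\[
\|u_s\|_{\mathrm{op}}^2 \;=\; \frac{(2+s^2)+|s|\sqrt{s^2+4}}{2} \;=\; 1 + \frac{s^2+|s|\sqrt{s^2+4}}{2},
\]
which is exactly the factor appearing in the first claim. Since the right-hand side depends on $s$ only through $s^2$ and $|s|$, the identity $u_s^{-1}=u_{-s}$ confirms $\|u_s^{-1}\|_{\mathrm{op}}=\|u_s\|_{\mathrm{op}}$, and the product in the general inequality is indeed $\|u_s\|_{\mathrm{op}}^2$.

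For $g=g_t$: the matrix is already diagonal, so $g_t^{\mathrm{t}}g_t = \mathrm{diag}(e^{2t}, e^{-2t})$, whose largest eigenvalue is $e^{2|t|}$, yielding $\|g_t\|_{\mathrm{op}} = e^{|t|}$ (and identically for $g_t^{-1}=g_{-t}$). Substitution into the general inequality gives the second claimed Finsler bound.

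There is no substantive obstacle here: both inequalities are direct consequences of the already-proved operator-norm bound combined with elementary $2\times 2$ matrix algebra in $\SL_2(\R)$; the only conceptual point worth isolating is that singular values of elements of $\SL_2(\R)$ are reciprocal pairs, so that one needs to compute only a single operator norm in each case.
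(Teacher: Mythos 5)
Your approach matches the paper's: the corollary is proved by specializing the already-derived inequality $\|g\beta\|_{g\til q}\le \|g\|_{\mathrm{op}}\,\|g^{-1}\|_{\mathrm{op}}\,\|\beta\|_{\til q}$ to $g=u_s$ and $g=g_t$, computing each operator norm via the trace formula. Your reduction to a single operator norm (using that elements of $\SL_2(\R)$ have reciprocal singular values) and the $u_s$ computation are both correct, and the factor $\|u_s\|_{\mathrm{op}}^2 = 1+\frac{s^2+|s|\sqrt{s^2+4}}{2}$ does match the first inequality exactly.

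However, your treatment of the $g_t$ case has an unflagged inconsistency. You correctly compute $\|g_t\|_{\mathrm{op}}=e^{|t|}$ and note $\|g_t^{-1}\|_{\mathrm{op}}=e^{|t|}$ as well, but then assert that ``substitution into the general inequality gives the second claimed Finsler bound.'' It does not: the product $\|g_t\|_{\mathrm{op}}\|g_t^{-1}\|_{\mathrm{op}}$ equals $e^{2|t|}$, not the $e^{|t|}$ stated in the corollary. In fact $e^{2|t|}$ is the correct pointwise constant: on a torus, take the cohomology class $\theta$ with $\theta(\sigma)=(\hol^{(y)}_q(\sigma),0)$ and evaluate the ratio $\|g_t\theta(\sigma)\|/\ell_{g_tq}(\sigma)$ on a vertical saddle connection; the ratio grows like $e^{2t}$. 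The stated $e^{|t|}$ is almost certainly a typo in the paper — note that the subsequent Corollary \ref{eq: sup norm horocycle deviation}, obtained by integrating precisely this pointwise bound, has the factor $e^{2|t|}$, which would not follow from a pointwise bound of $e^{|t|}$. Rather than silently asserting a match that does not hold, you should have pointed out that the method yields $e^{2|t|}$ and that the displayed $e^{|t|}$ should be corrected accordingly.
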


Integrating these pointwise bounds and using the definition of the sup-norm
distance, we find that nearby horocycle trajectories diverge from each
other at most quadratically and nearby geodesic orbits diverge at most exponentially. Namely:

\begin{cor}\name{eq: sup norm horocycle deviation}
 For  $\til q_0$ and $\til q_1 \in \HHm$ and any $s, t
\in \R$, 

\[
\begin{split}
\left(1+\frac{s^2+|s|\sqrt{s^2+4}}{2}\right)^{-1}\dist (\til q_0, \til
q_1)&\leq \dist(u_s\til q_0, u_s\til q_1)\\ 
&\leq  \left(1+\frac{s^2+|s|\sqrt{s^2+4}}{2}\right) \dist(\til q_0, \til q_1)
\end{split}
\]
and
\eq{eq: geodesic expansion rate}{
e^{-2|t|}\dist (\til q_0, \til q_1) \leq \dist(g_t\til q_0, g_t\til q)
\leq  e^{2|t|}\dist(\til q_0, \til q_1) 
.
}
\end{cor}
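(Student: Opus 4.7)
The plan is to bootstrap the pointwise tangent-space bounds from the preceding corollary to the claimed global distance bounds by integrating along paths. Given any smooth path $\gamma \colon [0,1] \to \HHm$ with $\gamma(0) = \til q_0$ and $\gamma(1) = \til q_1$, form the pushed-forward path $\gamma_s(\tau) \df u_s \gamma(\tau)$, which connects $u_s \til q_0$ to $u_s \til q_1$. The key observation is that in period coordinates the action of $u_s$ on $\HHm$ is affine, with derivative equal to the coefficient action of $u_s$ on $H^1(S, \Sigma; \R^2)$, as explained in \S \ref{subsec: G}. Consequently $\gamma_s'(\tau) = u_s \cdot \gamma'(\tau)$ in the trivialization $T(\HHm) = \HHm \times H^1(S, \Sigma; \R^2)$, and the pointwise horocycle bound of the preceding corollary gives
\[
\|\gamma_s'(\tau)\|_{\gamma_s(\tau)} \leq C(s)\, \|\gamma'(\tau)\|_{\gamma(\tau)}, \qquad C(s) \df 1 + \tfrac{s^2 + |s|\sqrt{s^2+4}}{2}.
\]

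Integrating over $\tau \in [0,1]$ gives $\mathrm{length}(\gamma_s) \leq C(s)\, \mathrm{length}(\gamma)$, and taking the infimum over $\gamma$ in the definition \equ{eq: Finsler integrate} of $\dist$ delivers the upper bound $\dist(u_s \til q_0, u_s \til q_1) \leq C(s)\, \dist(\til q_0, \til q_1)$. For the matching lower bound, apply the same argument with $s$ replaced by $-s$ and with $\til q_i$ replaced by $u_s \til q_i$; since $C(-s) = C(s)$ and $u_{-s} u_s = \mathrm{id}$, one obtains
\[
\dist(\til q_0, \til q_1) \leq C(s)\, \dist(u_s \til q_0, u_s \til q_1),
\]
which rearranges to the required lower bound.

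The geodesic inequality \equ{eq: geodesic expansion rate} is proved identically: replace $u_s$ by $g_t$ throughout and invoke the pointwise bound $\|g\beta\|_{g\til q}\le \|g\|_{\mathrm{op}}\,\|g^{-1}\|_{\mathrm{op}}\,\|\beta\|_{\til q}$ specialized to $g=g_t$, for which $\|g_t\|_{\mathrm{op}}\|g_t^{-1}\|_{\mathrm{op}} = e^{2|t|}$. There is no genuine obstacle in the argument; the only substantive point is the routine chain-rule verification that differentiating $\gamma_s$ in the period-coordinate trivialization returns the coefficient action of $u_s$ (or $g_t$) applied to $\gamma'(\tau)$, which is immediate from the linearity of the $G$-action on $H^1(S, \Sigma; \R^2)$ established in \S \ref{subsec: G}.
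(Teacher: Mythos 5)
Your proof is correct and follows essentially the same route the paper intends — the paper's justification is the one-liner ``integrating these pointwise bounds and using the definition of the sup-norm distance,'' which your argument (pushing forward paths, applying the pointwise operator-norm bound to the derivative, integrating, taking infima, and symmetrizing with $u_{-s}$ or $g_{-t}$ for the lower bound) unpacks explicitly. One thing worth flagging: you correctly invoke the general bound $\|g\beta\|_{g\til q}\le \|g\|_{\mathrm{op}}\|g^{-1}\|_{\mathrm{op}}\|\beta\|_{\til q}$ with $\|g_t\|_{\mathrm{op}}\|g_t^{-1}\|_{\mathrm{op}}=e^{2|t|}$, which is consistent with \equ{eq: geodesic expansion rate}; the preceding corollary's displayed bound $\|g_t(\theta)\|_{g_t\til q}\le e^{|t|}\|\theta\|_{\til q}$ appears to be a typo (it should read $e^{2|t|}$), and your version is the one needed to make the integration argument close.
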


In the case of unipotent flows in homogeneous dynamics nearby orbits
diverge at most polynomially with respect to an appropriate
metric. Corollary \ref{eq: sup norm horocycle deviation}  shows that
on strata, nearby horocycles orbits diverge from each other {\em at
  most} quadratically. In \S\ref{subsec: erratic} we will discuss the
more delicate question of  {\em lower} bounds
for the rate of divergence of horocycles, and show that erratic
divergence is possible.

\section{The space of pairs of tori glued along slits}\name{sec: E dynamics}
In this section we collect some information we will need regarding the
structure of $\EE$ and the 
dynamics of the straightline flow on surfaces in $\EE$. We also prove
Proposition \ref{prop: auxiliary}, which plays an important role in \S \ref{sec: spiky 
  fish}. It shows that for surfaces in
$\EE$, the ergodic measures in directions which are not uniquely
ergodic have good approximations by splittings of the surface into two
tori. This may be considered as a converse to a 
construction of Masur and Smillie \cite[\S 3.1]{MT}.

\subsection{The locus $\EE$}\name{subsec: locus E}
McMullen studied
the eigenform loci $\EE_D$, 
which are affine $G$-invariant suborbifolds 
of $\HH(1,1)$ (see
\cite{McMullen-SL(2)} and references therein).
The description of $\EE=\EE_4$ which will be convenient for us is the
following. Recalling that $\HH(0,0)$ is the stratum of tori with two marked points, we have that
$\EE$ is the collection of $q \in \HH(1,1)$ for which 
there is a branched $2$ to $1$ translation cover from $M_q$ onto a torus in
$\HH(0,0)$. To avoid confusion with different conventions used in the
literature, we remind the reader that we take the marked points in
$\HH(0,0)$ and $\HH(1,1)$ to 
be labelled. See \cite[\S 7]{eigenform}
  for additional information.

Given a torus $T \in
\HH(0,0)$ and a saddle connection $\delta$ joining the two marked
points we can build a 
surface  $M\in\HH(1,1)$ by cutting $T$ along $\delta$, viewing the
resulting surface as a surface with boundary. We define $M$ to be the result of
taking two copies of the  surface with boundary  
and gluing along the boundaries. The surface $M$ has a branched
covering map to $T$ and a deck transformation which is an involution
interchanging the two copies of $T$. A {\em slit}\index{slit} on a 
translation surface is a union of homologous saddle connections which
disconnect the surface. Thus in this example, the preimage $\sigma$ of
$\delta$ 
under the map $M \to T$ is a slit. We say that $M$ is built from the
{\em slit construction}\index{slit construction} applied to $\sigma$. Clearly surfaces built from the slit construction belong to $\EE.$

The following proposition shows that, with respect to the terminology
of \S \ref{subsec: orbifold}, $\EE$ consists of points in
$\HH(1,1)$ where the local orbifold group is non-trivial; namely, it
is the group of order two generated by an involution in $\Mod(S,\Sigma)$. 

\begin{prop}[\cite{EMasurSchmoll}]\name{prop: structure of E} The locus $\EE$ is connected.
  It admits a four to one  covering map 
$P: \EE \to \HH(0,0)$ which is characterized by the following property: 
for every $q \in \EE$ there is an order 2 translation equivalence
$\iota = \iota_q: M_q \to
M_q$\index{i@$\iota$}, such that the quotient surface
$M_q /\langle \iota \rangle$ is a translation surface which is
translation equivalent to the torus $T_{P(q)}$.  
\end{prop}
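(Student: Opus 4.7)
The plan has three stages: construct the involution from the defining cover, count the preimages of $P$ over a fixed torus, and deduce connectedness from the ergodicity of $\mu_\EE$.

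First, given $q \in \EE$, fix a $2$-to-$1$ translation cover $\pi_q: M_q \to T$ with $T \in \HH(0,0)$. A subgroup of index $2$ is automatically normal, so $\pi_q$ is Galois, and I let $\iota_q$ generate its deck group, an order-$2$ translation equivalence of $M_q$. Applying Riemann-Hurwitz to a degree-$2$ cover between surfaces of genus $2$ and $1$ yields exactly $2$ branch points. Any singularity of $M_q$ has cone angle $4\pi$ and must be a ramification point (otherwise its image in $T$ would inherit cone angle $4\pi$, contradicting $T \in \HH(0,0)$), so the branch points are precisely the two labeled singularities. Hence $\iota_q$ fixes them pointwise, and $M_q/\langle \iota_q \rangle$ is translation equivalent to $T$ with its two labeled marked points. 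One verifies that $\iota_q$ is characterized as the unique nontrivial translation equivalence of $M_q$ fixing each labeled singularity with exactly these as fixed points, so $P(q) \df M_q/\langle \iota_q\rangle$ is well-defined and continuous.

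Second, to compute $P^{-1}(T)$ for $T \in \HH(0,0)$ with labeled marked points $p_1, p_2$: each preimage corresponds to a $2$-to-$1$ translation cover of $T$ ramified over exactly $\{p_1, p_2\}$, with the label of each singularity inherited from its image. Such covers are classified by surjections $\phi: \pi_1(T \sm \{p_1,p_2\}) \to \Z/2\Z$ sending each small loop $c_i$ around $p_i$ to $1 \in \Z/2\Z$. From the presentation $\pi_1(T \sm \{p_1,p_2\}) = \langle a, b, c_1, c_2 \mid [a,b] c_1 c_2 \rangle$, after abelianization the relation forces $\phi(c_1) = \phi(c_2) = 1$, while $\phi(a), \phi(b) \in \Z/2\Z$ are free, yielding exactly $4$ such $\phi$. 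Different $\phi$'s produce non-isomorphic labeled covers (they restrict differently to absolute homology), so the fiber has exactly $4$ elements. To see that $P$ is a local homeomorphism, note that near $T$ the topological type of each cover is locally constant in period coordinates, and periods of the cover are determined linearly by those of $T$ via the transfer map; this makes $P$ a degree-$4$ covering.

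Finally, for connectedness of $\EE$: the stratum $\HH(0,0)$ is connected, so $\EE$ has at most $4$ components. Each such component is $G$-invariant, being open in the $G$-invariant set $\EE$ with $G$ connected. If $\EE$ had more than one component, restricting the ergodic $G$-invariant probability measure $\mu_\EE$ to each component would yield a nontrivial $G$-invariant decomposition, contradicting ergodicity; hence $\EE$ is connected. The main obstacle I anticipate is establishing rigorously that $P$ is a covering map (rather than merely a continuous $4$-to-$1$ surjection), which requires showing that the $\Z/2\Z$-branched cover varies smoothly in period coordinates, and verifying uniqueness of $\iota_q$ even at points where $M_q$ may carry additional translation equivalences.
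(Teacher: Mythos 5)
Your counting argument for $\#P^{-1}(T)=4$ is essentially the same as the paper's: both reduce to classifying $\Z/2\Z$-covers of $T\sm\Sigma$ with prescribed ramification, and both observe that the two boundary loops impose only one linear condition on a $3$-dimensional $\Z/2\Z$-vector space (you phrase it via the presentation $\langle a,b,c_1,c_2 \mid [a,b]c_1c_2\rangle$, the paper via $H^1(T\sm\Sigma;\Z/2\Z)$; same content). Your derivation of $\iota_q$ from the Galois property of the index-$2$ cover, and the observation that both singularities must be ramification points because $T\in\HH(0,0)$, add a bit of detail the paper takes as definitional, which is fine.

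The genuine divergence — and a genuine gap — is your connectedness argument. The paper outsources this to \cite[Theorem 4.4]{EMasurSchmoll}. You instead try to deduce it from ergodicity of $\mu_\EE$: you assume that $\mu_\EE$ is a $G$-invariant ergodic probability measure whose support is all of $\EE$, and then note that since components of $\EE$ are open, closed, and $G$-invariant, ergodicity forces the support into a single component. That inference is logically correct, but it presupposes exactly the input you should be worried about: that there is a single $G$-ergodic measure of full support on $\EE$. If $\EE$ had several components, the natural ``flat'' measure on $\EE$ would decompose as a sum of measures on the components, each $G$-invariant, and it would not be ergodic; what one would in general get is an ergodic measure of \emph{full support on one component}. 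So ``there exists a $G$-ergodic probability measure with $\supp=\EE$'' is not a given — it is essentially equivalent to connectedness of $\EE$ together with ergodicity on each component, and the standard proofs of this statement (McMullen's classification, orbit-closure arguments) typically establish connectedness first or as part of the same package. As written, your last step is circular. You should either cite a reference that establishes connectedness directly, or replace the ergodicity argument by a direct topological one (e.g.\ exhibiting a path in $\HH(0,0)$ whose lift permutes the fiber $P^{-1}(T)$ transitively, so that the monodromy of the $4$-to-$1$ cover is transitive).

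One further fine point you flag correctly but don't fully resolve: well-definedness of $P$ requires uniqueness of $\iota_q$. This does hold, because a translation automorphism of a surface in $\HH(1,1)$ must fix each labelled cone point, and near a cone point of angle $4\pi$ it is a rotation by a multiple of $2\pi$, hence either trivial or the ``sheet-swap''; a nontrivial automorphism is thus determined, so the automorphism group is at most $\Z/2\Z$. Spelling this out would close the gap you yourself identify.
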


\begin{proof}
Connectedness of $\EE$ is proved in \cite[Theorem 4.4]{EMasurSchmoll}. 
It remains to show that $P$ is four to one. 
By definition, if $q \in \EE$ then $M_q$ has a translation automorphism
$\iota$ such that $M_q/\langle \iota \rangle$ is a torus in
$\HH(0,0)$. 

We begin by determining the fixed points of $\iota$. 
If a translation automorphism fixes a nonsingular point it fixes a neighborhood
of that point. Thus the set of nonsingular fixed points is open and closed.
We conclude that the only possible fixed points are singularities
and singularities are indeed fixed since they are labelled. 
We conclude that $\iota$ induces a branched covering map which has non-trivial
branching at the two singular points.

Let $T$ be a torus with $\Sigma=\{p_1, p_2\}$
corresponding to a point in $\HH(0,0)$. Any $q \in \EE$
for which $P(q) = T$ gives an unbranched cover $M_q \sm P^{-1}(\Sigma)
\to T \sm \Sigma$. Conversely any unbranched cover of $T \sm \Sigma$ can be completed to
a branched cover of $T$. This cover is ramified at $p_j \in \Sigma$
precisely when a small loop $\ell_j$ around $p_j$ in $T$ does not lift as a closed
loop in $M_q$. So the cardinality of $P^{-1}(T)$ is the number of 
topologically distinct degree 2 covers of $T \sm \Sigma$ for which the
loops $\ell_j$ do not lift as closed loops. Equivalently, it is the
number of  conjugacy 
classes of homomorphisms $\pi_1(T \sm \Sigma) \to 
\Z/2\Z$ for which the image of the class of each $\ell_j$ is
nontrivial. Since $\Z/2\Z$ is abelian, the covering
spaces are determined uniquely by elements $\theta \in H^1(T \sm \Sigma;
\Z/2\Z)$ which has dimension 3 and we are counting those $\theta$ for which
both $\theta(\ell_j) \neq 0$. Since the loops $\ell_1$ and $\ell_2$
are homologous, this condition gives a single inhomogeneous linear equation on a  $\Z/2\Z$ vector space of
dimension 3, so we have four solutions. 
\end{proof}

As we saw surfaces built from the slit construction belong to $\EE$. The following is a strong converse to this statement (a similar result holds for all eigenform loci, see \cite[\S 7]{McMullen-SL(2)}).

\begin{prop}\name{prop: slit characterization} Two
  saddle connections $\delta_1$ and $\delta_2$ on the same torus in $\HH(0,0)$, connecting the singularities, give rise to the 
same surface in $\EE$ if and only if the corresponding homology classes
$[\delta_1]$ and $[\delta_2]$ are 
equal as elements of $H_1(T,\Sigma;\Z/2\Z)$.
In particular every surface in $\EE$
  can be built from the slit construction in infinitely many ways (that is, using infinitely many different $\delta$). 
\end{prop}

\begin{proof} As in the proof of Proposition \ref{prop: structure of
    E}, a surface in $\EE$ corresponds to a 
  class $\theta \in H^1(T \sm \Sigma;\Z/2\Z)$ for which the
  $\theta(\ell_j)$ are nonzero, for $j=1,2$. If $\delta$ is any path
  from $p_1$ to $p_2$, it defines a class $[\delta] \in
  H_1(T,\Sigma;\Z/2\Z)$, and we will say $\theta$ is {\em represented by
    $\delta$} if $\theta$ is the class in $H^1(T \sm
  \Sigma;\Z/2\Z)$  
which is Poincar\'e dual to $[\delta]$. Clearly, if $\theta$ is
represented by some $\delta$ then $\theta$ 
satisfies the requirement $\theta(\ell_j) \neq 0$, and by a
dimension count, any such $\theta$ is represented by some path 
$\delta$. It remains to show that each $\theta$ is represented by
infinitely many 
saddle connections $\delta$ from $p_1$
to $p_2$. To
see this, let $\delta_0$ be some path representing $\theta$, let $v_0
\df \hol_T(\delta_0)$, let 
$\Lambda \df \hol_T(H_1(T;\Z))$, and let $\Lambda' \df \Lambda \cup \left(
  v_0 + \Lambda \right).$ Since $\R^2$ is the universal cover of $T$,
$\Lambda$ is a lattice in $\R^2$, $v_0 
\notin \Lambda$, and the required paths $\delta$ are those for which
$\hol_T(\delta) \in v_0 + 2\cdot \Lambda$ and for which the straight segment
in $\R^2$ from the origin to $\hol_T(\delta)$ does not intersect
$\Lambda'$ in its interior. It follows from this description that the
set of such $\delta$ is infinite. 
\end{proof}

For use in the sequel, we record the conclusion of  Proposition \ref{prop: tangent
  and normal} in the special case of the orbifold substratum $\EE$: 

\begin{cor}
  \name{prop: KZ over
  E} We can identify the tangent space  $T(\EE)$ with
the $+1$ eigenspace of the action of $\iota$ on $H^1(S, \Sigma; \R^2)$
and the
normal bundle 
$\mathscr{N}(\EE)$ with the $-1$ eigenspace. The bundle
$\mathscr{N}(\EE)$ has a splitting into flat sub-bundles
$$\mathscr{N}(\EE) = \mathscr{N}_x(\EE) \oplus \mathscr{N}_y(\EE),$$
and each of these sub-bundles has a flat monodromy-invariant volume
form.
\end{cor}

\subsection{Dynamics on $\EE$} \label{subsec: dynamics on E} Here we state some important features
of the straightline flow on surfaces in $\EE$. 
\begin{prop}\name{prop: involution on E and measures}
Let $q \in \EE$, let $M=M_q$ be the underlying surface, let $\iota: M
\to M$  be the involution as described in Proposition 
\ref{prop: structure of E}, let $\FF$ be the horizontal foliation on
$M$, and let $(dy)_q$ be the canonical transverse measure. Suppose
that the foliation $\FF$ is not 
periodic. Then for any transverse
 measure $\nu$ to  $\FF$, $\iota_* \nu$ is also a transverse measure and
there is $c>0$ such that $\nu + \iota_*\nu = 
c\, (dy)_q$. Moreover, if $\FF$ is not uniquely ergodic, then (up to
multiplication by constants) it supports
exactly two ergodic transverse measures which are images of each other
under $\iota_*$, and $\Leb$ is not ergodic for the horizontal
straightline flow.
\end{prop}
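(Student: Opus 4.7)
The plan is to use the quotient map $\pi\colon M\to T=M/\langle\iota\rangle$ from Proposition \ref{prop: structure of E} together with the classical fact that an irrational linear flow on a torus is uniquely ergodic. First I would verify that $\iota_*\nu$ is a transverse measure: since $\iota$ is a translation equivalence, it is an orientation-preserving isometry preserving the horizontal direction, hence it sends transverse arcs to transverse arcs and respects isotopy through such arcs, so the pushforward system $\gamma\mapsto \nu_{\iota^{-1}(\gamma)}$ satisfies the defining properties from \S\ref{subsec: transverse}.

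Next I would observe that $\omega \df \nu+\iota_*\nu$ is by construction $\iota$-invariant, and therefore descends to a transverse measure $\bar\omega$ on the horizontal foliation of $T$, at least on transverse arcs avoiding the two marked points. Since $\FF$ on $M$ is not periodic, the induced horizontal foliation on $T$ must have irrational slope — otherwise every horizontal leaf on $T$ would be closed and these closed leaves would lift to closed leaves on the finite cover $M$, making $\FF$ periodic. Unique ergodicity of the irrational linear flow on the torus then forces $\bar\omega = c\,(dy)_T$ for some $c\geq 0$, and pulling back via $\pi$ yields $\nu+\iota_*\nu = c\,(dy)_q$. Positivity $c>0$ follows because $\nu$ is a nonzero non-negative transverse measure and $(dy)_q$ does not vanish.

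For the final assertions assume $\FF$ is not uniquely ergodic. By the Katok--Masur bound, on a genus $g=2$ surface the number of ergodic transverse measures (up to scaling) is at most $g=2$, so there are exactly two of them, call them $\nu_1,\nu_2$. The pushforward $\iota_*$ is a linear involution on signed transverse measures that permutes the ergodic ones. If $\iota_*\nu_i \parallel \nu_i$ held for both $i=1,2$, the identity from the previous paragraph would force each $\nu_i$ to be a scalar multiple of $(dy)_q$, contradicting the linear independence of $\nu_1$ and $\nu_2$; hence, after renormalizing, $\iota_*\nu_1 = \nu_2$. Finally, by Proposition \ref{prop:trans to meas} the invariant measure corresponding to $(dy)_q$ is $\Leb$, and substituting into the identity of the second paragraph gives $(dy)_q = c^{-1}(\nu_1+\nu_2)$, realizing $\Leb$ as a nontrivial convex combination of two distinct ergodic invariant measures, hence not ergodic. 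The most delicate point, I expect, is the descent of $\nu+\iota_*\nu$ through the branched cover $\pi$: one must confirm that $\bar\omega$ is a genuine transverse measure on $T$ whose pullback is $\nu+\iota_*\nu$, which uses the non-atomicity convention on $\nu$ together with the fact that the branch points form an isolated set that does not carry mass.
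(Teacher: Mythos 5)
Your proof is correct and executes precisely the strategy the paper indicates (the paper omits the proof but notes it uses that $\iota$ commutes with the horizontal flow and that the aperiodic projection of $\FF$ to the quotient torus is uniquely ergodic); the reduction to the torus, the deduction of $\nu+\iota_*\nu=c\,(dy)_q$, and the resulting convex decomposition $\Leb=c^{-1}(\mu_1+\mu_2)$ are all sound, including the observation that non-atomicity lets $\omega$ descend past the branch points. One small remark: you invoke Katok's bound of at most $g$ ergodic transverse measures to get ``exactly two,'' but this can also be extracted directly from the identity itself, since for every ergodic $\nu_i$ the relation $\nu_i+\iota_*\nu_i=c_i(dy)_q$ together with the ergodic decomposition of $(dy)_q$ and linear independence of distinct ergodic measures already forces the ergodic decomposition of $(dy)_q$ to involve only $\nu_i$ and $\iota_*\nu_i$, hence at most two ergodic components in total.
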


This follows from the facts that $\iota$ commutes with the flow and
that, under our aperiodicity assumption, the projection   
of $\FF$ to the torus is uniquely ergodic. We leave the details to the reader.

The following proposition is the main result of this section. 
Recall that
$\FF_\theta$ denotes the foliation 
in direction $\theta$, where $\theta=0$ corresponds to the horizontal direction.

\begin{prop}\name{prop: auxiliary}
Suppose $q \in \EE$ has the property that the horizontal foliation on
$M_q$ is minimal but not ergodic 
and 
let $\mu$ be an invariant ergodic probability measure on $M_q$ for
the horizontal straightline flow.
Then there are directions $\theta_j$, such that the foliations
$\FF_{j}$  in
direction $\theta_j$ contain saddle connections $\delta_j$  satisfying
the following: \begin{itemize} 
\item[(i)] The union $\sigma_j = \delta_j\cup \iota(\delta_j)$ is a
  slit in $\FF_j$ separating $M_q$
into two isometric tori.
\item[(ii)] 
The holonomy $\hol_q(\delta_j) = (x_j, y_j)$ satisfies 
$$|x_j| \to
\infty, \ 0 \neq y_j\to 0 \text{ as } j \to \infty.$$ 
In particular the direction $\theta_j$ is not horizontal but tends to
horizontal,  and the length of 
  $\delta_j$ tends to $\infty$. Moreover there are no saddle
  connections $\delta$ on $M_q$ with holonomy vector satisfying
  $\left|\hol^{(x)}_q(\delta) \right| < |x_j|$ and 
  $\left|\hol_q^{(y)}(\delta) \right|< |y_j|$. 
\item[(iii)] 
For each $j$ we can choose one of the tori $A_j$ in $M_q \sm \sigma_j$,
such that the normalized restriction $\mu_j$ of $\Leb$  to $A_j$ converges to
$\mu$ as $j \to \infty$, w.r.t. the weak-$*$ topology on probability
measures on $M_q$. Thus, letting $\nu$ and $\nu_j$ be the
transverse
measures corresponding to $\mu$ and $\mu_j$ (via Proposition
\ref{prop:trans to meas}),
and letting $\beta_\nu$ and $ \beta_j = \beta_{\nu_j}$ be the corresponding
foliation cocycles in $H^1(M_q, \Sigma_q; \R)$,
  we have
  $\beta_{j} \to \beta_\nu$. 
\end{itemize} 
\end{prop}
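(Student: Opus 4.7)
The plan is to descend to the torus quotient via the cover $P$ of Proposition \ref{prop: structure of E}, use Dirichlet approximation on the torus $T = P(M_q) \in \HH(0,0)$, and lift slits via Proposition \ref{prop: slit characterization}. Minimality of the horizontal foliation on $M_q$ projects to minimality, hence unique ergodicity, of the horizontal foliation on $T$, so the horizontal direction on $T$ is irrational with respect to the lattice $\Lambda$ of absolute periods. Dirichlet approximation on $T$ then produces saddle connections $\delta_j'$ from $\xi_0$ to $\xi_1$ whose holonomies $(x_j, y_j)$ satisfy $|x_j|\to\infty$, $y_j \to 0$, with the best-approximant property. To satisfy the lifting condition of Proposition \ref{prop: slit characterization}, I restrict these approximants to the coset of $2\Lambda$ in $v_0 + \Lambda$ determined by the $\Z/2\Z$-cover $M_q \to T$; this coset is still a translate of a rank-$2$ lattice, so infinitely many best approximants survive the restriction. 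Each such $\delta_j'$ lifts via $P^{-1}$ to a slit $\sigma_j = \delta_j \cup \iota(\delta_j)$ separating $M_q$ into two isometric tori $A_j$ and $B_j = \iota(A_j)$, giving (i) and (ii); the absence of shorter saddle connections on $M_q$ follows since any saddle connection on $M_q$ projects to a path on $T$ whose holonomy lies in $\Lambda \cup (v_0 + \Lambda)$.

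For (iii), I fix a $\mu$-generic point $p \in M_q$ (available by Birkhoff's ergodic theorem) and choose $A_j$ to contain $p$ for all large $j$. The measures $\mu_j = \Leb|_{A_j}/\Leb(A_j)$ are bounded by $2\Leb$, hence tight, and each is invariant under the $\theta_j$-flow. Since $\theta_j \to 0$, the $\theta_j$-flow tends to the horizontal flow in $C^0$, and any weak-$*$ limit $\mu'$ of $\mu_j$ is absolutely continuous and horizontal-invariant. By Proposition \ref{prop: involution on E and measures}, $\mu' = c\mu + (1-c)\iota_* \mu$ for some $c \in [0,1]$. The key step is to pin down $c = 1$, which is where the choice $p \in A_j$ is essential. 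I would argue this via Birkhoff equidistribution along the closed $\theta_j$-orbit through $p$: its period $T_j = \|\hol_{M_q}(\delta_j)\| \to \infty$, and the Dirichlet bound $|\theta_j| \cdot T_j \to 0$ ensures that this orbit $C^0$-shadows the horizontal orbit of $p$ over its full period. Since $p$ is $\mu$-generic, the empirical measure along the horizontal orbit converges to $\mu$, and therefore so does the empirical measure along the closed $\theta_j$-orbit through $p$. Since $A_j$ is a single $\theta_j$-cylinder (a torus cut along a $\theta_j$-slit), the normalized Lebesgue $\mu_j$ is the transverse average over the parallel closed orbits, each shadowing a horizontal orbit on $M_q$, and this transference should force $\mu_j \to \mu$.

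Finally, the convergence $\beta_j \to \beta_\nu$ follows from the weak-$*$ convergence $\mu_j \to \mu$ via Proposition \ref{prop:trans to meas} and the continuity of the cocycle-assignment map on tight families of invariant measures (evaluating $\beta$ on a fixed transverse arc is continuous in the associated invariant measure under the bijection of that proposition). The main obstacle is the last step of the previous paragraph: the transference from a single shadowing orbit to the full normalized Lebesgue $\mu_j$ must rule out that parallel orbits in $A_j$ far from $p$ equidistribute for $\iota_*\mu$ instead of $\mu$. A rigorous argument requires carefully combining the explicit form of $\mu_j$ as a transverse average over the single cylinder $A_j$ with the fact that the horizontal ergodic components $\supp(\mu)$ and $\supp(\iota_*\mu)$ are separated by the very slits $\sigma_j$ which are being constructed; alternatively one may analyze weak limits of $\mathbf{1}_{A_j}$ directly, using $\theta_j$-invariance and connectedness of $A_j$ to show that the limit is $\mathbf{1}_{\supp(\mu)}$.
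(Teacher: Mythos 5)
Your approach to (i) and (ii) --- Dirichlet approximation on the torus quotient $T = P(M_q)$, restricted to the coset $v_0 + 2\Lambda$ of Proposition \ref{prop: slit characterization} --- is a genuinely different and more elementary route than the paper's, which passes to the Deligne--Mumford compactification of $\overline{\mathcal{M}}_2$ and uses recurrence of the Teichm\"uller geodesic $\til g_t q$. Your route is plausible, but the assertion that ``infinitely many best approximants survive the restriction'' to $v_0 + 2\Lambda$ needs justification: the best approximants within the coset need not be best approximants in all of $\Lambda' = \Lambda \cup (v_0 + \Lambda)$, and since saddle connections of $M_q$ can project to holonomies anywhere in $\Lambda'$, the ``no shorter saddle connection'' clause of (ii) requires the stronger property. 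The paper sidesteps this by working directly with short saddle connections of $\til g_{t_j}q$ when $\til g_{t_j}\bar\pi(q)$ returns to a compact set.

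The serious problem is (iii), and the gap you flag yourself is indeed unbridgeable as written, because both proposed fixes rest on a false premise. You suggest using ``the fact that the horizontal ergodic components $\supp(\mu)$ and $\supp(\iota_*\mu)$ are separated by the very slits $\sigma_j$,'' and alternatively showing $\mathbf{1}_{A_j} \to \mathbf{1}_{\supp(\mu)}$. But the horizontal foliation on $M_q$ is \emph{minimal}, so every infinite horizontal leaf is dense, and hence $\supp(\mu) = \supp(\iota_*\mu) = M_q$. The two ergodic measures are mutually singular but have identical topological support, so there is no separation to exploit and $\mathbf{1}_{\supp(\mu)} = \mathbf{1}_{M_q}$. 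You also assume $A_j$ is a single periodic $\theta_j$-cylinder of period $\|\hol_{M_q}(\delta_j)\|$; this is not true in general --- the slit $\sigma_j$ lies in direction $\theta_j$ and hence the $\theta_j$-flow preserves $A_j$, but $A_j$ is a torus on which that flow need not be periodic, so the ``closed orbit through $p$'' and the ``transverse average'' picture are not available.

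The ingredient you are missing is the one that makes the paper's proof of (iii) work: rather than invoking periodicity, compare the Birkhoff average along a long horizontal segment of the $\mu$-generic point $y$ with that along a nearby parallel horizontal segment of a second point $y' \in A_j$. These two points are joined by a short vertical segment (of uniformly bounded length, because $\til g_{t_j}\bar\pi(q)$ lies in a compact set $\mathcal{K}$), and the key observation is that when one flows horizontally, this connecting vertical segment stays entirely inside $A_j$ except for a set of times of measure $O(|\phi_j(\sigma_j)|) \to 0$, where it crosses the slit. Thus the two Birkhoff sums differ by an error that vanishes, giving \equ{eq: second third}. Integrating over the transverse segment $\ell \cap D_j$ via Fubini then replaces the single-orbit average by the full Lebesgue average on $A_j$. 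This estimate is genuinely necessary: there is no topological obstruction preventing distant parallel orbits in $A_j$ from tracking $\iota_*\mu$; the rescue is metric, not topological.

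Your final remark --- that $\beta_j \to \beta_\nu$ follows from $\mu_j \to \mu$ by continuity of the assignment $\mu \mapsto \beta_\nu$ --- is correct, and agrees with the paper.
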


We divide the argument below into steps.
\begin{proof}
{\bf Step 1. Finding slits satisfying 
(i) and (ii): divergence in $\EE$ versus convergence in $\HH(0).$}

We consider the projection map $\bar{\pi}:\EE \to \HH(0)$ given by the
composition of the map $P : \EE \to \HH(0,0)$ from Proposition \ref{prop: structure of E}
with the forgetful map forgetting the second marked point. In other
words, $\bar{\pi} : M_q \mapsto
M_q/\langle \iota \rangle$. 
Since $M_q$ has a minimal horizontal foliation, so does $M_{\bar{\pi}(q)}$. 
We normalize the area of $M_q$ to be 2, so that $\bar{\pi}(q)$ has unit area, and thus belongs to the normalized stratum $\HH_1(0)$, which can be identified with the space of unimodular
lattices 
$\SL_2(\mathbb{R})/\SL_2(\mathbb{Z})$. 
 The horizontal foliation is minimal if and only if the corresponding 
 lattice does not contain a nonzero horizontal vector, and this implies that 
  there is a compact set 
 $\mathcal{K} \subset \HH(0)$ for which 
 \begin{equation} \label{eq: return to compact set of tori}\text{ there is } t_j \to \infty \text{ such that } {g}_{-t_j}\bar{\pi}(q) \in \mathcal{K}.
 \end{equation}

Denote by $\mathcal{M}_{g}$ the moduli space of Riemann surfaces of
genus $g$ and let $\overline{\mathcal{M}}_{g}$ be
its Deligne-Mumford compactification (see \cite[\S 5]{Bainbridge thesis} for
a concise introduction). \combaraknew{I added the reference to
  \cite{Bainbridge thesis} because he discusses the genus 2 case in
  detail; note the adjective `concise' which is not usually associated
  with Matt's work.} Passing to a further
 subsequence (which we will continue to denote by $t_j$ to simplify
 notation) we have that $g_{-t_{j}}q$ converges to a
 stable curve in $\overline{\mathcal{M}}_{2}$. This curve projects to some
 torus in $\mathcal{M}_{1}$ (and 
 not in its boundary $\overline{\mathcal{M}}_{1} \sm
 \mathcal{M}_{1}$)  because the projection of $\mathcal{K}$ to 
 $\mathcal{M}_{1}$ is compact. So the limiting stable curve has area
 2. By  \cite[Theorem 1.4]{McM Dio},  the limit of ${g}_{-t_{j}}q$
 is not connected and so, considering the projection to  
 $\mathcal{M}_{1}$ again, it is built from two tori connected at a node. Thus for all large $j$,
 the surfaces 
 \begin{equation}\label{eq: the surfaces mj}
 M^{(j)} \df M_{{g}_{-t_{j}}q}
 \end{equation}
 are built from two copies of a torus $T_j
 \in \mathcal{K}$ glued along slits whose lengths go to
 zero. These slits must be the union of two saddle connections that
 connect the two different singularities of $M^{(j)}$. Indeed,
 the slit cannot project to a short curve on $T_j$ and it must be
 trivial in homology. Write $M^{(0)} = M_q$, let
$
\phi_j: M^{(0)} \to
  M^{(j)}$ 
be 
  the affine comparison map corresponding to $g_{-t_j}$, and 
 let $\delta_j \subset M^{(0)}$ denote the pullback under $\phi_j$ of    one of the saddle
 connections that make up this slit, so that the other is
 $\iota(\delta_j)$. Letting $\theta_j$ be the direction of $\delta_j$, we obtain  (i).  
 
 Because the horizontal flow on $M^{(0)}$ is minimal, the $\delta_j$
 are not horizontal. For any fixed non-horizontal segment $\delta$ on $M^{(0)}$, the length of $\phi_j(\delta)$ on $M^{(j)}$ goes to infinity as $j\to \infty.$ Therefore  we may assume that the $\delta_j$ are all different. By the
 discreteness of holonomies of saddle connections (see \cite{MT}), this implies that $\|(x_j, y_j)\|\to \infty$, 
 where $(x_j, y_j) = \hol_{M^{(0)}}(\delta_j)$. Since
 $$\| \hol_{M^{(j)}}(\phi_j(\delta_j))\|=\|(e^{-t_j}x_j,e^{t_j}y_j)\|\to 0,$$ 
 we have
 that $y_j \to 0$ and so $x_j\to \infty$. Because the torus $T_j$ is
 in the compact set $\mathcal{K}$, the only short saddle
 connections of $M^{(j)}$
 are $\delta_j$ and $\iota(\delta_j)$
 which implies the second assertion in (ii). This establishes (ii).  

\medskip

{\bf Step 2. Sets of uniform convergence for the straightline flow on one side of the slit.}
\medskip

For the proof of (iii), recall that 
$\Upsilon^{(p)}(t)$ denote the horizontal straightline flow (starting at $p$, with time parameter $t$).
By the 
Birkhoff ergodic theorem, there is an increasing sequence $S_k \to
\infty$ and an increasing sequence of 
subsets $E_k \subset 
M^{(0)}$ such that $\lim_{k \to \infty} \mu(M^{(0)}\sm  E_k) =0$, $\Upsilon^{(p)}(t)$ is
defined for all $t\in \R$ and all $p \in  E_k$, and 
for any choice of $p_k \in E_k$, and an interval $I_k \subset \R$ around 0 of length
$|I_k| \geq  S_k$, the
empirical measures $\eta_k $
on $M^{(0)}$ defined by  
$$
\int f d
\eta_k = \frac{1}{|I_k|} \int_{I_k} f\left(\Upsilon^{(p_k)}(t)\right) dt \ \ \
(f \in C_c(M_q))
$$
satisfy 
\eq{eq: empirical}{\eta_k \to_{k \to \infty} \mu, \text{ with respect
    to the weak-$*$ topology.}}


{\bf Step 3. Notation for $\Upsilon(t)$-orbit segments on one side of the slit. }

\medskip
\begin{figure}[h]
\begin{tikzpicture}
 [bdot/.style={circle,draw,fill=black,inner sep=0pt,minimum size=1.0mm},
 wdot/.style={circle,draw,fill=white,inner sep=0pt,minimum size=1.0mm},
 sdot/.style={circle,draw,fill=black,inner sep=0pt,minimum size=0.3mm}]
\def\xa{5};
\def\ya{0};
\def\xb{1};
\def\yb{3};
\def\xc{-4};
\def\yc{1.0};
\def\xp{-2.0};
\def\yp{2.6};
\def\xq{-1.5};
\def\yq{3.2};
\node (A0) at (\xa,\ya) [sdot] {};
\node (A1) at (\xa+\xb,\ya+\yb) [sdot] {};
\node (A2) at (\xa+\xb+\xc,\ya+\yb+\yc) [sdot] {};
\node (A3) at (\xa+\xc,\ya+\yc) [sdot] {};

\node (p) at (\xp+\xa,\yp+\ya) [bdot] {};

\node (q) at (\xq+\xa,\yq+\ya+\yc-\yb+\yc) [wdot] {};

\draw (A0) -- (A1) -- (A2) -- (A3) -- (A0);
\draw (p) -- (q);
\draw [black] (A2) -- node[right] {$\ell$} ++ (0,-3);
\node (C0) at (\xa+\xb+\xc,\ya+\yb+\yc-0.7) [sdot] {};


\path [name path=horizontal line]  (\xa+\xb+\xc,\ya+\yb+\yc-0.7) -- (4+\xa+\xb+\xc,\ya+\yb+\yc-0.7);
\path [name path=second line]  (A1) -- (A2);

\path  [name intersections={of=horizontal line and second line, by=xx}]; 
\node (D0) [sdot] at (xx) {};
\node (D1) [sdot] at ($(xx)-(\xb,\yb)$) {};

\draw [black] (C0) -- (D0);


\path [name path=horizontal line 2]  (D1) -- ++ (2,0);
\path [name path=first line]  (A0) -- (A1);

\path  [name intersections={of=horizontal line 2 and first line, by=yy}]; 
\node (D2) [sdot] at (yy) {};
\draw [black] (D1) -- (D2);

\node (D3) [sdot] at ($(yy)+(\xc,\yc)$) {};
\path [name path=horizontal line 3]  (D3) -- ++ (2,0);
\path [name path=down line]  (A2) -- ++ (0,-3);
\path  [name intersections={of=horizontal line 3 and down line, by=zz}]; 
\node (D4) [sdot] at (zz) {};
\draw [black] (D3) -- (D4);

\begin{scope}[xshift=6cm]:
\node (A0) at (\xa,\ya) [sdot] {};
\node (A1) at (\xa+\xb,\ya+\yb) [sdot] {};
\node (A2) at (\xa+\xb+\xc,\ya+\yb+\yc) [sdot] {};
\node (A3) at (\xa+\xc,\ya+\yc) [sdot] {};
\node (p) at (\xp+\xa,\yp+\ya) [bdot] {};

\node (q) at (\xq+\xa,\yq+\ya+\yc-\yb+\yc) [wdot] {};

\draw (A0) -- (A1) -- (A2) -- (A3) -- (A0);
\draw (p) -- (q);
\end{scope}
\end{tikzpicture}\ \ \ \ \ \ \ \ \
\caption{The picture explains the notation from Step 3. The parallelograms represent tori. The surface  $M^{(j)}=M_{g_{-t_j} q}$ is obtained by gluing the two tori along the slit. The  vertical line on the left connected component of $M^{(j)}\sm \phi_j(\sigma_j)$ is denoted by $\ell$. The horizontal line segment is $H_x$ for some point $x\in\ell$. Because $H_x$ does not intersect $\phi_j(\sigma_j),$ we have that $x$ is in $D_j$, and $H_x$ is in $B_j$ and in $\bar B_j$.}
\label{fig: gluing2} 
\end{figure}

Let $\sigma_j$ be the slit on $M^{(0)}$ as before, and $A_j, A'_j$ be the
two tori comprising $M^{(0)} \sm \sigma_j$  as in (i). We will define certain segments in $M^{(j)}$ and use \eqref{eq: return to compact set of tori} in order to obtain bounds on their length.

Denote by $\iota$ the
involution of Proposition \ref{prop: structure 
  of E}, on both
$M^{(0)}$ and $M^{(j)}$, so that $\phi_j$ commutes with $\iota$.  Then
$\phi_j(\sigma_j)$ is a slit on $M^{(j)}$ and as we saw,  its length 
    $|\phi_j(\sigma_j)|$ satisfies $|\phi_j(\sigma_j)| \to 0.$ Since, by \eqref{eq: return to compact set of tori},
    $\bar{\pi} \left(M^{(j)}\right) \in \mathcal{K}$ 
    for all $j$, the diameter of $M^{(j)}$ is bounded above independently of $j$. 
  Since $\phi_j(A_j)$ is one of the connected components of $M^{(j)} \sm \phi_j(\sigma_j)$, $\phi_j(A_j)$ contains a vertical segment whose length is a fixed number independent of $j$. We denote this segment by $\ell$, and let  $\ell' \df \iota(\ell)$ (the segments $\ell$ and $\ell'$ depend on $j$ but we omit this from the notation). For each $x
    \in \ell \cup \ell'$ let $I(x)$ be the interval starting at 0,
    such that $H_x \df \left\{
      \Upsilon^{(x)}(t): t\in I(x)\right\}$ is the horizontal segment
    on $M^{(j)}$ 
    starting at $x$ and ending at the first return to $\ell \cup
    \ell'$. Then, by considering the projection to $\mathcal{K}$, we see that the length of $I(x)$ is bounded above and below by
    positive constants independent of $j$ and $x$, and by adjusting
    $\ell$ there is a constant $C$ such that
$$\forall j, \, \forall x \in \ell \cup \ell', \ \text{ we have } \ 1 \leq |I(x)| \leq C.$$ 
Let
    $$D_j \df \left\{ x\in \ell \cup \ell' : \phi_j(\sigma_j) \cap H_x 
      = \varnothing
    \right\}$$
    and
    $$
B_j \df \bigcup_{x \in D_j} H_x \ \ \text{ and } \ \bar  B_j \df \bigcup_{x \in D_j \cap \ell} H_x.
    $$
    Thus $B_j$ is the union of trajectories in $M^{(j)}$ starting and ending in $\ell \cup \ell'$ that do not pass through the slit $\phi_j(\sigma_j)$, and $\bar B_j$ is the set of such trajectories that stay in $\phi_j(A_j)$. 
    Then clearly $\iota(B_j)=B_j$ and moreover,  since $|\phi_j(\sigma_j)| \to
    0$, $\Leb(B_j) \to \Leb(M^{(j)}) = 
    2$. Similarly we have $\Leb(\bar B_j) \to
    1$. 

Let $k_j$  be the largest $k$ 
for which $e^{t_j} \geq S_{k}$. Then $k_j \to \infty$. 
By Proposition
\ref{prop: involution on E and measures} we have $\Leb  = \mu +
\iota_* \mu$, and so for large enough 
$j$,  $\phi_j(E_{k_j} \cup \iota(E_{k_j})) \cap B_j \neq \varnothing.$
Since $\iota(B_j)=B_j$ this implies $\phi_j(E_{k_j})\cap B_j \neq
\varnothing.$ Since the two tori $A_j, \, A'_j$ cover $M^{(0)}$, by
replacing $A_j$ with $A'_j$ if necessary, we may assume that for all large enough $j$,
$$\phi_j(A_j \cap  E_{k_j})\cap B_j \neq
\varnothing.$$

\medskip

{\bf Step 4: Comparing $\Upsilon(t)$-orbit segments on one side of the slit,  and Lebesgue measure restricted to that component.}

\medskip

Let $\mu_j$ be the restriction of $\Leb$ to $A_j$, so that $\mu_j$ is a probability measure.
 Our goal is to show that 
    for all $\vre>0$ and $f \in C_c\left (M^{(0)} \right)$, for
    all $j$ large enough we have 
   \eq{eq: our goal}{ \left| 
\int_{M^{(0)}} f d\, \mu_j - \int_{M^{(0)}} f d\mu \right| < \vre.
    }
    We will do this by showing that orbit segments of points in $E_k$, which are almost generic for $\mu$, track orbit segments of other points, which approximate $\Leb$ (\eqref{eq: second third} below).
We  assume with no loss of
    generality that $\|f \|_
    \infty =1$.

Fix $x_1   \in \phi_j( A_j \cap E_{k_j}) \cap B_j$ and let $y_1 \df
\phi_j^{-1}(x_1)$.  There is $x \in \ell \cap D_j$ such that $x_1 \in H_x$, and
we let $y \df \phi_j^{-1}(x)$. Recall that 
  $\phi_j^{-1}$ maps horizontal and vertical straightline segments
  on $M^{(j)}$ to horizontal and vertical straightline segments on
  $M^{(0)}$, multiplying their lengths respectively by $e^{\pm
    t_j}$. In particular $J(y) \df \phi_j^{-1}(H_x)$ is a horizontal line
  segment on $M^{(0)}$ of length at least $e^{t_j}$ and containing
  $y_1$, and since $y_1 \in E_{k_j}$, this implies via \eqref{eq: empirical} that for $j$
  sufficiently large,
\eq{eq: first third}{\left| \frac{1}{e^{t_j}|J(y)|}
\int_0^{e^{t_j}|J(y)|} f\left( \Upsilon^{(y)}(t) \right) dt - \int_{M^{(0)}}
f d \mu \right| < \frac{\vre}{3}. 
}

 We now make the previously mentioned orbit tracking argument: Let $x' \in D_j \cap \ell.$  So there is a vertical subsegment of $\ell$, with length at most $C$,  connecting $x$
and $x'$. Since $\ell \subset \phi_j(A_j)$, this segment lies
completely inside $\phi_j(A_j)$. 
Arcs starting in $\phi_j(A_j)$ can only leave $\phi_j(A_j)$ by passing through the slit $\phi_j(\sigma_j)$. Thus, if $\Upsilon^{(x)}(t)$ is in $\phi_j(A_j)$ and the vertical straightline segment of length $C$ starting at 
$\Upsilon^{(x)}(t)$ misses
$\phi_j(\sigma_j)$, there is also a vertical segment from $\Upsilon^{(x)}(t) $
to $\Upsilon^{(x')}(t )$ of length at most $C$, which lies 
completely inside $\phi_j(A_j)$. 

For any  $x' \in D_j \cap \ell$,  we set $y' \df
  \phi_j^{-1}(x')$.
 Since $|\phi_j(\sigma_j)|\to 0$, the discussion in the preceding paragraph implies that
there is a finite union of subintervals $J_1 = J_1 (y')$ in $J
      =J(y')$, such
    that $|J_1| = O(|\phi_j(\sigma_j)|) \to 0$ and such that for all $t \in
    J 
    \sm J_1$ there is a vertical line segment of 
    length at most  $C$ from $\Upsilon^{(x)}(t)$ to
    $\Upsilon^{(x')}(t)$, and this segment stays completely in $\phi_j(A_j)$. 

  Thus,  for all large enough $j$ we have
  \eq{eq: second third}{\frac{1}{e^{t_j}|J(y')|} 
\int_0^{e^{t_j}|J(y')|} \left|  f\left(\Upsilon^{(y')}(t) \right) -
  f\left(\Upsilon^{(y)}(t)\right)  \right | 
  dt   < \frac{\vre}{3}.  
}
Let $\bar \mu_j$ be the restriction of $\Leb$ to
$\phi_j^{-1}(\bar B_j)$. 
Then using Fubini's theorem to express $\bar \mu_j$ as an integral of
integrals along the lines $\phi_j^{-1}(H_{x'})$, for $x' \in D_j \cap
\ell$, we find from \equ{eq: first third} and \equ{eq: second third} that
\eq{eq: third third}{
\left| \int_{M^{(0)}} f d \bar \mu_j - \int_{M^{(0)}} f d\mu \right| < \frac{2\vre}{3}.
}
Since $\bar B_j \subset \phi_j(A_j)$ and $\phi_j^{-1}$ preserves Lebesgue measure,
we have
$$\phi_j^{-1}(\bar B_j) \subset A_j, \ \ \Leb(\phi_j^{-1}(\bar B_j)) \to 1 = 
\Leb(A_j)$$
and hence for all large $j$,
$$\left| \int_{M^{(0)}} f \, d\bar \mu_j - \int_{M^{(0)}} f \, d \mu_j \right| < \frac{\vre}{3}.$$
Combining this with \equ{eq: third third} gives \equ{eq:
  our goal}. 
\end{proof}

Similar ideas can be used to prove the
following statement.

\begin{thm}\name{thm: Masur Smillie converse}
Suppose $q \in \EE$, and the horizontal measured foliation of the underlying surface $M_q$  is minimal but not ergodic. Then there is a sequence
of decompositions of $M_q$ into pairs of tori $A_j$ and $A'_j$ glued along slits,
 and 
such that the set 
$$
A_\infty = \bigcup_i \bigcap_{j\geq i} A_j
$$
is invariant under the horizontal flow, and has Lebesgue measure $1/2$.

\end{thm}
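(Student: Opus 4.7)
The plan is to use Proposition \ref{prop: auxiliary} to produce slit decompositions and then identify $A_\infty$, up to a Lebesgue-null set, with an invariant Borel set carrying one of the two ergodic components of Lebesgue measure. First I set up this target invariant set $E$: since $M_q$ is horizontally minimal but not Lebesgue-ergodic, it is not uniquely ergodic, and Proposition \ref{prop: involution on E and measures} gives two mutually singular ergodic probability measures $\mu, \iota_*\mu$ for the horizontal straightline flow, with $\mu + \iota_*\mu = 2\Leb$. Since $\mu \le 2\Leb$, it is absolutely continuous with respect to $\Leb$; mutual singularity forces its density to be $\{0,2\}$-valued almost everywhere, and I let $E = \{d\mu/d\Leb = 2\}$. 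Then $E$ is invariant under the horizontal flow modulo a null set, $\mu = 2 \Leb|_E$, $\iota_*\mu = 2\Leb|_{M_q \sm E}$, and $\Leb(E) = \tfrac12$. I then apply Proposition \ref{prop: auxiliary} to this $\mu$, obtaining slits $\sigma_j$ decomposing $M_q$ into tori $A_j, \iota(A_j)$ with $\mu_j \df 2 \Leb|_{A_j} \to \mu$ in the weak-$*$ topology on probability measures on $M_q$.

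The main obstacle is to upgrade this weak-$*$ convergence of \emph{measures} to symmetric-difference convergence of \emph{sets}, i.e., $\Leb(A_j \triangle E) \to 0$. For this I translate the weak-$*$ statement into weak $L^2$-convergence $1_{A_j} \rightharpoonup 1_E$, using density of $C(M_q)$ in $L^2(M_q, \Leb)$ together with the identities $\mu_j = 2\Leb|_{A_j}$ and $\mu = 2\Leb|_E$. The norm equality $\|1_{A_j}\|_{L^2}^2 = \Leb(A_j) = \tfrac12 = \Leb(E) = \|1_E\|_{L^2}^2$ and the standard Hilbert-space fact that weak convergence plus convergence of norms implies strong convergence then yield
\[
\Leb(A_j \triangle E) \;=\; \|1_{A_j} - 1_E\|_{L^2}^2 \;\longrightarrow\; 0.
\]
This step relies crucially on the slit construction forcing every $A_j$ to have Lebesgue measure $\tfrac12$, matching $\Leb(E)$; weak-$*$ convergence of measures alone would not suffice.

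Finally, I pass to a subsequence $(A_{j_k})$ with $\sum_k \Leb(A_{j_k} \triangle E) < \infty$ and apply the Borel-Cantelli lemma to $\Leb$: for $\Leb$-a.e.\ point $p \in M_q$ one has $p \in A_{j_k}$ if and only if $p \in E$ for all sufficiently large $k$. Therefore $A_\infty = \bigcup_i \bigcap_{k \geq i} A_{j_k}$ coincides with $E$ modulo a $\Leb$-null set, giving $\Leb(A_\infty) = \tfrac12$ and horizontal-flow invariance of $A_\infty$ modulo a null set.
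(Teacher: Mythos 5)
The paper leaves this proof to the reader, remarking only that ``similar ideas'' to those in Proposition \ref{prop: auxiliary} suffice, so there is no written argument to compare against; your proof carries out precisely the program the authors hint at. The identification of the invariant set $E$ is correct: by Proposition \ref{prop: involution on E and measures} the two ergodic components $\mu$ and $\iota_*\mu$ satisfy $\mu+\iota_*\mu = 2\Leb$, and mutual singularity of absolutely continuous measures whose densities sum to $2$ forces the densities to be $\{0,2\}$-valued, giving $E$ with $\Leb(E)=\tfrac12$. The decisive step, upgrading the weak-$*$ convergence $\mu_j\to\mu$ of Proposition \ref{prop: auxiliary}(iii) to $\Leb(A_j\triangle E)\to 0$, is exactly right: since $\mu_j = 2\Leb|_{A_j}$ and $\mu = 2\Leb|_E$, weak-$*$ convergence together with boundedness and density of $C(M_q)$ in $L^2(\Leb)$ gives $1_{A_j}\rightharpoonup 1_E$, and the coincidence of norms $\|1_{A_j}\|_{L^2}^2 = \tfrac12 = \|1_E\|_{L^2}^2$ (which, as you observe, is forced by the slit construction) promotes this to strong $L^2$-convergence; Borel--Cantelli along a subsequence then identifies $\liminf A_{j_k}$ with $E$ modulo a null set.

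Two small points. First, you pass to a subsequence; since the theorem only asserts the existence of \emph{some} sequence of slit decompositions, this is fine, but you should say explicitly that you relabel the subsequence as the sequence. Second, and more substantively: what your argument actually shows is that $A_\infty$ coincides with the exactly invariant set $E$ modulo a $\Leb$-null set, hence that $A_\infty$ is invariant \emph{modulo null sets}. The theorem as stated asserts that $A_\infty$ itself ``is invariant under the horizontal flow.'' For the measure statement $\Leb(A_\infty)=\tfrac12$ this distinction is immaterial, and given the measure-theoretic context the mod-null reading is almost surely what is intended; but if literal set-theoretic invariance of $\liminf A_j$ were required, it would not follow from the weak-$*$ estimate and would need a separate geometric argument controlling, for every fixed $p\in A_\infty$ and every $t$, the parity of intersections of the segment $\Upsilon^{(p)}([0,t])$ with the slits $\sigma_j$ for all large $j$. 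It is worth flagging this explicitly rather than letting the two notions of invariance be conflated.
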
 

The statement will not be used in this paper and its proof is left to
the reader. 

\section{Tremors} \name{sec: tremors}
In this section we give a more detailed treatment of tremors and their
properties. 
\subsection{Definitions and basic properties}\name{subsec: tremor
  definitions} 
\subsubsection{Semi-continuity of foliation
  cocycles}\name{subsec: 
  foliation cycles}
  Let $q\in\HH$ represent a surface $M_q$ with horizontal foliation
  $\FF_q$. Recall from \S \ref{subsec: transverse} that the transverse
  measures (respectively, signed 
  transverse measures) define a cone $C^+_q$ of foliation cocycles
  (resp., a space  $\tremspace_q$ of signed foliation cocycles) and
  these are subsets of $H^1(M_q, \Sigma; \R_x)$.
For a marking map $\varphi: S \to M_q $ representing a marked
translation surface $\til q \in \pi^{-1}(q)$, the pullbacks
$\varphi^*(C_q^+)$ and $\varphi^*(\tremspace_q)$ are subsets of $H^1(S,
\Sigma; \R_x)$ and will be denoted by
$C_{\til q}^+$ and $\tremspace_{\til q}$.  Note that these notions are
well-defined even at orbifold 
points (i.e.\ do not depend on the choice of the marking map) because
translation equivalences map transverse measures to transverse
measures. Recall that 
$\beta \in C^+_q$ is called non-atomic if $\beta = \beta_\nu$ for a
non-atomic  transverse measure $\nu$.
We will mostly work with 
non-atomic transverse measures as described in \S \ref{subsec:
  transverse}, and for completeness explain the atomic  case in \S
\ref{sec: atomic tremors}.

Recall from \S \ref{subsec: atlas of charts} that for any $q$, the
tangent space $T_q(\HH)$ at $q$ 
is identified with $H^1(M_q, \Sigma_q; \R^2)$ (or with  $H^1(M_q,
\Sigma_q; \R^2)/\Gamma_q$ if $q$ is an orbifold point), and that a
marking map identifies the tangent space $T_{\til q_1}(\HHm)$
with $H^1(S, \Sigma; \R^2)$. The
following proposition expresses an 
important semi-continuity property for the cone of foliation cocycles.

\begin{prop}\name{prop: semicontinuity}
The set 
$$
C^+_\HH \df \left \{ (\til q , \beta) \in \HHm \times H^1(S, \Sigma; \R_x) : \beta
\in C^+_{\til q}  \right \}
$$\index{C+H@$C^+_\HH$}is closed. That is,
suppose $\til q_n \to \til q$ is a convergent sequence in 
$\HH_{\mathrm{m}}$, and let $C^+_{\til q_n}, C^+_{\til q} \subset H^1(S,
\Sigma; \R_x)$ be the corresponding cones. Suppose that $\beta_n \in H^1(S,
\Sigma; \R_x)$ is a convergent sequence such that $\beta_n \in C^+_{\til
  q_n}$ for every $n$. Then $\lim_{n\to \infty} \beta_n \in C^+_{\til
  q}$. 
\end{prop}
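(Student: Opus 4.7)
The plan is to realize each $\beta_n$ as the cocycle of a non-negative transverse measure $\nu_n$ on $\FF_{q_n}$, pass to the associated horizontal-flow invariant measures $\mu_n$ on $M_{q_n}$, extract a weak-$*$ subsequential limit after transporting everything to the model surface $S$, and recover from this limit a non-negative transverse measure $\nu$ on $\FF_q$ whose cocycle is $\beta$. This will yield $\beta \in C^+_{\til q}$ as required.

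More concretely, fix a geodesic triangulation $\tau$ of $S$ whose image under $\varphi_{\tau, \dev(\til q)}$ is a geodesic triangulation of $M_q$, and so that $\dev(\til q_n) \in U_\tau$ for all large $n$. Let $\nu_n \in C^+_{q_n}$ with $\beta_{\nu_n} = \beta_n$ and let $\mu_n$ be the corresponding $\FF_{q_n}$-invariant finite Borel measure on $M_{q_n}$ given by Proposition \ref{prop:trans to meas} (using the extension in \S \ref{sec: atomic tremors} if $\nu_n$ has admissible atoms). Transport $\mu_n$ to a finite Borel measure $\tilde \mu_n$ on $S$ via the inverse of $\varphi_{\tau, \dev(\til q_n)}$. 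On each triangle $T$ of $\tau$, the mass $\tilde \mu_n(T)$ is bounded by the product of the horizontal width of $T$ (an entry of $\hol_{\til q_n}$ on an edge of $T$) and the transverse total variation of $\nu_n$ across $T$, which in turn is bounded by values of $\beta_n$ on transverse edges of $\tau$. Since $\beta_n$ and $\dev(\til q_n)$ both converge, the masses $\tilde \mu_n(S)$ are uniformly bounded.

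Passing to a subsequence along which $\tilde \mu_n \to \tilde \mu$ weakly on $S$, push $\tilde \mu$ forward to $M_q$ via $\varphi_{\tau, \dev(\til q)}$ to get a finite Borel measure $\mu$ on $M_q$. Invariance of $\mu$ under $\FF_q$ follows from invariance of each $\tilde \mu_n$ under the pullback of $\FF_{q_n}$ to $S$, together with the fact that these pullbacks converge to the pullback of $\FF_q$ uniformly on compact subsets of $S \sm \Sigma$, which in turn follows from the continuity of the developing map in period coordinates. Then Proposition \ref{prop:trans to meas} (together with its atomic extension in \S \ref{sec: atomic tremors}) yields a non-negative transverse measure $\nu$ on $\FF_q$ associated to $\mu$. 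For any edge $e$ of $\tau$, weak-$*$ convergence gives $\beta_\nu(e) = \lim_n \beta_n(e) = \beta(e)$, and since the edges of $\tau$ generate $H_1(S, \Sigma; \Z)$, we conclude $\beta_\nu = \beta$, proving $\beta \in C^+_{\til q}$.

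The main obstacle is ensuring that the limiting measure $\mu$ fits the paper's framework for transverse measures. A priori, as $\til q_n \to \til q$, transverse arcs may degenerate and horizontal saddle connections may appear on $M_q$ that were absent from the $M_{q_n}$; mass could concentrate on such new horizontal objects in the limit. One must verify that any resulting atomic part of $\mu$ is supported on closed horizontal leaves or on closed loops built from horizontal saddle connections meeting at angles $\pm\pi$, i.e., precisely the admissible atomic class introduced in \S \ref{sec: atomic tremors}. This is forced by invariance of $\mu$ under $\FF_q$ and the structure of periodic objects of the horizontal flow on translation surfaces; a secondary check is that the evaluation pairing $\beta \mapsto \beta(e)$ behaves well under the piecewise affine comparison maps, which follows from their uniform continuity on compact subsets of $S \sm \Sigma$.
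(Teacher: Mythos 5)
Your high-level strategy---realize the $\beta_n$ as cocycles of transverse measures $\nu_n$, pass to the associated invariant measures, extract a weak-$*$ subsequential limit, and recover a transverse measure on $M_q$---is the same philosophy the paper uses, and the uniform mass bound and the invariance-passes-to-weak-limits step are both reasonable. But the paragraph you flag as ``the main obstacle'' is not a loose end to be checked; it is where essentially all of the work of the proof lives, and the way you dispatch it does not hold up.

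The specific gap is your assertion that ``weak-$*$ convergence gives $\beta_\nu(e) = \lim_n \beta_n(e) = \beta(e)$'' for edges $e$ of $\tau$. This fails for two compounding reasons. First, a geodesic triangulation of $M_q$ can have horizontal edges (horizontal saddle connections), and on such edges the transverse measures $\nu_n$ and $\nu$ are not even comparable objects; the paper avoids this by replacing the triangulation with an adapted polygon decomposition whose edges are all transverse to the horizontal foliation, and then refining it further (the $J, J_0, J'$ intervals of \S\ref{sec: atomic tremors}) so that the edges also avoid horizontal saddle connections. Second, even on a non-horizontal edge $e$, weak-$*$ convergence of $\nu_n|_e$ to a limit on $e$ does not give $\lim_n \nu_n(e) = \nu(e)$: mass can escape to the endpoints of $e$, and when $M_q$ has horizontal saddle connections this escape is exactly what produces the atomic part. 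The escaping mass contributes to $\beta = \lim \beta_n$ but not to the integral $\int_e d\nu$. Moreover, for the atomic part, the cohomology class $\beta_\nu$ is \emph{not} computed by integrating $\nu$ over transverse arcs; it is defined by Poincar\'e duality with closed horizontal leaves and with continuous extensions $\check\delta$ of primitive continuously extendible loops. So even if you knew which atoms arose, the equality $\beta_\nu(e) = \lim_n\beta_n(e)$ is not a weak-$*$ statement --- it is a cohomological identity that has to be established by relating the escaped mass to the duals of extendible loops.

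Your appeal to ``invariance of $\mu$ under $\FF_q$ and the structure of periodic objects'' does not supply what is needed. Invariance of the limiting measure does impose a flow-conservation constraint at singularities, but by itself it only controls the invariant measure $\mu$; it does not tell you how the lost mass is encoded in the cohomology class $\beta$. The paper's proof of Proposition~\ref{prop: semicontinuity} in \S\ref{sec: atomic tremors} does precisely this bookkeeping: it records the escape-of-mass numbers $\mathrm{e}_K^{\mathrm{b,t}}$ at the two endpoints of each refined-APD edge, proves (the Claim there) that these numbers depend only on the adjacency class of the prong and satisfy the conservation identity \eqref{eq: invariance endpoints}, uses item (3) of the Claim to rule out escape onto infinite critical leaves, and then traces the escaped mass forward from singularity to singularity to build primitive continuously extendible loops, culminating in the decomposition $\beta - \beta' = \sum_j c_j\beta^{(j)}$ of \eqref{eq: finally}. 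This decomposition --- not invariance of $\mu$ --- is what certifies $\beta \in C^+_{\til q}$. As written, your proposal stops exactly where that argument needs to begin. (For the special case where the $\nu_n$ are uniformly $c$-absolutely continuous there is no escape of mass and a much shorter argument suffices; this is Proposition~\ref{prop: uniform ac closed}.)
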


Proposition \ref{prop: semicontinuity} will be proved in \S
\ref{subsec: polygonal tremors} under an additional assumption  and in
\S \ref{sec: atomic tremors} in general. Note that care is required in
formulating an
analogous property for $\tremspace_q$ because $\dim
\tremspace_q$ can decrease when taking limits.  See Corollary \ref{cor: total
  variation continuous}. Also note the requirement $\til{q}
 \in \HHm$; our definitions of transverse measures are not well-suited to degenerations involving limiting to  $\til q$ in a boundary stratum.

\subsubsection{Signed mass, total variation, and balanced
  tremors}\name{subsec: length} We now define the {\em
  signed mass} and {\em total variation} of a signed foliation
cocycle. Recall from \S\ref{sec: basics} 
that $dx = (dx)_q$  denotes  the
canonical transverse measure for the vertical  
foliation on a translation surface $q$ and $\hol^{(x)}_q$ denotes the
corresponding element of $H^1(M_q, \Sigma_q; \R)$.
Given $q \in \HH$ and $\beta \in H^1(M_q,
\Sigma_q; \R)$, denote by 
$L_{q} (\beta)$ \index{Lq1@$L_q(\beta)$} the evaluation of the cup
product $\hol_{ q}^{(x)} \cup \beta$ on 
the fundamental class of $M_q$. 
In particular, if $\beta = \beta_\nu$ for a non-atomic signed
transverse measure $\nu$ then
\begin{equation}\label{eq: formula for L0}L_q(\beta) =\int_{M_q} dx \wedge \nu;
\end{equation}
or
equivalently, if $\mu = \mu_\nu$ is the horizontally invariant signed measure
associated to $\nu$ by Proposition \ref{prop:trans to meas}, then
$L_q(\beta) = \mu(M_q)$. 
We will refer to $L_q(\beta)$ as the {\em signed mass}\index{signed mass} of $\beta$. 
Our sign conventions imply that $L_q( \beta) >0$ for any nonzero $\beta \in
C^+_q$.

Note
that if 
$h : M_q \to M_q$ is a translation equivalence then $L_q(\beta)
= L_q (h^* (\beta))$. Thus, if $\til q \in \pi^{-1}(q)$ is a marked translation
surface represented by a marking map $\varphi$, and $\beta' \in H^1(S, \Sigma; \R)$ satisfies $\beta = \varphi_* \beta'$, then we can define $L_{\til
  q}(\beta') \df L_q(
\beta)$, and this definition 
does not depend on the choice of the marking map 
$\varphi$ representing $\til q$. In particular the mapping $(q, \beta) \mapsto
L_q(\beta)$ defines a map on  $T(\HH)$, even if $q$ lies in an orbifold
substratum.

 Recall that every signed measure and every signed transverse measure has
a canonical Hahn decomposition $\nu =
\nu^+-\nu^-$ as a difference 
of  measures. Thus  any $\beta \in
\tremspace_q$ can be written as $\beta = \beta^+-\beta^-$ where
$\beta^{\pm} \in C^+_q$. In analogy with the total variation of a measure we now define
\begin{equation}
|L|_q(\beta) = L_q(\beta^+)+L_q(\beta^-),
\end{equation}
\index{Lq1@$\vert L\vert_q(\beta)$}and call this the 
{\em total  variation}\index{total variation} of
$\beta$. Note that the signed mass is defined for every $\beta \in
H^1(M_q, \Sigma; \R)$ but the total variation is only defined for $\beta
\in \tremspace_q$. 
The linearity of the cup product implies that the maps
$$T(\HH) \to \R, \, (q, \beta)
\mapsto L_q(\beta) \  \ \ \mathrm{and } \ \  \ T(\HHm) \to \R, \,  (\til q,
\beta) \mapsto L_{ \til{q}}(\beta)$$ 
are both continuous. In combination with Proposition \ref{prop:
  semicontinuity}, this 
implies: 
\begin{cor}\name{cor: more continuity}
The sets
$$
C^+_{\HHm, 1} \df \{(\til q, \beta):
\beta \in C^+_{\til q}, \, L_{\til q }(\beta) =1 \} 
$$
and 
$$
C^+_{\HH, 1} \df \{(q, \beta):
\beta \in C^+_{q}, \, L_q(\beta)=1 \} 
$$
are closed, and thus define closed subsets of $T(\HHm) $ and $T(\HH) $. 
\end{cor}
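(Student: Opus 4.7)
The plan is to deduce both closedness statements from Proposition~\ref{prop: semicontinuity} together with the continuity of the signed mass functional $L$, handling the upstairs space $T(\HHm)$ first and then passing to the orbifold quotient $T(\HH)$.

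First I would treat $C^+_{\HHm,1}$. Let $(\til q_n, \beta_n)$ be any sequence in $C^+_{\HHm,1}$ converging in $T(\HHm) = \HHm \times H^1(S,\Sigma;\R^2)$ to some $(\til q, \beta)$. Write $\beta_n = \beta_n^{(x)} + \beta_n^{(y)}$ under the splitting~\equ{eq: x and y splitting model surface}; by definition $\beta_n^{(y)} = 0$ and $\beta_n^{(x)} \in C^+_{\til q_n}$, hence the limit satisfies $\beta \in H^1(S,\Sigma;\R_x)$. Proposition~\ref{prop: semicontinuity} (applied to the sequence $(\til q_n,\beta_n)$ in the closed set $C^+_\HH$, using the notation there) yields $\beta \in C^+_{\til q}$. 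The continuity of the map $(\til q,\beta) \mapsto L_{\til q}(\beta)$ on $T(\HHm)$, which follows immediately from the fact that $L_{\til q}(\beta)$ is the cup product pairing of $\beta$ with the continuous class $\hol^{(x)}_{\til q}$, then forces $L_{\til q}(\beta) = \lim_n L_{\til q_n}(\beta_n) = 1$. Hence $(\til q,\beta) \in C^+_{\HHm,1}$ and the set is closed.

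Next I would deduce that $C^+_{\HH,1}$ is closed. The cleanest route is to observe that the quotient description $T(\HH) = T(\HHm)/\Mod(S,\Sigma)$ equips $T(\HH)$ with the quotient topology, so a subset is closed in $T(\HH)$ iff its preimage under the quotient map $T(\HHm) \to T(\HH)$ is closed. Both the cones $C^+_{\til q}$ and the signed mass $L_{\til q}(\beta)$ are $\Mod(S,\Sigma)$-equivariant in the sense that $\gamma \cdot C^+_{\til q} = C^+_{\gamma\til q}$ and $L_{\gamma\til q}(\gamma^*\beta) = L_{\til q}(\beta)$, because translation equivalences send transverse measures to transverse measures and preserve the pairing against $dx$. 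Consequently the preimage of $C^+_{\HH,1}$ in $T(\HHm)$ is precisely $C^+_{\HHm,1}$, which is closed by the first step.

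The only point that requires a moment's care is the equivariance of $L$ and of the cones under $\Mod(S,\Sigma)$; this is essentially tautological once one recalls that both quantities were defined on $\HH$ first and then pulled back to $\HHm$ via any marking map, so there is no real obstacle. Finally, the inclusions into $T(\HHm)$ and $T(\HH)$ are just the observation that vectors in $C^+_{\til q}$, viewed as elements of $H^1(S,\Sigma;\R_x) \hookrightarrow H^1(S,\Sigma;\R^2)$, lie in the tangent fibers at $\til q$ (respectively $q$), so the closed sets $C^+_{\HHm,1}$ and $C^+_{\HH,1}$ are indeed closed subsets of the tangent bundles.
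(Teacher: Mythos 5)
Your proof is correct and takes essentially the same approach the paper intends: upstairs, $C^+_{\HHm,1}$ is the intersection of the closed set $C^+_\HH$ from Proposition~\ref{prop: semicontinuity} with the level set of the continuous signed-mass map $L$, and downstairs the closedness of $C^+_{\HH,1}$ is obtained by passing to the $\Mod(S,\Sigma)$-quotient. The equivariance bookkeeping for $L$ and the cones under the mapping class group action, which you flag as the one point needing care, is handled correctly.
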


The following special case will be important in the proofs of Theorem
\ref{thm: 1}
and Theorem \ref{thm: 2}. 

\begin{cor}\name{cor: semicontinuity} 
 Let $q \in \HH$, and denote its canonical foliation cocycle by $\hol_q^{(y)}$. Suppose the  underlying translation surface $M_q$ has area one and is
horizontally uniquely 
ergodic.  Then for
any sequence $q_n \in \HH$ such that $q_n\to q$, and any $\beta_n \in
C^+_{q_n}$ with $L_{q_n}(\beta_n)=1$,  
we have $ \beta_n \to \hol_q^{(y)}$. 
\end{cor}

The total variation of a foliation cocycle also has a semicontinuity property:

\begin{cor}\name{cor: total variation continuous}
Suppose $\til q_n \to \til q$ in $\HHm$ and $\beta_n \in
\tremspace_{\til q_n} \subset H^1(S, \Sigma; \R)$ is a
sequence of non-atomic signed foliation cocycles for which the limit $\beta =
\lim_{n \to \infty} \beta_n$ exists and $\sup_n |L|_{\til q_n}(\beta_n) <
\infty.$ Then $\beta \in \tremspace_{\til q}$ and
\eq{eq: limit L}{
|L|_{\til q}(\beta) {\leq }
\liminf_{n \to \infty}
|L|_{\til q_n}(\beta_n). 
}
\end{cor}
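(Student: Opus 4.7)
The plan is to combine the Hahn decomposition with the semi-continuity of the cone of foliation cocycles from Proposition \ref{prop: semicontinuity} and the continuity of the signed mass functional $L$. First, for each $n$, I would write the Hahn decomposition $\beta_n = \beta_n^+ - \beta_n^-$ with $\beta_n^\pm \in C^+_{\til q_n}$, so that $L_{\til q_n}(\beta_n^+) + L_{\til q_n}(\beta_n^-) = |L|_{\til q_n}(\beta_n)$. The hypothesis $\sup_n |L|_{\til q_n}(\beta_n) < \infty$ then yields a uniform bound on the masses $L_{\til q_n}(\beta_n^\pm)$.

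Next I would extract a convergent subsequence. Since $L$ is continuous on $T(\HHm)$ and $\til q_n \to \til q$, the uniform mass bound combined with the fact that $L$ is proper on the closed cone $C^+_{\til q}$ (a finite-dimensional closed cone in $H^1(S,\Sigma;\R)$ on which $L$ is a non-negative linear functional that is strictly positive on nonzero elements) gives uniform boundedness of $\beta_n^\pm$ in $H^1(S,\Sigma;\R)$. Passing to a subsequence, I obtain $\beta_n^\pm \to \alpha^\pm$; by Proposition \ref{prop: semicontinuity}, the limits lie in $C^+_{\til q}$. Consequently $\beta = \alpha^+ - \alpha^- \in \tremspace_{\til q}$, which establishes the first assertion of the corollary, and by continuity of $L$ we have $L_{\til q_n}(\beta_n^\pm) \to L_{\til q}(\alpha^\pm)$, so
\[
\lim_{n \to \infty} |L|_{\til q_n}(\beta_n) = L_{\til q}(\alpha^+) + L_{\til q}(\alpha^-).
\]
Since the Hahn decomposition of $\beta$ at $\til q$ is minimal among all decompositions $\beta = \gamma^+ - \gamma^-$ with $\gamma^\pm \in C^+_{\til q}$, one direction $|L|_{\til q}(\beta) \leq \lim |L|_{\til q_n}(\beta_n)$ is immediate, giving lower semi-continuity.

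The main obstacle is the reverse inequality, showing that $\alpha^+ - \alpha^-$ is in fact the Hahn decomposition of $\beta$ at $\til q$, i.e.\ that no cancellation $\alpha^\pm = \beta^\pm_{\mathrm{Hahn}} + \gamma$ with a nonzero $\gamma \in C^+_{\til q}$ can occur in the limit. The key tool here is Katok's injectivity result (Proposition \ref{prop:transverse measures}): at each $\til q_n$, the positive and negative parts $\beta_n^\pm$ correspond to mutually singular non-atomic transverse measures, and I would argue that this mutual singularity passes to the limit because non-atomic transverse measures are uniquely determined by their cohomology classes. More concretely, the disjointness of the supports of $\mu_{\nu_n^+}$ and $\mu_{\nu_n^-}$ as invariant measures on $M_{\til q_n}$, combined with the uniqueness from Proposition \ref{prop:transverse measures}, should imply that the limit measures $\mu_{\alpha^+}, \mu_{\alpha^-}$ remain mutually singular on $M_{\til q}$, so $\alpha^+ - \alpha^-$ is already the Hahn decomposition of $\beta$ and the corresponding total variation matches. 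Finally, since any subsequence of $\{|L|_{\til q_n}(\beta_n)\}$ has a further subsequence yielding the common value $|L|_{\til q}(\beta)$, the full limit exists and equals $|L|_{\til q}(\beta)$.
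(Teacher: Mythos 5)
Your proof follows essentially the same outline as the paper's: Hahn decompose $\beta_n = \beta_n^+ - \beta_n^-$, use the mass bound to extract a convergent subsequence, identify the limits $\alpha^\pm$ as lying in $C^+_{\til q}$ via Proposition \ref{prop: semicontinuity}, and then try to recognize $\alpha^+ - \alpha^-$ as the Hahn decomposition at $\til q$. The paper works with the invariant measures $\mu_n^\pm$ directly, passing to weak-$*$ limits via comparison maps, whereas you work with the cohomology classes; these formulations are equivalent. The parts of your argument establishing $\beta \in \tremspace_{\til q}$ and the lower semicontinuity inequality $|L|_{\til q}(\beta) \leq \lim_j |L|_{\til q_{n_j}}(\beta_{n_j})$ along the subsequence are fine.

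The gap is in the final step, which you correctly flag as the main obstacle. The proposal to deduce mutual singularity of the limits from Proposition \ref{prop:transverse measures} does not work: Katok's result is an injectivity statement for the map $\nu \mapsto \beta_\nu$ on a single fixed surface, and it says nothing about whether mutual singularity survives a weak-$*$ limit of measures pulled back across a varying family of surfaces. In fact mutual singularity genuinely need not pass to the limit here. If $\til q$ has a uniquely ergodic horizontal foliation, Corollary \ref{cor: semicontinuity} forces any two sequences $\gamma_n^\pm \in C^+_{\til q_n}$ with $L_{\til q_n}(\gamma_n^\pm)=1$ to converge to the same class $\hol_q^{(y)}$, so the corresponding limit measures coincide with $\Leb$ rather than being mutually singular. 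Taking $\til q_n \to \til q$ through minimal non-uniquely-ergodic surfaces and letting $\beta_n^\pm$ be the two normalized ergodic foliation cocycles, one gets $\beta_n \to 0$ while $|L|_{\til q_n}(\beta_n) \equiv 2$, so the identification of $\alpha^+ - \alpha^-$ with the Hahn decomposition of $\beta$ is simply not available in general. You should also be aware that the paper's own proof is very terse at exactly this point: it passes from ``the $\nu^\pm$ are (non-negative) transverse measures'' to ``$\nu = \nu^+ - \nu^-$ is the Hahn decomposition of $\nu$'' in a single clause, which is the same leap. The difficulty you ran into is therefore a real one; what your argument (and the paper's) does cleanly establish is the lower semicontinuity $|L|_{\til q}(\beta) \leq \liminf_n |L|_{\til q_n}(\beta_n)$, and this is the direction actually invoked in \S\ref{sec: spiky fish}.
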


Corollary \ref{cor: total variation continuous} will also be proved in
\S \ref{subsec: polygonal tremors} under an additional assumption, and the proof in the general case will be given in \S \ref{sec: atomic tremors}.

We say that $\beta \in \tremspace_q$ is {\em balanced}\index{balanced}
if $L(\beta)=0$, 
and we let $\tremspace_q^{(0)}$ denote the set of balanced signed 
foliation cocycles. 
\index{T@$\tremspace_q^{(0)}$}
Combining Corollary \ref{prop: KZ over E} and Proposition \ref{prop: involution on E and measures},  for surfaces in $\EE$ we see that balanced foliation cocycles are those that are `normal' to $\EE$:

  \begin{cor} \name{cor: balanced tremors are in B-} Let
    $\mathcal{O}$ be an orbifold substratum of $\HH$ and $q\in
    \mathcal{O}$. Then $\tremspace_q\cap \mathscr{N}_x(\mathcal{O})
    \subset \tremspace_q^{(0)}$, with equality in the case
    $\mathcal{O} = \EE$; namely, 
if $q \in \EE$ is aperiodic then  $\tremspace_q^{(0)}=\mathscr{N}_x(\EE)$.    
   \end{cor}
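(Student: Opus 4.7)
The plan is to prove the general inclusion and the reverse inclusion for $\EE$ separately, with the central observation being that $L_q$ is invariant under the local group $\mathcal{G}_q$.

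\emph{General inclusion.} Each $\gamma\in\mathcal{G}_q$ is represented by a translation equivalence of $M_q$, hence preserves the horizontal $1$-form $dx$ (so that $\gamma^*\hol_q^{(x)}=\hol_q^{(x)}$) and the fundamental class $[M_q]$. Thus for every $\beta\in H^1(M_q,\Sigma;\R)$,
\[L_q(\gamma^*\beta)=(\gamma^*\hol_q^{(x)}\cup\gamma^*\beta)([M_q])=(\hol_q^{(x)}\cup\beta)(\gamma_*[M_q])=L_q(\beta).\]
Averaging over $\mathcal{G}_q$ and using linearity yields $L_q(\beta)=L_q(P^+\beta)$. Consequently, if $\beta\in\tremspace_q\cap\mathscr{N}_x(\mathcal{O})=\tremspace_q\cap\ker P^+$, then $L_q(\beta)=L_q(0)=0$, i.e., $\beta\in\tremspace_q^{(0)}$.

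\emph{Equality when $\mathcal{O}=\EE$ with $q\in\EE$ aperiodic.} Here $\mathcal{G}_q=\{1,\iota\}$ and $\mathscr{N}_x(\EE)$ is the $(-1)$-eigenspace of $\iota^*$ on $H^1(M_q,\Sigma;\R_x)$. I would use Proposition \ref{prop: involution on E and measures} to describe $\tremspace_q$ explicitly. If the horizontal foliation on $M_q$ is uniquely ergodic, then $\tremspace_q=\R (dy)_q$ and $L_q((dy)_q)=\mathrm{area}(M_q)\neq 0$, so $\tremspace_q^{(0)}=\{0\}\subset\mathscr{N}_x(\EE)$ trivially. Otherwise there are exactly two ergodic transverse measures (up to scaling) $\nu_1,\nu_2=\iota_*\nu_1$ satisfying $\nu_1+\nu_2=c\,(dy)_q$, so setting $\alpha=\nu_1-\nu_2$ we have $\tremspace_q=\R (dy)_q\oplus\R\alpha$, where $(dy)_q$ is $\iota$-invariant and $\alpha$ is $\iota$-anti-invariant. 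Since $\iota$ preserves total mass, $L_q(\alpha)=\mu_{\nu_1}(M_q)-\mu_{\nu_2}(M_q)=0$ while $L_q((dy)_q)\neq 0$, hence $\tremspace_q^{(0)}=\R\alpha\subset\mathscr{N}_x(\EE)$. Combined with the general inclusion this gives $\tremspace_q^{(0)}=\tremspace_q\cap\mathscr{N}_x(\EE)$.

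The statement ``$\tremspace_q^{(0)}=\mathscr{N}_x(\EE)$'' should be understood as this latter equality with $\tremspace_q\cap\mathscr{N}_x(\EE)$: Katok's bound gives $\dim\tremspace_q\leq 2$, whereas $\mathscr{N}_x(\EE)$ is two-dimensional and contains no multiple of the $\iota$-invariant element $(dy)_q\in\tremspace_q$, so a literal set-theoretic equality cannot hold. Once this convention is fixed, the only substantive step beyond the $\mathcal{G}_q$-invariance of $L_q$ is the explicit description of $\tremspace_q$ supplied by Proposition \ref{prop: involution on E and measures}.
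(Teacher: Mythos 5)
Your proof is correct and follows essentially the same route as the paper: the general inclusion comes from the $\mathcal{G}_q$-invariance of $L_q$ (via the cup-product computation), and the $\EE$ case rests on Proposition \ref{prop: involution on E and measures}. The only cosmetic difference is that the paper argues directly that every $\beta\in\tremspace^{(0)}_q$ satisfies $\iota_*\beta=-\beta$ (by splitting $\mu_\nu$ into ergodic pieces and deducing $\iota_*\mu=-\mu$), while you first describe $\tremspace_q$ explicitly as $\R(dy)_q$ or $\R(dy)_q\oplus\R\alpha$ and then identify $\ker L_q$; these are the same calculation viewed from two directions. Your remark that the final displayed identity ``$\tremspace_q^{(0)}=\mathscr{N}_x(\EE)$'' cannot be a literal set equality is well taken: $\dim\mathscr{N}_x(\EE)=2$ while $\dim\tremspace_q^{(0)}\le 1$, and the paper's own proof in fact only establishes $\tremspace_q^{(0)}\subset\mathscr{N}_x(\EE)$, which together with the general inclusion yields precisely the equality $\tremspace_q^{(0)}=\tremspace_q\cap\mathscr{N}_x(\EE)$ that you state.
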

\combaraknew{Did fairly extensive editing in the next proof.}

    \begin{proof}
      Let $q \in \mathcal{O}$, let $\Gamma_q$ be the group of
      translation equivalences of $M_q$, let $\mathcal{G} \df
      \mathcal{G}_q$ be the local group as in \S 
      \ref{subsec: orbifold} and let 
      $\gamma \in \mathcal{G}$. Recall that $\Gamma_q$ and
      $\mathcal{G}$ are isomorphic and by fixing a marking map, we
      can think of $\gamma$ simultaneously as acting on $M_q$ by
      translation automorphisms, and on $H^1(S, \Sigma; \R^2)$ by the
      natural map induced by a homeomorphism. 
      Since translation automorphisms of $M_q$ preserve the canonical transverse measure
      $(dx)_q$, we have $\gamma^* \hol^{(x)}_q= \hol^{(x)}_q$, and
      thus for any $\beta$,
      \[
        \begin{split}
 L_q(\gamma^* \beta) = & (\hol_q^{(x)} \cup \gamma^* \beta) (M_q) 
=  (\hol_q^{(x)} \cup \beta)
(\gamma(M_q)) \\ = & (\hol_q^{(x)} \cup \beta)(M_q) = L_q(\beta).
      \end{split}\]
Hence, if $\beta \in \tremspace_q \cap \mathscr{N}_x(\mathcal{O})$
then $P^+(\beta)=0$, where  $P^+$ is the projection onto the tangent space of $\mathcal{O}$ given in \equ{eq: formulae projections}, and we have 
\[
  L_q(\beta) 
    =\frac{1}{|\mathcal{G}|} \sum_{\gamma\in
    \mathcal{G}}L_q(\gamma^*(\beta)) =L_q\left(\frac{1}{|\mathcal{G}|}
    \sum_{\gamma\in \mathcal{G}}\gamma^*(\beta)\right)
  =L_q(P^+(\beta)) 
=0.
\]
Therefore $\beta \in \tremspace_q^{(0)}.$ 
 
  Now if $q\in \EE$ is aperiodic and $\beta \in \tremspace^{(0)}_q$,
  then we can write $\beta = \beta_\nu$ for a signed transverse
  measure $\nu$, and let $\mu = \mu_\nu$ be the 
 associated horizontally invariant signed measure (see
 Proposition \ref{prop:trans to meas}). Since $\beta \in
 \tremspace^{(0)}_q$ we have $\mu(M_q)=0$. Recall from Proposition
 \ref{prop: involution on E and measures} that aperiodic surfaces in
 $\EE$ are either uniquely ergodic, or have two ergodic measures which
 are exchanged by the involution $\iota = \iota_q$. By ergodic
 decomposition (applied to each summand in 
 $\mu = \mu^+-\mu^-$) we can write
 $\mu$ as a linear combination of ergodic measures (where the coefficients
 may be negative). If $M_q$ is uniquely ergodic then this
 gives $\mu = c \cdot \Leb$ and since $\mu(M_q) =0$ we have $\mu=0$. If
 $M_q$ has two ergodic probability measures $\mu_1$ and $\mu_2 = \iota_* \mu_1$
 then $\mu = c_1 \mu_1 + c_2 \iota_* \mu_1$ and
 $$
0 = \mu(M_q) = c_1 \mu_1(M_q) + c_2 \mu_1 (\iota (M_q)) = c_1 +c_2,
$$
so $c_1 = -c_2$. In both cases we obtain $\iota_* \mu = -\mu$, which
implies $\iota_* \beta = -\beta$. Thus, using Corollary \ref{prop:
  KZ over E}, we see that $\beta \in \mathscr{N}_x(\EE)$.
\end{proof}

\subsubsection{Absolutely continuous foliation cocycles}\name{subsec: ac}
Let $\nu_1$ and $\nu_2$ be two signed transverse measures for $\FF_q$.
We say that {\em $\nu_1$ is
absolutely continuous with respect to $\nu_2$}\index{absolute
continuity} if the corresponding signed
measures $\mu_{\nu_1}, \mu_{\nu_2}$ given by Proposition
\ref{prop:trans to meas} satisfy $\mu_{\nu_1} \ll \mu_{\nu_2}$. We say
that {\em $\nu$ is absolutely continuous} if it is absolutely 
continuous with respect to the canonical transverse measure
$(dy)_q$. Since $(dy)_q$ is non-atomic, so is any absolutely 
continuous signed transverse measure.  For $c>0$, we say  $\nu$ is
{\em $c$-absolutely continuous} 
\index{c absolutely continuous@$c$-absolutely continuous}
if 
\eq{eq: equivalently}{
\text{for any
transverse  arc } \gamma \text{ on } M_q, \ \ \ \left| \int_\gamma d\nu \right | \leq c
\left | \int_\gamma dy \right |.
} 
We call a signed foliation cocycle $\beta = \beta_\nu$  {\em absolutely
  continuous} (respectively, {\em $c$-absolutely continuous}) if it
corresponds to a signed transverse measure $\nu$ which 
is absolutely continuous (resp., $c$-absolutely
continuous). Let $\|\nu\|_{RN}$ denote the minimal $c$
such that the above equation holds for all transverse arcs
$\gamma$\index{Cq+@$\|\nu\|_{RN}$} (our notation stems from the fact
that $\|\nu\|_{RN}$ is the 
$L^{\infty}$-norm of the Radon-Nikodym derivative $\frac{d\mu_\nu}{d
  \Leb}$, although we will not be using this in the sequel).
Given $q \in \HH$ and $c>0$, denote by
$C^{+,RN}_q(c)$\index{C@$C^{+,RN}_q(c)$} (respectively, by
$\tremspace^{RN}_q(c)$\index{T@$\tremspace^{RN}_q(c)$})
the set of 
absolutely continuous (signed) foliation cocycles $\beta_\nu$ with $\|\nu\|_{RN} \leq
c$.

\begin{remark}
As the reader will note, we will use both $|L|_q(\beta)$ and $\|\nu\|_{RN}$ to measure the `size' of a foliation cocycle $\beta = \beta_\nu.$ 
For most purposes in this paper, $|L|_{q}$ is easier to work with. Additionally, it is more broadly defined, making sense when the tremor corresponds to a singular measure. However, $\|\cdot\|_{RN}$ is more suitable for estimates involving the distance function $\dist$ (see Proposition \ref{prop: lipschitz tremors}) and plays an essential role in the proof of Proposition \ref{prop: proper on c-ac}.
\end{remark}
It is easy to see that
\eq{eq: bound on length}{C^{+, RN}_q(c) \subset \{\beta \in C^+_q:
  L_q(\beta) \leq c\}}
and
\eq{eq: bound on length signed}{
\tremspace^{RN}_q(c) \subset \{\beta \in \tremspace_q : |L|_q(\beta) \leq c\}.
}

As we will see in Lemma \ref{lem: automatic absolutely continuous},
for some surfaces we will also have a reverse inclusion.

We now observe that for aperiodic surfaces, the assumption of absolute
continuity implies a bound on the Radon-Nikodym derivative:

\begin{lem}\name{lem: absolutely continuous}
Suppose $M_q$ is a horizontally aperiodic surface, $\nu$ is an
absolutely continuous 
transverse measure, and 
$\mu = \mu_\nu$ is the corresponding measure  on $M_q$, so that $\mu
\ll
\Leb$. Then there is $c>0$
such that $\|\nu\|_{RN} \leq c$.
Moreover the constant $c$ depends only on the coefficients  
appearing in the ergodic decompositions of $\mu$ and $\Leb$, and
if $\mu$ is a probability measure and $\Leb = \sum a_i \nu_i,$
where $\{\nu_i\}$ are the horizontally invariant ergodic probability
measures and each $a_i$ is positive, then $\|\nu\|_{RN}  \leq\max_i
\frac{1}{a_i}.$ The same conclusions hold if instead of assuming $M_q$
is aperiodic,\index{aperiodic measure} we assume the measure $\nu$ is
aperiodic, that is  
$\mu$ assigns zero measure to any horizontal cylinder on $M_q$. 
\end{lem}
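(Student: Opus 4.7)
The plan is to derive the bound from the ergodic decomposition of the horizontally invariant measure $\mu_\nu$ associated to $\nu$. By Proposition \ref{prop:trans to meas}, $\nu$ corresponds to a horizontally invariant measure $\mu$ on $M_q$, and absolute continuity of $\nu$ with respect to $(dy)_q$ translates to $\mu \ll \Leb$. Since $M_q$ is horizontally aperiodic, the classical finiteness theorem of Katok, Masur and Veech guarantees that the set of ergodic horizontally invariant probability measures is finite, say $\{\nu_1, \ldots, \nu_n\}$, with pairwise mutually singular supports $A_1, \ldots, A_n$, and $\Leb = \sum_i a_i \nu_i$ with all $a_i > 0$. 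The ergodic decomposition of $\mu$ then yields $\mu = \sum_i b_i \nu_i$ with $b_i \geq 0$: any ergodic component of $\mu$ must itself be absolutely continuous with respect to $\Leb$, and by ergodicity the only such horizontally invariant probabilities are the $\nu_i$ themselves.

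From these decompositions, the Radon--Nikodym derivative $f = d\mu/d\Leb$ is $\Leb$-a.e.\ equal to $b_i/a_i$ on $A_i$, so $f \leq c$ with $c \df \max_i (b_i/a_i)$. To convert this pointwise bound into the transverse arc inequality \eqref{eq: equivalently}, I would apply Proposition \ref{prop:trans to meas} in reverse: for a vertical transverse arc $v$ and a short horizontal segment $h$ of length $\epsilon$, the rectangle $R_\epsilon$ with sides $v$ and $h$ satisfies $\mu(R_\epsilon) = \nu(v) \cdot \epsilon$ and $\Leb(R_\epsilon) = (dy)(v) \cdot \epsilon$. The inequality $\mu \leq c \Leb$ then gives $\nu(v) \leq c \cdot (dy)(v)$ for every vertical arc $v$, and for a general transverse arc $\gamma$ one uses invariance of $\nu$ and $(dy)_q$ under isotopy through transverse arcs to deform $\gamma$ to a vertical segment, obtaining $|\nu(\gamma)| \leq c \cdot |(dy)(\gamma)|$. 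Hence $\|\nu\|_{RN} \leq c$. When $\mu$ is a probability measure, $\sum_i b_i = 1$ forces $b_i \leq 1$, so $c \leq \max_i (1/a_i)$.

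For the aperiodic measure variant, $M_q$ may carry horizontal cylinders and $\Leb$ includes ergodic components supported on each of them; but since $\mu$ assigns zero mass to every horizontal cylinder, none of these cylinder components appear in the ergodic decomposition of $\mu$, and the argument above goes through verbatim with the index $i$ restricted to non-cylinder ergodic measures. The main obstacle I foresee is the reduction from an arbitrary transverse arc $\gamma$ to a vertical segment: one needs to check that the straightening isotopy can be chosen to avoid singularities and remain through transverse arcs, so that both $\nu$ and $(dy)_q$ are preserved along the way. Given the regularity properties of transverse measures codified in Proposition \ref{prop:trans to meas}, this should follow by a local chart argument combined with a compactness and approximation step on $\gamma$.
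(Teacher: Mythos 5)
Your argument is essentially the same as the paper's: finiteness of ergodic horizontally invariant measures (Katok--Masur--Veech), ergodic decomposition of both $\mu$ and $\Leb$, and $c = \max_i b_i/a_i$ as the pointwise bound on $d\mu/d\Leb$. The only divergence is in the final step — you recover \equ{eq: equivalently} from the pointwise bound via a rectangle-and-local-straightening argument, whereas the paper observes directly that horizontal invariance plus \equ{eq: formula above} identifies $d\nu/(dy)_q$ along each transverse arc with $d\mu/d\Leb$; both routes work, and the piecewise-local straightening you flag as the remaining issue is indeed how one would avoid the (false) claim that an arbitrary transverse arc is globally isotopic to a single vertical segment.
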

\begin{proof}
Let $\{\mu_1, \ldots, \mu_d\}$ be the invariant ergodic 
probability measures for the  horizontal straightline flow on
$M_q$. Since $M_q$ is horizontally aperiodic, this is a finite
collection, see e.g. \cite{transverse measures}. Thus
there only finitely many ergodic measures 
which are absolutely continuous with respect to $\mu$, and we denote
them by $\{\mu_1, \ldots, 
\mu_{k}\}$. The measures $\mu_i$ are mutually singular. 
Write $\Leb = \sum_i a_i \mu_i$ and $\mu = \sum_i b_i
\mu_i$, where all $a_i, b_j$ are
non-negative and not all are zero. Since  $\mu \ll 
\Leb$, we have 
$$
b_i > 0 \implies a_i > 0.
$$
Set
\eq{eq: formula for ac const}{
c \df \max \left\{\frac{b_i}{a_i} : b_i \neq 0\right\}.}
For any Borel set $A \subset M_q$ we have 
$$
\mu(A) = \sum_i b_i \mu_i (A) \leq  c \sum_i a_i
\mu_i(A)= c \, \Leb(A). 
$$
This implies that the Radon Nikodym derivative satisfies
$\frac{d\mu}{d \Leb} \leq c$ a.e. The horizontal invariance of $\mu$
and $\Leb$ shows that the Radon-Nikodym derivative $\frac{d\mu}{d \Leb}$
is defined on almost every point of every transverse arc $\gamma$, and
the relation \equ{eq: formula above} shows that it coincides with the 
Radon-Nikodym derivative 
$\frac{d\nu}{(dy)_q}$. Thus we get \equ{eq: equivalently}.

The second
assertion follows from \equ{eq: formula for ac const}, and the last
assertion follows by letting $\mu_i$ denote the horizontally invariant
measures on the complement of the union of the horizontal cylinders in $M_q$, and
repeating the argument given above. 
\end{proof}

\subsubsection{Tremors as affine geodesics, and their domain of
  definition}\label{sec: trem ode} 
Recall  from \S \ref{subsec: atlas of charts} that we
identify $T(\HHm)$ with $\HHm \times H^1(S,\Sigma,\mathbb{R}^2)$. 
 Our
particular interest is in affine geodesics tangent to signed foliation
cocycles. That is, we take $$\beta \in \tremspace_{\til q} \subset
H^1(S,\Sigma;\R_x)$$ (where the last inclusion uses a marking map
$\varphi : S \to M_q$ representing $\til
q$). We write $v = (\beta, 0) \in H^1(S, \Sigma; \R^2)$ and consider
the parameterized line $\theta(t) = \theta_{\til q,v}(t)$ in 
$\HHm$ satisfying 
\eq{eq: linear equation}{\theta(0) = \til q \  \text{ and  } \ \frac{d}{dt} \theta(t)=v}
(where we have again used the marking to identify the tangent space
$T_{\theta(t)}(\HHm)$ with $
H^1(S, \Sigma; \R^2)$). 
By the uniqueness of
solutions of differential equations, these equations uniquely define
the affine geodesic $\theta(t)$ for $t$ in the maximal domain of definition
$\Dom(\til q, v)$.
As in the introduction we now have $\trem_{
  t,\beta}(\til q) = \theta(t)$ and 
  $\trem_{\beta}(\til q)  = \theta(1)$ when $1 \in \Dom (\til q, \beta)$.
Equation \eqref{eq: linear equation} and uniqueness of solutions imply that for $c>0$ we have $\theta_{\til q, cv}(t)=\theta_{\til q,v}(ct)$ and $\Dom(\til q, v) = c \, \Dom(\til q, cv).$ In particular $\trem_{t,c\beta}(\til q) =\trem_{ct,\beta} (\til q)$ and thus $\trem_{t, \beta}(\til q) = \trem_{t\beta}(\til q).$
\index{trem@$\trem_{\beta}(q)$}
\index{trem@$\trem_{t,\beta}(q)$}

Since the developing map is affine, we find 
\eq{eq: tremor}{
\hol^{(x)}_{\trem_\beta(\til q)}(\gamma) =
\hol^{(x)}_{\til q}(\gamma)+\beta (\gamma), \ \ 
\hol^{(y)}_{\trem_\beta(\til q)}(\gamma) = \hol^{(y)}_{\til  
  q}(\gamma).}  
 Comparing equations \equ{eq:
  tremor} and
\equ{eq: tremor3}, we see that we have
given a formal definition of the tremors introduced in \S \ref{subsec: tremors intro}. 

 The pure mapping class group $\Mod(S,\Sigma)$ acts 
on each coordinate of $T(\HHm)=\HHm \times
H^1(S,\Sigma,\mathbb{R}^2)$, and by equivariance we find that
$$\trem_\beta(q) = \pi(\trem_{\beta}(\til q)) \ \ \text{ and } \ \ \Dom(q,
\beta) \df \Dom (\til q, \beta)$$ 
are  well-defined and
independent of the choice of $\til q \in \pi^{-1}(q)$. 

Basic properties of ordinary differential equations now
give us:

\begin{prop}\name{prop: continuity}
The set 
$$\mathcal{D} =\{(\til q, v, s) \in T(\HHm) \times \R :  s \in \Dom(\til q, v)\}$$ 
\index{D@$\mathcal{D}$}
is open in $T(\HHm) \times \R$, and the map 
$$
\mathcal{D} \ni (\til q, v, s) \mapsto \theta_{\til q, v}(s)
$$ is continuous. In particular the tremor map 
$$\{(\til q , \beta) \in T\HHm :
\beta \in \tremspace_q\} 
 \to \HHm, \ \ (\til q, \beta) \mapsto
\trem_{\til q, \beta} $$
is continuous where defined. 
\end{prop}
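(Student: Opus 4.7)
The plan is to identify the affine geodesic flow on $\HHm$ with the flow of a smooth vector field on the tangent bundle $T(\HHm)$, and then invoke the standard theorem on openness of the maximal flow domain and continuous dependence on initial conditions for ODEs. Under the identification $T(\HHm) = \HHm \times H^1(S, \Sigma; \R^2)$ from \S\ref{subsec: atlas of charts}, the affine geodesic flow is generated by the vector field $X(\til q, v) \df (v, 0)$, which in each period chart is constant (hence $C^\infty$). Comparing with the defining equations \equ{eq: tremor}, the tremor path $\trem_{\til q, \beta}(s)$ is exactly the time-$s$ orbit of $X$ starting at $(\til q, (\beta, 0))$, so continuity of the tremor map will follow from continuity of the ambient affine geodesic flow.

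To establish openness of $\mathcal{D}$ and continuity of $(\til q, v, s) \mapsto \til q_v(s)$, I would fix $(\til q_0, v_0, s_0) \in \mathcal{D}$ and consider the compact arc $K \df \{(\til q_0)_{v_0}(s) : s \in [0, s_0]\}$ (with the analogous interval if $s_0 < 0$). Cover $K$ by finitely many period coordinate charts $V_{\tau_1}, \ldots, V_{\tau_n}$. In each chart the flow is literal translation $(\til q, v, s) \mapsto (\til q + sv, v)$, which is jointly continuous in all three arguments. A standard patching argument then proceeds by choosing a partition $0 = t_0 < t_1 < \cdots < t_n = s_0$ with $\{(\til q_0)_{v_0}(s) : s \in [t_{i-1}, t_i]\} \subset V_{\tau_i}$, and inductively using continuity in $V_{\tau_i}$ to extend from an open neighborhood of the time-$t_{i-1}$ data to one of the time-$t_i$ data. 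This yields an open neighborhood of $(\til q_0, v_0, s_0)$ in $T(\HHm) \times \R$ on which the flow is defined and varies continuously.

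The continuity of the tremor map is then immediate by restriction: the assignment $(\til q, \beta) \mapsto (\til q, (\beta, 0), 1)$ embeds the domain of the tremor map continuously into $\mathcal{D}$, and post-composition with the flow map gives $\trem_{\til q, \beta}$. I do not expect a substantive obstacle here, as the argument is a textbook application of continuous dependence for ODEs to a smooth (indeed linear-in-charts) vector field. The one point requiring care is the compactness-plus-patching step to pass from continuity in a single period chart to continuity along a geodesic segment that may cross several charts, but this is handled by the finite cover of the compact arc $K$. Notably, the semicontinuity result Proposition \ref{prop: semicontinuity} plays no role in this continuity statement itself; it is relevant only when one subsequently wishes to know that limits of tremor tangent vectors are again tremor tangent vectors.
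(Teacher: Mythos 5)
Your argument is correct and matches what the paper intends: the paper dispatches this proposition with the single sentence ``Basic properties of ordinary differential equations now give us,'' and your write-up is the standard elaboration of that claim, realizing the affine geodesic flow as the flow of the constant (hence $C^\infty$) vector field $(v,0)$ under the identification $T(\HHm)\cong\HHm\times H^1(S,\Sigma;\R^2)$, then applying continuous dependence on initial conditions and openness of the maximal flow domain, with the chart-patching step along the compact arc $K$ supplying the details. Your closing remark that Proposition \ref{prop: semicontinuity} plays no role here, but only in guaranteeing that limits of foliation cocycles are again foliation cocycles, is also correct and a useful clarification.
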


Comparing equation \equ{eq: tremor} to the definition of the horocycle flow in
period coordinates, we immediately see that for the canonical
foliation cocycle $dy = \hol^{(y)}_{\til q}$, we have
\eq{eq: horocycles are tremors}{
\trem_{sdy}(\til q) = u_s\til q.
}

\subsection{Tremors and polygonal presentations of
  surfaces}\name{subsec: polygonal tremors}
In this section we prove Proposition \ref{prop:
  semicontinuity}, under an additional hypothesis. This special case
is easier to prove and suffices for proving our main results. 
We will prove the
general case of Proposition \ref{prop: semicontinuity} in \S \ref{sec:
  atomic tremors}. At the end of this section we deduce
Corollary \ref{cor: total variation continuous} from Proposition
\ref{prop: semicontinuity}.

\begin{prop}\name{prop: uniform ac closed}
Let $\til q_n \to \til q$ in $\HHm$, $\beta_n \to \beta$ in $H^1(S,
\Sigma; \R_x)$ be as in the statement of
Proposition \ref{prop: semicontinuity}. Write $q_n = \pi(\til q_n), \,
q = \pi(\til q)$ and suppose also that 
\eq{eq: star}{\text{ 
  there is a }c>0 \text{  such that for
  all } n, \  \beta_n \in C^{+,RN}_{q_n}(c). }
Then $
\beta \in C^{+,RN}_{q}(c)$. 
\end{prop}

Clearly Proposition \ref{prop: uniform ac closed} implies
Proposition \ref{prop: semicontinuity} in the case that \equ{eq: star}
holds.

Recall that any translation surface has a polygon decomposition, and that fixing a polygon decomposition on a marked surface makes it possible to consider the same polygon decomposition on nearby marked surfaces.
For the proof of Proposition \ref{prop: uniform ac closed}, we introduce polygon decompositions which are useful for understanding transverse measures to the horizontal foliation.  

In a general polygon decomposition of a surface,  
some edges might be horizontal, and corresponding edges on nearby
surfaces may intersect the horizontal foliation with different
orientations. This will cause complications and in order to avoid
them, we introduce an {\em adapted polygon
  decomposition (APD)} \index{APD} of a surface. An APD is a polygon decomposition in which all
polygons are either 
triangles with no horizontal edges, or quadrilaterals with one
horizontal diagonal. Any surface has an APD, as can be seen by taking
a triangle decomposition and merging adjacent triangles sharing a horizontal
edge into quadrilaterals. We fix an APD of $M_{q}$, with a finite
collection of edges $\{J_i\}$, all of which are transverse to the
horizontal foliation on $M_{ q} $. Since we are considering
marked surfaces, we can use a marking map representing $\til q \in \pi^{-1}(q)$ and the comparison maps of \S\ref{subsec:
  atlas of charts} and think of the arcs $J_i$ as arcs on 
$S$, as well as on $M_{q'}$ for any marked translation surface $\til{q'}$ sufficiently close to $\til q$. Moreover, the edges $\{ J_i\}$
are also  a subset of the  edges of an APD on 
$M_{ q'}$ and they are also transverse to the horizontal foliation on
$M_{ q'}$. Note that on $M_{ q'}$ the APD may contain additional
edges that are not edges on $M_{ q}$, namely some of the horizontal diagonals
on $M_{ q}$ might not be horizontal on $M_{ q'}$ and in this case we add them
to the $\{J_i\}$ to obtain an APD on $M_{ q'}$. 

\begin{figure}[h]
\begin{tabular}{l r}
\begin{tikzpicture}[scale=0.5]
\def\xa{10};
\def\ya{1};
\def\xb{3};
\def\yb{1.2};
\def\xc{0.7};
\def\yc{4.4};
\def\xd{-1.2};
\def\yd{2.3};
\def\xe{-5};
\def\ye{3};

\node (A0) at (\xa,\ya) [circle,draw,fill=black,inner sep=0pt,minimum size=1.0mm] {};
\node (A1) at (\xa+\xb,\ya+\yb) [circle,draw,fill=black,inner sep=0pt,minimum size=1.0mm] {};
\node (A2) at (\xa+\xb+\xc,\ya+\yb+\yc) [circle,draw,fill=black,inner sep=0pt,minimum size=1.0mm] {};
\node (A3) at (\xa+\xb+\xc+\xd,\ya+\yb+\yc+\yd) [circle,draw,fill=black,inner sep=0pt,minimum size=1.0mm] {};
\node (A4) at (\xa+\xb+\xc+\xd+\xe,\ya+\yb+\yc+\yd+\ye) [circle,draw,fill=black,inner sep=0pt,minimum size=1.0mm] {};
\node (A5) at (\xa+\xc+\xd+\xe,\ya+\yc+\yd+\ye) [circle,draw,fill=black,inner sep=0pt,minimum size=1.0mm] {};
\node (A6) at (\xa+\xd+\xe,\ya+\yd+\ye)  [circle,draw,fill=black,inner sep=0pt,minimum size=1.0mm] {};
\node (A7) at (\xa+\xe,\ya+\ye)  [circle,draw,fill=black,inner sep=0pt,minimum size=1.0mm] {};

\draw (A0) -- (A1) -- (A2) -- (A3) -- (A4) -- (A5) -- (A6) -- (A7) -- (A0);
\draw (A7) -- (A1);
\draw (A7) -- (A2);
\draw [dashed] (A6) -- node[anchor=north] {$\sigma$} (A2);
\draw (A6) -- (A3);
\draw (A5) -- (A3);

\end{tikzpicture}\ \ \ \ \ \ \ \ \

\begin{tikzpicture}[scale=0.5]
\def\xa{10};
\def\ya{1};
\def\xb{3};
\def\yb{1.3};
\def\xc{0.7};
\def\yc{5};
\def\xd{-1};
\def\yd{2.5};
\def\xe{-5};
\def\ye{2};

\node (A0) at (\xa,\ya) [circle,draw,fill=black,inner sep=0pt,minimum size=1.0mm] {};
\node (A1) at (\xa+\xb,\ya+\yb) [circle,draw,fill=black,inner sep=0pt,minimum size=1.0mm] {};
\node (A2) at (\xa+\xb+\xc,\ya+\yb+\yc) [circle,draw,fill=black,inner sep=0pt,minimum size=1.0mm] {};
\node (A3) at (\xa+\xb+\xc+\xd,\ya+\yb+\yc+\yd) [circle,draw,fill=black,inner sep=0pt,minimum size=1.0mm] {};
\node (A4) at (\xa+\xb+\xc+\xd+\xe,\ya+\yb+\yc+\yd+\ye) [circle,draw,fill=black,inner sep=0pt,minimum size=1.0mm] {};
\node (A5) at (\xa+\xc+\xd+\xe,\ya+\yc+\yd+\ye) [circle,draw,fill=black,inner sep=0pt,minimum size=1.0mm] {};
\node (A6) at (\xa+\xd+\xe,\ya+\yd+\ye)  [circle,draw,fill=black,inner sep=0pt,minimum size=1.0mm] {};
\node (A7) at (\xa+\xe,\ya+\ye)  [circle,draw,fill=black,inner sep=0pt,minimum size=1.0mm] {};

\draw (A0) -- (A1) -- (A2) -- (A3) -- (A4) -- (A5) -- (A6) -- (A7) -- (A0);
\draw (A7) -- (A1);
\draw (A7) -- (A2);
\draw (A6) -- (A2);
\draw (A6) -- (A3);
\draw (A5) -- (A3);

\end{tikzpicture}\ \ \ \ \ \ \ \ \
\end{tabular}
\caption{Two APD's on nearby surfaces in $\HH(2)$. The dotted horizontal line represents a diagonal of a quadrilateral on the first surface and is an edge of a triangle on the second surface since it is no longer horizontal.}
\label{fig: APD} 
\end{figure}

Since the polygons of a polygon decomposition are simply connected,
a 1-cochain representing an element of $H^1(S, \Sigma; \R)$ is
determined by its values on the edges of the polygons. 
For each $i$, each polygon $P$ of the APD with $J= J_i \subset \partial P$, and each $x
\in J$, there is a horizontal segment in $P$ with endpoints in $\partial P$ one
of which is $x$. The other endpoint of this segment is called the {\em
opposite point (in $P$) to $x$} and is denoted by \index{opposite point map $\mathrm{opp}_K$}
$\mathrm{opp}_P(x)$. The image of $J$ under $\mathrm{opp}_P$ is a
union of one or two sub-arcs contained in the other boundary edges of $P$. 

\begin{figure}[h]
\begin{tikzpicture}[scale=0.7]
\def\xa{10};
\def\ya{1};
\def\xb{3};
\def\yb{1.2};
\def\xc{0.7};
\def\yc{4.4};
\def\xd{-1.2};
\def\yd{2.3};
\def\xe{-5};
\def\ye{3};

\node (A0) at (\xa,\ya) [circle,draw,fill=black,inner sep=0pt,minimum size=1.0mm] {};
\node (A1) at (\xa+\xb,\ya+\yb) [circle,draw,fill=black,inner sep=0pt,minimum size=1.0mm] {};
\node (A2) at (\xa+\xb+\xc,\ya+\yb+\yc) [circle,draw,fill=black,inner sep=0pt,minimum size=1.0mm] {};
\node (A3) at (\xa+\xb+\xc+\xd,\ya+\yb+\yc+\yd) [circle,draw,fill=black,inner sep=0pt,minimum size=1.0mm] {};
\node (A4) at (\xa+\xb+\xc+\xd+\xe,\ya+\yb+\yc+\yd+\ye) [circle,draw,fill=black,inner sep=0pt,minimum size=1.0mm] {};
\node (A5) at (\xa+\xc+\xd+\xe,\ya+\yc+\yd+\ye) [circle,draw,fill=black,inner sep=0pt,minimum size=1.0mm] {};
\node (A6) at (\xa+\xd+\xe,\ya+\yd+\ye)  [circle,draw,fill=black,inner sep=0pt,minimum size=1.0mm] {};
\node (A7) at (\xa+\xe,\ya+\ye)  [circle,draw,fill=black,inner sep=0pt,minimum size=1.0mm] {};

\draw (A0) -- (A1) -- (A2) -- (A3) -- (A4) -- (A5) -- (A6) -- (A7) -- (A0);
\draw (A7) -- (A1);
\draw (A7) -- (A2);
\draw (A6) -- (A3);
\draw (A5) -- (A3);

\path [name path=horizontal line]  (2,8) -- (15,8);
\path [name path=first line]  (A5) -- (A6);
\path [name path=second line]  (A3) -- (A6);
\path [name path=third line]  (A2) -- (A3);
\path [name intersections={of=horizontal line and first line, by=xx}]; 
\path [name intersections={of=horizontal line and second line, by=yy}]; 
\path [name intersections={of=horizontal line and third line, by=zz}]; 

\node (X) at (xx) [circle,draw,fill=black,inner sep=0pt,minimum size=0.7mm] {};
\node [circle,draw,fill=black,inner sep=0pt,minimum size=0.7mm] at (yy) {};
\node (Z)  at (zz) [circle,draw,fill=black,inner sep=0pt,minimum size=0.7mm] {};
\node  [circle,draw,fill=black,inner sep=0pt,label=0:$y_2$] at (zz) {};
\node  [circle,draw,fill=black,inner sep=0pt,label=-180:$y_1$] at (xx) {};
\node  [circle,draw,fill=black,inner sep=0pt,label=-270:$x$] at (yy) {};
\node  [label=-270:$P_1$] at (6,8) {};
\node  [label=-270:$P_2$] at (10,6) {};

\draw [dotted] (X) -- (Z);
\end{tikzpicture}
\caption{The opposite point map, with $y_1=\mathrm{opp}_{P_1}(x)$ and $y_2=\mathrm{opp}_{P_2}(x)$.}
\label{fig: opposite point} 
\end{figure}

A transverse measure $\nu$ for the horizontal foliation on $M_q$ assigns a measure to each $J$. We will
denote this either by $\nu$, or by $\nu|_J$ when confusion may
arise. 
By the invariance property  of a transverse measure,
\eq{eq: invariance property}{
\left(\mathrm{opp}_P\right)_* \nu|_J = \nu|_{\opp_P(J)},
}
and this holds for any $P$ and $J$. We call \equ{eq: invariance
  property} the {\em  
  invariance property}. Note that in this section, all measures under
consideration are
non-atomic, and we will not have to worry about whether intervals are
open or closed (but in \S \ref{sec: atomic tremors} this will be a
concern). 

\begin{prop}\label{prop: they define}
Given an APD for a translation surface $M_q$, and a collection 
of finite non-atomic measures $\nu_J$ on the edges $J$ as above, satisfying the invariance
property, there is a transverse measure $\nu$ on $M_q$ for which $\nu|_J = \nu_J.$
\end{prop}

\begin{proof}
We can reconstruct $\nu$ from the $\nu_J$, by homotoping any transverse arc to subintervals
of edges of the APD along horizontal leaves (this is well-defined in view of the
invariance property). 
\end{proof}

 Proposition \ref{prop: they define} makes it possible to reduce questions about transverse measures on surfaces, to finitely many measures on some arcs. We use this idea in the following:  
\begin{proof}[Proof of Proposition \ref{prop: uniform ac closed}]
 We will
write $\beta_n = \beta_{\nu_n}$ for a sequence of $c$-absolutely
continuous transverse measures $\nu_n$ on $M_{q_n}$ (in particular the
$\nu_n$ are non-atomic). Our goal is to prove that there is a transverse measure
$\nu$ on $M_{ q}$ such that $\beta = \beta_\nu$.
The main idea of the proof is to use APD's to reduce the discussion to measures on finitely many transverse arcs.  It suffices to consider the restriction of the transverse measure
 to a particular finite collection of transverse arcs, which we now
 describe.

Let $\tau$ be the triangulation of $M_{ q}$ obtained by adding
the horizontal diagonals to quadrilaterals in an APD. As discussed in
\S \ref{subsec: atlas of charts}, using $\tau$ and marking maps, 
we obtain maps $\varphi_n: S \to M_{q_n}$, $\varphi:S\to M_q$,  such
that for each $n$, the comparison map $\varphi_n \circ 
\varphi^{-1}: M_q \to M_{q_n}$ is piecewise affine, with derivative
(in planar charts) tending to the identity map as $n \to \infty$.
Let $P$ be one of the polygons of the APD and $K
\subset \partial P$ a subinterval of the form $J$ or $\opp_P(J)$ as above. 
For all large
enough $n$,  none of the sides $\varphi_n \circ \varphi^{-1}(K)$ are
horizontal and all have the 
same orientation as on $M_{ q}$. Let
$\nu_{K}^{(n)}$ be the measure on $\varphi_n \circ \varphi^{-1}(K)$ corresponding to
$\nu_n$. Using the marking $\varphi_n^{-1}$ we will also think of
$\nu_{K}^{(n)}$ as a measure on $\til K = \varphi^{-1}(K)$. 

Passing to
subsequences
and using the compactness of the space of measures of bounded mass on a
bounded interval, we can assume that for each $K$, the sequence
$\left(\nu_{K}^{(n)} \right)_n$ converges 
to a measure $\nu_{K}$ on $\til K$. It follows from \equ{eq: star} that $\nu_K$
is non-atomic, indeed it is $c$-absolutely continuous since all the
$\nu_K^{(n)}$ are. 
Each of the measures $\nu_{K}^{(n)}$ satisfies the invariance property
for the horizontal foliation on $M_{q_n}$, and we claim:
\begin{claim}\name{claim: claim}
The measures
$\nu_K$ satisfy the invariance property for the horizontal foliation
on $M_q$. 
\end{claim}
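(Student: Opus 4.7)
The strategy is to pass the identity $(\opp_P^{(n)})_* \nu_n|_J = \nu_n|_{\opp_P^{(n)}(J)}$ directly to the limit, where $\opp_P^{(n)}$ denotes the opposite-point map in the polygon $P$ computed using the horizontal foliation on $M_{q_n}$. The argument rests on three ingredients: uniform convergence of the opposite-point maps, non-atomicity of the limit measures, and a standard ``weak-$*$ + uniform continuous maps $\Rightarrow$ convergence of pushforwards'' observation.

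\emph{Step 1: Uniform convergence of opposite-point maps.} Fix a polygon $P$ of the APD on $M_q$ and a transverse edge $J \subset \partial P$. Viewing $P$ through the comparison map $\varphi_n\circ \varphi^{-1}$, the planar structure of $P$ on $M_{q_n}$ is piecewise affine with derivative tending to the identity, so the horizontal direction relative to $P$ varies continuously with $n$. Consequently $\opp_P^{(n)}$ converges uniformly to $\opp_P = \opp_P^{(q)}$ on every compact subinterval of $J$ disjoint from the (finitely many) points of $J$ whose horizontal trajectory in $P$ terminates at a vertex. A subtlety arises for the quadrilaterals of $M_q$'s APD whose horizontal diagonal $D$ ceases to be horizontal on $M_{q_n}$. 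There, $P$ is subdivided into two triangles $T_1, T_2$ sharing $D$, and the opposite-point map of $P$ is, where defined, obtained by composing the opposite-point maps of $T_1$ and $T_2$ across $D$. Since the same formula describes the limiting $\opp_P$ on $M_q$ (with $D$ being genuinely horizontal), uniform convergence on $J$ persists.

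\emph{Step 2: Passage to the limit.} The invariance property on $M_{q_n}$ applied to each triangle of the APD yields, after composing across shared diagonals as in Step 1, the identity $(\opp_P^{(n)})_* \nu^{(n)}_J = \nu^{(n)}_{\opp_P^{(n)}(J)}$ for each polygon $P$ of $M_q$'s APD. By hypothesis \equ{eq: star}, each $\nu_K^{(n)}$ is $c$-absolutely continuous, hence so is the weak-$*$ limit $\nu_K$; in particular $\nu_J$ assigns zero mass to the finitely many exceptional points of Step 1. Combining weak-$*$ convergence $\nu_J^{(n)} \to \nu_J$ with uniform convergence $\opp_P^{(n)} \to \opp_P$ away from a $\nu_J$-null set gives $(\opp_P^{(n)})_* \nu_J^{(n)} \to (\opp_P)_* \nu_J$ in the weak-$*$ topology. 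Since the right-hand sides also converge, $\nu^{(n)}_{\opp_P^{(n)}(J)} \to \nu_{\opp_P(J)}$, we obtain $(\opp_P)_* \nu_J = \nu_{\opp_P(J)}$, which is the invariance property for $M_q$.

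\emph{Main obstacle.} The nontrivial bookkeeping concerns the quadrilateral case: one must verify that the invariance relations on the finer triangulation of $M_{q_n}$ actually assemble into the invariance on the coarser quadrilaterals of $M_q$'s APD, uniformly as $n\to\infty$. This is where the choice of APD (in particular, the exclusion of horizontal boundary edges and the controlled behavior of the horizontal diagonal) plays its essential role; a triangulation with horizontal edges would be unworkable because the orientation of the intersection of horizontal leaves with such an edge could flip when passing from $M_q$ to $M_{q_n}$, destroying the uniform control in Step 1.
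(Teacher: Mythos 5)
Your proposal is correct and follows essentially the same strategy as the paper's proof: establish uniform convergence of the opposite-point maps $\opp_P^{(n)}\to\opp_P$ on compact subintervals away from the finitely many exceptional points, use non-atomicity of $\nu_J$ (from $c$-absolute continuity) to neutralize these exceptional points, and pass the invariance identity $(\opp_P^{(n)})_*\nu_J^{(n)} = \nu_n|_{\opp_P^{(n)}(J)}$ to the limit, the paper via Portmanteau applied to compact intervals $I\subset\operatorname{int}(J)$ with endpoints of zero measure, you via the equivalent abstract ``weak-$*$ plus locally uniform maps $\Rightarrow$ convergence of pushforwards.''

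One small conceptual slip in your quadrilateral discussion: on the limiting surface $M_q$ the diagonal $D$ is genuinely horizontal, so horizontal trajectories from $J$ run \emph{parallel} to $D$ and never cross it; thus $\opp_P$ on $M_q$ is simply $\opp_{T_1}$ (or $\opp_{T_2}$), not a composition across $D$. The composition $\opp_{T_2}^{(n)}\circ\opp_{T_1}^{(n)}$ is only relevant on $M_{q_n}$, and only on the short sub-arc of $J$ near the shared vertex of $J$ and $D$ where the slightly tilted $D$ is crossed; on any compact $I$ in the interior of $J$, for large $n$ no crossing occurs and $\opp_P^{(n)}|_I$ equals $\opp_{T_1}^{(n)}|_I$ outright. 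This doesn't invalidate your conclusion, but the stated reason for persistence of uniform convergence is not quite right. You should also note that your final step, ``$\nu^{(n)}_{\opp_P^{(n)}(J)} \to \nu_{\opp_P(J)}$,'' silently uses $c$-absolute continuity a second time (to control the mass on the $n$-dependent symmetric difference of $\opp_P^{(n)}(J)$ and $\opp_P(J)$), which the paper's fixed-interval Portmanteau phrasing sidesteps.
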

To see this, suppose $K=J$ in the above notation, the case
$K=\opp_P(J)$ being similar. For
each $n$ let $\mathrm{opp}_P^{(n)}$ be the map 
corresponding to the horizontal foliation on $M_{q_n}$; it maps $J$ to a
subset of an edge or two edges of the APD. Let
$I$ be a compact interval contained in the interior of $J$. Then for all sufficiently large $n$,
$\mathrm{opp}_P^{(n)}(I) \subset \opp_P(J)$, and the maps 
$\mathrm{opp}_P^{(n)}|_{I} $ converge uniformly to $
\mathrm{opp}_P|_{I}$. By our assumption that the measure
is non-atomic, the endpoints of $I$ have
zero $\nu_{J}$-measure. Therefore, since $\nu_{J}^{(n)} \to
\nu_{J}$, by the Portmanteau theorem we have
$
\nu_{J} (I) = \nu_{\opp_P(J)} (\mathrm{opp}_P(I)).
$
Such intervals $I$ generate the Borel $\sigma$-algebra on $J$, and
so we have established the invariance
property. This proves Claim \ref{claim: claim}. \hfill
$\triangle$

\vspace{5mm}

By Proposition \ref{prop: they define}, the $\nu_K$  define a
transverse measure $\nu$, and we let $\beta' = \beta_{\nu}$. Recall
that we have assumed $\beta_n \to \beta$ as 
cohomology classes in $H^1(S, \Sigma; \R)$. For each edge $J$ of the APD,
\begin{equation}\label{eq: each edge} \beta(J) \leftarrow \beta_n(J) = 
m^{(n)}_{J}\to m_{J} =
\beta'(J),
\end{equation}
and so $\beta' = \beta$. 
\end{proof}

We now deduce Corollary \ref{cor: total variation continuous}. 
As in the proof of Proposition \ref{prop: semicontinuity}, 
we will use assumption \eqref{eq: star}.  The general case will be established in \S \ref{sec: atomic
  tremors}.

\begin{proof}[Proof of Corollary \ref{cor: total variation continuous} under 
assumption \eqref{eq: star}]
We first give a formula for $L_q(\beta_\nu)$, where $\nu$ is a transverse measure on the surface $M_q$. Fixing an APD on $M_q$, we can write \begin{equation}\label{eq: formula for L}
L_{ q}(\beta_\nu) = \sum_P \sum_{J \in L(P)} \int_J D(x)  d\nu_J(x),
\end{equation}
where $P$ ranges over the edges of the APD, $L(P)$ is the set of edges on the left-handside of $P$, and for $x \in J, D(x)$ denotes the length of the horizontal segment from $x$ to $\mathrm{opp}_P(x)$. Indeed, this formula is just a more detailed version of \eqref{eq: formula for L0}.

Now, for each $n$ write $\beta_n = \beta_{\nu_n}$ where $\nu_n$ is a
transverse measure on $M_{q_n}$, and let $\nu_n = \nu^+_n- \nu^-_n$ be
the Hahn decomposition.  
By assumption, 
$$
L_{\til q_n}(\beta^\pm_n)  \leq  |L|_{\til
  q_n}(\beta_n)$$
is a bounded sequence. Using the comparison maps $\varphi^{-1} \circ
\varphi_n: M_{q_n} \to M_q$ used in the preceding proof, we can think
of the $(\nu^{\pm}_n)|_J$ as 
measures on $J$ with a uniform bound on their total
mass, and 
we can pass to a subsequence to obtain $(\nu^{\pm}_{n_j})|_J \to
(\nu_\infty^{\pm})|_J$, thus defining (via Proposition \ref{prop: they define} as in the proof of Proposition \ref{prop: semicontinuity}) 
 tranverse measures 
$\nu_\infty^{\pm}$ on $M_q$. 

Let $ \nu \df \nu_\infty^+-\nu_\infty^-$ and let $\beta' \df \beta_\nu$. Recall that a cohomology class is determined by its values on the edges of a triangulation, and non-atomic transverse measures evaluate to zero on horizontal saddle connections. Thus we obtain from \eqref{eq: each edge} that $\beta_{n_j} \to
\beta'$. But since we have assumed $\beta = \lim_n \beta_n$, we have
$\beta = \beta' \in \tremspace_q$.
%
%
The Hahn decomposition $\mu = \mu^+-\mu^-$ of a finite measure is characterized by the following minimizing property: for any pair of measures $\sigma^{\pm}$ with $\mu= \sigma^+-\sigma^-$, and any non-negative integrable function $f,$ we have $\int f d\mu^+ + \int f d\mu^- \leq \int f d\sigma^+ + \int f d\sigma^- $. Thus, even though $\nu|_J = (\nu^+_\infty)|_J- (\nu^-_\infty)|_J$ might not be the Hahn decomposition of $\nu_J$, we have 
from \eqref{eq: formula for L} that 
\begin{equation}\label{eq: from that} \begin{split}|L|_q(\beta_\nu) \leq & \
L_q(\beta_{\nu^+}) + L_q(\beta_{\nu^-}) \\
= & \lim_{j \to \infty} \left( L_q(\beta_{\nu_{n_j}^+}) + L_q(\beta_{\nu_{n_j}^-})\right)
=
\lim_{j \to \infty}
|L|_{\til q_{n_j}}(\beta_{n_j}). 
\end{split}\end{equation}
Since this holds for any choice of the
subsequence, we obtain 
\equ{eq: limit L}.
\end{proof}

\subsection{The domain of definition of a tremor, and foliation cocycles in a fixed horospherical leaf}\label{subsec: for more details}
In this subsection we will
set up a canonical identification of $\tremspace_q$  and $\tremspace_{q'}$, when $q$ and $q'$
 belong to the same horospherical leaf. For this, the notation of an APD, introduced in the \S \ref{subsec: polygonal tremors}, will turn out to be useful. As a consequence, and 
using results of \cite{mahler}, we will show that for a non-atomic tremor, the domain of definition $\Dom(q, \beta)$ is the entire real line, and we will obtain useful `group action' properties of tremors on  a fixed horospherical leaf.

 Recall from \S \ref{subsec: atlas of
  charts} that via the identification of $T(\HHm) $ with the product $ \HHm \times H^1(S, \Sigma; \R^2)$, for any $\til q_1, \til q_2 \in \HHm$, every $v_1 \in T_{\til q_1}(\HHm)$ has a unique parallel vector $v_2 
  \in T_{\til q_2}(\HHm)$. We say that $v_2$ is obtained from $v_1$ by {\em parallel transport.}

\begin{prop} \label{prop: tremspace} (cf. \cite[Theorem 1.2]{mahler}). If $\til q_1$ and $\til q_2$ are elements of $\HHm$ belonging to the same horospherical leaf $W^{uu}$ then parallel transport takes $\tremspace_{\til q_1}$
to $\tremspace_{\til q_2}$. It takes $C^+_{q_1}$ to $C^+_{q_2}$ and takes non-atomic tremors to non-atomic tremors. It takes $(dy)_{q_1} \in\tremspace_{\til q_1}$ to $(dy)_{q_2}\in\tremspace_{\til q_2}$. 

\end{prop}

\begin{proof}
Since $\til q_1, \til q_2$ are both in $W^{uu}$, there is a path $\rho: [a,b] \to W^{uu}$ such that $\rho(a) = \til q_1, \ \rho(b) = \til q_2.$ 
For each $t_0 \in [a,b]$, fix an APD on $\rho(t_0),$ and let $\tau = \tau(t_0)$ be the triangulation obtained from this APD by adding diagonals to quadrilaterals, as in the proof of Proposition \ref{prop: uniform ac closed}. Let 
 $V_\tau$ be the open subset of $\HHm$ associated with $\tau$ as in \S \ref{subsec: atlas of
  charts}. We obtain 
a covering of $[a,b]$ by $\left\{\rho^{-1}\left(V_{\tau(t_0)} \right): t_0 \in [a,b] \right\}$, and by compactness we can pass to a finite covering. Thus in proving the Proposition we may assume that  the image of $\rho$ is contained in one $V_\tau,$ where $\tau = \tau(a)$ is the triangulation obtained from an APD on $M_{q_1}. $ 

Let $\phi : M_{q_1} \to M_{q_2}$
be the comparison map which is affine on triangles of $\tau$, as defined in \S \ref{subsec: atlas of
  charts}. Since $\til q_1, \til q_2$ belong to the same horospherical leaf, a segment is horizontal on $M_{q_1}$ if and only if its image under $\phi$  is horizontal on $M_{q_2}$. In particular the APD on $M_{q_1}$ is sent to an APD on $M_{q_2}$, and the restriction of $\phi$ to edges of the APD commutes with the opposite point maps (this situation is illustrated in Figure \ref{fig: comparison}). This implies via Proposition \ref{prop: they define} that $\phi$ induces a bijection between signed transverse measures on $M_{q_1}$ and $M_{q_2}$, and this bijection maps positive (respectively, atomic) transverse atomic transverse measures to positive (resp. atomic) transverse measures. Also, again using that $\til q_1, \til q_2$ are in the same horospherical leaf, the map $\phi$ sends $(dy)_{q_1}$ to $(dy)_{q_2}$. Thus the map $\phi^* : H^1(M_{q_2}, \Sigma_{q_1}; \R^2) \to H^1(M_{q_2}, \Sigma_{q_2}; \R^2)$ induced by $\phi$ sends $\tremspace_{q_2}$ to $\tremspace_{q_1}$ and sends $(dy)_{q_2}$ to $(dy)_{q_1}.$ Finally, since $\til q_2$ is obtained from $\til q_1$ by pre-composing charts by $\phi$, the definition of parallel vectors given in \S \ref{subsec: atlas of
  charts} shows that the map induced by $\phi^*$ is parallel transport.  
\end{proof}

\begin{prop}\name{prop: tremor domain of defn}
If $\beta \in \tremspace_q$ is non-atomic
then $\Dom(q,\beta)=\R$. 
\end{prop}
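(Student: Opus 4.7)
The plan is to argue by contradiction. Assume $\Dom(q,\beta)\neq\R$; since $\Dom(q,\beta)$ is an open connected subset of $\R$ containing $0$ (Proposition \ref{prop: continuity}), without loss of generality there is a finite endpoint $T = \sup(\Dom(q,\beta)\cap [0,\infty)) < \infty$. The goal is to show that the non-atomicity of $\beta$ makes this impossible.

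First I would verify that the tremor preserves the area of $M_{\theta(t)}$. In local coordinates where the horizontal foliation is $\{y = \mathrm{const}\}$, the (generalized) 1-form representing $\beta = \beta_\nu$ for non-atomic $\nu$ is of the form $f \cdot dy$, so the wedge $\beta \wedge \hol^{(y)}_q$ is identically zero, and hence $\theta(t) \in \HHm_{,1}$ for all $t \in \Dom(q,\beta)$. By Proposition \ref{prop: sup norm properties} the sup-norm metric on $\HHm$ is proper, and the standard criterion that short saddle connections control compactness in $\HH_1$ then implies that the hypothesis $T<\infty$ forces a sequence $t_n \to T^-$ and saddle connections $\sigma_n$ on $M_{\theta(t_n)}$ with $\ell_{\theta(t_n)}(\sigma_n) \to 0$.

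The crucial geometric observation is that the horizontal foliation $\FF_{\theta(t)}$ agrees with $\FF_q$ as a foliation of the underlying topological surface, because the tremor acts as a horizontal-leaf-wise shear and preserves the leaves of the horizontal foliation as sets. Combined with the preservation of the class $\hol^{(y)}$, one should be able to conclude that for large $n$ one has $\hol^{(y)}_q(\sigma_n) = 0$, so the corresponding straight segment lies in a horizontal leaf of $\FF_q$ and is in fact a horizontal saddle connection of $M_q$. Non-atomicity of $\beta$ then gives $\beta(\sigma_n) = 0$, and the tremor formula \equ{eq: tremor} yields $\hol^{(x)}_{\theta(t_n)}(\sigma_n) = \hol^{(x)}_q(\sigma_n)$, a fixed nonzero constant, in contradiction with $\ell_{\theta(t_n)}(\sigma_n) \to 0$.

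The hard part is the reduction to $\hol^{(y)}_q(\sigma_n) = 0$: one only has $\hol^{(y)}_q(\sigma_n) \to 0$ a priori, and since the $\sigma_n$ are saddle connections on varying surfaces $M_{\theta(t_n)}$ and may represent pairwise distinct homology classes, discreteness of saddle connection holonomies on a single fixed surface is not directly applicable. I would handle this either by rescaling via the geodesic flow $g_{s_n}$ to normalize each $\sigma_n$ to unit length and extracting a convergent subsequence in $\HHm$ using Corollary \ref{eq: sup norm horocycle deviation} to control how the tremor paths deform, or, more directly, by giving an explicit geometric construction of $M_{\theta(t)}$ as a horizontal-leaf shear of $M_q$ by the transverse measure $t\nu$, which manifestly produces a translation surface in the same stratum for all $t \in \R$ whenever $\nu$ is non-atomic.
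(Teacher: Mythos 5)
Your argument by contradiction is a genuinely different route from the paper's. There, Proposition~\ref{prop: tremor domain of defn} is derived from the tremor comparison homeomorphism of Proposition~\ref{prop: tremor comparison}, whose proof \emph{constructs} the target flat structure directly for all $t\in\R$ from the cocycle $b_t = (t\,d\nu_0 + dx_0, dy_0)$ via candidate developing maps on the universal cover, with non-atomicity of $\nu$ used to show these are local homeomorphisms, monotone and proper on leaves; the authors also note the result follows from the lifting criterion of \cite[Thm.~1.2]{mahler}. You instead try to rule out a finite blow-up time $T$ by producing degenerating saddle connections. Already in step 2 there is a subtlety: finite $T$ forces the affine geodesic $\theta(t)$ to escape every compact subset of $\HHm$, but this does not by itself give escape from compact subsets of $\HH_1$ (the marking can diverge while the underlying surface stays bounded), so the short-saddle-connection compactness criterion is not directly applicable; one would need, e.g., to argue via the Finsler inequality $\dist(\til q_0,\theta(t))\le\int_0^t\|\beta\|_{\theta(s)}\,ds$, which shifts the burden to controlling $\|\beta\|_{\theta(s)}$ rather than to lengths of saddle connections.

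The gap you flag is genuine, and your first suggested repair does not fill it. What you can prove is that no single relative homotopy class $\gamma$ with $\hol^{(y)}_q(\gamma)\ne 0$ degenerates, since $\ell_{\theta(t)}(\gamma)\ge\left|\hol^{(y)}_q(\gamma)\right|$ for every $t$ at which $\gamma$ is realized by a saddle connection. But the $\sigma_n$ may run through infinitely many distinct classes whose nonzero vertical holonomies accumulate at zero, and there is no discreteness to invoke: on a general translation surface the nonzero vertical holonomies of saddle connections are dense near $0$. Rescaling by $g_{s_n}$ gives no a priori reason for the resulting sequence to subconverge in $\HHm$, and Corollary~\ref{eq: sup norm horocycle deviation} only bounds how $\dist$ grows along orbits; it supplies no recurrence. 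Your second suggested repair --- constructing $M_{\theta(t)}$ for every $t$ as a horizontal-leaf-wise shear of $M_q$ by $t\nu$ --- is not a patch on the contradiction argument but is exactly the content of Proposition~\ref{prop: tremor comparison}; making it precise (local homeomorphism near nonsingular points, monotonicity and properness on leaves, equivariance under deck transformations, uniqueness) is the paper's proof, so you would be abandoning your argument rather than completing it.
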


The assumption that $\beta$ is non-atomic is important 
here, see \S \ref{sec: atomic tremors}.  

\begin{proof}
 Let $\til q \in \pi^{-1}(q)$, let $\beta \in H^1(S, \Sigma; 
 \R_x)$, let $v = (\beta, 0)$, let $\theta(t)$ be the parameterized line \eqref{eq: linear equation}, and let $\Dom(q, \beta)$ denote its domain of definition. Let $\gamma_s = \hol^{(x)}(\til q) + s \beta$ be the corresponding line in $H^1(S, \Sigma; \R_x).$ We can define $\gamma_s$ for all $s \in \R$, and for $s \in \Dom(q, \beta)$ we have $\gamma_{s} = \dev (\theta(s)).$ Thus $\gamma$ is a line in $H^1(S, \Sigma; \R_x) $, $\theta$ is its lift via dev, and our goal is to show that this lift is well-defined for all $s \in \R.$

We denote by $\FF$ the foliation on $S \sm \Sigma$ obtained by pulling the horizontal foliation on $M_q$ by $\varphi.$  
For all the surfaces $\til{q'}$ in any lift of $\gamma$, $\FF$ is  also the pullback of the horizontal foliation on $M_{q'}$.   
Let $\mathbb{B}(\FF)$ denote the set of cohomology classes $\gamma' \in H^1(S, \Sigma; \R)$ satisfying the following conditions:
 \begin{itemize}
     \item[(i)] For any oriented saddle connection $\delta$ on $M_q$ with $\hol^{(x)}(\delta) >0$, we have $\varphi^* \gamma'(\delta)>0.$ 
     \item[(ii)] 
     For any non-atomic transverse measure $\nu$ to $\FF$, $\gamma'$ has a positive cup product with $\tau \df \beta_\nu$. 
     \end{itemize}
By \cite[Thm. 1.1, see also Thm. 11.2]{mahler} (but swapping the roles of horizontal and vertical foliations), in order to show that the path $\gamma$ lifts, it  suffices to show that $\gamma_s \in \mathbb{B}(\FF)$ for all $s$. Since $\beta$ is non-atomic, it  vanishes on horizontal saddle connections, and this implies that for any horizontal saddle connection $\delta$, the function $s \mapsto \gamma_s(\delta)$ is constant. Therefore $\gamma_s(\delta) = \gamma_0(\delta) = \hol^{(x)}(\delta) >0$, and this implies (i). In order to check (ii), let $\tau$ be the cohomology class corresponding to a non-atomic transverse measure. Then    
$$\int \gamma_s \wedge \tau=\int ((dx)_{\til q} +s\beta)\wedge \tau=\int (dx)_{\til q} \wedge \tau+s\beta\wedge \tau=\int (dx)_{\til q} \wedge\tau >0.$$
 We have used here the fact that two cohomology classes arising from non-atomic measures transverse to the same foliation have cup product zero (see \cite[Prop. 4]{transverse measures}). 
\end{proof}

It follows from Proposition \ref{prop: tremspace} that for any horospherical leaf $W^{uu}$ in $\HHm$, there is a fixed subspace $\tremspace^{(\mathrm{na})}_{W^{uu}} \subset H^1(S, \Sigma; \R_x)$, so that \index{trem@$\tremspace^{(\mathrm{na})}_{W^{uu}} $}
for each $\til q \in W^{uu}$, the collection of non-atomic foliation cocycles in $\tremspace_{\til q}$ is canonically identified with $\tremspace^{(\mathrm{na})}_{W^{uu}}.$ Note that if $\til q$ has no horizontal saddle connections, then the same is true for the same is true for any surface in the horospherical leaf of $\til q$; in this case $\til q$ admits no atomic foliation cocycles and $\tremspace_{\til q} = \tremspace^{(\mathrm{na})}_{W^{uu}} $. We define a map 
\begin{equation}\label{eq: action horospherical}
\tremspace^{(\mathrm{na})}_{W^{uu}} \times W^{uu} \to W^{uu}, \ \ \ (\beta, \til q) \mapsto \trem_{\beta}(\til q). 
\end{equation}
This map is well-defined in light of Proposition \ref{prop: tremor domain of defn}.

\begin{prop}\label{prop: group action law tremors}
The map in \eqref{eq: action horospherical} satisfies the `group-action' law
$$\trem_{\beta_1 + \beta_2}(\til q) = \trem_{\beta_1}(\trem_{\beta_2}(\til q))$$ 
for all $\til q \in W^{uu}$ and $\beta_1, \beta_2 \in \tremspace^{(\mathrm{na})}_{W^{uu}}.$ 
\end{prop}

\begin{proof}
For any $s_1, s_2 \in \R$, the path 
$$\gamma_{s_1, s_2} : \R \to H^1(S, \Sigma; \R_x), \ \ \gamma_{s_1, s_2} (t) \df \hol_{\til q}+ t(s_1 \beta_1 +s_2 \beta_2)$$
can be lifted to a path $\theta_{s_1, s_2}$ by Proposition \ref{prop: tremor domain of defn}. This implies that $\trem_{s_1 \beta_1+s_2\beta_2}(q) $ is well-defined.  Since $\dev$ is a local homeomorphism, it has a unique lifting property. That is, for any path $\gamma : [0,1] \to H^1(S, \Sigma; \R_x)$ and any $\til q_0$ with $\gamma(0) = \dev(q_0)$, there is at most one path $\theta : [0,1] \to \HHm$ with $\theta(0) = \til q_0$ and $\gamma = \dev \circ \, \theta. $ 
The two paths 
$$s \mapsto \trem_{\beta_1}(\trem_{s\beta_2}(\til q)), \ \ \ s \mapsto  \trem_{\beta_1 + s\beta_2}(\til q)$$
are continuous by Proposition \ref{prop: continuity}, and commutativity of addition in $H^1(S, \Sigma; \R_x)$ shows that they are lifts of the same path in $H^1(S, \Sigma ; \R_x)$. Thus they are the same, and setting $s=1$ we get the required result. 

See \cite[Prop. 4.5]{eigenform} for a similar argument.
\end{proof}

\begin{cor}
For any $u \in U$ and $\beta \in \tremspace_q$, we have 
\eq{eq: commutation 0}{
u \, \trem_\beta( q) = \trem_\beta(uq), \ \ \ \ \Dom(uq,
\beta) = \Dom( q, \beta).} 
\end{cor}

\begin{proof}
If $\beta$ is non-atomic, this is immediate from \eqref{eq: horocycles are tremors} and Proposition \ref{prop: group action law tremors}. The proof when $\beta$ is atomic is similar to the proof of Proposition 
\ref{prop: group action law tremors}. In  this paper, we will not be using \eqref{eq: commutation 0} when $\beta$ is atomic, and we leave the details to the reader. 
\end{proof}

\section{The tremor comparison homeomorphism}\name{sec: TCH}
Recall from \S \ref{subsec: for more details} that two points $\til q_0$ and $\til q_1$ in the same horospherical leaf share the same space of foliation cocycles. This was proved in Proposition \ref{prop: tremspace} by analyzing the effect of a composition of finitely many comparison maps $\varphi: M_{q_0} \to M_{q_1}$, each of which is affine on each triangle of a triangulation. The  map $\varphi$ respects horizontal foliations, that is maps the leaves of the  horizontal foliation $\FF$ on $M_{q_0}$ to horizontal leaves on $M_{q_1}$, and preserves the canonical transverse measure $dy$ measuring the `height displacement' between leaves. In this section we will show that if $\til q_1$ is obtained from $\til q_0$ by a non-atomic tremor, then there is a comparison map $M_{q_0} \to M_{q_1} $ 
 that {\em shears along horizontal leaves}; that is, respects the horizontal foliations $\FF$ on $M_{q_0}$ and $M_{q_1}$, preserves the transverse measure $dy$, and in addition, preserves the length parameter along horizontal leaves. In the language of flows, the comparison map from \S \ref{subsec: for more details} commutes with the horizontal straightline flow up to a time change, and in this section we will produce a map commuting with straightline with no time change.  This map need not be affine on triangles. The difference between these maps is illustrated in Figures \ref{fig: trem2} and \ref{fig: comparison}. We note that for the horocycle flow, the affine comparison maps defined in \S \ref{subsec: G} are both affine on triangles, and act by shearing horizontal leaves with respect to each other (see 
Figure \ref{fig: horocycle}).

 As we will see in Proposition \ref{prop: added}, the existence of a comparison homeomorphism that shears along horizontal leaves characterizes the property of lying on the same tremor path.

\begin{prop}\name{prop: tremor comparison}
Let $q_0 \in \HH$ and let $M_0 = M_{q_0}$ be the corresponding surface. Let
$\varphi_0:S\to M_0$  be a marking map and 
let $\til q_0 \in \pi^{-1}(q_0)$ be the corresponding marked translation
surface. 
Let $\nu$ be a non-atomic  signed  
transverse measure on the horizontal foliation of $M_0$ and let
$\beta = \beta_\nu$. Let $q_t = \trem_{t\beta}(q_0)$ and $\til q_t =
\trem_{t\beta}(\til q_0) $, let $M_t = M_{q_t}$ be the underlying
surface, and let $\varphi_t : S \to M_t$ be a marking map representing
$\til q_t$. 
Denote $\hol_{\til q_t} = \left (\hol_t^{(x)}, \hol_t^{(y)} \right)$.
Then there is a unique homeomorphism $\psi_t: M_0\to M_t$ which is isotopic to
$\varphi_t\circ\varphi_0^{-1}$, preserves horizontal foliations and satisfies  
\eq{eq: tremor shear}{
\hol_t^{(x)}(\psi_t(\gamma))=\hol_0^{(x)}(\gamma)+t\int_\gamma \nu \ \
{\rm and} \ \  
 \hol_t^{(y)}(\psi_t(\gamma))=\hol_0^{(y)}(\gamma) 
}
for any piecewise smooth path $\gamma$ in $M_0$ between {\em any} two
points. 
\end{prop}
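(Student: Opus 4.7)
The plan is to construct $\psi_t$ explicitly as a horizontal shear determined by the transverse measure $\nu$, verify \equ{eq: tremor shear} first on edges of a polygon decomposition and then on arbitrary paths by homotopy invariance, and finally address the isotopy class and uniqueness. First, I would fix an adapted polygon decomposition (APD) of $M_0$ as introduced in \S \ref{subsec: polygonal tremors}. In each polygon $P$, introduce local coordinates in which horizontal leaves are $\{y=\text{const}\}$, and define
\[
\psi_t^P(x,y) \df \bigl(x + t\,\widehat\nu_P(y),\ y\bigr),
\]
where $\widehat\nu_P(y)$ is the $\nu$-measure of a transverse reference arc in $P$ from a fixed base point up to vertical height $y$. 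Non-atomicity of $\nu$ makes $\widehat\nu_P$ continuous, so $\psi_t^P$ is a homeomorphism of $P$ onto a planar polygon whose edges realize the tremored holonomies $\hol_t = \hol_0 + (t\beta,0)$.

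Next I would patch: the invariance property \equ{eq: invariance property} of the non-atomic transverse measure (or equivalently the fact that $\beta_\nu$ is a well-defined cohomology class) guarantees that across a shared edge of two adjacent polygons the local shears agree, provided the base points are transported consistently. An equivalent and tidier formulation is to define $\psi_t$ on the $1$-skeleton of the APD directly from the tremor formula $\hol_t^{(x)}(e)=\hol_0^{(x)}(e)+t\int_e\nu$ on each edge $e$, and then extend into each polygon by the shear formula above; continuity along boundary edges follows from the invariance property. This produces a global homeomorphism $\psi_t:M_0\to M_t$. The formula \equ{eq: tremor shear} holds on edges of the APD by construction, and for an arbitrary piecewise smooth path $\gamma$ it follows by homotoping $\gamma$ rel endpoints to a concatenation of APD edges and using that $\hol_t^{(x)}$ and $\hol_t^{(y)}$ depend only on the relative homotopy class.

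To obtain the correct isotopy class, I would carry out the same construction continuously in the parameter, producing a family $\{\psi_s\}_{s\in[0,t]}$ which is an isotopy rel $\Sigma$ from the identity to $\psi_t$. The cohomology-level effect of this isotopy is exactly the straight-line deformation $\hol_0\mapsto\hol_0+(s\beta,0)$ that defines $\til q_s=\trem_{\til q_0,s\beta}$, so $\psi_t$ and $\varphi_t\circ\varphi_0^{-1}$ represent the same isotopy class of homeomorphisms $M_0\to M_t$. For uniqueness, given two such homeomorphisms $\psi_t,\psi_t'$, the composition $\psi_t'\circ\psi_t^{-1}:M_t\to M_t$ is isotopic to the identity and preserves the holonomy of every piecewise smooth path. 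This forces it to fix every singular point (singularities are the only orbifold-like points and are preserved by any map in the correct isotopy class), and then the endpoint of any path based at a singularity is determined by its holonomy, so $\psi_t'\circ\psi_t^{-1}=\mathrm{id}$.

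The main obstacle is the patching step in the second paragraph: ensuring that the local horizontal shears on adjacent polygons glue to a genuine homeomorphism of $M_0$. This is where both the non-atomicity of $\nu$ (which gives continuity of $\widehat\nu_P$ and precludes jumps along shared edges) and the invariance property of transverse measures (which ensures compatibility of the basepoint convention across edges) are essential; without either hypothesis, the construction would only yield a map on the complement of the atoms and would generally fail to extend continuously, as will be discussed for atomic tremors in \S\ref{sec: atomic tremors}.
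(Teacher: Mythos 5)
Your construction of $\psi_t$ as a leaf-wise horizontal shear in polygon coordinates, via the accumulated transverse mass $\widehat{\nu}_P(y)$, is a genuinely different route from the paper's, which works instead on the universal cover: the paper interpolates between the singular cocycle $a_t$ of the piecewise-affine marking $\varphi_t\circ\varphi_0^{-1}$ and the target cocycle $b_t$, builds candidate developing maps $f_{s,t}:\til S\to\R^2$, and recovers the horizontal isotopy leaf by leaf via $\til g_{s,t}=f_{0,t}^{-1}\circ f_{s,t}$. Your version is more explicit, and the global consistency that the cohomological formalism handles automatically reappears in your construction as the patching step, which deserves a precise statement: for adjacent polygons $P,Q$ sharing an edge, the invariance property forces $\widehat{\nu}_P(\cdot)-\widehat{\nu}_Q(\cdot)$ to be constant along that edge (both sides change between nearby heights by the $\nu$-mass of isotopic transverse arcs), so the two local shears differ by a translation and the curved-boundary images of the polygons glue to a translation surface with the period coordinates of $M_t$. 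Your argument for the isotopy class is also too quick: equality of developing maps alone does not determine a marking at orbifold points of $\HH$, so the clean way to finish is to observe that $s\mapsto\psi_s\circ\varphi_0$ and $s\mapsto\varphi_s$ are continuous lifts through $\pi:\HHm\to\HH$ of the same path $s\mapsto q_s$, with the same initial point $\til q_0$, and hence coincide.

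The uniqueness step has a real gap. The assertion that ``the endpoint of any path based at a singularity is determined by its holonomy'' is false: for instance, for $q\in\EE$ with involution $\iota$, taking a singular point $p$ (which is $\iota$-fixed) and any path $\gamma$ from $p$, the path $\iota(\gamma)$ has the same holonomy as $\gamma$ but generically a different endpoint. Being isotopic to the identity does not rescue this, since the isotopy fixes only the initial endpoint. What the hypothesis actually gives you, applied to short local paths, is that $h=\psi_t'\circ\psi_t^{-1}$ is a translation in every chart, i.e.\ a translation equivalence of $M_t$; you then finish as the paper does, invoking that the identity is the only translation equivalence of $M_t$ isotopic to the identity rel $\Sigma$.
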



\begin{dfn} We call $\psi_t: M_0\to M_t$ the {\em tremor comparison
    homeomorphism} (TCH). 
\end{dfn} \index{TCH} \index{tremor comparison homeomorphism}

The uniqueness of a tremor comparison homeomorphism implies 
the following important naturality property:

\begin{cor}
    \name{prop: naturality}
  With the notation of Proposition \ref{prop: tremor comparison}, 
suppose $\varphi_0$ and $\varphi'_0$ are two different marking maps
$S \to M_0$ representing $\til q_0$, so that $\varphi_0' \circ \varphi_0^{-1}$ is isotopic to a translation equivalence
$h$ of $M_0$. Then the TCH's 
$\psi_t$ and $\psi'_t$ satisfy
$\psi_t = \psi'_t \circ h$. 
\end{cor}


 In order to construct $\psi_t$, we start with a comparison map 
 $\varphi$ 
 which is only assumed to satisfy \eqref{eq: tremor shear} in case $\gamma$ is a saddle connection. We then modify $\varphi$ by means of an isotopy which moves points along leaves of the horizontal foliation of the target surface $M_t$. 
 The signed distance along horizontal leaves will be chosen so that \eqref{eq: tremor shear} holds for all piecewise smooth curves  $\gamma$ connecting any two points.  Since the horizontal straightline flow may not be defined for all times, one of the complications we will address is to ensure that we can move points horizontally by the required amount.

\begin{proof}[Proof of Proposition \ref{prop: tremor comparison}] 
We  begin by proving the existence of $\psi_t$.  Let 
  $\tau$ be a triangulation of $S$ obtained as the 
  pullback via $\varphi_0$ of a geodesic triangulation 
  on $M_0$. Since we will be using the opposite point map defined in \S \ref{subsec: polygonal tremors}, we will take $\tau$ to be  given by adding horizontal diagonals to the polygons of an APD, as in the proof of Proposition \ref{prop: tremspace}. Let $U_\tau$ and $V_\tau$ be the open sets in $H^1(S, \Sigma; \R^2)$ and $\HHm$, as in \S
 \ref{subsec: atlas of 
   charts}.
 For a sufficiently small $\vre>0$, in the interval $I = [0, \vre]$ we have
 \eq{eq: contained in Vtau}{
\{\trem_{t\beta}(q) : t \in I \}\subset   V_\tau,
 }
 and we will first prove the
existence of $\psi_t$ for $t \in I$ where $I$ satisfies \equ{eq:
  contained in Vtau}. The existence for all $t$ then follows by composing maps defined on small intervals, as in the first paragraph of the proof of Proposition \ref{prop: tremspace}. With this in mind we can re-parameterize $I$, and replace $\beta$ by its multiple by a positive constant, to assume that $t=1$, $I=[0,1]$ and $\til q_0, \til q_1 \in V_\tau.$


Let $\tau_0, \tau_1 $ denote respectively the pushforward of the triangulation $\tau$ to $M_0, M_1$, and let
 $\varphi:M_0 \to M_1$ be the comparison map which is affine and orientation-preserving on triangles of $\tau$ as in \S \ref{subsec: atlas of
  charts}. Thus $\varphi $ sends $
  \tau_0$ to $ \tau_1$. 
The definition of tremors gives us \eqref{eq: tremor shear} with $\varphi$ in place of $\psi_t$, and 
for any path $\gamma$ on $M_0$  with endpoints in $\Sigma$.
Recall from Proposition \ref{prop: tremspace} that $\varphi$ takes the horizontal foliation of $M$ to the horizontal foliation of $M'$ and  takes $(dy)_{\til q_0}$ to $(dy)_{\til q_1}$. Also $\varphi$ preserves the rightward orientation on horizontal lines. 
We will construct the homeomorphism $\psi$  by composing $\varphi$ with a map which moves a point in $M_1$ along its horizontal leaf. Recall that 
$\Upsilon^{(x)}
(s)$ denotes the image of $x \in M_1$ under horizontal straightline flow, to (signed) distance $s$. With this notation, for a continuous function $\bar s : M_0\to \R$, we 
write 
\eq{eq: defin of psi}{
\psi(p) \df 
\Upsilon^{(\varphi(p))}(\bar s(p)).}

As mentioned above, the straightline flow map $t \mapsto \Upsilon^{(\varphi(p))}(t) $ might not be defined to time $t = \bar s(p)$; we will show in Lemma \ref{lem: upsilon} that it actually is. 
Thus, for $p\in M_0$, $\psi(p)$ is obtained by 
motion along the horizontal leaf of $\varphi(p)$ in $M_1$, by the signed distance $\bar s(p)$; see Figure \ref{fig: trem2}.
Clearly such a map will satisfy the second equation in \eqref{eq: tremor shear}, and 
 $\bar s(p)$  will be chosen so that the first equation in \eqref{eq: tremor shear} holds as well. The construction of $\bar s(p)$ and proof that it has the desired properties will be broken up into several lemmas.

We begin by specifying the values of the function $\bar s$,  on each of the edges of the triangulation $\tau_0$ of $M_0$. On the horizontal edges of the triangulation, we set $\bar s$ equal to zero. Let 
$\sigma:[0,1]\to M_0$ 
denote an affine parameterization of a non-horizontal edge of $\tau_0$. We define
\eq{eq: independent of direction}{\bar s(\sigma(t))=\int_{\sigma(0)}^{\sigma(t)} d\nu-t\int_{\sigma(0)}^{\sigma(1)}d\nu,}
where the integrals are taken along the path $\sigma$ between the indicated limits. 
\begin{lem}\label{lem: orientation} 
The following hold for each edge $\sigma$: 
\begin{itemize}
    \item[(a)] The definition \eqref{eq: independent of direction} does not depend on the choice of orientation for $\sigma$; that is, defines the same function on the edge, if one uses $\bar \sigma(1-t)$ instead of $\sigma(t)$. 
    \item[(b)] 
    The map $t \mapsto \bar s(\sigma(t))$ is continuous. 
    \item[(c)] $\bar{s}(\sigma(0)) = \bar{s}(\sigma(1))=0.$
\end{itemize}
\end{lem}
\noindent \textit{Proof.} Assertion (a) follows from a computation using \eqref{eq: tremor shear} for the curve $\sigma$;  we leave this to the reader. Assertion (b) follows from the fact that $\nu$ is non-atomic. Assertion (c) follows from \eqref{eq: independent of direction}.
$\hfill \triangle$

\medskip

We now check that when using \eqref{eq: independent of direction}, a map defined via \eqref{eq: defin of psi} has the required property of preserving distances along horizontal lines, for two points on opposite sides of a polygon $P$ of the 
 APD. To this end, let $\mathrm{opp}_P$ be the opposite point map as in \S \ref{subsec: polygonal tremors}, and let $\sigma, \sigma'$ denote two affine parameterizations of sides of $P$, so that 
$$x = \sigma(t) \in \partial P, \ \ \  y = \mathrm{opp}_P(x) = \sigma'(t')$$ for appropriate $t,t' \in [0,1].$
Let $d_0, d_1$ denote respectively the horizontal signed distance between $x,y$ and $\varphi(x), \varphi(y)$ in $M_0, M_1$; that is, 
$$y = \Upsilon^{(x)}(d_0), \ \ \ \ \ \varphi(y) = \Upsilon^{(\varphi(x))}(d_1).$$
Here we swap if necessary the roles of $x$ and $ y$ to assume that $d_0>0$, for the definition of $d_i$ we refer to straightline flow on $M_i$, and in case the horizontal trajectory of $x$ is periodic we use the parameterization of paths through the interior of $P$ and $\varphi(P)$. Note that the straightline flow from $x$ to $y$ is well-defined by definition of $\mathrm{opp}_P,$ and straightline flow from $\varphi(x)$ to $\varphi(y)$ is well-defined since $\varphi$ maps horizontal segments to horizontal segments and preserves their orientation.

\begin{lem}\label{lem: on edges}
 We have $d_0  = d_1 -\bar s(x) +\bar s(y).$ 
\end{lem}
Note that Lemma \ref{lem: on edges} does not assume that $\psi $ as in 
\eqref{eq: defin of psi} is well-defined; but if one assumes that $\psi$ is well-defined, one concludes from Lemma 
\ref{lem: on edges} that $d_0$, the signed horizontal distance  between $x$ and $y$, is the same as $d_1-\bar s(x) +\bar s(y)$, the signed horizontal distance between $\psi(x)$ and $\psi(y).$ 

\noindent {\it Proof.}
By decomposing $P$ into triangles, we can assume with no loss of generality that $P$ is a triangle. 
We can further assume, using Lemma \ref{lem: orientation}(a), that $P$ has one vertex at $\xi$, where $\sigma$ and $\sigma'$ are affine parameterizations of opposite edges of $P$ with $\sigma(0) = \sigma'(0) = \xi$ and $\hol^{(y)}(\sigma) < \hol^{(y)}(\sigma')$, as shown in Figure \ref{fig: comparison2}. 
Let $\alpha$ denote a path from $x$ to $y$ along the horizontal segment through $P$. Then $\alpha$ is homotopic to the path from $x$ to $y$ along the edges of $P$, and hence
\eq{eq: an easy computation}{
d_0 = \hol_0(\alpha) = t'\, \hol^{(x)}_{0}(\sigma')-t \, \hol_{0}^{(x)}(\sigma).}
Similarly
\eq{eq: an easy computation2}{d_1 = \hol_1(\varphi(\alpha)) =t'\, \hol^{(x)}_{1}(\varphi(\sigma' ))-t\, \hol_{1}^{(x)}(\varphi(\sigma)).
}

 \begin{figure}[h]
\center

\begin{tikzpicture}[scale=1.0]

\def\xa{-1};
\def\xb{1};
\def\xc{2.6};

\def\ya{-0.5};
\def\yb{1.0};
\def\yc{2.2};
\def\yd{3};

\node (P0) at (\xb,\ya) [circle,draw,fill=black,inner sep=0pt,minimum size=1.2mm,label=-90:$\xi$] {};
\node (P1) at (\xa,\yc) [circle,draw,fill=black,inner sep=0pt,minimum size=1.2mm,label=180:$ $] {};
\node (P2) at (\xc,\yd) [circle,draw,fill=black,inner sep=0pt,minimum size=1.2mm,label=0:$ $] {};

\draw (P0) -- node[left, pos=0.76] {$\sigma$} (P1);
\draw (P0) -- node[right, pos=0.7] {$\sigma'$} (P2);
\draw (P1) -- (P2);

\path [name path=horizontal line]  (-1,\yb) -- (3,\yb);
\path [name path=first line]  (P0) -- (P1);
\path [name path=second line]  (P0) -- (P2);
\path [name intersections={of=horizontal line and first line, by=xx}]; 
\path [name intersections={of=horizontal line and second line, by=yy}]; 

\draw (-1,\yb) -- (3,\yb);
\node [left] (L) at (-1,\yb) {};

\node (X) at (xx) [circle,draw,fill=black,inner sep=0pt,minimum size=0.7mm, label=-100:$x$] {};
\node (Y)  at (yy) [circle,draw,fill=black,inner sep=0pt,minimum size=0.7mm, label=-70:$y$] {};
\begin{scope}[xshift=180]

\def\xa{0.6};
\def\xb{1};
\def\xc{2.0};

\def\ya{-0.5};
\def\yb{1.0};
\def\yc{2.2};
\def\yd{3};

\node (P0) at (\xb,\ya) [circle, draw, fill=black, inner sep=0pt, minimum size=1.2mm, label=-90:$\varphi(\xi)$] {};
\node (P1) at (\xa,\yc) [circle, draw, fill=black, inner sep=0pt, minimum size=1.2mm, label=180:$ $] {};
\node (P2) at (\xc,\yd) [circle, draw, fill=black, inner sep=0pt, minimum size=1.2mm, label=0:$ $] {};

\draw (P0) -- node[left, pos=0.76] {$\varphi(\sigma)$} (P1);
\draw (P0) -- node[right, pos=0.7] {$\varphi(\sigma')$} (P2);
\draw (P1) -- (P2);

\path [name path=horizontal line]  (-1.0,\yb) -- (2.5,\yb);
\path [name path=first line]  (P0) -- (P1);
\path [name path=second line]  (P0) -- (P2);
\path [name intersections={of=horizontal line and first line, by=xx}]; 
\path [name intersections={of=horizontal line and second line, by=yy}]; 

\draw (-1.2,\yb) -- (3.2,\yb);
\node [left] (L) at (-1.2,\yb) {};

\node (X) at (xx) [circle,draw,fill=black,inner sep=0pt,minimum size=0.7mm,  label=-110:$\varphi(x) $] {};
\node (Y)  at (yy) [circle,draw,fill=black,inner sep=0pt,minimum size=0.7mm, label=-70:$\varphi(y)$] {};


\end{scope}

\end{tikzpicture}\ \ \ \ \ \ \ \ \

\caption{Paths used in the proof of Lemma \ref{lem: on edges}.}
\label{fig: comparison2} 
\end{figure}

Applying the opposite point invariance property \eqref{eq: invariance property}, we obtain \eq{eq: using invariance now}{\int_{\sigma(0)}^{\sigma(t)} \nu = \int_{\sigma'(0)}^{\sigma'(t')} \nu. }
 By \eqref{eq: tremor shear} (which holds for the saddle connections $\sigma$ and $\sigma'$), along with \eqref{eq: an easy computation} and \eqref{eq: an easy computation2}, we get 
 \[
 \begin{split}
    d_1-d_0 
    = & t' \, \left( \hol^{(x)}_{1}(\varphi(\sigma')) - \hol_{0}^{(x)}(\sigma')
    \right) - t\, \left( \hol^{(x)}_{1}(\varphi(\sigma)) - \hol_{0}^{(x)}(\sigma)\right) \\
    = & 
    t' \, \int_{\sigma'(0)}^{\sigma'(1)}
 \nu - t \,  \int_{\sigma(0)}^{\sigma(1)} \nu,  
 \end{split}
 \]
 and by \eqref{eq: independent of direction} and \eqref{eq: using invariance now} we also get 
 $$
 \bar s (x) - \bar s (y)  = \bar s (\sigma(t)) -\bar s (\sigma'(t'))  = 
  t' \, \int_{\sigma'(0)}^{\sigma'(1)}
 \nu - t \,  \int_{\sigma(0)}^{\sigma(1)} \nu
  .
 $$
 This gives the required identity.
$\hfill \triangle$

\medskip

We now extend $\bar s$ by affine interpolation to the interiors of triangles. For any point $p \in M_0$, let $x, y$ denote the two intersections of the horizontal leaf of $p$ with $\partial P$, so that $y = \mathrm{opp}_P(x)$, and let $d_0$ be as above. Then there is $t \in (0,1)$ so that $p = \Upsilon^{(x)}(td_0) = \Upsilon^{(y)}((t-1)d_0)$. We define 
\eq{eq: definition s of p}{\bar s (p) \df (1-t) \bar s(x) + t \bar s (y). 
}
Since $\varphi$ and \eqref{eq: definition s of p} are both affine and orientation-preserving, the conclusion of Lemma \ref{lem: on edges} continues to hold; namely, for any two points $x'$ and $y'$ which  are on a horizontal segment passing from side to side of a polygon of the APD, 
we have
\begin{equation}\label{eq: like lemma 5.5}
    d_0  = d_1 -\bar s(x') +\bar s(y'),
\end{equation}
where $d_0, d_1$ denote signed distances defined using $x', y'$. In other words, where defined, $\psi$ maps horizontal segments to horizontal segments isometrically.

With this extended definition we claim:
\begin{lem}\label{lem: upsilon}
For any $p \in M_0,$ the horizontal straightline flow from $\varphi(p)$ to signed distance $\bar s(p)$ on $M_1$ is defined, and thus 
    the map $\psi$ defined  via \eqref{eq: defin of psi} is well-defined.
    \end{lem}

\noindent {\it Proof.}
Suppose by way of contradiction that for some $p \in M_0$, the straightline flow trajectory from $\varphi(p)$ to signed distance $\bar s (p
)$ is not defined. We know from Lemma \ref{lem: orientation}(c) that $p$ is not a singular point. 
Assume with no loss of generality that $\bar s(p)>0$; our assumption means that for some $0<t_{\mathrm{crit}} \leq  \bar s(p)$ we have $\{\Upsilon^{(\varphi(p))}(t) : 0 \leq t<t_{\mathrm{crit}}\}$ is well-defined but the one-sided limit 
$$\xi \df \lim_{t\to t_{\mathrm{crit}}^-}\Upsilon^{(\varphi(p))}(t)$$ 
is a singular point on $M_1$. 
Let $k$ be the number of times the trajectory $\left\{\Upsilon^{(\varphi(p))}(t) : 0 < t<t_{\mathrm{crit}} \right\}$ crosses edges of the APD. We can choose $p$ with the above properties so that $k$ is minimal. We will reach a contradiction in both cases $k=0$ and $k>0.$

If $k=0$ then there is a polygon $\Delta$ of the APD on $M_0$ such that $\xi = \varphi(\xi)$ is a vertex of $\varphi(\Delta)$  and $p \in \Delta$. Let $x$ be the point on $\partial \Delta$ which is opposite to $\xi$, so that $p$ is on the segment from $x$ to $\xi$. 
%
We define $d_0, d_1$ as above, with $p$ and $\xi$ playing respectively the roles of $x$ and $y$.  
Since $t_{\mathrm{crit}}>0$ we must have that $\varphi(p) $ is to the left of $\xi$ in the polygon $\varphi(\Delta),$ and hence $d_1>0$. Since $\varphi$ preserves the orientation of horizontal lines we must have $d_0>0.$ By our contradiction assumption, $t_{\mathrm{crit}}\geq d_1$. 
Finally $\bar s (\xi) =0$ by Lemma 
 \ref{lem: orientation}(c). Putting these together and using equation \eqref{eq: like lemma 5.5} we get the contradiction 
$$ \bar s(p) \geq t_{\mathrm{crit}} \geq  d_1 = d_0 + \bar s(p) - \bar s(\xi) > \bar s(p).$$

Now suppose $k>0.$ Let $\Delta$ be a polygon of the APD containing $p$ and let $y'$ be the endpoint of the rightward oriented segment from $p$ to $\partial \Delta.$ Let $d_0, d_1$ be defined as above, using the points $p$ and $y'$ instead of $x$ and $y$. We compute the numbers $t'_{\mathrm{crit}}, k'$ corresponding to $y'$ instead of $p$. We have $t'_{\mathrm{crit}} = t_{\mathrm{crit}} - d_1$ and $k' = k-1$. Using Lemma \ref{lem: on edges} we have 
$$\bar s(y') = d_0 -d_1 + \bar s(p) \geq d_0 - d_1 + t_{\mathrm{crit}} = d_0 + t'_{\mathrm{crit}} \geq t'_{\mathrm{crit}}.$$ This implies that $y'$ also satisfies that the straightline flow from $\varphi(y)$ to distance $\bar s(y)$ is not defined, and contradicts the minimality in the choice of $p.$
$\hfill \triangle $

\medskip

    \begin{lem}\label{lem: instructive}

    The map $\psi$ is a homeomorphism which is isotopic rel $\Sigma$ to $\varphi$  and satisfies 
    \eq{eq: to see that}{\Upsilon^{(\psi(p))}(t) = \psi ( \Upsilon^{(p)}(t))} 
    for any $p \in M_0 $ and any $t \in \R$ for which one (hence both) of these terms is defined.
    
\end{lem}

\noindent {\it Proof.}
The function $x \mapsto \bar s(x)$
 is continuous by Lemma \ref{lem: orientation} and \eqref{eq: definition s of p}. This implies that $\psi$ is continuous. Since $M_0$ is compact, in order to show that $\psi$ is a homeomorphism, it is enough to show that it is bijective. To this end, we first note that \eqref{eq: to see that} holds. Indeed by equation \eqref{eq: like lemma 5.5}, \eqref{eq: to see that}  holds for any interval $I$ for which the path $\{\Upsilon^{(p)} (t) : t \in I\}$ is contained in a polygon of the APD, and thus, by induction on the number of times a horizontal straightline segment from $p$ to $\Upsilon^{(p)}(t)$ crosses edges of the APD, it holds for all $t.$

It follows from Lemma \ref{lem: orientation}(c) that $\psi|_{\Sigma} = \varphi|_{\Sigma}$ and hence that $\psi$ is a label-preserving bijection on $\Sigma$. 
 It follows from \eqref{eq: to see that} that the restriction of $\psi$ to a horizontal straightline flow trajectory is an isometry (with respect to the metric induced by the 1-form $dx$). 
 Since the restriction of $\psi$ to a horizontal trajectory is an isometry mapping singular points to singular points, the restriction of $\psi$ to any horizontal trajectory is a bijection. Moreover, by \eqref{eq: defin of psi}, the image of a horizontal trajectory under $\psi$ is the same as its image under $\varphi$, and since $\varphi$ is a bijection, we obtain that $\psi$ is also a bijection.

Consider the one-parameter family of maps
 $$
 g^{(r)} (x) \df \Upsilon^{(\varphi(x))}(r\bar s (x)) \ \ \ (r \in [0,1]). 
 $$
 Clearly, this family 
 gives a homotopy between $\varphi$ and $\psi$ fixing $\Sigma$ pointwise. To see that each $g^{(r)}$ is a homeomorphism, arguing as before we see that it suffices to show that it is bijective on each horizontal straightline flow trajectory. For such a trajectory, it is indeed bijective as it is a linear homotopy between order-preserving homeomorphisms. This shows that $\varphi$ and $\psi$ are isotopic rel $\Sigma$. 
 $\hfill \triangle $

\medskip

\begin{lem}\label{lem: map satisfies}
   The map $\psi$ satisfies formula \eqref{eq: tremor shear}.
\end{lem}

\noindent 
{\it{Proof.}} 
We first claim that it is enough to prove the claim for paths $\gamma $ whose image is contained in edges of the APD. Indeed, 
 if \eqref{eq: tremor shear} holds for two paths it holds for their concatenation. Thus, in order to prove the result for an arbitrary path, it suffices to prove the result for a path $\gamma$ contained in one polygon  $\Delta$ of the APD. Suppose $\gamma'$ is  obtained by sliding every point in $\gamma$ to an edge $\sigma$ of $\Delta$; that is, $\gamma'(t) = \Upsilon^{(\gamma(t))}(\rho(t))$, where $\rho(t)$ is the horizontal signed distance from $\gamma(t)$  to $\sigma$. Using formula \eqref{eq: to see that}, we see that $\psi(\gamma')$ is obtained from $\psi(\gamma)$ by sliding horizontally by the same amount $\rho(t)$. From this one easily sees that if \eqref{eq: tremor shear} holds for $\gamma'$, it also holds for $\gamma$. 

 It remains to check that \eqref{eq: tremor shear} holds for paths whose image is contained in an edge $\sigma \subset \partial \Delta.$ This follows easily from the 
 definition \eqref{eq: independent of direction} of $\bar s$ along edges of $\Delta$; we leave the verification to the reader. 
$\hfill \triangle$

\medskip

We now complete the proof of Proposition \ref{prop: tremor comparison}. Lemmas \ref{lem: instructive} -- \ref{lem: map satisfies} establish the existence of $\psi$ with the required properties.
We complete the proof by proving uniqueness. Let $\psi$ and $\psi'$ be isotopic maps from $M_0$ to $M_1$
satisfying \eqref{eq: tremor shear} for arbitrary paths.
This equation implies that $\psi^{-1}\circ\psi'$ preserves the holonomy of paths and is thus a
translation equivalence. Since the maps $\psi$ and $\psi'$ are isotopic the map $\psi^{-1}\circ\psi'$ is isotopic to the identity.
The identity map is
the unique translation equivalence of $M_0$ isotopic to the identity
so we have $\psi^{-1}\circ\psi'=I$ and $\psi=\psi'$. 
\end{proof}

\ignore{
As
discussed in \S \ref{subsec: transverse}, 
the 1-form $dx$ and the transverse measure $\nu$ on $M_0$ can both be
thought of as cohomology classes. They can also be thought of as
1-cocycles in singular cohomology; i.e., they assign a real  
number $\int_\gamma dx$ to any piecewise smooth path
$\gamma$ in $M_0$. Note that this relies on our assumption that $\nu$ is
non-atomic. 
Denote by $dx_t$ and $dy_t$ the pullbacks to $S$ of the cocycles $dx$ and
$dy$ on $M_t$ by $\varphi_t$, let ${\mathcal F}$ be the pullback of
the horizontal foliation on $M_0$ and let $\nu_0$ denote the pullback
of the transverse measure. Define two families of singular 
$\R^2$-valued cocycles $a_t$ and $b_t$ on $S$ as follows:
$$
a_t: \gamma \mapsto \left(\int_\gamma dx_t, \int_\gamma dy_t \right)
$$
and
$$
b_t: \gamma\mapsto \left(\int_\gamma td\nu_0+dx_0, \int_\gamma dy_0 \right).
$$
That is, $a_t$ is the cocycle corresponding to the flat surface
structure on $M_t$ and the explicit marking map $\varphi_t$, and $b_t$
is the cocycle which would correspond to a marking map $\psi_t$ for
which the desired formula \equ{eq: tremor shear} holds. 
Also define a two-parameter family of singular $\R^2$-valued cocycles,
interpolating between $a_t$ and $b_t$, by
$$\xi_{s,t}=(1-s)a_t+sb_t  \ \ \text{(where } s \in [0,1],\ t \in I  \text{)}.$$
Let $\til S$ denote the universal cover of $S$, let $\til \Sigma
\subset \til S$ denote the pre-image of $\Sigma$ under the covering map,
and let $p_0 \in \til 
\Sigma$.
 Given a singular $\R^2$-valued cocycle $\alpha$ on
$S$ we can pull it back to a class $\til\alpha$ on $\til S$ by the
covering map. Since $\til S$ is contractible its first cohomology
vanishes, thus the cocycle $\til\alpha$ is a coboundary and we can find
a function $f:\til S\to\R^2$ that solves the equation
$\delta(f)=\til\alpha$, where $\delta$ is the coboundary operator in
singular cohomology \combarak{should we refer to e.g. Hatcher or some
  other text for more
  about the coboundary operator in singular cohomology?}. Since $\til S$ is connected any two solutions
to this equation differ by a constant, and we can fix $f$ uniquely by
requiring $f(p_0)=0$. We call $f$ the {\em candidate
  developing map corresponding to $\alpha$}\index{candidate developing
  map}. 
Our terminology is motivated by the fact that the candidate developing
map corresponding to the singular 
cohomology class associated with the marking map $\varphi: S \to M$,
is a special case of the `developing map' defined by Thurston,
see 
\cite[\S 3.5]{Thurston notes}; \comcol{the sentence here was: `however not all candidate covering maps
are developing maps'. Should `covering' be replaced with `developing'? Note a commented out discussion in the latex file which we once had about this cryptic comment.} \combarak{Added the ref to Thurston and
  simultaneously reduced the details. Perhaps we can even omit this
  reference and not introduce the terminology candidate developing map
at all, what do you think?} Note that $f$ is continuous,
and precomposition by a deck 
transformation changes $f$ by a constant.
}

\ignore{
We let $f_{s,t}$ denote the
candidate developing map corresponding to $\xi_{s,t}.$ It follows from
the definition of the cocycles $\xi_{s,t}$ that $f_{s,t}$ is a local
homeomorphism near nonsingular points, that is for every $x \in \til
S \sm \til \Sigma$ there is a neighborhood $\mathcal{U}$ of $x$ such
that $f_{s, t}|_{\mathcal{U}}$ is a homeomorphism onto an open subset
of $\R^2$.
\combarak{I think we also need the following
  uniformly equicontinuity.}
Furthermore, the family of maps $\{f_{s, t} : s \in [0,1], t\in I\}$
is uniformly equicontinuous: 
for $x_1, x_2 \in \til S$, the difference
$\|f_{s,t}(x_1) - f_{s,t}(x_2)\|$ can be 
bounded from above by an expression involving the length of a path
$\gamma$ connecting $x_1$ and $x_2$, and the transverse measures of
$\nu$ along $\gamma$ and along the edges of the triangulation $\tau$. 
 Pulling back $\FF$ by the covering map we get a foliation of
$\til S \sm \til \Sigma$, which we continue to call the {\em
  horizontal foliation}. 
Each map $f_{s,t}$ inherits the following properties from $a_t$ and $b_t$:
\begin{itemize}
\item It maps horizontal leaves in $\til S \sm \til \Sigma$ to
  horizontal lines on $\mathbb{R}^2$. 
\item It is monotone increasing on each horizontal leaf.
\item It is proper, that is, sends intervals that are bounded
  (resp. unbounded) on the left (resp. right) to intervals
  that are bounded (resp. unbounded) on the left (resp. right).
\end{itemize}

\noindent 
{\bf Claim:} There is a continuous family of homeomorphisms $\til g_{s,t}$ of
$\til S$ (where $s \in [0,1]$, $t \in I$), such that $\til g_{s,
  t}|_{\til \Sigma} = \mathrm{Id}$, and such  that 
the functions $f_{s,t}$ and $f_{0,t}$ satisfy 
$f_{s,t}= f_{0,t}\circ\til g_{s,t}$. 

\medskip

\noindent \textit{Proof of claim:}
By a closed interval in $\R$ we mean a closed connected subset
containing more than one point (i.e. it 
may be all of $\R$, or a compact interval with nonempty interior, or a
closed ray). 
We note that two monotone increasing proper functions $f_0$ and $f_1$ on a closed
interval in $\R$ satisfy
$f_1=f_0\circ g$ for some homeomorphism $g$, 
if and only if they have the same range, and in this case we can recover $g$ as 
\eq{eq: recover}{
  g(x)=f_0^{-1}(f_1(x)).}
Motivated by this observation, we can
construct the isotopy $\til g_{s,t}$ on the closure
$\bar{\mathscr{L}}$ 
of a single horizontal leaf $\mathscr{L}$. Since we are working in the
universal cover $\til S$, a leaf-closure $\bar{\mathscr{L}}$ has the structure of a
closed interval in $\R$ where its endpoints (if there are any) are
 in $\til \Sigma$. Clearly connected closed subsets of horizontal
 lines in $\R^2$ also have the structure of a closed interval in
 $\R$. 

We now check that the functions
$f_{s,t}|_{\bar{\mathscr{L}}}$ and $f_{0,t}|_{\bar{\mathscr{L}}}$ have the same range. 
The cocycles $\delta(f_{s,t}) = \xi_{s, t}$ and $\delta(f_{0,t})
= \xi_{0, t}$ both
represent the cohomology class $\beta_{t}
$ in $H^1(S,\Sigma ; \R^2)$. This
implies that the difference of the values of $f_{s,t}$ and
$f_{0,t}$ at a singular point $p \in \til \Sigma$ are independent of $p$. Since
these functions agree at $p_0$ they agree at every singular point, and
hence at the endpoints of
$\mathscr{L}$. Furthermore, if $\mathscr{L} $ has no left or right
endpoint, then by properness, neither does its image under the two maps
$f_{s,t}, f_{0, t}$.  Thus both maps have the same image. This
implies that we can define  $\til g_{s,t}|_{\bar {\mathscr{L}}}$
(separately on each $\bar{\mathscr{L}}$), by using \equ{eq:
  recover}. It is easy to check that the definition makes 
sense (does not depend on the identifications of the range and domain
of the maps $f_{s,   t}, f_{0, t}$ with closed intervals in $\R$).
Since $f_{s,t}(p) = f_{0,t}(p)=p$ for every $p \in \til \Sigma$, we
have $\til g_{s, t} |_{\til \Sigma} = \mathrm{Id}.$ In particular, since
distinct leaf-closures $\bar{\mathscr{L}}_1, \bar{\mathscr{L}}_2$ can
intersect only in $\til \Sigma$, the map $\til g_{s, t}: \til S \to \til S$
is well-defined, and its restriction to each $\mathscr{L}$ is continuous.

\combarak{made a change in the following paragraph. I think the
  previous proof did not show 
  continuity at singular points. In my opinion this is why we need
  uniform continuity.}
We now show that $\til g_{s, t}$ is
continuous. Let $x_i \to x$ be a convergent sequence in $\til S$, and
suppose first that $x \notin \til \Sigma$. Then $ \til g_{s,t}(x_i)$
can be described as the unique $y_i$ on the horizontal leaf of $x_i$,
for which $f_{0, t}(y_i) = f_{s, t}(x_i)$. Since $f_{s, t}$ is
continuous, and since $f_{0, t}$ is a covering map in a neighborhood
of $x$, we have $y_i \to \til g_{s, t}(x)$. This shows continuity at
points of $\til S \sm \til \Sigma$, and moreover,  since the maps
$f_{s,t}, f_{0, t}$ are uniformly continuous, the same argument
shows uniform continuity of $\til g_{s, t}. $ Since $\til S$ is the
completion of $\til S \sm \til \Sigma$, continuity of $\til g_{s,
  t}$ follows. 

\combarak{Added a few words about equicontinuity to get continuous
  dependence on $s$. I did not write this in full, it is already long
  enough, see what you think.} 
A similar argument reversing the roles of $f_{0, t}$ and $f_{s,
  t}$ produces a continuous inverse to $\til g_{s, t}$. This shows
that $\til g_{s, t}$ is a homeomorphism. The fact that the family
$\til g_{s, t}$ depends continuously on the parameter $s$, follows
from the equicontinuity of the collection of maps $\{f_{s,t}: s \in
[0,1]\}$. This proves the
claim. 
\hfill $\triangle$ 

\medskip

We now claim that $\til g_{s,t}$ is invariant under the group
of deck transformations, and hence induces an isotopy $g_{s,t}$
defined on $S$.  To see this, recall that applying a deck transformation 
changes both $f_{s,t}$ and $f_{0,t}$ by some constant. Since they
are in the same cohomology class, the constant must be the same. 

\combarak{Made some changes in the next computation, please check.}
Taking coboundaries of both sides of the equation $f_{s,t}=
f_{0,t} \circ \til
g_{s,t}$ gives  
$\xi_{s,t}=\til g_{s,t}^*(\xi_{0,t}).$
%
In particular $a_t=g_{0,t}^*(a_t)$ and $b_t=g_{1,t}^*(a_t)$.
We define $$\psi_{s,t} \df \varphi_t\circ g_{s,t} \circ \varphi_0^{-1}
\ \  \text{ and } \psi_t\df \psi_{1,t}.$$
Computing on the level of cocycles, we have:
\begin{equation}\label{eq:tremored cocycle}
\begin{split}
\psi_t^*(\hol(M_t)) &=(\varphi_t\circ g_{1,t}\circ \varphi_0^{-1})^*(\hol(M_t))\\
&=\varphi_0^{-1*}\circ g_{1,t}^*\circ \varphi_t^*(\hol(M_t))\\
&=\varphi_0^{-1*}\circ g_{1,t}^*(a_t) =\varphi_0^{-1*}(b_t) =b_t.
\end{split}
\end{equation}

Applying this equation to a path $\gamma$ in $M_0$ gives \eqref{eq:
  tremor shear}, and the family 
 $\psi_{s,t}$ provides an isotopy between $\psi_{0,t} = \varphi_t
 \circ \varphi_0^{-1}$ and
$\psi_{1,t}=\psi_t$.

We now prove uniqueness. If there were two isotopic maps $\psi_t$ and $\psi'_t$
satisfying the requirements
\combarak{Here it said ``
(for possibly different $\varphi_0$)'' but I don't see why this should
be said. I think $\varphi_0$ is part of the data. Maybe I missed
something.}
then  \eqref{eq: tremor shear} implies that $\psi^{-1}\circ\psi'$ is a
translation equivalence which is isotopic to the identity.
The identity map is
the unique translation equivalence of $M_0$ isotopic to the identity
so we have $\psi^{-1}\circ\psi'=I$ and $\psi=\psi'$. 
\end{proof}
}
\begin{prop}\label{prop: added}
Suppose that for $i=0,1$, $\til q_i$  are marked translation surfaces represented by the marking maps $\varphi_i: S \to M_i$. Suppose $\til q_0, \til q_1$ belong to the same horospherical leaf, and there is a homeomorphism $\psi: M_0 \to M_1$ isotopic to $\varphi_1 \circ \varphi_0^{-1}$ for which the conclusion of Lemma \ref{lem: instructive} holds. Then $\til q_1 = \trem_\beta(\til q_0)$ for some non-atomic foliation cocycle $\beta \in \tremspace_{q_0}.$
\end{prop}

Since we will not be using this result in this paper, we only outline the argument. 

\begin{proof}[Sketch of proof.]
Since $\til q_0, \til q_1 $ belong to the same horospherical leaf and $\psi$ is isotopic to $\varphi_1 \circ \varphi_0^{-1}$, $\hol^{(y)}_{M_0} = \hol^{(y)}_{M_1}(\psi(\gamma))$ for any path $\gamma$ joining singular points. We will define a non-atomic signed transverse measure $\nu$ satisfying \eq{eq: we will define so that}{\hol^{(x)}_{M_1}(\psi(\gamma)) = \hol^{(x)}_{M_0}(\gamma) + \int_{\gamma}\nu.}
This will show that \eqref{eq: tremor shear} holds (with $t=1, \psi = \psi_1 $), for any path joining singular points, thus showing that $\til q_1 = \trem_{\beta_\nu}(\til q_0)$. 

Let $\vre>0$ be such that horizontal straightline flow is defined on all points of both $\gamma$ and $\psi(\gamma)$, to time $s$, for all $|s|<\vre$. We define the {\em horizontal diameter} of a topological disc in a translation surface  to be the supremum of horizontal holonomies of any curve contained in $\mathcal{U}$. We can cover  the image of $\gamma$ by topological discs $\mathcal{U}$ such that the horizontal diameter of both $\mathcal{U} $ and  $\psi\left(\mathcal{U}\right)$ is smaller than $\vre.$ The subarcs $\gamma'$ of  $\gamma$ contained in  such a topological disc $\mathcal{U}$ generate the Borel $\sigma$-algebra on $\gamma$. For each such  $\gamma'$  we define 
$$\int_{\gamma'} \nu = \hol^{(x)}_{M_1}(\psi(\gamma')) - \hol^{(x)}_{M_0}(\gamma').$$ 
Using the Carath\'eodory extension theorem, one can show that this defines $\nu$ as a signed measure on $\gamma$.
By linearity, $\nu$ satisfies \eqref{eq: we will define so that}, and one can check using \eqref{eq: to see that} that $\nu$ defined in this way is invariant under holonomy along horizontal lines, and thus defines a transverse measure. 
\end{proof}
\begin{remark}\label{remark: from BSW}
It is instructive to compare our discussion of tremors, using
Proposition \ref{prop: tremor comparison}, with the discussion of the
Rel deformations in \cite[\S 6]{eigenform}. Namely in
\cite[Pf. of Thm. 6.1]{eigenform}, a map $\bar{f}_t: M_0 \to
\Rel_t(M_0)$ is constructed but the definition of this map involves
some arbitrary choices. In particular it is not unique and is not
naturally contained in a continuous one-parameter family of maps. 
  \end{remark}

  \section{Properties of tremors}\name{subsec: commutation}
  \combarak{No essential math changes in this section but fairly
    extensive rewriting. This was written imprecisely before we came
    up with TCH's and I made use of TCH's to make it more precise. It
    definitely became more verbose -- I am afraid some readers will be
  annoyed  that we are spending so much time on trivialities. But I am
  not very good at condensing, the previous version took less space
  but was   less precise and maybe more mysterious. As evidence that
  this approach is better, in ``tremor for archeologists'', Cor. 6.2,
  there used to be a mistake, we fixed the statement easily enough,
  but the proof we had in the document was
so convincing that there was no reasonable way to figure out what was
wrong with the proof.}

In this section we will derive further properties of tremors. 
  
  \subsection{Composing tremors and other maps}\name{subsec: composing tremors}
Recall from Proposition \ref{prop: group action law tremors} that we have 
\eq{eq: simplifies to}{
\trem_{\beta_1+\beta_2}(q) = \trem_{ \beta_2}(\trem_{\beta_1}(q)). 
}
Here, and in the rest of this section, we have in mind the identification of $\tremspace_{\til{q_1}}$ with $\tremspace_{\til q_2}$, for all $\til q_1, \til{q_2}$ in the same horospherical leaf; in particular, on the left-hand side of \eqref{eq: simplifies to}, $\beta_2$ belongs to $\tremspace_q$, and on the right-hand side, to $\tremspace_{q_1}$ for $q_1=\trem_{\beta_1}(q),$ and these spaces are identified  by choosing appropriate lifts $\til q, \til{q}_1.$
With this convention recall also from \eqref{eq: commutation 0} that $\trem_\beta(uq)  = u\, \trem_\beta(q),$ for any $u \in U.$

Note that the identification of $\tremspace_{\til q_1}$ with $\tremspace_{\til q_2} $ in Proposition \ref{prop: tremspace} need not send balanced tremors to balanced tremors. However, the horocycle flow commutes with horizontal straightline flows, and therefore
for $u \in U$ and  $\beta \in \tremspace_{ q} \cong
\tremspace_{uq}$, we have $L_{q}(\beta) = L_{u
  q}(\beta)$. From \equ{eq: simplifies to} and \equ{eq: commutation 0} we deduce:

\begin{cor}\name{cor: pass to balanced}
Let $\beta \in \tremspace_q$ and $s \df L_q(\beta)$. Then \begin{itemize} 
\item  $\beta - s (dy)_q \in \tremspace_{u_sq}$  is balanced. 
\item 
If $\beta$ is balanced in $\tremspace_q$ then $\beta$ is balanced in $\tremspace_{uq}$, for any $u \in U$. 
\end{itemize}
\end{cor}

Recall that $B \subset G$ denotes the upper triangular group. We now discuss the interaction between the 
$B$-action and tremors. Note that while an element $\mathbf{b} \in B$  maps horospherical leaves to horospherical leaves, it  does not necessarily preserve individual horospherical leaves, so we cannot use Proposition \ref{prop: tremspace} to identify $\tremspace_{q}$
 with $\tremspace_{\mathbf{b}\til q}$. Instead, we use the derivative of the affine comparison map $\psi_b$ defined in \S \ref{subsec: G} to identify 
 $\tremspace_{\til q}$ with 
$\tremspace_{\mathbf{b}q}$. Note that the subgroup of $B$ preserving horospherical leaves is $U$, and for $u \in U$ the map $\psi_u$ acts on $H^1(S, \Sigma; \R_x)$ trivially, and thus this identification coincides with the identification via parallel transport that is  used in Proposition \ref{prop: tremspace}.



The interaction of tremors with the $B$-action is as
follows.  

\begin{prop}\name{prop: commutation relations}
Let $q \in \HH$ and let 
\eq{eq: def p}{
\mathbf{b} = \left( \begin{matrix} a & z \\ 0 & a^{-1}\end{matrix} \right) \in
B, \text{ with } a = a(\mathbf{b})>0. 
}
Let $M_q$ and $M_{\mathbf{b}q}$ be the underlying
surfaces, and let $\til q \in \pi^{-1}(q)$. The above identification $\tremspace_{\til q} \to \tremspace_{\mathbf{b} \til q} $ multiplies the canonical transverse measure $dy$ by
$a^{-1}$ (where $a = a(\mathbf{b})$ is as in \equ{eq: def p}),  preserves the subsets of atomic and balanced foliation
cocycles, and maps $c$-absolutely continuous foliation cocycles to
$ac$-absolutely continuous foliation cocycles. 
Furthermore, 
\eq{eq: commutation 1}{\mathbf{b}\,
\trem_{ \beta}( q) = \trem_{a \cdot \beta}(\mathbf{b} q), \ \ \ \
\Dom(\mathbf{b}  q, \beta)
= a^{-1} \cdot \Dom( q, \beta).}

\end{prop}

\begin{proof} 
Let $q_1 = \mathbf{b}q$, denote the underlying surfaces by $M =
M_q, \, M_1 = M_{q_1}$ and write $\psi = \psi_{\mathbf{b}} : M\to M_1$ for 
the affine comparison map. 
Since the linear action of $\mathbf{b}$ on $\R^2$ preserves horizontal lines,
$\psi$ 
sends the horizontal foliation on $M$ to the horizontal foliation on
$M_1$. As in Proposition \ref{prop: tremspace}, $\psi$ sends transverse measures to transverse 
measures, non-atomic transverse measures to non-atomic transverse 
measures, and the induced map $\psi^*$ on cohomology sends
$\tremspace_q$ to $\tremspace_{q_1}$ and $C^+_q$ to $C^+_{q_1}$. 
Since $\psi$ is an affine map with derivative $\mathbf{b}$, the canonical transverse
measure $(dy)_q$ on $M_q$ is sent to 
its scalar multiple $a(\mathbf{b})^{-1} \cdot (dy)_{q_1}$ on $M_{q_1}$. 
Hence $c$-absolutely continuous
foliation cocycles are mapped to $a c$-absolutely continuous foliation
cocycles. 
To prove equation \equ{eq: commutation 1}, let $t \mapsto \til q_t$ be the
affine geodesic in $\HHm$ with $\til q_0 = \til q$ and
$\frac{d}{dt}|_{t=0} \til q_t = \beta$, so that $\til q_1 =
\trem_{\beta}(\til q)$. The new path $t \mapsto
\hat{q}_t = \mathbf{b}\til q_t$ is also an affine geodesic and
satisfies $\hat{q}_0 = \mathbf{b} 
\til q$. Now \equ{eq: commutation 1} follows from the fact that 
$\frac{d}{dt}|_{t=0} \hat{q}_t = a(\mathbf{b}) \cdot \beta,$ since
$\tremspace_q$ is embedded in the real space $H^1(S, \Sigma; \R_x)$.

We now show that our affine comparison map sends $\tremspace^{(0)}_q$
to 
$\tremspace^{(0)}_{q_1}$, that is, preserves balanced foliation cocycles. 
Since the horizontal direction is fixed by $\mathbf{b}$ and scaled by a factor of $a= a(\mathbf{b})$, $(dx)_{q_1}$ is obtained from $(dx)_q$ by
multiplication by $a$.
Now suppose $\beta \in \tremspace_{q}^{(0)}$
so that $\hol_q^{(x)} \cup
\beta=0$. By naturality of the cup product we get
$$0 = a^{-1} \hol_q^{(x)}\cup
\beta=\left(\psi^{-1}\right)^*\left( \hol_{q_1}^{(x)} \cup \psi^* \beta \right) =
\hol_{q_1}^{(x)} \cup \psi^*\beta.$$  
\end{proof}

\subsection{Relations between tremors and other maps}\name{subsec:
  new maps} 
We will now prove commutation and normalization relations between
tremors and other maps, which extend those in Proposition \ref{prop:
  commutation  relations}. These results will not be used in the sequel, but we hope they will be useful in the future. We will simultaneously discuss the interaction of tremors
with the action of $B$, all possible tremors for a
fixed surface, real-Rel deformations, and the $\R^*$-action on the
space of tremors. 

We will use
the notation and results of \cite{eigenform} in order to discuss
real-Rel deformations. Let $Z$ be the subspace of $H^1(S, \Sigma; \R_x) $ of cohomology classes 
which evaluate to zero on closed loops.
Thus $Z$ represents the subspace of real
rel deformations of surfaces in $\HH$ (see
\cite[\S 3]{eigenform} for more information).

Let $q \in \HH, \, M_q$ the underlying surface, $\varphi: S \to
M_q$ a marking map and $\til q \in \HHm$ the corresponding element in
$\pi^{-1}(q)$. We define semi-direct products   
$$
S^{(\varphi)}_1 \df B \ltimes \tremspace_{\til q}, \ \ \ \ S^{(\varphi)}_2 \df
B \ltimes
(\tremspace_{\til q} \oplus Z),
$$\index{S@$S^{(\varphi)}_i$}
where the group structure on $ S^{(\varphi)}_2$ is defined by
$$
\left(b_1, v_1, z_1).(b_2, v_2, z_2\right) = \left(b_1b_2, 
a^{-2}(b_2)v_1+v_2, a^{-1}(b_2) z_1+z_2 \right), $$
where 
$$ b_i \in  B, \ v_i \in \tremspace_{\til q}, \ z_i 
\in  Z, 
$$
$a(b)$ is defined in \equ{eq: def  p}. Also define the group structure on
$S^{(\varphi)}_1$ by thinking of it as a subgroup
of $S^{(\varphi)}_2$. 
Define the quotient semidirect products  
$$\bar{S}_1^{(\varphi)} \df S_1^{(\varphi)}/ \sim , \ \
\bar{S}_2^{(\varphi)} \df S_2^{(\varphi)} / \sim,$$
\index{S@$\bar{S}^{(\varphi)}_i$} where
$\sim$ denotes the 
equivalence relation $B \ni u_s  \sim s \cdot \hol_{\til q}^{(y)} \in
\tremspace_{\til q}$.

With this notation we have the following:

\begin{prop}\name{prop: semidirect product action}
  Let $q$, $M_q$, $\varphi$ and $\til q$ be as above, 
  and suppose $M_q$ has no
horizontal saddle connections (so that tremors and 
real-Rel deformations have the maximal domain of definition). Define 
$$
\Theta^{(\varphi)}_1: S^{(\varphi)}_1 \to \HHm, \ \
(b, \beta) \mapsto b \, \trem_{\beta}(\til q) 
$$
\index{U@$\Theta^{(\varphi)}_i $}
and
$$
\Theta^{(\varphi)}_2: S^{( \varphi)}_2 \to \HHm, \ \
(b, \beta, z) \mapsto b \, \mathrm{Rel}_z \, \trem_{\beta}(\til q).
$$
Then the maps $\Theta_i^ {(\varphi)}$ 
obey a `group action' law 
\eq{eq: group action law}{
  \Theta_i^{\left(\psi_{g_2} \circ\varphi
  \right)}(g_1) 
= \Theta_i^{(\varphi)}(g_1g_2) 
\ \ (i=1,2).
}
Moreover these maps are continuous, and descend to
well-defined immersions $\bar \Theta^{(\varphi)}_i:
\bar{S}_i^{(\varphi)} \to \HHm$. \index{U@$\bar \Theta^{(\varphi)}_i $} 
\end{prop}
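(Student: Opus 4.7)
The plan is to establish the four assertions---full domain, continuity, the equivariance \equ{eq: group action law}, and descent to an immersion of $\bar{S}^{(\varphi)}_i$---by unpacking $\Upsilon^{(\varphi)}_i$ into its three building blocks ($B$-action, tremor, real-Rel) and repeatedly invoking the commutation relations of \S\ref{subsec: commutation}. Under the hypothesis that $M_q$ has no horizontal saddle connections, every signed foliation cocycle at $q$ is non-atomic, so Proposition \ref{prop: tremor domain of defn} gives $\Dom(\til q, \beta) = \R$ for all $\beta \in \tremspace_{\til q}$; the analogous statement that $\Rel_z \til q$ is defined for every $z \in Z$ under the same hypothesis is from \cite[\S 6]{eigenform}. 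This shows $\Upsilon^{(\varphi)}_i$ is defined on all of $S^{(\varphi)}_i$, and continuity follows by composing the continuous tremor flow (Proposition \ref{prop: continuity}), the continuous real-Rel flow, and the smooth $B$-action.

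For \equ{eq: group action law}, I would compute both sides directly in period coordinates. The change of marking $\varphi \mapsto \psi_{g_2}\circ\varphi$ replaces the base point by $g_2 \til q$; expanding via \equ{eq: commutation 1} (which reads $b\cdot\trem_\beta = \trem_{a(b)\beta}\cdot b$) together with its analogue $b\cdot\Rel_z = \Rel_{a^{-1}(b)z}\cdot b$ from \cite{eigenform}, one sees that the scaling factors $a^{-2}(b_2)$ on the tremor coordinate and $a^{-1}(b_2)$ on the real-Rel coordinate built into the semidirect product law of $S^{(\varphi)}_i$ are precisely those needed for the identity to hold, reducing \equ{eq: group action law} to a formal bookkeeping computation.

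The equivalence $u_s \sim s\cdot \hol^{(y)}_{\til q}$ is precisely \equ{eq: horocycles are tremors}. One checks that it generates a normal subgroup of $S^{(\varphi)}_i$---conjugation by $b \in B$ merely rescales the parameter $s$ by a power of $a(b)$, staying within the subgroup---so $\bar{S}^{(\varphi)}_i$ inherits a group structure on which $\bar\Upsilon^{(\varphi)}_i$ is well-defined. For injectivity of the derivative at the identity, the three tangent summands $\mathfrak{b},\tremspace_{\til q}, Z$ send their basis vectors into $H^1(S,\Sigma;\R^2)$ respectively as the span of $(\hol^{(x)}_{\til q}, -\hol^{(y)}_{\til q})$ and $(\hol^{(y)}_{\til q}, 0)$ (the geodesic and horocycle directions), as $(\tremspace_{\til q}, 0)$, and as $(Z, 0)$. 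A kernel vector must have vanishing $\R_y$-component, forcing the $g_t$-contribution to zero; the remaining equation $\sigma\hol^{(y)}_{\til q} + v + z = 0$ with $\sigma \in \R$, $v \in \tremspace_{\til q}$, $z \in Z$ has kernel spanned exactly by the $\sim$-direction $(1, -\hol^{(y)}_{\til q}, 0)$, provided $\tremspace_{\til q}\cap Z = \{0\}$. This last fact follows from Proposition \ref{prop:transverse measures} (Katok's injectivity on absolute homology), applicable because the absence of horizontal saddle connections forces the absence of horizontal cylinders. Injectivity at arbitrary $g \in \bar{S}^{(\varphi)}_i$ then follows by combining this identity-case analysis with \equ{eq: group action law} and the identity \equ{eq: simplifies to} for composing tremors, both of which allow one to reduce the analysis at $g$ to the identity case with an adjusted marking.

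The main obstacle is the immersion property: the identity-case kernel analysis is straightforward, but propagating injectivity to all points, and keeping track of the precise interaction of the $\sim$-quotient with the semidirect product structure, requires careful bookkeeping. The essential non-formal input is Katok's Proposition \ref{prop:transverse measures}, which rules out the one direction ($\tremspace_{\til q} \cap Z$) in which the derivative could otherwise fail to be injective.
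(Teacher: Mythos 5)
Your proposal is essentially correct and follows the same high-level strategy as the paper, but you supply substantially more detail on the immersion step, which the paper only sketches. The paper's proof is two sentences for the immersion part: it asserts that distinct elements of $\bar{S}^{(\varphi)}_1$ are taken to surfaces with distinct developing maps in period coordinates, without spelling out the argument. You instead compute the derivative at the identity explicitly, identifying the images of $\mathfrak{b}$, $\tremspace_{\til q}$, and $Z$ in $H^1(S,\Sigma;\R^2)$, and observe that the kernel is exactly the one-dimensional $\sim$-direction once one knows $\tremspace_{\til q}\cap Z=\{0\}$; you then correctly trace this transversality back to Katok's injectivity (Proposition \ref{prop:transverse measures}), which applies because the no-horizontal-saddle-connection hypothesis rules out horizontal cylinders. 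This is the essential non-formal ingredient, and you are right that the paper does not cite it explicitly at this point; for $i=1$ it is not even needed (the kernel is the $\sim$-direction tautologically), whereas for $i=2$ it is indispensable. Your verification that $\sim$ generates a normal subgroup (with conjugation by $b$ rescaling the parameter by $a^2(b)$) and your observation that the $B$-action, tremors, and real-Rel all preserve the ``no horizontal saddle connections'' property---so that the base-point move via \equ{eq: group action law} stays within the class of markings for which the identity-case kernel computation applies---together make the propagation of injectivity from the identity to arbitrary points rigorous. Your treatment is if anything cleaner than the paper's, since the paper's phrasing (injectivity of $\dev\circ\bar\Upsilon_1^{(\varphi)}$) does not by itself establish the immersion property, whereas your derivative computation does.
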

We will only prove the statement corresponding to $i=1$. The case
$i=2$ will not be needed in the sequel and we will  leave it to 
 the reader. Specifically, in case $i=2$, the comparison map $\psi_{g_2}$ appearing in \eqref{eq: group action law} is defined up to isotopy in \cite{eigenform}, see the map $\bar f_t$ in Remark \ref{remark: from BSW}. 
\begin{proof}
The fact that the map
$\Theta_1^{(\varphi)}$ satisfies the group action law \equ{eq: group
  action law} with respect
to the group structure on $S_1^{(\varphi)}$ is immediate from
Propositions  \ref{prop: group action law tremors} and \ref{prop: commutation relations}. The fact that $\bar
\Theta_1^{(\varphi)}$ is well-defined on $\bar S_1^{(\varphi)}$ follows from \eqref{eq: horocycles are tremors} and \equ{eq: commutation 0}. The maps $
\Theta_1^{(\varphi)}, \, \bar \Theta_1^{(\varphi)}$ are continuous
because they are given as affine geodesics, and because of general
facts on ordinary differential equations. The fact that $\bar
\Theta_1^{(\varphi)}$ is an immersion can be proved by showing that
when $g_1, g_2$ are two elements of $S_1^{(\varphi)}$ that project to
distinct elements of $\bar S_1^{(\varphi)}$, then $ \dev \left(\bar
  \Theta_1^{(\varphi)}(q_i) \right)$ are distinct, i.e. the
operations have a different effect in period coordinates. 
  \end{proof}

There is also a natural action of
the multiplicative group $\R^* = \R \sm \{0\}$ on $\tremspace_{\til
  q}$ given by $(\rho, 
\beta) \mapsto \rho\beta$, where $\rho\in \R^*$ and $\beta \in
\tremspace_{\til q}$. This action preserves the set of balanced tremors
$\tremspace_{\til q}^{(0)}$. By Proposition \ref{prop: tremspace} and
Proposition \ref{prop: commutation 
  relations}, $\tremspace^{(0)}_{\til q}$ is a normal subgroup of $\tremspace_{\til q}$ and $ S_1^{(\varphi)}$. It is not hard to show using Corollary \ref{cor: pass to balanced} that $B \ltimes \tremspace_{\til q}^{(0)}$ is a normal subgroup of $S_1^{(\varphi)}$ isomorphic to the group $\bar S_1^{(\varphi)}.$
We define
a third semidirect product $S^{(\varphi)}_3 \df (\R^* 
\times B) \ltimes \tremspace_{\til q}^{(0)}$, where $\R^*$ acts on
$\tremspace_{\til q}^{(0)}$ by scalar multiplication and $B$ acts on
$\tremspace_{\til q}^{(0)}$ as above.
Arguing as in the proof of Proposition \ref{prop: semidirect
  product action} we obtain:
\begin{prop}\name{prop: more semi direct}
Let $q$, $M_q$, $\varphi$, $\til q$ be as in Proposition \ref{prop:
  semidirect product action}. Then the map  
$$
S_3^{(\varphi)} \to \HHm, \ \ \ \ (\rho, b, \beta) \mapsto b\, \trem_{\rho
  \beta}(\til q),
$$
obeys the group action law and is a continuous immersion.
%
\end{prop}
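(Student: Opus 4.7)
The strategy mirrors Proposition \ref{prop: semidirect product action}, which the source explicitly cites as a template. I would proceed in three steps: verifying the group action law, establishing continuity, and checking the immersion property.

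For the group action law, the analogue of equation \equ{eq: group action law} must be shown. Given elements $(\rho_i, b_i, \beta_i) \in S_3^{(\varphi)}$ for $i=1,2$, a direct calculation combines three ingredients: the tremor composition identity \equ{eq: simplifies to}; the commutation relations with $B$ from Proposition \ref{prop: commutation relations}, which describe how the $B$-action and tremors interact and ensure that a $B$-conjugate of a tremor is again a tremor; and the scalar rescaling identity $\trem_{\rho \beta}(\til q) = \trem_{\til q, \beta}(\rho)$, which follows at once from the characterization of tremors as affine geodesics in \S \ref{sec: trem ode}. A prerequisite is that $\tremspace^{(0)}_{\til q}$ is stable under the combined $\R^* \times B$-action: this is immediate from linearity of $L_{\til q}$ for the $\R^*$-factor, and is established at the end of the proof of Proposition \ref{prop: commutation relations} for the $B$-factor. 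Together these observations ensure that the semidirect product structure of $S_3^{(\varphi)}$ is well-defined and compatible with the action on $\HHm$.

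Continuity of the map is then immediate from Proposition \ref{prop: continuity} applied to the tremor, combined with continuity of the $G$-action on $\HHm$.

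The immersion property is verified in period coordinates. Via the developing map, the image of $(\rho, b, \beta)$ is given by $b \cdot (\dev(\til q) + \rho \beta) \in H^1(S, \Sigma; \R^2)$, and one computes the differential of the map and verifies injectivity by checking that the infinitesimal contributions coming from the three factors produce linearly independent tangent vectors in $H^1(S, \Sigma; \R^2)$. The main obstacle is the apparent redundancy between the $\R^*$-direction and scalar variations within $\tremspace^{(0)}_{\til q}$, since both produce infinitesimal variations of $\rho \beta$; resolving this requires passing to the effective quotient, paralleling the passage from $S_i^{(\varphi)}$ to $\bar S_i^{(\varphi)}$ in Proposition \ref{prop: semidirect product action}. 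On this effective quotient the remaining directions separate, and the linear-algebra check—using the splitting of $H^1(S, \Sigma; \R^2)$ into horizontal and vertical components and the fact that balancedness rules out the offending coincidence $u_s \sim s \cdot \hol_{\til q}^{(y)}$, since $\hol_{\til q}^{(y)} \notin \tremspace^{(0)}_{\til q}$—delivers the immersion claim. This final step is the most delicate part of the argument.
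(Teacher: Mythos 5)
Your approach is the paper's approach: the text proves Proposition \ref{prop: more semi direct} in a single sentence, ``Arguing as in Proposition \ref{prop: semidirect product action} we obtain,'' and your proposal spells out exactly that argument. The group action law and continuity follow as you describe, from \equ{eq: simplifies to}, Proposition \ref{prop: commutation relations}, the affine-geodesic description of tremors in \S\ref{sec: trem ode}, and Proposition \ref{prop: continuity}.

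You also go beyond the text by flagging the $(\rho,\beta)\sim(c\rho,c^{-1}\beta)$ redundancy, and that observation is correct and worth emphasizing. The $\rho$-derivative of $(\rho,b,\beta)\mapsto b\,\trem_{\rho\beta}(\til q)$ at $\rho=1$ is the tangent direction determined by $\beta$, which also lies in the image of the $\tremspace_{\til q}^{(0)}$-derivatives, so the differential on $S_3^{(\varphi)}$ has a one-dimensional kernel whenever $\beta\neq 0$: the map is not literally an immersion on $S_3^{(\varphi)}$. By analogy with $\bar{S}_1^{(\varphi)}$ and $\bar{S}_2^{(\varphi)}$, the statement should refer to the quotient collapsing this redundancy. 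Your fix---passing to the effective quotient, which away from $\beta=0$ is locally the restriction of $\bar{\Upsilon}_1^{(\varphi)}$ to $B\ltimes\tremspace_{\til q}^{(0)}$, together with the observation that the identification $u_s\sim s\cdot\hol^{(y)}_{\til q}$ never intervenes on the balanced cone since $\hol^{(y)}_{\til q}$ has nonzero signed mass---is exactly what is needed to inherit the immersion property from Proposition \ref{prop: semidirect product action}. Every downstream application of the proposition invokes only naturality of the $\R^*$-action (the group action law), so the imprecision in the immersion clause is harmless in practice, but your more careful formulation is the right one.
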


\begin{remark}
Note that (as reflected by the notation) the objects $S_i^{(\varphi)}$
and $\Theta_i^{(\varphi)}$ 
discussed above depend on the choice of a marking map. This is needed
because the marking map was used to identify $\tremspace_q$ for different
surfaces $q$. On the other hand 
\eqref{eq: commutation 0}  makes sense irrespective of a choice of a marking map. 
\end{remark}

\begin{remark}
In addition to the deformations listed above there is another deformation that could be considered. In the
spirit of \cite[\S 1]{VIET} (see also \cite[\S 2.1]{CMW}),
for each horizontally invariant fully supported probability measure $\nu$ on
$M_q$, there is a 
topological conjugacy sending $\nu$ to Lebesgue measure (on a different
surface $M_{q'}$). This topological conjugacy also induces a
comparison map $M_q \to M_{q'}$ and corresponding maps on foliation 
cocycles and on the resulting tremors, and it is possible to write
down the resulting group-action law which the map obeys when combined with 
those of Propositions
\ref{prop: semidirect product action} and \ref{prop: more semi
  direct}. This is left to the
assiduous reader. 
\end{remark}

\subsection{Tremors and sup-norm distance}
Let $\dist$ denote the sup-norm distance as 
in \S \ref{subsec: sup   norm}.

\begin{prop}\name{prop: lipschitz tremors}
If $q \in \HH$, $\nu$ is a non-atomic absolutely continuous signed
transverse measure on 
the horizontal foliation of $M_q$, and $\beta = \beta_\nu$ 
then 
\eq{eq: bound for distance}{
\dist(q,
\trem_{\beta}(q)) \leq \|\nu\|_{RN}.}
\end{prop}

\begin{proof}
 Let $ q_1 = \trem_\beta(q)$ and 
let $dy$ be the canonical transverse measure on $q$. 
Let  
$$\{\gamma(t) : t \in [0,1]\}, \ \ \text{ where } \gamma_t \df \trem_{t \beta}(q),$$ 
be the affine geodesic from $q$ to $q_1$. The tangent vector of
$\gamma$ is represented by the class $\beta$, 
and by specifying a marking map $\varphi_0 : S \to M_q$ we can lift
the path to $\HHm$, and find $\til q$, $\til q_1$ and $\til 
\gamma(t)$, $t \in [0,1]$ so that
$$\pi(\til q_1) = 
q_1, \ \pi(\til \gamma(t)) = \gamma(t) \ \text{ with } \ \til \gamma(0) = \til
q, \ \  
 \til \gamma(1) = \til q_1,$$ and $\til \gamma (t)$
satisfies 
$$\dev(\til \gamma(t)) = \dev(\til q)+ t
\bar \beta, \ \text{ where } \bar \beta = \left( \varphi_0^{-1}\right)^* \beta
\in H^1(S, \Sigma; \R).$$
We will use this path in 
\equ{eq:
  Finsler integrate} to give an upper bound on the distance from
$q$ and $q_1$. 
For each $t \in [0,1]$, write $q_t = \gamma(t)$ and denote the
underlying surface by $M_t$. Recall that
we denote the 
collection of saddle connections on a surface $q$ by $\Lambda_q$. We let $\Lambda'_q$ denote the saddle connections in $\Lambda_q$ which are not horizontal on $M_q$; for horizontal saddle connections $\sigma$ we have $\bar \beta(\sigma)=0$. For
any $\sigma \in \Lambda_q$, 
we have (with the notation of \S\ref{subsec: strata}) 
\eq{eq: bound on sc}{
\ell_{q_t}(\sigma) = \|\hol_{q_t} (\sigma) \|\geq |\hol^{(y)}_{q_t}(\sigma)|.}

By Proposition \ref{prop: tremspace}, we obtain transverse measures $\nu_t$ and
$(dy)_t$ on each $q_t$.
Using this, for all $t \in [0,1]$ we have 
\[ 
\begin{split}
\|\gamma'(t)\|_{\gamma(t)} = & 
\|\bar \beta \|_{\til q_t} = 
 \sup_{\sigma \in \Lambda_{\til q_t}} \frac{\|\bar \beta(\sigma)
   \|}{\ell_{\til q_t}(\sigma)} =  \sup_{\sigma \in \Lambda'_{\til q_t}} \frac{\|\bar \beta(\sigma)
   \|}{\ell_{\til q_t}(\sigma)}\\ \stackrel{\equ{eq: bound on sc}}{\leq} & \sup_{\sigma \in
  \Lambda_{\til q_t} }
\frac{\|\bar \beta(\sigma) \|}{\left|{\hol^{(y)}_{\til q_t}(\sigma)}\right |} 
 = \sup_{\sigma \in \Lambda_{\til q_t}}
\frac{|\int_{\sigma} d\nu_t |}{|\int_\sigma (dy)_t|}
\stackrel{\equ{eq: equivalently}}{\leq}  \|\nu\|_{RN}. 
\end{split}
\]
Integrating w.r.t. $t \in [0,1]$ in \equ{eq: Finsler integrate} we
obtain the bound \equ{eq: bound for distance}. 
\end{proof}

By moving along a horocycle orbit, small absolutely continuous tremors can be realized by small {\em balanced} tremors. Namely:
\begin{cor}
With the notations and assumptions of Proposition \ref{prop: lipschitz tremors},
    there is $q' \in Uq$ and
$\beta' \in \tremspace^{(0)}_{q'}$ with $|L|_{q'}(\beta') \leq 2\|\nu\|_{RN}$ and
\eq{eq: the identity}{
  \trem_\beta(q) = \trem_{\beta'}(q').}  
\end{cor}

\begin{proof}
This follows from Corollary \ref{cor: pass to balanced}, \eqref{eq: bound for distance}, and
the triangle inequality.  
\end{proof}

%

\section{Proof of Theorem \ref{thm: tremors bounded
    distance}}
  \name{section: horocycles and tremors}

We will now deduce the three assertions of Theorem \ref{thm: tremors bounded distance} from the results of the preceding sections.  
Throughout this section we write $q_1$ for $\trem_{\beta}(q)$ where $\beta \in \tremspace(q)$. The first assertion of the Theorem is that, for $\beta$  absolutely continuous, the distance between $u_sq$ and $u_sq_1$ remains bounded. 
\begin{proof}[Proof of Theorem \ref{thm: tremors bounded distance}(i)]
Let $\beta = \beta_\nu$ be the signed foliation cocycle corresponding
to a signed transverse measure $\nu$. 
We first claim that there is no loss of generality in assuming that
$\nu$ is $c$-absolutely continuous for some $c>0$. To see this, write
$\nu = \nu_1 +\nu_2$ where $\nu_1$ is aperiodic and $\nu_2$ is
supported on horizontal cylinders. By Lemma \ref{lem: absolutely
  continuous}, $\beta_{\nu_1}$ is $c_1$-absolutely continuous for some
$c_1$. Now modify $\nu_2$ so that for any horizontal cylinder
$C$ on $M_q$, the restriction of $\nu_2$ to $C$ is equal to $a_C
\, dy|_C$ for some positive constant $a_C$. Such a modification has no
effect on $\beta_{\nu_2}$, and will thus have no effect on
$\beta = \beta_{\nu_1} + \beta_{\nu_2}$. Thus, 
if $c_2 = \max_C a_C$, then (after the
modification), $\|\nu\|_{RN} \leq c_1+c_2.$
Now  using \eqref{eq: commutation 0} and Proposition \ref{prop: lipschitz tremors}, we see that
the left-hand side of \equ{eq: bounded} is bounded by $c_1+c_2$. 
\end{proof}

The second assertion of the Theorem is that if $\beta$ is absolutely continuous and essential then the horizontal foliation of a surface in the closure of the orbit $Uq_1$ is not uniquely ergodic. For this we will need the following statement, which will also be useful in \S \ref{sec: spiky fish}. 

\begin{prop}\name{prop: proper on c-ac}
Let $F \subset \HH$ be a closed set, and fix $c>0$. Then the sets 
\eq{eq: this set}{F' \df
\bigcup_{q \in F} \bigcup_{\beta \in C^{+, RN}_q(c)} \trem_\beta(q)
}\index{F@ $F'$}
and 
\eq{eq: this set2}{F'' \df
\bigcup_{q \in F} \bigcup_{\beta \in \tremspace^{RN}_q(c)} \trem_\beta(q)
}\index{F@ $F''$}
are also closed. 
\end{prop}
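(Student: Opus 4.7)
The plan is to prove that $F'$ is closed by a diagonal compactness argument: given $q'_n \to q^*$ in $F'$, write $q'_n = \trem_{q_n, \beta_n}$ with $q_n \in F$ and $\beta_n \in C^{+,RN}_{q_n}(c)$, then extract convergent subsequences of both $q_n$ and (lifts of) $\beta_n$, and identify the limit as a tremor in $F'$. The treatment of $F''$ will then follow by applying the positive case to the Hahn decomposition of each signed transverse measure.

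First I would establish precompactness of $\{q_n\}$. By Proposition \ref{prop: lipschitz tremors}, $\dist(q_n, q'_n) \le c$ for all $n$. Since $q'_n \to q^*$, the sequence $\{q'_n\}$ lies in a compact set, and by properness of $\dist$ (Proposition \ref{prop: sup norm properties}) so does $\{q_n\}$. After passing to a subsequence, $q_n \to q$ for some $q \in F$, using that $F$ is closed. Choose marked lifts $\til q'_n \to \til q^*$ in $\HHm$ and, using the local homeomorphism structure of $\pi$ near $q$, choose compatible marked lifts $\til q_n \to \til q$ with $\til q_n \in \pi^{-1}(q_n)$. Each $\beta_n$ then becomes an element of $H^1(S, \Sigma; \R_x)$ via the marking.

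Next I would extract a convergent subsequence from $\{\beta_n\}$. The $c$-absolute continuity assumption gives a uniform bound $|\beta_n(\gamma)| \le c \int_\gamma (dy)_{q_n}$ on any transverse arc $\gamma$. Since the $q_n$ lie in a compact set, we can represent a fixed basis of $H_1(S,\Sigma)$ by concatenations of saddle connections of uniformly bounded length on $M_{q_n}$, and so $\|\beta_n\|$ (in any fixed norm on $H^1(S,\Sigma;\R_x)$) is uniformly bounded. By finite-dimensionality, pass to a subsequence with $\beta_n \to \beta$ in $H^1(S,\Sigma;\R_x)$. Applying Proposition \ref{prop: uniform ac closed} to the sequences $\til q_n \to \til q$ and $\beta_n \to \beta$ yields $\beta \in C^{+, RN}_q(c)$. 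By continuity of the tremor map (Proposition \ref{prop: continuity}), $\trem_{\til q_n, \beta_n} \to \trem_{\til q, \beta}$ in $\HHm$, and comparing with $\til q'_n \to \til q^*$ gives $\til q^* = \trem_{\til q, \beta}$, hence $q^* = \trem_{q, \beta} \in F'$.

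For $F''$, the same argument goes through once we replace $\beta_n = \beta_{\nu_n}$ by its Hahn decomposition. Writing $\nu_n = \nu_n^+ - \nu_n^-$, the hypothesis $\nu_n \in \tremspace^{RN}_{q_n}(c)$ implies $\|\nu_n^\pm\|_{RN} \le c$, so both $\beta_n^\pm \in C^{+,RN}_{q_n}(c)$. Extracting subsequences as above gives $\beta_n^\pm \to \beta^\pm \in C^{+,RN}_q(c)$ with $\beta = \beta^+ - \beta^- \in \tremspace^{RN}_q(c)$, and continuity of the tremor map identifies $q^*$ as $\trem_{q,\beta} \in F''$. The main technical obstacle is the compatible lifting to $\HHm$ and the uniform boundedness of $\beta_n$ in cohomology; both are handled cleanly using the $c$-absolute continuity bound together with properness of the sup-norm distance.
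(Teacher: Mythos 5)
Your proof is correct and follows essentially the same route as the paper's: extract a subsequence $q_n \to q \in F$ using Proposition \ref{prop: lipschitz tremors} together with properness of the sup-norm distance, bound the lifted classes $\beta_n$ in $H^1(S,\Sigma;\R_x)$, extract a further convergent subsequence, and finish with Propositions \ref{prop: uniform ac closed} and \ref{prop: continuity}. The only cosmetic difference is that you bound $\beta_n$ by evaluating the $c$-absolute continuity inequality on a fixed basis of $H_1(S,\Sigma)$ with bounded-length representatives, whereas the paper records the equivalent estimate $\|\beta_n\|_{q_n}\le c$ in the sup-norm and invokes its continuity from \cite{AGY}; your treatment of $F''$ via Hahn decomposition is simply a spelled-out version of what the paper declares ``similar.''
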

Recall from \S \ref{subsec: ac} that 
$C^{+,RN}_q(c)$ (respectively, 
$\tremspace^{RN}_q(c)$)
denotes 
the set of 
absolutely continuous (signed) foliation cocycles $\beta_\nu \in \tremspace_q$ with $\|\nu\|_{RN} \leq
c$. 
\begin{proof}
We first prove that $F'$ is closed. Let $q'_n \in F'$ be a
convergent sequence with $q' = \lim_n q'_n$. We need 
to show that $q' \in F'$.  Let $q_n \in F$ and $\beta_n \in
C^{+,RN}_{q_n}(c)$ 
such that $q'_n 
= \trem_{\beta_n}(q_n)$. 
We will show that $q'  = \trem_\beta(q)$ where $q$ and $\beta$  are accumulation points of the sequences $(q_n)$ and $(\beta_n)$. According to
Proposition \ref{prop: lipschitz tremors}, the sequence $(q_n)$ is
bounded with respect to the metric $\dist$. Also, a
computation similar to the one appearing in the proof  of Proposition
\ref{prop: lipschitz tremors}, gives $\|\beta_n\|_{q_n} \leq c$,
where $\| \cdot \|_{q_n}$ is the norm given by the Finsler structure defined in \equ{eq: define a
  norm downstairs}. By Proposition \ref{prop:
  sup norm properties} the sup-norm distance is proper, and hence the
sequence $(q_n)$ has a convergent subsequence. Thus passing
to a subsequence and using the fact that $F$ is closed, we can assume $q_n \to
q \in F$. Let $M_n$ be the underlying surfaces of $q_n$. Choose
marking maps $\varphi_n: S \to M_{q_n}$ and $\varphi: S \to M_q$ so that
the corresponding points $\til q_n \in \HHm$ satisfy $\til q_n \to \til q$.
Using these marking maps, identify $\beta_n$ with elements of
$H^1(S, \Sigma; \R^2)$. By the continuity
property of the norms $\| \cdot \|_{q_n}$ (see \S \ref{subsec: sup
  norm}), this sequence of cohomology classes is bounded, and so we can pass
to a further subsequence to assume that $\beta_n$ converges to $\beta \in H^1(S,
\Sigma; \R^2)$. 
%
%
Applying Proposition \ref{prop:
  uniform ac closed} we get that $\beta = \lim_{n\to\infty} \beta_n
\in C^+_{\til q}(c)$ and using Proposition \ref{prop: continuity} we
see that $q' =\trem_\beta(q) \in F'$. The proof that $F''$ is closed is similar. 
\end{proof}

\begin{proof}[Proof of Theorem \ref{thm: tremors bounded distance}(ii)] 
Let $q_1 = \trem_\beta(q)$ where $\beta = \beta_\nu $ and $\nu$ is
absolutely continuous. As in the proof of part (i) of the theorem, we
can assume that $\nu$ is $c$-absolutely continuous for some $c$,
i.e. $\beta \in C^{+, RN}_q(c)$,
and set $F = \overline{Uq}$. By commutation of tremors and horocycles (see \equ{eq: commutation 0}), for any $s \in \R$, we have
$u_s q_1 = \trem_{\beta}(u_sq)$.
By Proposition \ref{prop: tremspace}, $\beta \in C^+_{u_sq}(c)$ for all $s$, and so $u_s q_1 \in
F'$, where $F'$ is defined via \equ{eq: this set}. By Proposition
\ref{prop: proper on c-ac}  we have that any $q_2 \in \overline{Uq_1}
\sm \LL$ also belongs to $F'$, so is a tremor of a surface in
$\LL$. 

So we write $q_2 = \trem_{\beta'}(q_3)$ for $q_3 \in \LL$ and $\beta' \in \tremspace_{q_3}$, and write
$M_2, M_3$ for the underlying surfaces. Our goal is to show that the horizontal foliation on $M_2$ is not uniquely ergodic. Since
$\LL$ is $U$-invariant and $q_2 \notin \LL$, $\beta'$ is not a
multiple of the
canonical foliation cocycle $\hol_{q_3}^{(y)}$, i.e. the horizontal
foliation  on $M_{3}$ is not uniquely 
ergodic. By Proposition \ref{prop: tremspace}, neither is the horizontal foliation on
$M_2$. 
\end{proof}

The third assertion is that when $q$ is generic for some $U$-invariant ergodic measure $\mu$, assigning zero measure to surfaces with horizontal saddle connections, then $q_1$ is also generic for $\mu$ (but note that $q_1$ need not belong to $\supp \, \mu$). A heuristic explanation of this phenomenon is that for most values of $s,$ the surface $u_sq$ is close to surfaces with a uniquely ergodic horizontal foliation, which means that $C^+_{u_sq}$ is a narrow cone centered around the canonical transverse measure tangent to the horocycle flow. By continuity of tremors, in this case $u_sq_1$ is very close to $u_{s+s_0}q$ for some $s_0.$
\begin{proof}[Proof of Theorem \ref{thm: tremors bounded
    distance}(iii)]
We first employ an argument of \cite{LM}, to prove the following: 

\medskip

{\bf Claim 1:} For $\mu$-a.e. surface $q$, the horizontal foliation on the underlying surface $M_q$ is uniquely ergodic.

\medskip 

 Indeed, from  \cite{MW} we find that there is a compact subset $K \subset \HH$ such that any surface $q$ with no horizontal saddle connections
satisfies 
$$\liminf_{T \to \infty} \frac{1}{T} |\{s \in [0, T] : u_sq \in K \}|
> \frac{1}{2}
$$ 
(where $|A|$ denotes the Lebesgue measure of $A \subset \R$. Then by the Birkhoff ergodic theorem, any $U$-invariant ergodic
measure $\nu$ on $\HH$, which gives zero measure to surfaces with
horizontal saddle connections, satisfies $\nu(K) > 1/2.$ 
 If the claim is false, then by ergodicity
$\mu$-a.e. surface has a minimal but non-uniquely ergodic horizontal
foliation. Applying Masur's criterion (see e.g. \cite{MT}) 
to the horizontal foliation,  we find that for $\mu$-a.e.\,$q$, the
ray $\{ g_tq  :t<0\}$ is divergent. Thus for
$\mu$-a.e.\,$q$ there is $t_0 = t_0(q)$ such that for all $t \geq t_0,
\, g_{-t} q \notin K$.  Moreover, we can take $t_1$ large enough
so that $\mu(\{q : t_0(q) < t_1\}) > 1/2$ and hence $\nu = 
(g_{-t_1})_*\mu$ satisfies $\nu(K) < 
1/2$. Since $\nu$ is also $U$-ergodic, and also gives zero measure to
surfaces with horizontal saddle connections, this gives a
contradiction. The claim is proved. 
\hfill $\triangle$

\medskip

Let $\mu$ be the measure on $\LL$, let $q \in \LL$ be generic for
$\mu$, and let $q_1 = \trem_{\beta}(q)$ for some $\beta$. We need to
show that $q_1$ is generic. Let $f$ be a compactly supported
continuous test function and let $\vre>0$. Let $s_0 = L_q(\beta)$ and
let $q_2 = u_{s_0}q$. Since $q_2$ and $q$ are in the same $U$-orbit, 
$q_2$ is also generic.  For this pair $q_1$, $q_2$, we now claim:
\medskip 

{\bf Claim 2:} For 
every $\vre>0$, every $\delta >0$ and for all large enough $T$ there is a subset $A
\subset [0,T]$ with $|A| \geq (1- 
\vre 
)T$ so that for all $s \in A$,
$\dist(u_sq_1, u_sq_2) <\delta$.

\medskip

We 
first use Claim 2 to conclude the proof of the Theorem. 

By  
the uniform continuity of $f$, there is $\delta$ so that whenever
$\dist(x,y) < \delta$ we have $|f(x)-f(y)|<\frac{\vre}{4}$. Apply Claim 2 with $ \frac{\vre}{8 \|f\|_\infty}$ in place of $\vre$. Since $q_2$ is generic, for all 
 large enough  $T$
we have
$$\left|\frac{1}{T}  \int_0^T f(u_sq_2) \, ds - \int f d\mu \right | < \frac{\vre}{2}. $$ 
Using the triangle inequality, we see that for all large enough $T$:
\[
\begin{split}
& \left|\frac{1}{T}  \int_0^T f(u_sq_1) \, ds - \int f d\mu \right | \\ \leq &
\left|\frac{1}{T}  \int_0^T f(u_sq_1) \, ds - \frac{1}{T}  \int_0^T
  f(u_sq_2) \, ds \right | +
\left|\frac{1}{T}  \int_0^T f(u_sq_2) \, ds - \int f d\mu \right | \\ 
\leq & \frac{1}{T} \int_A \left|f(u_sq_1) - f(u_sq_2) \right| ds + \frac{1}{T}
\int_{[0, T] \sm A} 2 \|f\|_\infty \, ds + \frac{\vre}{2} \\ 
\leq & \frac{\vre}{4} + \frac{\vre}{4} + \frac{\vre}{2} = \vre. 
\end{split}
\]
This shows that $q_1$ is generic. 

\medskip

It remains to prove Claim 2. For this we use \cite{MW} again. Let $Q \subset \HH$ be
a compact set such that for all large enough $T$, 
$$
\frac{|A_1|}{T} \geq 1 -\frac{\vre}{2}, \ \text{ where } A_1 = \left \{
  s \in [0,T] : u_s q\in Q \right \}.
$$
Let $\til Q \subset \HHm$ be compact such that $\pi(\til Q) = Q$.
Fix some norm on $H^1(S, \Sigma; \R)$. Since $\til Q$ is compact, and by
the continuity in Proposition \ref{prop: continuity}, there is
$\delta'$ such that for any 
$\til q'\in \til Q,$ and $\beta_1, \beta_2 \in C^+_{\til q'}$ for which $
L_{\til q'}(\beta_1)=L_{\til q'}(\beta_2)=s_0,$ we have 
\eq{eq: first one}{\|\beta_1 -
\beta_2\| < \delta' \implies \dist\left(\trem_{\beta_1}(\til q'),
\trem_{
  \beta_2}(\til q')  \right) < \delta.
}
Let $\LL'$ denote the collection of surfaces in $\LL$ with no horizontal
saddle connections and for which the horizontal foliation is uniquely
ergodic. By Claim 1, $\mu(\LL')= \mu(\LL)=1$, and by Corollary
\ref{cor: semicontinuity}  there is a neighborhood $\mathcal{U}$ of
$\pi^{-1}(\LL')$ such that 
\eq{eq: second one}{
\til q' \in \mathcal{U}, \, \beta \in C^+_{\til q'}, \, L_{\til q'}(\beta) = s_0
\implies \|\beta - s_0 \, (dy)_{\til q'} \| < \delta'. 
}
Clearly
$\pi(\mathcal{U})$ is an open set  of full $\mu$-measure. Since
$q$ is generic, for all sufficiently large
$T$ there is a subset $A_2 \subset [0,T]$ with 
$$
\frac{|A_2|}{T} > 1-
\frac{\vre}{2} \ \text{ and }  
s \in A_2 \implies u_s q \in \pi(\mathcal{U}). 
$$
Now set
$
A = A_1 \cap A_2, $ so that $|A| > (1-\vre)T$. Suppose $s \in A$. Then there is $\til q' \in
\mathcal{U} \cap \til Q$ with $\pi(\til q') = u_sq$. We can view
$\beta$ as an element of $C^+_{u_sq}$ and with respect to the marked
surface $\til q'$ this corresponds to $\beta' \in C^+_{\til q'}$, and
we have 
$$ u_s q_1 =
\trem_{\beta}(u_sq) = \pi(\trem_{\beta'}(\til q'))\ \text{ and } u_s q_2 =
u_{s_0}q' = \pi(\trem_{ s_0 dy}(\til q')). $$
By \equ{eq: first one} and \equ{eq: second one}, we find 
$\dist(u_s q_1, u_s q_2) < \delta$, and the claim is proved. 
\end{proof}

\section{Points outside a locus $\LL$ which are generic for $\mu_\LL$}
In this section, after some
preparations, we prove Theorem \ref{thm: more detailed}. At the end of
the section we also discuss how tremored 
surfaces behave with respect to the divergence
of nearby trajectories under the horocycle flow. 

\subsection{Tremors and rank-one loci}\label{subsec: rank one}
We now recall the notions of Rel deformations and of a rank-one locus. 
Define $W\subset H^1(S, \Sigma; \R^2)$  to be the kernel of the restriction map \index{R@$\mathrm{Res}$}
$\mathrm{Res}: H^1(S, \Sigma) \to H^1(S )$ which takes a cochain to its
restriction to absolute periods. For any $q \in \HH$, and any 
 lift $\til{q} \in \pi^{-1}(q)$,  as in \S \ref{subsec: atlas of charts} we have an
 identification $T_{\til q}(\HHm) \cong H^1(S, 
\Sigma; \R^2)$, and the subspace of $T_q(\HH)$ corresponding to $W$ is
called the Rel subspace and is independent 
of the marking (see \cite[\S3]{eigenform} for more details). 
Let $\mathfrak{g} = \mathfrak{g}_q$ denote the tangent space to the
$G$-orbit of $q$ (we consider this as a subspace of
$T_q(\HH)$ for any $q$).  
A $G$-orbit-closure $\LL$ is said to be a
{\em rank-one locus} if there is a subspace $V\subset W$ such that for
any $q \in \LL$, the 
tangent space 
$T_q(\LL)$ is everywhere equal to $\mathfrak{g}_q \oplus V$. Rank-one loci
were introduced and analyzed by Wright in 
\cite{Wright cylinders}, and the eigenform loci 
$\EE_D$ in $\HH(1,1)$ are examples of rank-one loci. The following result, which  can be seen as a strengthening of an infinitesimal statement given in Corollary \ref{cor: balanced tremors are in B-}, is valid for all rank-one loci. 

\begin{prop}\name{prop: transverse rank one}
Suppose $\LL$ is a rank-one locus. Then for any compact set $K \subset
\LL$ there is an $\vre>0$ such that if $q \in K$ is horizontally aperiodic,
and $\beta \in \tremspace_q$ is 
an essential tremor satisfying $|L|_q(\beta) < \vre$, then
$\trem_{\beta}(q) \notin \LL$. If $q$ is horizontally minimal and 
$\overline{Uq} =\LL$, then no essential tremor of $q$ belongs to $\LL$. 

\end{prop}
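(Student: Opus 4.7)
The plan is to extract a single infinitesimal obstruction from the rank-one hypothesis and then use it for both statements. Specifically, I will first show that for any $q \in \LL$ horizontally minimal,
\[
T_q\LL \cap \tremspace_q = \R \cdot (dy)_q,
\]
so essential foliation cocycles are never tangent to $\LL$. To verify this, any $\beta$ in the left-hand side can be written as $\beta = \beta_{\mathfrak{g}} + v_x + v_y$ with $\beta_{\mathfrak{g}} \in \mathfrak{g}_q$, $v_x \in V_x \subset W_x$, $v_y \in V_y \subset W_y$. Demanding that the vertical component of $\beta$ vanish, together with the linear independence of $\hol_q^{(x)}|_{H_1(S)}$ and $\hol_q^{(y)}|_{H_1(S)}$, forces the form $\beta = a(dy)_q + v_x$. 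Then $\beta - a(dy)_q = v_x$ is a non-atomic signed foliation cocycle whose restriction to $H_1(S)$ vanishes, and Proposition \ref{prop:transverse measures}, which applies since horizontal minimality excludes horizontal cylinders, forces $v_x = 0$.

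For the first assertion I will argue by contradiction. Given sequences $q_n \in K$ horizontally minimal and essential $\beta_n \in \tremspace_{q_n}$ with $|L|_{q_n}(\beta_n) \to 0$ and $\trem_{\beta_n}(q_n) \in \LL$, Corollary \ref{cor: pass to balanced} allows me to replace each $\beta_n$ by a balanced cocycle at a nearby horocycle translate of $q_n$, keeping all hypotheses and $|L|(\beta_n) \to 0$. After extracting a convergent subsequence $q_n \to q \in \LL$ and normalizing $\bar\beta_n := \beta_n/|L|_{q_n}(\beta_n)$, Corollary \ref{cor: total variation continuous} and a cohomology compactness argument yield a further subsequence with $\bar\beta_n \to \bar\beta \in \tremspace_q^{(0)}$ and $|L|_q(\bar\beta) = 1$. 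For $n$ large the straight-line tremor segment from $q_n$ to $\trem_{\beta_n}(q_n)$ lies in a single period-coordinate chart in which $\LL$ is cut out by linear equations, so both endpoints in $\LL$ force $\beta_n \in T_{q_n}\LL$, hence $\bar\beta_n \in T_{q_n}\LL$, and in the limit $\bar\beta \in T_q\LL \cap \tremspace_q$. The infinitesimal obstruction gives $\bar\beta = a(dy)_q$, and the balanced condition forces $a = 0$, contradicting $|L|_q(\bar\beta) = 1$.

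For the second assertion I will reduce to the first using the $G$-action and density of $Uq$. Assuming $\trem_\beta(q) \in \LL$ with $\beta$ essential, Proposition \ref{prop: commutation relations} applied to $\mathbf{b} = g_t$ together with $G$-invariance of $\LL$ yields $\trem_{e^t\beta}(g_tq) \in \LL$ for every $t$, with $|L|_{g_tq}(e^t\beta) = e^{2t}|L|_q(\beta) \to 0$ as $t \to -\infty$; essentiality and horizontal minimality of $(g_tq, e^t\beta)$ are preserved for all $t$. If $g_tq$ is recurrent in a compact subset of $\LL$ along some sequence $t_n \to -\infty$, the first assertion applies directly to give a contradiction. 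In general this recurrence may fail (horizontally minimal but non-uniquely-ergodic surfaces diverge under $g_{-t}$ by Masur's criterion), so I will first use density of $Uq$ in $\LL$ combined with Proposition \ref{prop: continuity} and Corollary \ref{cor: total variation continuous} to transport the identity $\trem_\beta(u_sq) \in \LL$ (valid by Proposition \ref{prop: commutation tremors}) to an analogous essential-tremor relation at a $g_t$-recurrent, horizontally-minimal $q_* \in \LL$, and then apply the $g_t$-scaling at $q_*$.

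The main obstacle is in the second assertion, namely ensuring that essentiality of the limiting cocycle is preserved when transporting from $u_{s_n}q \to q_*$. The dimension of $\tremspace$ can drop in limits, and if $q_*$ happens to be horizontally uniquely ergodic then every element of $\tremspace_{q_*}$ is a scalar multiple of $(dy)_{q_*}$; the delicate step is selecting $q_*$ inside the same orbifold substratum as $q$ and carrying enough horizontal ergodic measures so that the TCH-limit of the essential $\beta$ remains essential rather than collapsing onto $\R(dy)_{q_*}$.
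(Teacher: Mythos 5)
Your argument for the first assertion is sound and parallels the paper's, just organized differently. You decompose directly in the tangent space $\mathfrak{g}_q \oplus V_x \oplus V_y$, kill the $\mathfrak{sl}_2$-component via linear independence of $\hol_q^{(x)}|_{H_1(S)}$ and $\hol_q^{(y)}|_{H_1(S)}$ (which follows from positive area), and kill the $W$-component via Katok's Proposition \ref{prop:transverse measures}. The paper instead factors a small tremor landing in $\LL$ as $g\,\Rel_w q$ and derives $g \in U$, $w=0$ from the invariance of vertical periods, but the two routes use the same ingredients and the same linear-algebra conclusion $\tremspace_q \cap T_q\LL = \R(dy)_q$. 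Your compactness/normalization argument (balance via Corollary \ref{cor: pass to balanced}, normalize $|L|$, extract limits via Corollary \ref{cor: total variation continuous}, contradict with the nonzero balanced limiting cocycle) is a reasonable way to make the paper's terse ``since $\LL$ is closed and $K$ compact, it suffices to show\dots'' explicit.

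The genuine gap is exactly the one you flag in the second assertion. Scaling by $g_t$ as $t \to -\infty$ drives $|L|$ to zero, but since Masur's criterion makes $g_tq$ divergent when the horizontal foliation of $q$ is minimal non-uniquely-ergodic, you cannot apply the first assertion along this ray. Your proposed fix --- transport $\trem_\beta(u_sq) \in \LL$ along $u_{s_n}q \to q_*$ for a $g_t$-recurrent $q_*$ --- does not go through: Corollary \ref{cor: semicontinuity} shows that if $q_*$ is horizontally uniquely ergodic (a dense property in $\LL$), the cocycle limit is forced to be a multiple of $(dy)_{q_*}$, so essentiality collapses precisely because $\dim \tremspace_{q_*}$ drops. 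You would need to choose $q_*$ carefully, and there is no guarantee a single $q_*$ is simultaneously $g_t$-recurrent, in $\overline{U q}$ as a limit along which essentiality survives, and amenable to the first assertion.

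The paper sidesteps this entirely by decoupling the two operations in the opposite order: first apply $\til g_t$ for a \emph{single fixed large} $t$ chosen so that $e^{-2t}|L|_q(\beta) < \vre$; then, since $\til g_t$ is a homeomorphism of $\LL$, the set $\til g_t Uq$ is still dense in $\LL$, so one can pick $u \in U$ with $\til g_t u q$ landing inside the compact set $K$. The surface $\til g_t u q$ is horizontally minimal, the cocycle $e^{-t}\beta$ is manifestly essential (one never takes a limit of cocycles, only applies affine comparison maps and the TCH), and $\trem_{e^{-t}\beta}(\til g_t u q) = \til g_t u \, \trem_\beta(q) \in \LL$ contradicts the uniform $\vre$ of the first assertion. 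The key maneuver --- using density of the translated orbit rather than recurrence of a single geodesic trajectory --- is exactly what lets one avoid the passage to a limit where essentiality can be lost.
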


\begin{proof}
 We leave it as an exercise to the reader to show that in rank-one loci, by Proposition \ref{prop:transverse measures} and \eqref{eq: tremor}, a small essential tremor of an aperiodic surface $q$ cannot have the same absolute periods as a surface obtained by applying a small element of $G$ to $q$. This establishes the first claim.

For the second assertion, suppose by contradiction that $\trem_{
  \beta} (q)\in \LL$ for some $q \in \LL$ with $\LL = \overline{Uq}$ and
$\beta \in \tremspace_q$ an essential tremor. Let $K$ be a bounded
open subset of $\LL$ and let $\vre>0$ be as in the first
assertion. 
The translated set $g_{t} U q$ is also dense 
in $\LL$, and $g_{t} u\, \trem_{\beta} (q)\in \LL$ for any $u \in
U$. By Proposition
\ref{prop: commutation relations}, $g_{-t} u \, \trem_{\beta} (q)= \trem_{
  e^{-t}\beta}(g_{-t}uq)$. Taking $t$ large enough so that $|L|_q(e^{-t} \beta)
< \vre$, and  choosing $u$ so that $g_{-t}uq \in K$, we get a
contradiction to the choice of 
$\vre$.   
\end{proof}

\begin{cor}\name{cor: how to reach from locus}
Suppose $\LL$ is a rank-one locus, $q_1, q_2 \in \LL$ are horizontally
minimal and have dense $U$-orbits, and for $i=1,2$ there are
$\beta_i \in \tremspace_{q_i}$ such that $\trem_{\beta_1}(q_1) =
\trem_{ \beta_2}(q_2)$. Then there is $u \in U$ such that $uq_1 =
q_2$. Furthermore, if $\beta_1$ and $\beta_2$ are balanced then $q_1 = q_2$ and
$\beta_1$ is obtained from $\beta_2$ by applying a translation equivalence. 
\end{cor}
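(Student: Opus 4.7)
Plan. The idea is to realize $q_2$ as a tremor of $q_1$ and then invoke the rigidity of rank-one loci supplied by Proposition \ref{prop: transverse rank one}. Let $q = \trem_{\beta_1}(q_1) = \trem_{\beta_2}(q_2)$, write $M_i = M_{q_i}$, and pick marking maps so that the TCHs $\psi^{(i)} : M_i \to M$ of Proposition \ref{prop: tremor comparison} are defined. By Corollary \ref{cor: ref} the pullbacks $(\psi^{(i)})^*$ identify $\tremspace_q$ with $\tremspace_{q_i}$ and preserve both the cone $C^+$ and the subspace of balanced cocycles. Through this identification I will regard $\beta_1$ and $\beta_2$ simultaneously as elements of $\tremspace_{q_1}$; then the composition formula \equ{eq: simplifies to} yields $q_2 = \trem_{q_1, \gamma}$ with $\gamma \df \beta_1 - \beta_2 \in \tremspace_{q_1}$.

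Next I apply rigidity. Since $Uq_1$ is dense in $\LL$, we have $\overline{Uq_1} = \LL$, while $q_2 = \trem_{q_1, \gamma} \in \LL$. By the second assertion of Proposition \ref{prop: transverse rank one}, $\gamma$ cannot be an essential tremor. Horizontal minimality of $q_1$ guarantees the absence of horizontal saddle connections, so the only way $\gamma$ can fail to be essential is for it to be a scalar multiple of the canonical foliation cocycle, $\gamma = s \cdot (dy)_{q_1}$ for some $s \in \R$. Then \equ{eq: horocycles are tremors} gives $q_2 = \trem_{q_1, s\,(dy)_{q_1}} = u_s q_1$, which is the first claim.

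For the second claim, suppose $\beta_1, \beta_2$ are balanced. Corollary \ref{cor: ref} ensures that both of them, when transferred to $\tremspace_{q_1}$ via the TCHs, remain in $\tremspace_{q_1}^{(0)}$, hence so does their difference $\gamma$. But $L_{q_1}\!\bigl(s\,(dy)_{q_1}\bigr) = s \cdot \mathrm{Area}(M_{q_1})$, so the balanced condition forces $s = 0$, whence $q_2 = q_1$. Finally, the identification of $\tremspace_{q_1}$ and $\tremspace_{q_2}$ depends on the chosen marking maps, and, as noted in Remark \ref{rem:ambig}, the ambiguity between two such identifications is exactly postcomposition by an element of the local group, i.e., a translation equivalence of $M_{q_1}$. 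Thus $\beta_1 = h^*\beta_2$ for a suitable translation equivalence $h$, which is the asserted statement.

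The main technical obstacle is bookkeeping: keeping track of which foliation cocycle lives in which $\tremspace_{\cdot}$ and verifying that the intrinsic quantities used (balancedness, essentiality, the canonical cocycle $dy$) are indeed preserved under TCH transport. Once this is set up correctly, Proposition \ref{prop: transverse rank one} does all of the dynamical work, and the translation-equivalence clause in the balanced case reduces to the marking-dependence recalled in Remark \ref{rem:ambig}.
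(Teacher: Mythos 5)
Your proof is correct and follows essentially the same route as the paper's: write $q_2 = \trem_{q_1, \gamma}$ using the composition rule for tremors (you invoke \equ{eq: simplifies to}; the paper cites Proposition \ref{prop: semidirect product action}, which encapsulates the same thing), invoke the second assertion of Proposition \ref{prop: transverse rank one} together with horizontal minimality to conclude $\gamma = s\,(dy)_{q_1}$, and then compute $L_{q_1}$ to force $s=0$ in the balanced case. The only place the exposition diverges is in the last step: the paper argues directly from the affine form of \equ{eq: tremor} that two lifts $\til\beta_i$ of the $\beta_i$ producing the same marked surface must be exchanged by the local group, whereas you appeal to Remark \ref{rem:ambig} about the marking-dependence of TCH's. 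Both lead to the same conclusion, but your phrasing ``the identification of $\tremspace_{q_1}$ and $\tremspace_{q_2}$ depends on the chosen marking maps'' is slightly misleading once $q_1=q_2$, since the two spaces then coincide as subsets of $H^1(M_{q_1},\Sigma;\R_x)$; the genuine source of ambiguity is that the equality $\trem_{q_1,\beta_1}=\trem_{q_1,\beta_2}$ in $\HH$ only pins down the $\beta_i$ up to the action of $\Gamma_{M_{q_1}}$, which is what the paper's appeal to \equ{eq: tremor} makes transparent.
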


\begin{proof}
Let $q_3 = \trem_{\beta_i}(q_i)$, let $M_3$ be the underlying surface,
and let $\varphi: S \to M_3$ be a marking map representing $\til q_3 \in
\pi^{-1}(q_3)$. For $i=1,2$, let $$\til \beta_i = \varphi_i^*(\beta_i) \in H^1(S,
\Sigma; \R_x)$$ 
be the cohomology classes for which $$\trem_{\til
  \beta_i}(\til q_i ) = \til q_3 \ \text{ and } \ \til q_i \in \pi^{-1}(q_i).$$ 
  By Proposition
\ref{prop: group action law tremors} we have $\trem_{\til
  \beta_1-\til \beta_2}(\til q_1)=\til q_2$. It
follows from Proposition \ref{prop: transverse rank one} that  
$\til \beta_1-\til \beta_2 = s_0 (dy)_{q_1}$ for some $s_0 \in \R$, i.e. $\trem_{\til
  \beta_1-\til \beta_2}(\til q_1) = 
u_{s_0}\til q_1$ and $ u_{s_0}q_1 = q_2$. If $\beta_1, \beta_2$ are balanced then 
$$
s_0 = \int_{M_{q_1}} dx \wedge s_0\, dy  = \int_{M_{q_1}} dx \wedge
( \beta_1-  \beta_2) = 
L_{q_1}(\beta_1) - L_{q_1}(\beta_2) =0,
$$
and this implies that $q_1=q_2$. Now considering the expression
\equ{eq: tremor} giving $\dev(\trem_{\beta}(\til q))$,
we see that the only possible ambiguity in the choice of $\til \beta_i$ for
which $\trem_{\til \beta_1}(\til q) = \trem_{\til \beta_2}(\til q)$ is
if $\til \beta_1,
\til \beta_2 \in H^1(S, \Sigma; \R_x)$ are exchanged by the action of
$\varphi^{-1} \circ h \circ \varphi$, where $h$ is a translation
equivalence of the underlying surface $M_{q}$. This gives the last
assertion. 
\end{proof}

We can use Proposition \ref{prop: transverse rank one} to construct
examples fulfilling property (III) in the discussion preceding the
formulation of Theorem 
\ref{thm: more detailed}; namely we will show there is $q\in \LL =
\EE$ and $q_1\notin \LL$, where $q_1$ is an essential tremor of $q$. We remark 
that in the introduction  we explicitly required that $q$ admit a  
tremor which is both essential and absolutely continuous. In fact this
assumption is redundant, that is for surfaces in $\EE$, foliation cocycles are
absolutely continuous. More precisely we have:

\begin{lem}\name{lem: automatic absolutely continuous}
For each aperiodic $q \in \EE$, and any $\beta \in \tremspace_q$,  
\eq{eq: automatic}{
|L|_q(\beta) \leq 1 \ \implies \beta \text{ is 2-absolutely
  continuous.}
}
\end{lem}
Recall that \eqref{eq: bound on length signed} gives that if $\beta\in \mathcal{T}^{RN}_q(2)$ then $|L|_q(\beta)\leq 2$.
\begin{proof}
First suppose $\beta = \beta_\nu \in C^+_q$ with
$L_q(\beta) = 1$. By Proposition \ref{prop: involution on E and
  measures} there is $c_1$ 
such that $\nu + \iota_* \nu = c_1 (dy)_q$. Since 
$$\int_{M_q} dx \wedge
dy = 1 =L_q(\beta)=
\int_{M_q}  dx \wedge
\nu  = \int_{M_q} dx \wedge d \iota_* \nu,$$ 
we must have  $c_1 = 2$, i.e. 
$$
(dy)_q = \frac{1}{2} d\nu + \frac{1}{2} d \iota_* \nu.
$$
This implies that $\beta \in C^{+, RN}_q(c)$ for $c = 2$. 
For a
general $\beta \in \tremspace_q$, with $|L|_q(\beta) \leq 1$, write
$\beta = \beta_{\nu^+} - \beta_{\nu^-}$, with $\beta_{\nu^\pm} \in C^+_q$ and repeat the
argument. For any transverse positive arc $\gamma$ we have $\int_\gamma
d\nu^\pm  \in \left[ 0, 2
\int_\gamma dy \right ],$ which implies \equ{eq: equivalently} with
$c=2$.

\ignore{
For the second assertion, assume first that $\nu$ is a positive
transverse measure which is 2-absolutely continuous and ergodic,
and let $\nu' = \iota_*
\nu$ and let $c $ so that $(dy)_q = c(d\nu + d\nu')$. The measures
$\nu, \nu'$ are mutually singular so for any $\vre>0 $ we can find a
short arc $\gamma$ such that $\int_\gamma d\nu' \leq \vre \int_\gamma
d\nu$ and $\int_\gamma d\nu >0$. Since $\nu$ is 2-absolutely continuous, this gives
$$
\int_\gamma d\nu \leq 2 \int_\gamma dy = 2c \int_\gamma (d\nu + d\nu')
\leq 2c(1 + \vre) \int_\gamma d\nu.
$$
Taking $\vre \to 0$ we see that $c \geq \frac{1}{2}.$
Since $dx = \iota_* dx$ we have $\int_{M_q} dx \wedge d\nu =
\int_{M_q} dx  \wedge d\nu'$, and thus 
$$1 = \int_{M_q} dx \wedge dy = c \int_{M_q} dx \wedge (d\nu + d\nu')
 = 2c \int_{M_q} dx \wedge d\nu = 2c L_q(\beta_\nu),$$
and $L_q(\beta_\nu) = \frac{1}{2c} \leq 1$.

If $\nu$ is a positive transverse measure which is not necessarily
ergodic, and is 2-absolutely continuous, then we can write $\nu =
\nu_1 + \nu_2$ where each $\nu_i$ is ergodic and 2-absolutely
continuous, and by the previous paragraph, $L_q(\beta) = L_q(\beta_{\nu_1})+
L_q(\beta_{\nu_2}) \leq 2$. If $\nu$ is a signed transverse measure,
then the conclusion follows by considering its Hahn decomposition. }
\end{proof}

\subsection{Nested orbit closures}\name{subsec: nested}
Theorems \ref{thm: more detailed} and \ref{thm: spiky fish} both
exhibit one-parameter families of distinct orbit-closures for the
$U$-action (see \equ{eq: 2nd assertion} and \equ{eq: 3rd
  assertion}). This  property is proved using the following 
general statement.
\begin{prop}\name{prop: properly nested} 
Let $F = \EE$, let $c>0$, and let $F''$ be the set defined by \equ{eq: this
  set2}. Let 
   $q_0$ be a surface in $\EE$ whose $U$-orbit is dense in $\EE$, and let 
  $\mathfrak{F}_1$ be a subset of $F''$ containing an essential tremor of $q_0$. 
For
each $\rho>0$ define 
\eq{eq: what we need for nested}{
\mathfrak{F}_\rho \df \left\{\trem_{\rho \beta}(q): q \in \EE, \ \beta \in
\tremspace_q^{(0)}, \ \trem_\beta(q) \in \mathfrak{F}_1 \right\}.
}
Then for $0< \rho_1 < \rho_2$ we have $\mathfrak{F}_{\rho_1} \neq
\mathfrak{F}_{\rho_2}.$
\end{prop}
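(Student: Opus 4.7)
The plan is to argue by contradiction: assuming $\mathfrak{F}_{\rho_1}=\mathfrak{F}_{\rho_2}$ for some $\rho_1<\rho_2$, I will derive an infinite family of balanced tremors of $q_0$ with unboundedly growing total variation that all lie in $\mathfrak{F}_1\subset F''$, contradicting the uniform bound on total variation coming from $F''$. Let $R\df\rho_2/\rho_1>1$ and observe that since $\trem_{q_0,\beta_0}\in\mathfrak{F}_1$, the element $\trem_{q_0,\rho_2\beta_0}$ lies in $\mathfrak{F}_{\rho_2}=\mathfrak{F}_{\rho_1}$, so it admits a representation $\trem_{q,\rho_1\beta}$ with $q\in\EE$, $\beta\in\tremspace_q^{(0)}$ and $\trem_{q,\beta}\in\mathfrak{F}_1$.

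The key step is a uniqueness assertion: if $\trem_{q_0,\gamma_0}=\trem_{q,\gamma}$ with $q\in\EE$, $\gamma_0\in\tremspace_{q_0}^{(0)}$ and $\gamma\in\tremspace_q^{(0)}$, then $q=q_0$ and $\gamma_0=\gamma$ after identifying $\tremspace_q$ with $\tremspace_{q_0}$ via the TCH of Corollary \ref{cor: ref}. To prove this, I use the composition law \equ{eq: simplifies to} to rewrite $q=\trem_{q_0,\gamma_0-\gamma}$, with $\gamma$ transferred to $\tremspace_{q_0}$ through the chain of TCHs. Since $\beta_0$ is essential $q_0$ is horizontally minimal, and by hypothesis $\overline{Uq_0}=\EE$, so the second assertion of Proposition \ref{prop: transverse rank one} forces $\gamma_0-\gamma=s\,(dy)_{q_0}$ for some $s\in\R$. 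Evaluating $L_{q_0}$ on both sides, and using that the TCH preserves the balanced condition (Corollary \ref{cor: ref}), gives $0=s$, hence $\gamma=\gamma_0$ and $q=q_0$. Applied to the equation $\trem_{q_0,\rho_2\beta_0}=\trem_{q,\rho_1\beta}$ above, this yields $q=q_0$ and $\beta=R\beta_0$, so $\trem_{q_0,R\beta_0}\in\mathfrak{F}_1$.

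Iterating the same argument with $R^k\beta_0$ in place of $\beta_0$ at the $k$-th stage shows that $\trem_{q_0,R^n\beta_0}\in\mathfrak{F}_1\subset F''$ for every $n\ge 0$. For each such $n$, the definition of $F''$ combined with Proposition \ref{prop: lipschitz tremors} produces a representation $\trem_{q_0,R^n\beta_0}=\trem_{q'_n,\eta_n}$ with $q'_n\in\EE$, $\eta_n\in\tremspace_{q'_n}^{(0)}$ and $|L|_{q'_n}(\eta_n)\le 2c$. A second application of the uniqueness step, now to the pair $(q'_n,\eta_n)$ and $(q_0,R^n\beta_0)$, identifies $q'_n=q_0$ and $\eta_n=R^n\beta_0$, so $R^n|L|_{q_0}(\beta_0)\le 2c$ for all $n$. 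Since $R>1$ and $|L|_{q_0}(\beta_0)>0$ (as $\beta_0$ is essential, hence nonzero), this is a contradiction for large $n$.

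The main technical obstacle I expect is the uniqueness step: specifically, the TCH bookkeeping required to view $\gamma$ simultaneously as an element of $\tremspace_q$ and $\tremspace_{q_0}$, and to verify that the balance condition $L_{q_0}(\gamma_0)=L_{q_0}(\gamma)=0$ and the total variation are preserved under these identifications. Once this is set up correctly, the rank-one property of $\EE$ via Proposition \ref{prop: transverse rank one} and the Radon-Nikodym control of Proposition \ref{prop: lipschitz tremors} combine in a routine way to close the argument.
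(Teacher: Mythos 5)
Your proof follows essentially the same strategy as the paper's: both hinge on the fact that a nonzero balanced tremor of $q_0$ lying in $\mathfrak{F}_1 \subset F''$ forces a uniform bound on total variation (via Proposition \ref{prop: lipschitz tremors} and the uniqueness coming from Proposition \ref{prop: transverse rank one} / Corollary \ref{cor: how to reach from locus}), while equality $\mathfrak{F}_{\rho_1}=\mathfrak{F}_{\rho_2}$ forces an unbounded dilation orbit. The paper packages this as "a bounded subset of $\tremspace_{q_0}^{(0)}$ containing nonzero points cannot be invariant under a nontrivial dilation," working with the sets $\tremspace_{q_0}^{(0)}(\rho)$; you unroll the same dilation into an explicit sequence $R^n\beta_0$, which is a cosmetic rather than structural difference. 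Your derivation of the uniqueness step directly from the second assertion of Proposition \ref{prop: transverse rank one}, applied only to $q_0$, is exactly how the paper proves Corollary \ref{cor: how to reach from locus}, so no new ground is covered there.

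One genuine omission: you write "since $\trem_{q_0,\beta_0}\in\mathfrak{F}_1$, the element $\trem_{q_0,\rho_2\beta_0}$ lies in $\mathfrak{F}_{\rho_2}$," but the definition \equ{eq: what we need for nested} only produces $\trem_{q_0,\rho_2\beta_0}\in\mathfrak{F}_{\rho_2}$ when $\beta_0\in\tremspace_{q_0}^{(0)}$, and the hypothesis of the proposition does not assume the given essential tremor is balanced. Likewise your uniqueness step and the evaluation $L_{q_0}(\gamma_0)=0$ implicitly require $\gamma_0$ (and hence $\beta_0$) balanced. You need the preliminary reduction the paper carries out via Corollary \ref{cor: pass to balanced}: replace $q_0$ by the point $u_{s}q_0$ of its $U$-orbit with $s=L_{q_0}(\beta_0)$, and $\beta_0$ by $\beta_0 - s\,\hol_{q_0}^{(y)}$, which is still essential and now balanced, and $u_sq_0$ still has dense $U$-orbit. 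With that one-line fix in place, your argument closes correctly.
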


\begin{proof}
By Corollary \ref{cor: pass to balanced}, replacing $q_0$ with an
element in its $U$-orbit, there is no loss of 
generality in assuming that $\mathfrak{F}_1$ contains an essential balanced
tremor of $q_0$. Thus if we define
$$
\tremspace_{q_0}^{(0)}(\rho) \df \left\{\beta \in
\tremspace_{q_0}^{(0)}:  \trem_{\beta}(q_0) \in \mathfrak{F}_\rho \right\},
$$
then $\tremspace_{q_0}^{(0)}(1) $ contains a nonzero vector. Clearly
for all $\rho>0$ we have $\tremspace_{q_0}^{(0)}(\rho) = \rho
\tremspace_{q_0}^{(0)}(1),$ so each of the sets
$\tremspace_{q_0}^{(0)}(\rho)$ contain nonzero vectors as well. By
\equ{eq: this set2} and Corollary 
\ref{cor: how to reach from locus}, the sets
$\tremspace_{q_0}^{(0)}(\rho)$ are bounded for each $\rho$. Now
suppose by contradiction that for $\rho_1 < \rho_2$ we have 
$\mathfrak{F}_{\rho_1} = \mathfrak{F}_{\rho_2}$. Then 
$$
\tremspace_{q_0}^{(0)}(\rho_1) = \tremspace_{q_0}^{(0)}(\rho_2) =
\frac{\rho_2}{\rho_1} \tremspace_{q_0}^{(0)}(\rho_1) .$$
But $\frac{\rho_2}{\rho_1}>1$ and a bounded subset of
$\tremspace^{(0)}_{q_0}$ cannot be invariant under a nontrivial
dilation if it contains nonzero points. This is a contradiction. 
\end{proof}

\begin{proof}[Proof of Theorem \ref{thm: more detailed}] We will find
  a surface satisfying conditions (I), (II) and (III) of the theorem.
It was  shown by Katok and Stepin \cite{KS}  that there is a surface
$q \in \EE$ with a 
horizontal foliation which is not uniquely ergodic and has no
horizontal saddle connection (Veech \cite{Veech
  Kronecker} proved an equivalent result on $\mathbb{Z}_2$-skew
products of rotations, see \cite{MT}). 
 Thus the underlying surface $M_q$ satisfies 
condition (II). To see that $q$ satisfies condition (III) we apply
Proposition \ref{prop: transverse rank one} to the rank-one locus
$\EE$.

To see that $q$ satisfies condition (I), we use
\cite[Thm. 10.1]{eigenform}, which states that the $U$-orbit of
every point in $\EE$ is generic for some
measure; furthermore, the result identifies the measure. In the terminology of
\cite{eigenform},
the $G$-invariant `flat' measure on $\EE$ is the measure of type 7. The last bullet
point of the theorem states that a 
 surface is equidistributed with respect to flat measure if it has no
 horizontal saddle connection and is 
 not the result of applying a real-Rel flow to a lattice surface.
However lattice surfaces without horizontal saddle connections have a
uniquely ergodic 
horizontal foliation (\cite{Veech - alternative}) and the horizontal foliation is
preserved under real-Rel deformations. This implies that $q$ cannot
be a real-Rel deformation of a lattice surface.

For the proof of the second assertion, equation \eqref{eq: 2nd
  assertion},  we combine Propositions 
\ref{prop: more semi direct} 
and \ref{prop: properly nested}. Namely, we let $q_r = \trem_{
  r,\beta}(q)$ be as in the statement of the Theorem and define 
$$\hat{\mathfrak{F}}_\rho \df \overline{Uq_{\rho}} \ \ \text {and } \ 
\mathfrak{F}_\rho \df \left\{\trem_{\rho \beta}(q): q \in \EE, \ \beta \in
\tremspace_q^{(0)}, \ \trem_\beta(q) \in \hat{\mathfrak{F}}_1 \right\}.
$$
Recall the $\R^*$-action multiplying elements of $\tremspace_q$ by positive scalars (see \S \ref{subsec:
  new maps}). 
Since $q_r$ is obtained from $q_1$ using the
$\R^*$-action with parameter $r$, by naturality of the $\R^*$-action (see Proposition
\ref{prop: more semi direct}) we obtain that  $\hat{\mathfrak{F}}_\rho=\mathfrak{F}_\rho$.  
So $\hat{\mathfrak{F}}_{r_1} \varsubsetneq \hat{\mathfrak{F}}_{r_2}$ for $r_1<r_2,$ and  \equ{eq: 2nd assertion} follows by Proposition \ref{prop: more semi direct}. 
\end{proof}

\begin{remark}
As we remarked in the introduction (see Remark \ref{remark: other
  loci}), Theorem \ref{thm: more detailed} remains valid for
other eigenform loci $\EE_D$ in place of $\EE = \EE_4$. Indeed, the
results of \cite{eigenform} used above are valid for all eigenform loci, and
to prove the existence of surfaces in $\EE_D$ whose horizontal foliations
are minimal but not ergodic, one can use \cite{CM} in place of
\cite{KS}. Thus the proof given above goes through with obvious modifications. Finally we note
that Lemma \ref{lem: automatic absolutely continuous} is also true for 
other eigenform loci, provided the constant 2 on the right hand side
of \equ{eq: automatic} is replaced with an appropriate constant
depending on the discriminant $D$. We leave the details to the
reader.
\end{remark}

\subsection{Erratic divergence of nearby horocycle orbits}
\name{subsec: erratic}
A crucial ingredient in Ratner's measure classification
  theorem is the polynomial divergence of 
nearby trajectories for unipotent flows. As we have seen in Corollary
\ref{eq: sup norm horocycle deviation} there is a quadratic 
upper bound on the distance between two nearby horocycle trajectories
in a stratum $\HH$, with respect to the sup-norm
distance. Such upper bounds can also be obtained in the homogeneous
space setting, but in that setting they are accompanied by
complementary lower bounds. Namely, Ratner used the fact 
that if 
$\{u_s\}$ is a unipotent flow on a homogeneous space $X$, for some
metric $d$ on $X$ we have (see e.g. \cite[Cor. 1.5.18]{morris book}):

\medskip

{\em $(*)$ for any $\vre>0$ and every $K \subset X$ compact, there is
  $\delta>0$ such that if $x_1, x_2 \in   X$ and for some $T>0$ we
  have 
$$ \left| \left\{ s \in [0,T]: d(u_sx_1, u_sx_2) < \delta, u_s x_1 \in
    K \right\} \right| \geq \frac{T}{2},
$$
then for all $s \in [0,T]$ for which $u_sx_1 \in K$ we have 
$
d(u_s x_1, u_sx_2)<\vre.
$
}

\medskip 

Our proof of Theorem
\ref{thm: more detailed} shows that $(*)$ fails for strata and in fact
we have:

\begin{thm}\name{thm: erratic}
There is a stratum $\HH$, a compact set $K \subset \HH$, $\vre>0$, and
$q_1, q_2 \in \HH$, so that for any $\delta>0$, 
\eq{eq: weird behavior 1}{
\liminf_{T \to \infty} \frac{1}{T} \left| \left\{s \in [0,T] : \dist(u_s
    q_1, u_s q_2) < \delta, \, u_s q_1 \in K \right\} \right| > \frac{1}{2},
}
but the set
\eq{eq: weird behavior 2}{ \{s\geq  0: 
u_{s}q_1 \in K \text{ and } \dist(u_{s}q_1, u_{s}q_2) \geq \vre \}
 } is nonempty. 
 \end{thm}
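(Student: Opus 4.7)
The plan is to construct $q_1, q_2$ exactly as in the proof of Theorem \ref{thm: more detailed}, but insisting that the tremor cocycle be \emph{balanced} so that the same horocycle time parameter can be used to compare both orbits. Concretely, take $\HH = \HH(1,1)$ and choose $q_1 \in \EE$ satisfying conditions (I), (II), (III); in particular $q_1$ is aperiodic, is generic for $\mu_\EE$, and its horizontal foliation supports two ergodic transverse measures interchanged by the involution $\iota$ of Proposition \ref{prop: involution on E and measures}. By Corollary \ref{cor: balanced tremors are in B-} we have $\tremspace_{q_1}^{(0)} = \mathscr{N}_x(\EE)$, and the cocycle $\beta_{\nu_1} - \beta_{\iota_*\nu_1}$ (with $\nu_1$ one of the two ergodic transverse measures) is a nonzero balanced element; any nonzero balanced $\beta$ is automatically essential since $(dy)_{q_1}$ has nonzero signed mass. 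After rescaling so that $|L|_{q_1}(\beta)$ lies below the threshold produced by Proposition \ref{prop: transverse rank one} for a compact neighborhood of $q_1$ in $\EE$, I define $q_2 \df \trem_{q_1, \beta} \notin \EE$; Lemma \ref{lem: automatic absolutely continuous} then guarantees that $\beta$ is absolutely continuous.

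Next I would choose a compact set $K \subset \HH$ with $q_1 \in K$, $\mu_\EE(\partial K) = 0$, and $\mu_\EE(K) > 3/4$, which exists because $\mu_\EE$ is a probability measure on $\EE$ with $q_1$ in its support. Set $\vre \df \tfrac{1}{2}\dist(q_1, q_2) > 0$, which is strictly positive because $q_2 \notin \EE \ni q_1$. The set \equ{eq: weird behavior 2} then contains $s = 0$, since $u_0 q_1 = q_1 \in K$ and $\dist(u_0 q_1, u_0 q_2) = 2\vre > \vre$. For \equ{eq: weird behavior 1}, I would invoke the argument from the proof of Theorem \ref{thm: tremors bounded distance}(iii): because $\beta$ is balanced, $L_{q_1}(\beta) = 0$, so the auxiliary surface `$u_{s_0} q_1$' appearing in that proof coincides with $q_1$ itself, and the argument produces, for each $\delta > 0$ and all large $T$, a subset $A = A(\delta, T) \subset [0,T]$ with $|A|/T \to 1$ such that $\dist(u_s q_1, u_s q_2) < \delta$ for every $s \in A$. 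Meanwhile genericity of $q_1$ for $\mu_\EE$ gives $\liminf_{T \to \infty} |\{s \in [0,T] : u_s q_1 \in K\}|/T \geq \mu_\EE(K) > 3/4$, so intersecting with $A$ yields a set of density $> 1/2$, as required.

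The main obstacle, already dispatched by the material of Sections \ref{sec: tremors} and \ref{section: horocycles and tremors}, is arranging simultaneously that $\beta$ be essential (to force $q_2 \notin \EE$, and hence $\vre > 0$), balanced (so the close-orbit argument aligns $u_s q_1$ with $u_s q_2$ at the same time parameter rather than at shifted parameters), and small in $|L|_{q_1}$ (so Proposition \ref{prop: transverse rank one} applies and delivers $q_2 \notin \EE$). All three requirements live in the nontrivial linear space $\tremspace_{q_1}^{(0)} = \mathscr{N}_x(\EE)$, where essentiality and smallness are open nonempty conditions, so they can be met simultaneously; once this bookkeeping is done the two halves of the theorem follow directly from the genericity argument of Theorem \ref{thm: tremors bounded distance}(iii) and from the triviality $s = 0$.
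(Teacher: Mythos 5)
Your proposal is correct and takes essentially the same approach as the paper: construct $q_1 \in \EE$ generic for $\mu_\EE$, let $q_2$ be a balanced essential tremor of $q_1$ (so that $\dist(q_1,q_2)>0$ provides $\vre$ and $s=0$ witnesses \equ{eq: weird behavior 2}), and derive \equ{eq: weird behavior 1} from the main claim in the proof of Theorem \ref{thm: tremors bounded distance}(iii). The paper's proof is terser — it implicitly takes $K$ to be the compact set $Q$ produced inside that claim rather than choosing $K$ separately and intersecting — but the construction of $q_1,q_2$ and the reliance on the claim with $s_0=L_{q_1}(\beta)=0$ are identical to yours.
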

 
\begin{proof}
Take $q_1 \in \LL$ for some $\LL$ as in the proof of Theorem \ref{thm:
  more detailed}, where $q_1$ admits an essential tremor, and is
generic for the $G$-invariant measure on $\LL$, and let
$q_2$ be a balanced essential tremor of $q_1$. Let $0 < \vre <
\dist(q_1, q_2)$, so that \equ{eq: weird behavior 2} holds. Claim 2 in the proof of Theorem \ref{thm: tremors bounded
    distance}(iii) implies \equ{eq: weird behavior 1}.  
\end{proof}

\begin{remark} 
The construction in \S \ref{sec: spiky fish} exhibits a
stronger contrast to assertion $(*)$: it gives 
examples in which equation \equ{eq: weird behavior 1} holds while the set in equation \equ{eq:
  weird behavior 2} is unbounded. 
\end{remark}

\section{Existence of non-generic surfaces} \name{sec: nongeneric}
In this section we will prove Theorem \ref{thm: 2}. Let $B$ be the
upper-triangular group. We will need the following useful consequence
of the interaction of tremors 
with the $B$-action.

\begin{thm}\name{thm: dense tremors} Let $\HH$ be a stratum of
  translation surfaces and let $\LL \varsubsetneq \HH$ be a
  $G$-invariant locus such that there is $q \in \LL$ with
  $\overline{Gq} = \LL$ and such that $q$ admits an
  essential absolutely continuous tremor which does not belong
  to $\LL$. Then the closure of the set 
\eq{eq: obtained by tremoring}{
\bigcup_{q' \in Bq} \{\trem_{\beta}(q'): \beta \in C^+_{q'} \text{  
  is an essential absolutely continuous tremor}\}
}
is $G$-invariant and contains a $G$-invariant locus $\LL'$ with $\dim
\LL' > \dim \LL$. 

In particular, if $\LL = \EE \subset \HH(1,1)$, then the set in
\equ{eq: obtained by tremoring}  
is dense in $\HH(1,1)$. 
\end{thm}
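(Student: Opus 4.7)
The proof strategy has two main components: establishing $G$-invariance of $\overline{S}$, where $S$ denotes the set in \eqref{eq: obtained by tremoring}, and exhibiting a $G$-invariant affine invariant submanifold $\LL' \subseteq \overline{S}$ of dimension strictly greater than $\dim \LL$.

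For $G$-invariance, I first observe that $S$ is $B$-invariant, which follows at once from Proposition \ref{prop: commutation relations}: for $\mathbf{b} \in B$ with $a = a(\mathbf{b})$, one has $\mathbf{b}\,\trem_\beta(q') = \trem_{a\beta}(\mathbf{b}q')$, and this preserves membership in $S$ since $\mathbf{b}q' \in Bq$ and $a\beta$ remains essential absolutely continuous by the normalization clauses of Proposition \ref{prop: commutation relations}. Hence $\overline{S}$ is closed and $B$-invariant. To upgrade to $G$-invariance, I appeal to the theorem of Eskin--Mirzakhani--Mohammadi cited in the introduction: for every $p \in \HH$, $\overline{Bp} = \overline{Gp}$ is an affine invariant submanifold, in particular $G$-invariant. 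Writing $\overline{S} = \bigcup_{p \in \overline{S}} \overline{Bp}$ displays $\overline{S}$ as a union of $G$-invariant sets, hence $G$-invariant.

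For the existence of $\LL'$ in the general setting, I would choose $q_1 = \trem_{q,\beta_0} \in S \setminus \LL$ as supplied by the hypothesis, and set $\LL' := \overline{Gq_1} = \overline{Bq_1}$, an affine invariant submanifold (by EMM) clearly contained in $\overline{S}$. To obtain $\dim \LL' > \dim \LL$ I aim to show $\LL \subseteq \LL'$. By Proposition \ref{prop: commutation relations}, $g_{-t} q_1 = \trem_{e^{-t}\beta_0'}(g_{-t}q)$ where $\beta_0'$ is the transferred cocycle on $g_{-t}q$; combining this with Proposition \ref{prop: lipschitz tremors} and the scaling of absolute-continuity constants from Proposition \ref{prop: commutation relations}, one obtains $\dist(g_{-t}q, g_{-t}q_1) \to 0$ as $t \to \infty$. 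Since $q$ has no horizontal saddle connections (a prerequisite for admitting an essential tremor), Masur's recurrence criterion yields a sequence $t_n \to \infty$ along which $g_{-t_n}q$ returns to a compact subset of $\LL$; passing to a subsequence, $g_{-t_n}q \to p$ and $g_{-t_n}q_1 \to p$, so $p \in \LL' \cap \LL$. An ergodic-theoretic argument based on the ergodicity of the geodesic flow on $(\LL,\mu_\LL)$ then lets one choose $p$ with $\overline{Gp} = \LL$, whence $\LL \subseteq \LL'$; combined with $q_1 \in \LL'\setminus \LL$ this gives $\dim \LL' > \dim \LL$.

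For the particular case $\LL = \EE \subset \HH(1,1)$ a cleaner argument is available via McMullen's classification of affine invariant submanifolds in $\HH(1,1)$, which are precisely the eigenform loci $\EE_D$, Veech (closed $G$-orbit) curves, and the full stratum $\HH(1,1)$. The submanifold $\LL' = \overline{Gq_1}$ contains $q_1 \notin \EE$; it cannot be a closed Veech orbit (its tangent space contains a nontrivial tremor direction), and it is not contained in any eigenform locus $\EE_D$ (by Proposition \ref{prop: transverse rank one} applied to the rank-one locus $\EE_D$, essential tremors escape each $\EE_D$). Hence $\LL' = \HH(1,1)$, yielding the claimed density. The main obstacle is the ergodic step establishing $\LL \subseteq \LL'$ in the general setting: one must ensure that the limit point $p$ of the recurrent subsequence $g_{-t_n}q$ is generic for $\mu_\LL$ rather than trapped in a proper affine invariant sub-locus, and this is where McMullen's classification bypasses the technicality in the specific case of $\EE$.
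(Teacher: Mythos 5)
Your argument for $B$-invariance and the upgrade to $G$-invariance via \cite{EMM} are fine and match the paper's approach. However, your argument for $\dim \LL' > \dim \LL$ in the general case contains a fatal error. You write that since $q$ has no horizontal saddle connections, ``Masur's recurrence criterion yields a sequence $t_n \to \infty$ along which $g_{-t_n}q$ returns to a compact subset of $\LL$.'' This is backwards. The hypothesis that $q$ admits an \emph{essential} tremor means precisely that the horizontal foliation on $M_q$ is minimal but \emph{not} uniquely ergodic (by Proposition \ref{prop:transverse measures}, a foliation cocycle that is not a multiple of $(dy)_q$ corresponds to a transverse measure that is not a multiple of $dy$, which forces non-unique ergodicity). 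Masur's criterion, applied to the horizontal foliation, then says that the trajectory $\{\til g_t q : t > 0\} = \{g_{-t} q : t > 0\}$ \emph{diverges}. So the recurrent sequence you need does not exist, and the entire mechanism you use to place a point of $\LL$ inside $\LL' = \overline{Gq_1}$ collapses. Your secondary concern — that even if a limit point $p$ exists it need not be generic for $\mu_\LL$ — is real, but is moot given the more basic divergence.

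The paper's argument sidesteps this entirely by a purely topological device. It observes that the curve $q(t) = \trem_{q, t\beta}$ for $t \in (0,\vre)$ lies in $\Omega \setminus \LL$ and converges to $q$ as $t \to 0^+$, so $q \in \overline{\Omega \setminus \LL}$. Then it invokes the full strength of \cite[Thm.\ 2.1]{EMM}: the $B$-invariant closed set $F = \overline{\Omega}$ is not just $G$-invariant but is a \emph{finite union} $F_1 \cup \cdots \cup F_k$ of $G$-invariant loci, each the support of an ergodic $G$-invariant measure, and one can arrange $F_i \not\subset F_j$ for $i \neq j$. Since $\LL = \overline{Gq} \subset F$, some $F_i$ contains $\LL$. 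If $\LL = F_i$, then the $q(t)$ for small $t > 0$ lie in some other $F_j$, and by closedness $q \in F_j$, forcing $F_i = \overline{Gq} \subset F_j$ — a contradiction. This gives $\LL \varsubsetneq F_i \eqqcolon \LL'$ with no mention of geodesic recurrence or genericity of limit points. Your last paragraph, which specializes to $\LL = \EE$ and uses McMullen's classification to pin down $\LL' = \HH(1,1)$, is in the right spirit and is essentially what the paper does; but as stated, the verification that $\LL'$ is not a Teichm\"uller curve (``its tangent space contains a nontrivial tremor direction'') and not any $\EE_D$ (Proposition \ref{prop: transverse rank one} is formulated for $q \in \LL$, and there is no reason $q \in \EE_D$ for $D \neq 4$) are both incomplete. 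The cleaner route, once the general claim $\LL \varsubsetneq \LL'$ is established, is simply to note there is no $G$-invariant locus strictly between $\EE$ and $\HH(1,1)$.
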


\begin{proof}
Let $\Omega$ be the set in equation \equ{eq: obtained by tremoring}, and let $F$ be the
closure of $\Omega$. By assumption there is $q
\in \LL$ and an
absolutely continuous $\beta \in C^+_q \sm T_q(\LL),$ and hence for
$\vre>0$ sufficiently small, the curve 
$$t \mapsto q(t)\df  \trem_{t\beta}(q), \ \  t \in (-\vre, \vre)$$ 
satisfies $q(t) \in \Omega \sm
\LL$ for $t\neq 0$ and $q=\lim_{t\to 0} q(t)$; i.e., $q \in
\overline{\Omega \sm \LL}$. By Proposition \ref{prop: commutation
  relations}, $\Omega$ is $B$-invariant, and hence so is $F$. According to
\cite[Thm. 2.1]{EMM}, any $B$-invariant closed set is $G$-invariant, and is a
finite disjoint union of $G$-invariant loci. This implies that $\LL = \overline{Bq} 
\subset F$, and also that we can write $F = F_1 \sqcup
\cdots \sqcup F_k$ where each $F_i$ is a closed $G$-invariant
locus supporting an ergodic $G$-invariant measure, and for $i \neq j$
we have $F_i \not \subset F_j$. There is an $i$ 
so that $\LL \subset F_i$, and we claim
$\LL \varsubsetneq F_i$. Suppose $\LL = F_i$ and let $q(t)$ as
above. Then for sufficiently small $t>0$ we have $q(t) \notin F_i$. So
there is some $j$ such that $F_j$ contains a sequence $q(t_n)$ with
$t_n>0$ and $t_n \to 0$. Since $F_j$ is closed we find that $q \in F_j$. But since $F_i
= \overline{Gq}$ and $F_j$ is $G$-invariant and closed, we obtain that
$F_i \subset F_j$, a contradiction proving the claim. 

Thus if we set $\LL' \df F_i$ we have $\LL \varsubsetneq \LL',$ and
since both $\LL$ and $\LL'$ are manifolds and 
each is the support of a smooth ergodic measure, we must have
$\dim \LL < \dim 
\LL'$, as claimed.  To prove the second assertion, that $\mathcal{L}'=\mathcal{H}(1,1)$ 
we note that by McMullen's classification \cite{McMullen-SL(2)}, there
are no $G$-invariant loci $\LL'$ 
satisfying $\EE \varsubsetneq \LL' \varsubsetneq
\HH(1,1)$. 
\end{proof}

\begin{proof}[Proof of Theorem \ref{thm: 2}] First we
  claim that a dense set of surfaces in $\HH(1,1)$ are generic for   $\mu_1 =
  \mu_\EE$, the natural measure on $\EE$. By Theorem \ref{thm: tremors
    bounded distance}(iii) it suffices to show that tremors of
  surfaces in $\mathcal{E}$ with no horizontal saddle connections are
  dense in  
$\mathcal{H}(1,1)$. By Theorem \ref{thm: dense tremors} it
suffices to show that there exists a surface in $\mathcal{E}$ with no
horizontal saddle connections that admits an essential
tremor. Theorem \ref{thm: more detailed} establishes this, and the
claim is proved.

We now use a
  Baire category argument.
Let $\mu_2$ be the
natural flat measure on the entire stratum $\HH(1,1)$. Let $f$
  be a compactly supported continuous function with $\int f d\mu_1
  \neq  \int f d\mu_{2}$, and let $\vre>0$ be small enough so that 
$$2\vre < \left|\int f d\mu_{1} - \int f d\mu_{2} \right|.$$
For $j =1, 2$ and $T>0$ let 
$$\mathcal{C}_{j,T} \df \left \{ q \in \HH(1,1) : \left| \frac{1}{T} \int_0^T
  f(u_sq) ds - \int f d\mu_{j} \right| < \vre\right \}
$$ 
(which is an open subset of $\HH(1,1)$), and let 
$$
\mathcal{C}_j \df \bigcap_{n \in \N} \bigcup_{T \geq n} \mathcal{C}_{j,T}.
$$
If $q$ is generic for $\mu_j$ then $q \in \mathcal{C}_{j,T}$ for all
$T$ sufficiently large. Since generic surfaces for $\mu_j$
are dense in $\HH(1,1)$, each $\mathcal{C}_j$ is a dense
$G_\delta$-subset of $\HH(1,1)$. By definition, for $q \in
\mathcal{C}_j$ we have a subsequence $T_n \to \infty$ such that
$\frac{1}{T_n} \int_0^{T_n} f(u_sq)ds$ converges to a number $L$ with
$|L-\int f d\mu_j| \leq \vre$.  In particular, any 
$q \in \mathcal{C}_1 \cap \mathcal{C}_2$ satisfies \equ{eq: not
  generic}. For the last assertion, note that the set of surfaces with a dense orbit under the diagonal group $\{g_t\},$ in either forward or backward time,  is also a dense $G_\delta$ subset, and
so intersects $\mathcal{C}_1 \cap \mathcal{C}_2$ nontrivially.  
\end{proof}


\section{A new horocycle orbit closure}\name{sec: spiky fish}
In this section we will prove Theorem \ref{thm: spiky
  fish}. 
We first show the inclusion between the two subsets of $\HH(1,1)$ described in equation \equ{eq: spiky
  fish}, namely we show that
\eq{eq: 1.8 revisited}{
\begin{split}
  & \overline{\{\trem_\beta(q): q \in \EE \text{ is aperiodic},\  \beta \in
  \tremspace_{q} ,\ |L|_q(\beta)  \leq a\}} \\ \subset & \{
  \trem_\beta(q) : q \in \EE,\  \beta \in
  \tremspace_{q} ,\,  |L|_q(\beta)  \leq a\}.
  \end{split}
  }
 To see this note that Proposition \ref{prop: proper 
  on c-ac} and Lemma \ref{lem: automatic
  absolutely continuous}  imply that the first set is contained in the closed set $$\left\{\trem_{\beta}(q):q \in \EE, \ \beta \in \tremspace^{RN}(2a) \right\}.$$  Corollary \ref{cor: total variation continuous} implies that any limit point must satisfy $|L|_q(\beta)\leq a$. 

For the last assertion of the Theorem, note that the inclusion in equation
\equ{eq: 3rd assertion} is obvious from the first line of equation \equ{eq: spiky fish}, and the
naturality of the $\R^*$-action (Proposition 
\ref{prop: more semi direct}). The inclusion is proper by
Theorem \ref{thm: more detailed}. 

It remains to show the existence of a surface $q_1$ for which we have 
equality in equation \equ{eq: spiky fish}, namely for which
\eq{eq: namely for which}{
\overline{Uq_1}  = \overline{
\{\trem_\beta(q): q \in \EE \text{ is aperiodic},\  \beta \in
  \tremspace_{q} ,\ |L|_q(\beta)  \leq a\} }
.}
Before doing this, we 
set up some notation to be used throughout this section and describe
our strategy. We 
partition $\EE$ into 
the following subsets:
\[
\begin{split}
\EE^{(\mathrm{per})} & = \{q \in \EE: M_q \text{ is horizontally periodic}
\}\index{EA@$\EE^{(\mathrm{per})}$}, \\
\EE^{(\mathrm{tor})} & = \{q \in \EE: M_q
\text{ is two tori glued along a horizontal slit} \} \sm \EE^{(\mathrm{per})} 
\index{EB@$\EE^{(\mathrm{tor})}$}, \\
\EE^{(\mathrm{min})}\index{EC@$\EE^{(\mathrm{min})}$} & = \EE \sm \left(\EE^{(\mathrm{per})} \cup
\EE^{(\mathrm{tor})} \right)  \\
& = \{q \in \EE : \text{all infinite horizontal
  trajectories are dense}\}.
\end{split}
\]

Note that the set of aperiodic surfaces in $\EE$ is precisely
$\EE^{(\mathrm{tor})} \cup \EE^{(\mathrm{min})}$. It is easy to check that the sets $\EE^{(\mathrm{per})}$  and
$\EE^{(\mathrm{min})} $ are both dense in $\EE$; this follows easily from \cite[Thms 4.1 \& 1.8]{MT}. The set $\EE^{(\mathrm{tor})}$ is also dense --- this can be derived from \cite{EMM}, or in a more elementary fashion from Proposition \ref{prop: auxiliary}(2), see the proof of Proposition \ref{prop: tor dense in min}. 
We further partition $\EE^{(\mathrm{tor})}$ according to the length of the
slit. 

Let $\EE^{(\mathrm{tor}, H)}\index{EE@$\EE^{(\mathrm{tor}, H)}$}$ be the set of 
$q\in\EE^{(\mathrm{tor})}$ for which $M_q$ is made of two tori glued along a horizontal slit of length 
exactly $H$.



Although the individual sets $\EE^{(\mathrm{tor}, H)}$ are not dense in $\EE$, for each
$H_0>0$ the 
union $\bigcup_{H>H_0} \EE^{(\mathrm{tor}, H)}$ is dense in $\EE$. 

Now for positive parameters $a$ and $H$ we define subsets of $\HH(1,1)$:
\[
\begin{split}
\SF_{(\leq a)}^{(\mathrm{min})}\index{SF@$\SF_{(\leq a)}^{(\mathrm{min})} $} & = \left\{
\trem_\beta(q) : q \in \EE^{(\mathrm{min})},\,  \beta \in
\tremspace_q,\, |L|_q(\beta) \leq a 
\right \}\\
\SF_{(\leq a)}^{(\mathrm{tor})}\index{SF@$\SF_{(\leq a)}^{(\mathrm{tor})}$} & = \left\{
\trem_\beta(q) : q \in  
\EE^{(\mathrm{tor})},\,  \beta \in \tremspace_q,\, |L|_q(\beta) \leq a
\right \} \\
\SF_{(\leq a)} \index{SF@$\SF_{(\leq a)}$}& = \SF_{(\leq
  a)}^{(\mathrm{min})} \, \cup \, \SF_{(\leq a)}^{(\mathrm{tor})} 
\\
\SF_{(\leq a)}^{(\mathrm{tor}, H)}\index{SF@$\SF_{(\leq
    a)}^{(\mathrm{tor}, H)}$} & = \left \{\trem_\beta(q) \in
  \SF^{(\mathrm{tor})}_{(\leq a)} : q \in  
\EE^{(\mathrm{tor}, H)} \right\}.
\end{split}
\]
To lighten the notation, in the remainder of this section we will denote the closure  $\overline{\SF_{(\leq
  a)}}$ by $\overline{\SF}$\index{SF@$\overline{\SF}$}.
The letters $\SF$ stand for `spiky fish', and one can think of
$\overline{\SF }\sm \EE$ as the spikes of the spiky
fish. 
For $q \in \EE^{(\mathrm{tor})} \cup \EE^{(\mathrm{min})}$, denote by
$C^{+, \mathrm{erg}}_q$ \index{C@$C^{+, \mathrm{erg}}_q$} the extreme 
rays in the cone of foliation cocycles.
If the horizontal
direction is not uniquely ergodic on $M_q$ then Proposition \ref{prop:
involution on E and measures} shows that $C^{+, \mathrm{erg}}_q$ consists of two rays interchanged by the involution
$\iota$.
Further denote
\[
\begin{split}
\SF^{(\mathrm{min})}_{(=a)} & = \left\{ \trem_\beta(q) : q \in 
\EE^{(\mathrm{min})}, \beta \in C^{+, \mathrm{erg}}_q, L_q(\beta)=a \right\} \\ 
\SF^{(\mathrm{tor})}_{(=a)} & = \left\{ \trem_\beta(q) : q \in 
\EE^{(\mathrm{tor})}, \beta \in C^{+, \mathrm{erg}}_q, L_q(\beta)=a \right \} \\ 
\SF_{(=a)} & = \SF^{(\mathrm{min})}_{(=a)}  \, \cup \,
\SF^{(\mathrm{tor})}_{(=a)} \\
\SF^{(\mathrm{tor}, H)}_{(=a)} & = \left \{ \trem_\beta(q) : q \in 
\EE^{(\mathrm{tor}, H)}, \beta \in C^{+, \mathrm{erg}}_q, L_q(\beta)=a \right \} 
.
\end{split}
\]
Note that for $\beta \in C^{+, \mathrm{erg}}_q$, $L_q(\beta)=|L|_q(\beta)$.

With this terminology it is clear that equation \equ{eq: namely for which} (and
hence Theorem \ref{thm: spiky fish}) follows from:
\begin{thm}\name{thm: spiky fish 1}
For any $a>0$ there is $q_1 \in \SF^{(\mathrm{min})}_{(= a)}$,
such that $\overline{Uq_1} = \overline{\SF} =
\overline{\SF^{(\mathrm{tor})}_{(\leq a)}}$. 
\end{thm}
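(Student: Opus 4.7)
The plan is to choose $q \in \EE^{(\mathrm{min})}$ to be a non-uniquely ergodic surface with dense $U$-orbit in $\EE$, and $\beta_0 \in C^{+, \mathrm{erg}}_q$ an ergodic foliation cocycle normalized so that $L_q(\beta_0) = a$, and then set $q_1 \df \trem_{q, \beta_0}$. Such a $q$ exists by combining the Katok–Stepin construction (which supplies non-unique ergodicity in $\EE$, as used in Theorem \ref{thm: more detailed}) with the fact that $\mu_\EE$-generic points in $\EE$ have dense $U$-orbits, cf.\ \cite[Thm.~10.1]{eigenform}.

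The argument splits into three parts. First, the inclusion $\overline{Uq_1} \subset \overline{\SF_{(\leq a)}}$ is immediate from Proposition \ref{prop: commutation relations}: we have $u_s q_1 = \trem_{u_s q, \beta_0}$ with $u_s q \in \EE^{(\mathrm{min})}$, and $L_{u_s q}(\beta_0) = a$ by \equ{eq: length horocycle}, so each $u_s q_1$ lies in $\SF^{(\mathrm{min})}_{(=a)} \subset \SF_{(\leq a)}$. Second, the equality $\overline{\SF_{(\leq a)}} = \overline{\SF^{(\mathrm{tor})}_{(\leq a)}}$: the nontrivial inclusion $\overline{\SF^{(\mathrm{min})}_{(\leq a)}} \subset \overline{\SF^{(\mathrm{tor})}_{(\leq a)}}$ uses that $\bigcup_{H>0}\EE^{(\mathrm{tor},H)}$ is dense in $\EE$, together with Proposition \ref{prop: auxiliary} to approximate ergodic foliation cocycles on non-uniquely ergodic surfaces in $\EE^{(\mathrm{min})}$ by normalized Lebesgue cocycles on torus halves of horizontal slit decompositions. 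Continuity of the tremor map (Proposition \ref{prop: continuity}), semicontinuity of $C^+_{\HH}$ (Proposition \ref{prop: semicontinuity}), and continuity of total variation (Corollary \ref{cor: total variation continuous}) transfer this approximation to the tremored surfaces.

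Third, the main inclusion $\overline{\SF^{(\mathrm{tor})}_{(\leq a)}} \subset \overline{Uq_1}$. Given $\trem_{q'', \beta''}$ with $q'' \in \EE^{(\mathrm{tor})}$ and $|L|_{q''}(\beta'') \leq a$, we must find $s_n$ with $u_{s_n} q_1 \to \trem_{q'', \beta''}$. Since $u_{s_n} q_1 = \trem_{u_{s_n} q, \beta_0}$, the problem splits into: (i) finding $s_n$ with $u_{s_n} q \to q''$ in $\HH$ (using density of $Uq$ in $\EE$), and (ii) arranging that, under suitable lifts to $\HHm$, the cocycle $\beta_0$ transports along the TCH's to the prescribed limit cocycle on $M_{q''}$. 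Step (ii) is where Proposition \ref{prop: auxiliary} plays a direct role, applied now to $q$ itself: the ergodic cocycle $\beta_0$ on $q$ is the weak-$*$ limit of normalized Lebesgue cocycles on torus-halves of slit decompositions of $q$ in nearly-horizontal directions $\theta_j \to 0$. By combining this with appropriate elements of $B$ (which rotate nearly-horizontal slits to horizontal, and interact with tremors via Proposition \ref{prop: commutation relations}), together with the density of the $U$-orbit $Uq$ in $\EE$, one realizes every $\trem_{q'', \beta''}$ with $\beta'' \in C^+_{q''}$ and $L_{q''}(\beta'') \leq a$ as a limit of $u_{s_n} q_1$.

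The hard part will be extending (ii) from positive to signed foliation cocycles $\beta'' \in \tremspace_{q''}$, since $\beta_0 \in C^+_q$ and semicontinuity (Proposition \ref{prop: semicontinuity}) forces every direct limit of $u_s q_1$ to lie in the positive cone on the limit surface. Here we exploit Corollary \ref{cor: balanced tremors are in B-}: the balanced part of a signed tremor on $q''\in\EE$ satisfies $\iota_*\beta = -\beta$, and since $\iota$ is a translation equivalence of $M_{q''}$, tremors differing by $\iota_*$ represent the same point of $\HH$. This identification, combined with Corollary \ref{cor: pass to balanced} (which rewrites an arbitrary signed tremor as a balanced tremor on a $U$-translate) and the freedom to choose the lifts to $\HHm$ within the MCG-orbit, reduces the signed case to positive cocycles with $L_{q''}(\cdot)\leq a$. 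The proof is then concluded by assembling these ingredients and invoking Corollary \ref{cor: total variation continuous} to control the total variation in the limit, confirming $|L|_{q''}(\beta'')\leq a$ throughout.
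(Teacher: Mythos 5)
Your overall template — pick $q \in \EE^{(\mathrm{min})}$ non-uniquely ergodic with dense $U$-orbit, set $q_1 = \trem_{q,\beta_0}$ with $\beta_0 \in C^{+,\mathrm{erg}}_q$ normalized to $L_q(\beta_0)=a$ — is consistent with what the theorem asserts, and your part (1) (the inclusion $\overline{Uq_1} \subset \overline{\SF}_{(\leq a)}$) and part (2) (density of $\SF^{(\mathrm{tor})}_{(\leq a)}$ via Proposition \ref{prop: tor dense in min}) are fine. But the logic of part (3), the main inclusion $\overline{\SF}^{(\mathrm{tor})}_{(\leq a)} \subset \overline{Uq_1}$, does not go through, and this is where the paper's proof is structured very differently.

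The gap is in your step (ii). You fix $q_1 = \trem_{q,\beta_0}$ in advance, so for all $s$ you have $u_sq_1 = \trem_{u_sq,\beta_0}$ with $\beta_0$ \emph{transported} by the TCH (equivalently the affine comparison map $\psi_{u_s}$). When you choose $s_n$ so that $u_{s_n}q \to q'' \in \EE^{(\mathrm{tor})}$, semicontinuity forces the transported $\beta_0$ to accumulate on \emph{some} point of $\{\beta \in C^+_{q''} : L_{q''}(\beta) = a\}$, but nothing you have written controls \emph{which} one. This is not something you can steer after the fact: the accumulation point is dictated by the dynamics of the approach $u_{s_n}q \to q''$, and you have only a one-parameter family $(s_n)$ at your disposal while $\{\beta'' \in C^+_{q''} : L_{q''}(\beta'') = a\}$ is itself one-dimensional for $q'' \in \EE^{(\mathrm{tor})}$. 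There is no reason every target $\beta''$ is realized. Your invocation of Proposition \ref{prop: auxiliary} "applied to $q$ itself" approximates $\beta_0$ by Lebesgue cocycles on slit tori \emph{inside $q$}, which changes the tremor while keeping the base surface fixed; it does not help you track $\beta_0$ along the $U$-orbit. Moreover, you invoke elements of $B$ (and implicitly rotations $r_\theta$, which are not even in $B$) to ``rotate slits to horizontal,'' but the orbit closure in the theorem is under $U$ alone; $B$ and $r_\theta$ are not available. In the paper those maps are used only to construct a \emph{nearby} approximating surface (Proposition \ref{prop: min H dense in min}), never to flow $q_1$.

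The paper circumvents exactly this difficulty by never attempting to track a fixed $\beta_0$. Instead it proves three approximation/density statements — Propositions \ref{prop: tor dense in min}, \ref{prop: min dense} and \ref{prop: min H dense in min} — and then runs a Baire category argument: for each compact $K$ and $\vre>0$, the set $\mathcal{V}_{K,\vre}$ of points with $\vre$-dense $U$-orbit in $K$ is open and nonempty, so $\bigcap_n \mathcal{V}_{K_n,\vre_n} \neq \varnothing$. The engine behind nonemptiness is Proposition \ref{prop: min dense}: for any $p \in \SF^{(\mathrm{tor},H)}_{(=a)}$, the orbit $Up$ is already dense in the whole slice $\SF^{(\mathrm{tor},H)}_{(=a)}$, because the $U$-action there is continuously $U$-equivariantly conjugate (via the slit construction and subsurface shear) to Hedlund's horocycle flow on $\HH(0)$. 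This step has no counterpart in your proposal — you never identify the dynamical mechanism producing density — and it is what ultimately provides points with $\vre$-dense orbits. The characterization that $q_1 = \trem_{q,\beta}$ with $q\in\EE^{(\mathrm{min})}$, $\beta\in C^{+,\mathrm{erg}}_q$, $L_q(\beta)=a$ is then derived \emph{a posteriori} in the paper (using Corollary \ref{cor: total variation continuous}, Lemma \ref{lem: automatic absolutely continuous} and Propositions \ref{prop: properly nested}, \ref{prop: more semi direct}) from the assumption that $Uq_1$ is already dense; that direction is logically the reverse of what you are trying to prove, and the converse (that every surface of this form has dense orbit) is neither claimed nor established in the paper. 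Your handling of the signed-versus-positive issue (Corollary \ref{cor: balanced tremors are in B-}, Corollary \ref{cor: pass to balanced}) is a correct observation about the definitions, but it does not repair the central gap.
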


The proof of Theorem \ref{thm: spiky fish 1} will make use of the following intermediate
statements. Throughout this section, dist refers to the sup-norm
distance discussed 
in \S \ref{subsec: sup norm}. We will restrict dist to
$\overline{\SF}$, in particular the balls which will appear in the
proof are subsets of $\overline{\SF}$.

\begin{prop}\name{prop: tor dense in min}
For any $q \in \SF_{(\leq a)}^{(\mathrm{min})}$ and any $\vre>0$ there
is $q' \in \SF_{(\leq a)}^{(\mathrm{tor})}$ such that $\dist(q, q')<\vre.$
\end{prop}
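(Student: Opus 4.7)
The plan is to split $\SF^{(\mathrm{min})}_{(\leq a)}$ into two cases according to whether the horizontal foliation on $M_{q_0}$ is uniquely ergodic, and handle each separately. Write $q = \trem_{q_0, \beta}$ with $q_0 \in \EE^{(\mathrm{min})}$ and $|L|_{q_0}(\beta) \leq a$. In the uniquely ergodic case, every non-atomic signed transverse measure is a scalar multiple of $(dy)_{q_0}$ (since $q_0$ has no horizontal saddle connections), so $\beta = s \cdot (dy)_{q_0}$ with $|s| \leq a$ and $q = u_s q_0$ by \equ{eq: horocycles are tremors}. Using that $\EE^{(\mathrm{tor})}$ is dense in $\EE$, I would take $q_0' \in \EE^{(\mathrm{tor})}$ close to $q_0$ and set $q' \df u_s q_0' = \trem_{q_0', s (dy)_{q_0'}} \in \SF^{(\mathrm{tor})}_{(\leq a)}$; Corollary \ref{eq: sup norm horocycle deviation} then ensures $\dist(q, q') < \vre$.

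For the interesting case, where the horizontal foliation on $M_{q_0}$ is minimal but not uniquely ergodic, Proposition \ref{prop: involution on E and measures} gives exactly two extreme ergodic foliation cocycles $\beta_1$ and $\beta_2 = \iota^* \beta_1$; normalize them so that $L_{q_0}(\beta_i) = 1$ and $\beta_1 + \beta_2 = 2(dy)_{q_0}$. Writing $\beta = c_1 \beta_1 + c_2 \beta_2$, the mutual singularity of the two ergodic measures gives $|L|_{q_0}(\beta) = |c_1| + |c_2| \leq a$. Applying Proposition \ref{prop: auxiliary} to the ergodic probability measure associated with $\beta_1$ produces directions $\theta_j \to 0$ and slits $\sigma_j = \delta_j \cup \iota(\delta_j)$ in direction $\theta_j$ that split $M_{q_0}$ into isometric tori, together with the cohomological convergence $\beta_{1,j} \to \beta_1$ in $H^1(M_{q_0}, \Sigma; \R)$; by $\iota$-equivariance this also yields $\iota^* \beta_{1,j} \to \beta_2$.

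Next I would rotate the picture so that the slit becomes exactly horizontal: set $q_{0,j}' \df r_{-\theta_j} q_0$, which lies in $\EE^{(\mathrm{tor})}$ after passing to a subsequence (to avoid the meager set of directions giving horizontally periodic quotient tori). On $q_{0,j}'$ the horizontal slit genuinely decomposes the surface into two $\iota$-exchanged horizontally-invariant tori of area $\tfrac{1}{2}$ each, with associated extreme ergodic foliation cocycles $\tilde\beta_{1,j}, \tilde\beta_{2,j}$ normalized so that $L_{q_{0,j}'}(\tilde\beta_{i,j}) = 1$. Setting $\tilde\beta_j \df c_1 \tilde\beta_{1,j} + c_2 \tilde\beta_{2,j}$ gives $|L|_{q_{0,j}'}(\tilde\beta_j) = |c_1| + |c_2| \leq a$, so $q_j' \df \trem_{q_{0,j}', \tilde\beta_j} \in \SF^{(\mathrm{tor})}_{(\leq a)}$.

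The hard part will be confirming that $q_j' \to q$ in the sup-norm distance. Using consistent marking maps $\varphi_j' \df \psi_{r_{-\theta_j}} \circ \varphi_0$, continuity of the $G$-action on $\HHm$ gives $\til q_{0,j}' \to \til q_0$. The key matching is that, under the natural identifications of cohomology with $H^1(S, \Sigma; \R)$ afforded by these marking maps, the cocycle $\tilde\beta_{i,j}$ on $q_{0,j}'$ corresponds to the cocycle $\beta_{i,j}$ on $q_0$ from Proposition \ref{prop: auxiliary}(iii): both encode the horizontally-invariant mass carried by one of the two half-tori, and the affine comparison map $\psi_{r_{-\theta_j}}$ only rotates the $\R^2$-coordinates in a way that is negligible on the $\R_x$-component as $\theta_j \to 0$. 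With this identification in place, Proposition \ref{prop: auxiliary}(iii) gives $\tilde\beta_j \to \beta$ in $H^1(S, \Sigma; \R)$, and Proposition \ref{prop: continuity} yields $q_j' \to q$, completing the proof.
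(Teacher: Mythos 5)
Your proof takes essentially the same route as the paper's: reduce via continuity of the tremor map (Proposition \ref{prop: continuity}) to approximating the pair $(q_0,\beta)$, use Proposition \ref{prop: auxiliary}(iii) to produce a sequence of slits $\delta_j$ in nearly-horizontal directions, rotate by $r_{-\theta_j}$ so that the slit becomes horizontal, and use the convergence $\beta_j \to \beta_\nu$ together with the $\iota$-equivariance to handle general signed combinations of the two ergodic cocycles. The explicit case-split between the uniquely ergodic and non-uniquely-ergodic subcases is a welcome clarification that the paper glosses over (Proposition \ref{prop: auxiliary} requires $\FF_{q_0}$ to be minimal and non-ergodic, so the uniquely ergodic case does need a separate, easy treatment as you give).

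There is, however, one genuine gap: your handling of the issue that $r_{-\theta_j} q_0$ might land in $\EE^{(\mathrm{per})}$ rather than $\EE^{(\mathrm{tor})}$. You assert that passing to a subsequence avoids ``the meager set of directions giving horizontally periodic quotient tori,'' but the $\theta_j$ are a \emph{fixed countable sequence} determined by $q_0$ (via the returns of $\til g_t \bar{\pi}(q_0)$ to a compact set), not a continuum of directions -- so meagerness of periodic directions in $\bS^1$ gives no control over how many $\theta_j$ are periodic. In fact, all of them can be: write $\bar{\pi}(q_0)=\R^2/\Lambda$ with the second marked point at $v_0\notin\Lambda$; if $2v_0\in\Lambda$ (for instance $\Lambda=\Z^2$, $v_0=(1/2,0)$, with the horizontal direction irrationally sloped so $\FF_{q_0}$ is still minimal), then every slit from $\xi_0$ to $\xi_1$ has holonomy $v\in v_0+\Lambda$ satisfying $2v\in\Lambda$, hence lies in a $\Lambda$-rational, i.e.\ periodic, direction. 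In that scenario no subsequence helps. The paper's fix is different and robust: when $r_{-\theta_j}q_0\in\EE^{(\mathrm{per})}$, view it as two tori glued along a horizontal slit, perturb each torus slightly inside $\HH(0)$ to destroy horizontal periodicity, and reglue along the same slit; the resulting surface lies in $\EE^{(\mathrm{tor})}$, can be taken arbitrarily close to $r_{-\theta_j}q_0$, and the corresponding restriction-of-Lebesgue cocycle on the perturbed torus stays arbitrarily close to $\beta_{1,j}$. Replacing your subsequence claim with this perturbation step closes the gap; the rest of the argument is sound.
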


 \begin{prop}\name{prop: min H dense in min}
For any $a>0$, any $q \in \SF_{(\leq a)}^{(\mathrm{tor})}$ and any $\vre>0$ there
is an $H_0$ such that for each $H>H_0$ there is a $q'\in \SF_{(=
  a)}^{(\mathrm{tor}, H)}$ such that $\dist(q,q')<\vre$. 
\end{prop}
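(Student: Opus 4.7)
The plan has three steps: a normalization of the tremor, a geometric construction of long-slit approximating surfaces, and a continuity argument combined with a delicate mass-compensation.

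First I would write $\beta = c_1 \beta_A + c_2 \beta_{A'}$ using the two ergodic cocycles on $q_0$ from Proposition \ref{prop: involution on E and measures}, and, by possibly applying $\iota$ to interchange $A \leftrightarrow A'$, assume $c_1 \geq c_2$. Proposition \ref{prop: commutation tremors} with $s = c_2$ then yields $q = \trem_{u_{c_2} q_0,\, \alpha \beta_A}$ with $\alpha = c_1 - c_2 \in [0, 2a]$; relabeling $u_{c_2} q_0$ as $q_0$, we may assume $q = \trem_{q_0,\, \alpha \beta_A}$ with $q_0 \in \EE^{(\mathrm{tor}, H_0)}$ and $\alpha \in [0, 2a]$.

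Next I would establish the density of $\bigcup_{H > H_\ast} \EE^{(\mathrm{tor}, H)}$ in $\EE$ for every $H_\ast$. Approximating $q_0$ by a nearby $q^\ast \in \EE^{(\mathrm{min})}$ (using that $\EE^{(\mathrm{min})}$ is dense in $\EE$) and applying Proposition \ref{prop: auxiliary} to $q^\ast$ produces slits $\delta_j$ on $q^\ast$ in directions $\theta_j \to 0$ with $|\hol^{(x)}_{q^\ast}(\delta_j)| \to \infty$; rotating by $-\theta_j$ places these slits horizontal, so that $r_{-\theta_j} q^\ast \in \EE^{(\mathrm{tor}, H_j)}$ with $H_j \to \infty$ and $r_{-\theta_j} q^\ast \to q^\ast$ in sup-norm. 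Continuous real-Rel deformations, available via Proposition \ref{prop: semidirect product action}, interpolate the discrete sequence $\{H_j\}$, giving for each sufficiently large $H$ a surface $q_0' \in \EE^{(\mathrm{tor}, H)}$ close to $q_0$; moreover, by Proposition \ref{prop: auxiliary}(iii), the ergodic cocycles $\beta_A'$ on $q_0'$ converge to $\beta_A$ on $q_0$ in cohomology.

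The candidate is $q' = \trem_{q_0',\, 2a \beta_A'} \in \SF_{(=a)}^{(\mathrm{tor}, H)}$. Writing $2a \beta_A' = \alpha \beta_A' + (2a - \alpha) \beta_A'$ and applying Proposition \ref{prop: deduction TCH}, we obtain
$$q' = \trem_{\trem_{q_0',\, (2a - \alpha) \beta_A'},\; \alpha \beta_A'}.$$
By the continuity of tremors (Proposition \ref{prop: continuity}) together with $\beta_A' \to \beta_A$, it then suffices to show that the intermediate surface $\trem_{q_0',\, (2a - \alpha) \beta_A'}$ converges to $q_0$ in sup-norm as $H \to \infty$.

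The hard part will be this last convergence. When $\alpha < 2a$ the intermediate surface is an essential tremor of $q_0' \in \EE$, hence lies outside $\EE$, so one cannot hope for exact equality with $q_0 \in \EE$, only asymptotic approximation. The idea is to use the one-parameter family of real-Rel deformations available on $\EE^{(\mathrm{tor}, H)}$ to pre-compensate: one chooses $q_0'$ as a specific Rel-perturbation of the initial approximation, with the Rel parameter tuned to $(2a - \alpha)$, so that the essential-tremor displacement and the Rel correction cancel in sup-norm as $H \to \infty$. The quantitative control combines the Lipschitz estimate of Proposition \ref{prop: lipschitz tremors} with the geometric convergence rates implicit in the proof of Proposition \ref{prop: auxiliary}.
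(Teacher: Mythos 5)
Your normalization and the idea of approaching $q_0$ via long nearly-horizontal slits are sensible starting points, but the final step has a genuine gap that the proposed Rel pre-compensation cannot close. After your reduction you need $\trem_{q_0',\,(2a-\alpha)\beta_A'}$ to converge to $q_0$ as $H\to\infty$. Decompose $\beta_A' = \tfrac{1}{2}(dy)_{q_0'} + \gamma'$ where $\gamma'$ is the balanced part; by Corollary \ref{cor: balanced tremors are in B-}, $\gamma' \in \mathscr{N}_x(\EE)$, so $(2a-\alpha)\gamma'$ displaces $q_0'$ in the direction \emph{normal} to $\EE$ by an amount controlled from below by $\|\gamma'\|$. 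Since $q_0'\to q_0 \in \EE^{(\mathrm{tor})}$ and $q_0$ is not uniquely ergodic, the ergodic cocycle $\beta_A'$ stays bounded away from $\tfrac{1}{2}(dy)_{q_0'}$, so $\gamma'\not\to 0$; the tremor displacement is bounded away from $0$ whenever $\alpha<2a$. The Rel subspace $Z$, however, is tangent to $\EE$ (it preserves absolute periods and does not leave the rank-one locus), so a Rel correction of $q_0'$ cannot cancel a normal-direction tremor displacement — these two directions are transverse in cohomology, and by Proposition \ref{prop:transverse measures} an essential foliation cocycle has nonzero restriction to absolute periods, so it is never in $Z$.

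The missing ingredient is a mechanism for producing slit decompositions whose ergodic cocycle approximates an \emph{arbitrary prescribed} convex combination $\tfrac{s_1}{2a}\beta_1 + \tfrac{s_2}{2a}\beta_2$, not just one of the two ergodic cocycles $\beta_A, \beta_{A'}$ of $q_0$ (which is all that Proposition \ref{prop: auxiliary}(iii) can deliver — it approximates an ergodic measure). The paper's proof does this via the checkerboard construction of \S\ref{subsec: checks}: the choice of a second nearly-horizontal segment $\sigma_2$ satisfying Lemma \ref{lem: checkerboard} gives an alternative slit decomposition of the same surface, and the \emph{area imbalance} parameter (controlled in item (IV) via the choice of the lattice point $(m,n)$ in \eqref{eq: the choice of m n}) is tuned precisely to $c = \tfrac{s_2-s_1}{2a}$, which forces $\Leb|_{T'}\cap T_i$ to have mass close to $\tfrac{s_i}{2a}$. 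Applying $u_{2a}$ to $T'$ then shears each $T_i$ by approximately $s_i$, matching $q$ in period coordinates. Your proposal applies the full shear $u_{2a}$ to a torus whose Lebesgue restriction approximates $\mu_A$ alone, so it produces surfaces near $\trem_{q_0, 2a\beta_A}$, not near $\trem_{q_0,\alpha\beta_A}$; the two coincide only when $\alpha = 2a$.
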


Note that the approximation described in Proposition \ref{prop: min H dense in min} needs to accomplish two goals: approximate a tremor with total mass at most $a$ by tremors of total mass exactly $a$; and do so with a prescribed slit length $H.$

\begin{prop}\name{prop: min dense}
For positive constants $a$ and $H$ and any $q \in  \SF_{(=
  a)}^{(\mathrm{tor}, H)}$ the set $\overline{Uq}$ contains all of $\SF_{(=
  a)}^{(\mathrm{tor},H)}.$  
\end{prop}

\begin{proof}[Proof of Theorem \ref{thm: spiky fish 1} assuming
  Propositions \ref{prop: tor dense in min}, \ref{prop: min H dense in
    min}  
  and \ref{prop: min dense}]
  The equality $ \overline{\SF}
  =
\overline{\SF}^{(\mathrm{tor})}_{(\leq a)}$ is clear from Proposition
\ref{prop: tor dense in min}.  
 We will
prove:
\begin{itemize} \item[(i)]
  There is $q_1 \in \overline{\SF}
  $ for which the
  orbit $Uq_1$ is dense in $\overline{\SF}
  $. 
\item[(ii)] Any $q_1$ as in (i) satisfies $q_1 = \trem_\beta(q)$ for some $q \in
\EE^{(\mathrm{min})}$ and $\beta \in C^{+, \mathrm{erg}}_q$ with $L_q(\beta)
=a$.  
\end{itemize}

To prove (i), we will use the Baire category
theorem. { In this argument we will consider $\overline{\SF}$ as a  metric space in its own right, with respect to the restriction of the metric $\dist.$ Since $\overline{\SF}$ is closed and $U$-invariant, this is a complete metric space on which the $U$-action is continuous.} Given 
$\vre >0$ and a compact set $K \subset \overline{\SF}$, let
$\mathcal{V}_{K, \vre}$ denote the set of points in $\overline{\SF}$
whose $U$-orbit is $\vre$-dense in $K$. By continuity of the horocycle
flow and compactness of $K$, one sees that $\mathcal{V}_{K, \vre}$ is relatively open. We will
show that $\mathcal{V}_{K, \vre}$ is not empty. To see
this, note that by Proposition \ref{prop: tor dense in min}, given a
compact $K 
\subset \overline{\SF}$ and $\vre>0$ there is a 
finite set $F\subset \SF^{(\mathrm{tor})}_{(\leq
  a)}$ which is $\vre/2$-dense
in $K$. For $p \in F$, let $H_0=H_0(p)$ be the constant given in Proposition
\ref{prop: min H dense in min}, where we substitute $p$ for $q$ and replace $\vre$ with $\vre/2$.  Let $H> \max_{p\in
  F} H_0(p)$. Then for each $p$ 
there is $q'_p \in \SF^{(\mathrm{tor},
  H)}_{(=a)}$ such that $\dist(p, q_p')<\vre/2.$ Finally by
Proposition \ref{prop: min dense}, for any $q \in \SF^{(\mathrm{tor},
  H)}_{(=a)}$, the closure of $Uq$ contains all of the $q'_p$. Thus the orbit
$Uq$ comes within distance $\vre/2$ of each  $p \in F$ and in
particular is $\vre$-dense in $K$. {We have now shown that for any $\varepsilon>0$ and $K \subset \overline{\SF}$, $\mathcal{V}_{K,\varepsilon}\neq \emptyset$. 

We additionally claim that $\mathcal{V}_{K,\varepsilon}$ is dense for all $K$ compact and $\varepsilon>0$. To see this, first observe that  $$K\subset K' \text{ and } 0<\varepsilon'<\varepsilon \ \ \implies \ \ \mathcal{V}_{K,\varepsilon}\supset \mathcal{V}_{K',\varepsilon'}.$$ 
Given $x \in \overline{\SF}$ and $\varepsilon'>0,$ assume with no loss of generality that $\varepsilon'<\epsilon $ and apply the preceding statement, to $\varepsilon'$ instead of $\varepsilon$ and $K' \df K \cup \{x\}$ instead of $K. $ The $U$-orbit of any point in $\mathcal{V}_{K', \varepsilon'}$ intersects $B(x, \varepsilon'),$ and since $\mathcal{V}_{K', \varepsilon'}$ is contained in $\mathcal{V}_{K, \varepsilon}$ and is $U$-invariant, we have found that $\mathcal{V}_{K, \varepsilon}$ intersects $B(x, \varepsilon').$ Since $\varepsilon'$ was arbitrary, this shows that $\mathcal{V}_{K, \varepsilon}$ is dense.

}

Now let $K_1 \subset K_2 \subset \cdots$ be an exhaustion of
$\overline{\SF}$ by compact sets { and $\varepsilon_1>\varepsilon_2>...>0$ with $\lim \varepsilon_j=0$. By the Baire category theorem, and since all sets of the form $\mathcal{V}_{K, \vre}$ are open and dense in $\overline{\SF},$
$$\bigcap_{n=1}^{\infty} \mathcal{V}_{K_n,\varepsilon_n} \neq \emptyset.$$  
Clearly, any point in this intersection has a $U$-orbit which is dense in $\overline{\SF}.$ We have proved (i).}

To prove assertion (ii), recall from \equ{eq: 1.8 revisited} that $\overline{\SF}_{(\leq
  a)}$  is contained in
the set
$$\mathcal{S}_{(\leq a)} \df 
\{\trem_\beta(q): q
\in \EE, \, \beta\in \tremspace_q, \, |L|_q(\beta) \leq a\}.
\index{S@$\mathcal{S}_{(\leq a)}$}$$
Thus $q_1$ is of the form $\trem_{\beta}(q)$ for some $q \in \EE$ and $\beta \in \tremspace_q$ with
$|L|_q(\beta) \leq a$. We cannot have $q \in \EE^{(\mathrm{tor})} \cup
\EE^{(\mathrm{per})}$ since in both of these cases $M_q$ would have a
horizontal saddle connection of some length $H$, hence so would $q_1$,
and hence any surface in $\overline{Uq_1}$ would have a horizontal saddle
connection of length at most $H$. This would  contradict the fact that $Uq_1$
is dense in $\SF_{(\leq a)}$. So we must have $q \in
\EE^{(\mathrm{min})}$, and moreover $q$ has no horizontal saddle
connection.  Similarly, $\beta$ is not a multiple of the canonical foliation cocycle $(dy)_q$, because this would imply via \eqref{eq: horocycles are tremors} that $Uq_1\subset \EE$. In particular $M_q$ is not horizontally  uniquely ergodic.

Let $\nu_1$ and $\nu_2 = \iota_* \nu_1$ be the ergodic
transverse measures for the horizontal straightline flow on  $M_q$, normalized so that
$L_q(\beta_{i})=1, $ where $\beta_i \df \beta_{\nu_i}$ for $i=1,2$ and
write $\beta = a_1 \beta_1 + 
a_2\beta_2$ where $|a_1|+|a_2| \leq a$. We can assume with no loss of
generality that $a_2 \geq a_1$. Since $\beta$ is not a multiple of
$(dy)_q =\frac{1}{2} \nu_1 + \frac{1}{2} \nu_2$, we have $a_2 >
a_1$. Defining $s = 2a_1$ and using \equ{eq: commutation 0} we get
\begin{equation}
    \begin{split}
        \trem_\beta(q) &= \trem_{a_1 \beta_1 +a_2\beta_2}(q)\\
        &= \trem_{ a_1\left(2\hol^{(y)}_q- \beta_2 \right) +a_2\beta_2}(q)\\
        &= \trem_{(a_2 -a_1)\beta_2}(u_sq)
    \end{split}
\end{equation}  
and this shows that we may replace $q$ with $u_sq$ and  $\beta$ with
$(a_2-a_1)\beta_2$, which is an element of $C_{u_sq}^{+, 
  \mathrm{erg}}.$  So we assume that $\beta \in C_q^{+, \mathrm{erg}}$
and $L_q(\beta) \leq a$. Suppose $L_q(\beta) = a'< a$, then
writing $\rho = \frac{a}{a'}>1$ and letting 
$$q_1 = \trem_\beta(q) \text{ and }q_2
= \trem_{\rho \beta}(q) \in \SF_{(\leq a)} = \overline{Uq_1},
$$

Proposition \ref{prop: more semi direct} implies that
$$
\SF_{(\leq \rho a)} = \overline{Uq_2} \subset \overline{Uq_1} =
\SF_{(\leq a)} \subset \SF_{(\leq \rho a)},
$$ 
and thus $\SF_{(\leq \rho a)} = \SF_{(\leq a)}.$ This contradicts
Proposition \ref{prop: properly nested}, and hence $L_q(\beta)=a$. We have shown that there is $q_1\in \overline{\SF}$ with $\overline{Uq_1}=\overline{\SF}$, and moreover $q_1$ must be in $\SF^{(\min)}_{(=a)}$, proving the theorem. 
\end{proof}

We proceed with the proofs of Propositions \ref{prop: tor dense in
  min}, \ref{prop: min dense} and \ref{prop: min H dense in min}. As
we will see now, the main ingredient for proving  
Proposition \ref{prop: tor dense in min} is Proposition
\ref{prop: auxiliary}. 

\begin{proof}[Proof of Proposition \ref{prop: tor dense in min}]
By Proposition \ref{prop: continuity}, it is enough to show that for any $q$ in
$\EE^{(\mathrm{min})}$, any $\beta \in \tremspace_q$, and any 
$\vre'>0$, there is $q_1 \in \EE^{(\mathrm{tor})}$ and $\beta_1 \in
C^+_{q_1}$, such that $\dist(q, q_1) < \vre'$ and $\|\beta -
\beta_1\|<\vre'$.  Here $\| \cdot \|$ is some norm on $H^1(S, \Sigma; 
\R_x)$,  and we identify the cones $C^+_q$ and $C^+_{q_1}$ with subsets of
this vector space by choosing a marking and using period coordinates.
We would like to
use Proposition \ref{prop: auxiliary} (iii) 
and take $q_1 = r_{-\theta_j} q$, where $ r_{-\theta_j}$ is the
rotation of $M_q$ which makes the 
slit  $\sigma_j$ horizontal, and for $\beta_1$ take the cohomology class
corresponding to restriction of Lebesgue
measure to a torus on $M_{q_1}$ which is a connected component of the
complement of the horizontal slit; i.e. the rotation of $A_j$. It is clear that
for large $j$ this choice would fulfill all our requirements,
except perhaps the requirement that $q_1 \in
\EE^{(\mathrm{tor})}$. Namely it could be the case that the two translation
equivalent slit tori which
appear in Proposition \ref{prop: auxiliary} are periodic in direction
$\theta_j$. If this were to happen, we recall that $M_{q_1}$ is presented as two
tori glued along a horizontal slit, but the tori are horizontally
periodic, so a small perturbation of these tori (in the space of tori
$\HH(0)$) will make them horizontally aperiodic. Pulling back to
$\EE$, i.e. regluing the aperiodic tori along the same slit, we get a
new surface $q'_1$ which is not horizontally 
periodic and can be made arbitrarily close to $q_1$. The cohomology
class $\beta'_1$ corresponding to the restriction
of Lebesgue measure to one of the two 
aperiodic tori can be made arbitrarily close to $\beta_1$, completing
the proof. 
\end{proof}

Proposition \ref{prop: min dense} follows from a classical result of
Hedlund \cite{Hedlund} asserting that any horizontally aperiodic surface has a dense
$U$-orbit in the space of tori $\HH(0) \cong \SL_2(\R)/\SL_2(\Z)$. 

\begin{figure}[h]

\begin{tikzpicture}[scale=1.0]
\def\xa{5};
\def\ya{0};
\def\xb{1};
\def\yb{3};
\def\xc{-4};
\def\yc{1.0};
\def\xp{-2.0};
\def\yp{2.6};
\def\xq{-1};
\def\yq{1.5};

\node (A0) at (\xa,\ya) [circle,draw,fill=black,inner sep=0pt,minimum size=0.3mm] {};
\node (A1) at (\xa+\xb,\ya+\yb) [circle,draw,fill=black,inner sep=0pt,minimum size=0.3mm] {};
\node (A2) at (\xa+\xb+\xc,\ya+\yb+\yc) [circle,draw,fill=black,inner sep=0pt,minimum size=0.3mm] {};
\node (A3) at (\xa+\xc,\ya+\yc) [circle,draw,fill=black,inner sep=0pt,minimum size=0.3mm] {};

\node (p) at (\xp+\xa,\yp+\ya) [circle,draw,fill=black,inner sep=0pt,minimum size=1.2mm] {};

\path [name path=horizontal line 1]  (0,\yp+\ya) -- (8,\yp+\ya);
\path [name path=first line]  (A0) -- (A1);
\path  [name intersections={of=horizontal line 1 and first line, by=xx}]; 

\node (B0) [circle,draw,fill=black,inner sep=0pt,minimum size=0.3mm] at (xx) {};
\node (B1) [circle,draw,fill=black,inner sep=0pt,minimum size=0.3mm] at ($(xx)+(\xc,\yc)$) {};

\draw [black] (p) -- (B0);

\path [name path=horizontal line 2]  (0,\yp+\ya+\yc) -- (8,\yp+\ya+\yc);

\path [name path=second line]  (A1) -- (A2);
\path [name intersections={of=horizontal line 2 and second line, by=yy}]; 

\node (B2) [circle,draw,fill=black,inner sep=0pt,minimum size=0.3mm] at (yy) {};
\draw [black] (B1) -- (B2);

\node (B3) [circle,draw,fill=black,inner sep=0pt,minimum size=0.3mm] at ($(B2)-(\xb,\yb)$) {};

\path [name path=horizontal line 3]  (0,\yp+\ya+\yc-\yb) -- (8,\yp+\ya+\yc-\yb);

\path [name path=third line]  (A0) -- (A1);
\path [name intersections={of=horizontal line 3 and third line, by=zz}]; 

\node (B4) [circle,draw,fill=black,inner sep=0pt,minimum size=0.3mm] at (zz) {};
\draw [black] (B3) -- (B4);

\node (B5) [circle,draw,fill=black,inner sep=0pt,minimum size=0.3mm] at ($(B4)+(\xc,\yc)$) {};
\node (q) at (\xq+\xa,\yp+\ya+\yc-\yb+\yc) [circle,draw,fill=white,inner sep=0pt,minimum size=1.2mm] {};

\draw [black] (B5) -- (q);
\draw (A0) -- (A1) -- (A2) -- (A3) -- (A0);

\begin{scope}[xshift=220]

\def\xa{3};
\def\ya{0};
\def\xb{1};
\def\yb{3};
\def\xc{-4};
\def\yc{1.0};
\def\xp{-2.0};
\def\yp{2.6};
\def\xq{-1};
\def\yq{1.5};

\node (A0) at (\xa,\ya) [circle,draw,fill=black,inner sep=0pt,minimum size=0.3mm] {};
\node (A1) at (\xa+\xb,\ya+\yb) [circle,draw,fill=black,inner sep=0pt,minimum size=0.3mm] {};
\node (A2) at (\xa+\xb+\xc,\ya+\yb+\yc) [circle,draw,fill=black,inner sep=0pt,minimum size=0.3mm] {};
\node (A3) at (\xa+\xc,\ya+\yc) [circle,draw,fill=black,inner sep=0pt,minimum size=0.3mm] {};
\node (p) at (\xp+\xa,\yp+\ya) [circle,draw,fill=black,inner sep=0pt,minimum size=1.2mm] {};

\path [name path=horizontal line 1]  (0,\yp+\ya) -- (8,\yp+\ya);
\path [name path=first line]  (A0) -- (A1);

\path  [name intersections={of=horizontal line 1 and first line, by=xx}]; 
\node (B0) [circle,draw,fill=black,inner sep=0pt,minimum size=0.3mm] at (xx) {};
\node (B1) [circle,draw,fill=black,inner sep=0pt,minimum size=0.3mm] at ($(xx)+(\xc,\yc)$) {};

\draw [black] (p) -- (B0);

\path [name path=horizontal line 2]  (0,\yp+\ya+\yc) -- (8,\yp+\ya+\yc);

\path [name path=second line]  (A1) -- (A2);
\path [name intersections={of=horizontal line 2 and second line, by=yy}]; 
\node (B2) [circle,draw,fill=black,inner sep=0pt,minimum size=0.3mm] at (yy) {};
\draw [black] (B1) -- (B2);

\node (B3) [circle,draw,fill=black,inner sep=0pt,minimum size=0.3mm] at ($(B2)-(\xb,\yb)$) {};

\path [name path=horizontal line 3]  (0,\yp+\ya+\yc-\yb) -- (8,\yp+\ya+\yc-\yb);

\path [name path=third line]  (A0) -- (A1);
\path [name intersections={of=horizontal line 3 and third line, by=zz}]; 
\node (B4) [circle,draw,fill=black,inner sep=0pt,minimum size=0.3mm] at (zz) {};
\draw [black] (B3) -- (B4);

\node (B5) [circle,draw,fill=black,inner sep=0pt,minimum size=0.3mm] at ($(B4)+(\xc,\yc)$) {};
\node (q) at (\xq+\xa,\yp+\ya+\yc-\yb+\yc) [circle,draw,fill=white,inner sep=0pt,minimum size=1.2mm] {};

\draw [black] (B5) -- (q);

\draw (A0) -- (A1) -- (A2) -- (A3) -- (A0);
\end{scope}
\end{tikzpicture}\ \ \ \ \ \ \ \ \

\caption{A surface  $M_q\in\EE$ obtained by gluing two identical horizontally aperiodic tori  along a horizontal slit.}
\label{fig: gluing1} 
\end{figure}

\begin{figure}[h]

\begin{tikzpicture}[scale=1.0]
\def\xa{5};
\def\ya{0};
\def\xb{1};
\def\yb{3};
\def\xc{-4};
\def\yc{1.0};
\def\xp{-2.0};
\def\yp{2.6};
\def\xq{-1};
\def\yq{1.5};

\node (A0) at (\xa,\ya) [circle,draw,fill=black,inner sep=0pt,minimum size=0.3mm] {};
\node (A1) at (\xa+\xb,\ya+\yb) [circle,draw,fill=black,inner sep=0pt,minimum size=0.3mm] {};
\node (A2) at (\xa+\xb+\xc,\ya+\yb+\yc) [circle,draw,fill=black,inner sep=0pt,minimum size=0.3mm] {};
\node (A3) at (\xa+\xc,\ya+\yc) [circle,draw,fill=black,inner sep=0pt,minimum size=0.3mm] {};

\node (p) at (\xp+\xa,\yp+\ya) [circle,draw,fill=black,inner sep=0pt,minimum size=1.2mm] {};

\path [name path=horizontal line 1]  (0,\yp+\ya) -- (8,\yp+\ya);
\path [name path=first line]  (A0) -- (A1);

\path  [name intersections={of=horizontal line 1 and first line, by=xx}]; 
\node (B0) [circle,draw,fill=black,inner sep=0pt,minimum size=0.3mm] at (xx) {};
\node (B1) [circle,draw,fill=black,inner sep=0pt,minimum size=0.3mm] at ($(xx)+(\xc,\yc)$) {};

\draw [black] (p) -- (B0);

\path [name path=horizontal line 2]  (0,\yp+\ya+\yc) -- (8,\yp+\ya+\yc);

\path [name path=second line]  (A1) -- (A2);
\path [name intersections={of=horizontal line 2 and second line, by=yy}]; 
\node (B2) [circle,draw,fill=black,inner sep=0pt,minimum size=0.3mm] at (yy) {};
\draw [black] (B1) -- (B2);

\node (B3) [circle,draw,fill=black,inner sep=0pt,minimum size=0.3mm] at ($(B2)-(\xb,\yb)$) {};

\path [name path=horizontal line 3]  (0,\yp+\ya+\yc-\yb) -- (8,\yp+\ya+\yc-\yb);

\path [name path=third line]  (A0) -- (A1);
\path [name intersections={of=horizontal line 3 and third line, by=zz}]; 
\node (B4) [circle,draw,fill=black,inner sep=0pt,minimum size=0.3mm] at (zz) {};
\draw [black] (B3) -- (B4);

\node (B5) [circle,draw,fill=black,inner sep=0pt,minimum size=0.3mm] at ($(B4)+(\xc,\yc)$) {};

\node (q) at (\xq+\xa,\yp+\ya+\yc-\yb+\yc) [circle,draw,fill=white,inner sep=0pt,minimum size=1.2mm] {};

\draw [black] (B5) -- (q);

\draw (A0) -- (A1) -- (A2) -- (A3) -- (A0);

\begin{scope}[xshift=220]

\def\s{0.2};
\def\xa{2};
\def\ya{0};
\def\xbb{1};
\def\ybb{3};
\def\xcc{-4};
\def\ycc{1.0};
\def\xb{\xbb+0.5*\ybb};
\def\yb{3};
\def\xc{\xcc+0.5*\ycc};
\def\yc{1.0};
\def\xp{-1.0};
\def\yp{2.6};
\def\xq{-0.0};
\def\yq{1.5};

\node (A0) at (\xa,\ya) [circle,draw,fill=black,inner sep=0pt,minimum size=0.3mm] {};
\node (A1) at (\xa+\xb,\ya+\yb) [circle,draw,fill=black,inner sep=0pt,minimum size=0.3mm] {};
\node (A2) at (\xa+\xb+\xc,\ya+\yb+\yc) [circle,draw,fill=black,inner sep=0pt,minimum size=0.3mm] {};
\node (A3) at (\xa+\xc,\ya+\yc) [circle,draw,fill=black,inner sep=0pt,minimum size=0.3mm] {};

\node (p) at (\xp+\xa,\yp+\ya) [circle,draw,fill=black,inner sep=0pt,minimum size=1.2mm] {};

\path [name path=horizontal line 1]  (0,\yp+\ya) -- (8,\yp+\ya);
\path [name path=first line]  (A0) -- (A1);

\path  [name intersections={of=horizontal line 1 and first line, by=xx}]; 
\node (B0) [circle,draw,fill=black,inner sep=0pt,minimum size=0.3mm] at (xx) {};
\node (B1) [circle,draw,fill=black,inner sep=0pt,minimum size=0.3mm] at ($(xx)+(\xc,\yc)$) {};

\draw [black] (p) -- (B0);

\path [name path=horizontal line 2]  (0,\yp+\ya+\yc) -- (8,\yp+\ya+\yc);

\path [name path=second line]  (A1) -- (A2);
\path [name intersections={of=horizontal line 2 and second line, by=yy}]; 
\node (B2) [circle,draw,fill=black,inner sep=0pt,minimum size=0.3mm] at (yy) {};
\draw [black] (B1) -- (B2);

\node (B3) [circle,draw,fill=black,inner sep=0pt,minimum size=0.3mm] at ($(B2)-(\xb,\yb)$) {};

\path [name path=horizontal line 3]  (0,\yp+\ya+\yc-\yb) -- (8,\yp+\ya+\yc-\yb);

\path [name path=third line]  (A0) -- (A1);
\path [name intersections={of=horizontal line 3 and third line, by=zz}]; 
\node (B4) [circle,draw,fill=black,inner sep=0pt,minimum size=0.3mm] at (zz) {};
\draw [black] (B3) -- (B4);

\node (B5) [circle,draw,fill=black,inner sep=0pt,minimum size=0.3mm] at ($(B4)+(\xc,\yc)$) {};

\node (q) at (\xq+\xa,\yp+\ya+\yc-\yb+\yc) [circle,draw,fill=white,inner sep=0pt,minimum size=1.2mm] {};

\draw [black] (B5) -- (q);

\draw (A0) -- (A1) -- (A2) -- (A3) -- (A0);
\end{scope}
\end{tikzpicture}\ \ \ \ \ \ \ \ \

\caption{Applying a tremor in $C^{+, \mathrm{erg}}_q$ to  $M_q$ amounts to  applying a horocycle shear to one of the two tori. The resulting surface is not in $\EE$. Note that the length of the slit is unchanged.}
\label{fig: gluing2} 
\end{figure}

\begin{proof}[Proof of Proposition \ref{prop: min dense}] 
Note that each surface $q$ in $\EE^{(\mathrm{tor}, H)}$ has a
splitting into two translation equivalent tori $A_1$ and $A_2$ glued along a horizontal slit of
length $H$, and interchanged by the map $\iota$ of Proposition \ref{prop:
  structure of E}. The two rays in $C^{+, \mathrm{erg}}_q$ correspond, up
to multiplication by scalars, to
the restriction of the transverse measure  $(dy)_q$ to each of the two
tori. Thus if we set $s = 2a$, then each $q' \in
\SF_{(=a)}^{(\mathrm{tor}, H)}$ is obtained by 
a `subsurface shear' of a surface in $\EE^{(\mathrm{tor},H)}$, namely by 
applying $u_s$ to one of the tori $A_i$ and not changing the other
torus --- see Figures \ref{fig: gluing1} and \ref{fig: gluing2}. The reason for taking $s=2a$ is that the area of each of the
$A_i$ is exactly 1/2. 
This description
implies in particular that $\SF_{(=a)}^{(\mathrm{tor}, H)}$ is the
image of $\EE^{(\mathrm{tor}, H)} $ under  a continuous map commuting
with the $U$-action. So it suffices to show that the $U$-orbit of any  $q
\in \EE^{(\mathrm{tor}, H)} $ is dense in $ \EE^{(\mathrm{tor}, H)}
$. 

We do this by defining a $U$-equivariant inclusion of $\HH(0)^{(\mathrm{tor})}$, the set of tori that are horizontally aperiodic, into $\EE^{(\mathrm{tor},H)}$, and using the previously mentioned theorem of Hedlund.  Note  that any surface in $\EE^{(\mathrm{tor}, H)}
$ is obtained from a surface $M_{q'}$ for $q' \in \HH(0)^{(\mathrm{tor})}$ by
forming two copies of $M_{q'}$ and gluing them along a slit of length $H$
starting at the marked point (the fact that the surface is aperiodic
ensures that the slit exists).
This defines a 
$U$-equivariant map $\HH(0)^{(\mathrm{tor})} \to
\EE^{(\mathrm{tor}, H)}$, which is continuous when
$\HH(0)^{(\mathrm{tor})}$ is equipped with its topology as a subset of
$\HH(0)$. Thus to complete the proof it suffices to show that any
surface in $\HH(0)^{(\mathrm{tor})}$ has a $U$-orbit which is dense in
$\HH(0)$ --- which is Hedlund's theorem. 
\end{proof}

\subsection{Controlling tremors using checkerboards}
\name{subsec: checks}
\combarak{Rewrote these 3 lead-in paragraphs incorporating Jon's
  suggestions} In order to prove   
Proposition \ref{prop: min H dense in min} we will (among other
things) have to deal with the following situation. Given
$q \in \EE$ and $\beta \in C^{+, \mathrm{erg}}_q$,
with $L_q(\beta)<a$, we would like to find 
 a surface $M_{q'}$ and $\beta' \in C^{+,
  \mathrm{erg}}_{q'}$, such that $L_{q'}(\beta')=a$ and $\trem_{\beta}(q)$ is close to $\trem_{\beta'}(q')$. We find $q'$ close to the horocycle orbit of $q$. More specifically, we will choose $s$ so that $q_0 =
u_{-s} q$ and $\beta_0 = \beta + s$ $\hol_q^{(y)}$ satisfy $\trem_{\beta}(q) = \trem_{\beta_0}(q_0)$ and
$L_{q_0}(\beta_0) =a$, and take $q'$ close to $q_0$. This transforms
our problem into finding $\beta' \in C^
{+,\mathrm{erg}}_{q'}$ which closely approximates $\beta_0 \in
C^+_{q_0}$, where $\beta_0$ is not ergodic but rather is a nontrivial
convex combination of $\hol_{q_0}^{(y)}$ and an ergodic foliation
cocycle.

Controlling such convex combinations is achieved using what we will refer to informally as a
`checkerboard pattern'. A checkerboard on a 
torus $T$ is a pair of non-parallel
line segments $\sigma_1$ and $\sigma_2$ on $T$ which form the boundary of a
finite collection of polygons, which can be colored in two colors so that no two
adjacent polygons have the same color (see Figures \ref{fig:
  checkerboard1} and \ref{fig: checkerboard3}). If we
equip two identical tori $T_1, T_2$ with checkerboard patterns defined by
the same lines $\sigma_1, \sigma_2$, and in which the colors in the
coloring are swapped, we can 
form a surface $M$  in $\EE$ by gluing $T_1$ to $T_2$ in two different
ways, namely along each of the $\sigma_i$. Both of these
gluings give the same surface $M$, but it is decomposed as a union of
two tori glued along a slit in
two different ways (see Proposition \ref{prop: slit characterization}). One decomposition is into the original tori $T_1$
and $T_2$, and the other is into the unions $T'_1, T'_2$ of parallelograms of a fixed
color. Our interest will be in the `area imbalance' of the checkerboard, which is
the difference between the areas of $T_1 \cap T'_1$ and $T_2 \cap
T'_1$. Informally, the area imbalance tells us how close these two decompositions 
are to each other.

In our application the lines $\sigma_1$ and $\sigma_2$ will both be nearly
horizontal. Taking the normalized restriction $\Leb|_{T'_1}$ to one of the
tori in the decomposition $M = T'_1 \cup T'_2$ gives an ergodic
foliation cocycle for the flow in the direction of $\sigma_2$, and the checkerboard picture shows that it closely
approximates a nontrivial 
convex combination of the two ergodic components of the other
foliation cocycle, in the direction of $\sigma_1$, namely the one coming from the normalized
restrictions $\Leb|_{T_1},
\Leb|_{T_2}$. Controlling the coefficients in this convex combination
amounts to controlling the area imbalance parameter, and this will be achieved below in Lemma \ref{lem: sublemma}, item (IV).

Checkerboards were originally introduced by Masur and Smillie in order
to provide  
a geometric way to understand Veech's examples of surfaces with a
minimal and non-ergodic horizontal foliation, see \cite[p. 1039 \&
Fig. 7]{MT}. We now proceed to a more precise discussion.

\begin{figure}
\includegraphics[scale=0.90, trim = 0mm 0mm 0mm 0mm]{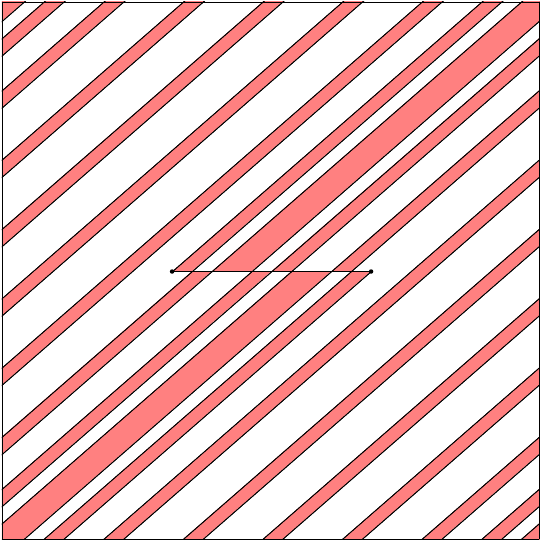}
\caption{A checkerboard: when the 
  $\sigma_i$ (drawn in black) are long and orthogonal, the torus will be
  partitioned into small rectangles of alternating colors. The
  difference between the areas occupied by the colors is the {\em
    area imbalance}. }\name{fig: checkerboard1} 
\end{figure}

\begin{figure}
\includegraphics[scale=0.90, trim = 0mm 0mm 0mm 0mm]{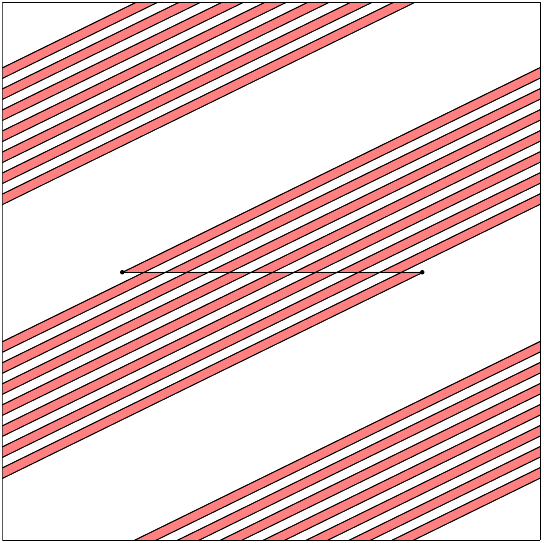}
\caption{A key feature of this checkerboard is that the non-horizontal
  black segment crosses the horizontal segment immediately adjacent to
  its previous crossing, leading to strips of equal width and length.}
\name{fig: checkerboard3} 

\end{figure}

Let $p \in
\HH(0,0)$ be a torus with two marked points $\xi_1$ and 
$\xi_2$. Let $T = T_p$ be the underlying surface. 
  Let $\sigma_1, 
\sigma_2$ be two non-parallel saddle connections on $p$ from $\xi_1$
to $\xi_2$. 
Let $\bar \sigma_2$ be the segment obtained by reversing the orientation on
$\sigma_2$, and let $\sigma$ be the concatenation of $\sigma_1$ and
$\bar \sigma_2$ so that $\sigma$ is a closed loop on $T$.
We have: 

\begin{lem}\name{lem: checkerboard}
The following are equivalent:
\begin{itemize}

\item[(i)]
 The loop $\sigma$ is homologous to zero in
  $H_1(T; \Z/2\Z)$.
\item[(ii)]
It is possible to color the connected components of $T \sm \sigma$
with two colors so that components which  
are adjacent along a segment forming part of $\sigma$ have different
colors. 
\item[(iii)]
For $i=1,2$ let 
 $M_i$  be the surface obtained from the slit construction applied to
 $\sigma_i$ (as in \S \ref{subsec: locus E}). Then $M_1$ and $M_2$ are
translation equivalent. 
\end{itemize}
\end{lem}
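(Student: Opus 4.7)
The plan is to establish the three equivalences by running (i)$\iff$(iii) via Proposition \ref{prop: slit characterization}, and then (i)$\iff$(ii) by a purely topological dual-graph bipartiteness argument. Set $\Sigma = \{\xi_1,\xi_2\}$ throughout.

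\textbf{Step 1: (i)$\iff$(iii).} By Proposition \ref{prop: slit characterization}, the surfaces $M_1$ and $M_2$ are translation equivalent precisely when $[\sigma_1] = [\sigma_2]$ in $H_1(T,\Sigma;\Z/2\Z)$, i.e.\ when the relative $1$-chain $\sigma_1 - \sigma_2$ vanishes there. Since $\sigma = \sigma_1 \cdot \bar\sigma_2$ is already a \emph{closed} loop in $T$, its class in $H_1(T,\Sigma;\Z/2\Z)$ is the image of its class in $H_1(T;\Z/2\Z)$ under the natural map. I will invoke the long exact sequence of the pair $(T,\Sigma)$,
\[
0 = H_1(\Sigma;\Z/2\Z) \longrightarrow H_1(T;\Z/2\Z) \longrightarrow H_1(T,\Sigma;\Z/2\Z),
\]
whose first term vanishes because $\Sigma$ is finite; hence the second map is injective, and $[\sigma]=0$ in $H_1(T;\Z/2\Z)$ is equivalent to $[\sigma]=0$ in $H_1(T,\Sigma;\Z/2\Z)$. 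Combined with the previous paragraph, this proves the equivalence of (i) and (iii).

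\textbf{Step 2: (i)$\iff$(ii).} This is a standard planar/surface topology fact, which I will prove via the dual graph. Let $\Gamma$ be the graph whose vertices are the connected components of $T \sm \sigma$ and whose edges are the arcs of $\sigma$ (each such arc separates, or is glued between, two components, possibly the same one). An alternating 2-coloring of the components in the sense of (ii) is exactly a proper $2$-coloring of $\Gamma$; by K\"onig's theorem, such a coloring exists if and only if every cycle in $\Gamma$ has even length. A closed walk in $\Gamma$ corresponds to a closed loop $\gamma$ in $T$ crossing $\sigma$ transversally, and the parity of its length equals the mod-$2$ intersection number $\langle \gamma,\sigma\rangle$. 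Hence the existence of a coloring is equivalent to $\langle \gamma,\sigma\rangle = 0$ for every closed loop $\gamma$, which by the non-degeneracy of the mod-$2$ intersection pairing on the closed orientable surface $T$ is in turn equivalent to $[\sigma]=0$ in $H_1(T;\Z/2\Z)$.

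\textbf{What I expect to be routine vs.\ delicate.} Step 1 is essentially a bookkeeping exercise applying Proposition \ref{prop: slit characterization} plus the obvious exact sequence. Step 2 is classical; the only mild care needed is to argue that every cycle in the abstract dual graph $\Gamma$ is realized by a transverse closed loop in $T$ (one can perturb an edge-path into the interiors of the components and across the midpoints of edges of $\sigma$), and conversely that any transverse loop $\gamma$ determines a closed walk in $\Gamma$ whose length parity matches $\langle\gamma,\sigma\rangle$; both of these are straightforward general-position arguments. No step is an obstacle — the whole statement is a clean consequence of Proposition \ref{prop: slit characterization} together with $\Z/2\Z$ Poincar\'e duality on $T$.
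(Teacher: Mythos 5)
Your proof is correct, and it is worth noting where it tracks the paper's argument and where it diverges. For (i)$\iff$(iii) you follow the same strategy — quote Proposition~\ref{prop: slit characterization} — but you spell out a point that the paper leaves implicit, namely the passage between $H_1(T;\Z/2\Z)$ and $H_1(T,\Sigma;\Z/2\Z)$, using $H_1(\Sigma;\Z/2\Z)=0$ and the long exact sequence of the pair to get injectivity of the comparison map; this is a worthwhile clarification. (One mild caveat: Proposition~\ref{prop: slit characterization} is phrased as an ``if,'' but its proof, via the Lefschetz-duality identification of $H_1(T,\Sigma;\Z/2\Z)$ with $H^1(T\sm\Sigma;\Z/2\Z)$ classifying the double covers, establishes the ``if and only if'' that you and the paper both need.) For (i)$\iff$(ii) your route is genuinely different in flavor: you pass to the dual graph $\Gamma$ of the components of $T\sm\sigma$, invoke K\"onig's bipartiteness criterion to reduce 2-colorability to the absence of odd cycles, identify cycle parity with the mod-2 intersection number $\langle\gamma,\sigma\rangle$, and then close the argument using non-degeneracy of the mod-2 intersection pairing on $T$. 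The paper instead works directly with cochains: the Poincar\'e dual $\Z/2\Z$-cochain of $\sigma$ is a coboundary $\delta f$ if and only if $[\sigma]=0$, and a primitive $0$-cochain $f$ is precisely a 2-coloring. Both arguments are faces of the same Poincar\'e duality coin; yours is more combinatorial and explicit about general-position and realizability of dual-graph cycles by transverse loops, while the paper's cochain formulation is shorter and bypasses the graph-theoretic apparatus entirely. Neither has a gap, and either reading is a legitimate proof.
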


\begin{proof}
The equivalence of (i) and (iii) follows from Proposition \ref{prop: slit
  characterization}. 
We now show that (ii) is equivalent to the triviality of the class
represented by $\sigma$. 
Consider the $\Z/2\Z$ valued 1-cochain Poincar\'e dual  to
$\sigma$. This cochain represents a trivial cocycle if and only if it
is the coboundary of a 
$\Z/2\Z$-valued function. Associating colors to the values of such a
function as in Figure \ref{fig: checkerboard1} we have the
checkerboard picture. Specifically being a coboundary with $\Z/2\Z$
coefficients means that two regions have the same color iff a generic
path crosses $\sigma$  an even number of times to get from one to the
other.  
\end{proof}

Assume that $\sigma_1$ and $\sigma_2$ cross each other an odd number of times and
satisfy the conditions of Lemma \ref{lem: 
  checkerboard}, 
let $A$ be the area of $T$ and let $A_1, A_2$ be the areas of the
regions colored by the two colors in the coloring in (ii) above, so that $A_1+A_2 = A$. We
will refer to the quantity 
$\frac{|A_1-A_2|}{A}$
 as the {\em area imbalance}\index{area imbalance} of
the subdivision given by $\sigma_1, \sigma_2$ (note that when $T_p$ has
area one this is the same as $|A_1-A_2|$).

We will need the following two lemmas on tori.

\begin{lem}\label{lem: sublemma}
Suppose $T$ is a torus for which the horizontal direction is aperiodic. Given $c \in [0,1)$, a horizontal segment $\sigma_1$ on $T$, and $\eta>0$,  there is $H_0$ such that for any 
$H>H_0$, there is a second segment $\sigma_2$ on $T$ joining the two
endpoints of $\sigma_1$ for which the following hold:
\begin{itemize}
\item[(I)]
The segments  $\sigma_1, \sigma_2$ on $T$  intersect an odd
number of times and  satisfy the conditions of Lemma \ref{lem:
  checkerboard};
\item[(II)] 
Let  $\theta \in (-\pi,\pi)$ be the direction of $\sigma_2$. Then
$|\theta| < \eta$ and the flow in direction $\theta$  
is aperiodic on $T$;
\item[(III)]
the length of $\sigma_2$ is in the interval $\left (H,
(1+\eta)H \right)$;
\item[(IV)]
 the area imbalance of $\sigma_1, \sigma_2$ is in the interval
 $\left(c- \eta, c+\eta
 \right).$ 
\end{itemize}
\end{lem}

\begin{lem}\label{lem: torus equidistribution}
   Let $T$ be a horizontally minimal torus, and let $\sigma_1$ be a horizontal segment on $T$. Let $\sigma_2^{(k)}$ be a sequence of straight segments in $T$ in direction $\theta_k\neq 0$, connecting the endpoints of $\sigma_1$, so that the loop $\sigma$ above satisfies the conditions in Lemma \ref{lem: checkerboard}, and satisfying $\lim_{k \to \infty}\theta_k=0.$ Let $T^{(k)}$ be any one of the two monochromatic regions, in the checkerboard coloring described in Lemma \ref{lem: checkerboard}(ii). 
 Then 
    for any piecewise smooth bounded curve $\gamma \subset T$, which is transverse to the horizontal foliation,  we have 
\eq{eq: integrals appearing in}{
\lim_{k \to \infty} 
\frac{1}{\Leb(T^{(k)})}\int_\gamma dy|_{T^{(k)}} 
=\frac{1}{\Leb( T)}\int_\gamma dy. 
}

\end{lem}

We will give the proof of Lemmas \ref{lem: sublemma} and \ref{lem: torus equidistribution} at the end of this section. First we conclude the proof of 
Proposition \ref{prop: min H
  dense in min} assuming their validity.

  \begin{proof}[Proof of Proposition \ref{prop: min H dense in min}]
Let $q$ be as in the statement of Proposition \ref{prop: min H dense
  in min}, that is $q$ is obtained from $p 
\in \HH(0)$ with minimal horizontal foliation, and from parameters
$H_1>0$ and $s_1, s_2 \in \R$ satisfying \eq{eq: s1 plus s2}{|s_1|+|s_2| \leq 2a,} as
follows. First put a horizontal segment $\sigma_1$ of length $H_1$ on
the underlying torus 
$T = T_p$ giving rise to a surface in $\HH(0,0)$. Then apply the slit
construction described in \S \ref{subsec: locus E} to obtain a surface $M_{q_0}$ for $q_0 \in
\EE^{(\mathrm{tor})}$ which is a union of two tori $T_1$ and $T_2$ with
minimal horizontal foliations, glued along a horizontal slit of length
$H_1$. Rescale so that this surface has area one, i.e. each $T_i$ has
area $1/2$. Then for $i=1,2$, apply the horocycle shear map $u_{s_i}$
to $T_i$, and glue the resulting aperiodic tori to each other to
obtain $M_q$. In light of the factor 2 appearing in \eqref{eq: s1 plus s2}, $q = \trem_\beta(q_0)$ for $\beta \in \tremspace_{q_0}$ satisfying $|L|_{q_0}(\beta) \leq a,$ so $q \in \SF_{(\leq a)}^{(\mathrm{tor})},$ and all surfaces in $\SF_{(\leq a)}^{(\mathrm{tor})}$ can be described in this way.

By swapping the roles of $T_1$ and $T_2$, replacing $p$ with $u_{-s}p$, where $s=2a-(s_1+s_2)$, and replacing 
$s_i$ with $s_i+s$ for some $s \in \R$, 
we can assume that 
\eq{eq: formula 2a}{
 0 \leq s_1 \leq s_2 \text{ and } s_1 +s_2=2a.} 
Let 
\begin{equation}
  \label{eq: definition of c}  
c
\df \frac{s_2-s_1}{2a}.
\end{equation}

  

Let $M_{q_0} \in \EE^{(\mathrm{tor})}$ be the surface constructed as in the above
  discussion (starting with $T$ and $\sigma_1$ as in the paragraph above equation \eqref{eq: formula 2a}). This means that $q = \trem_{ s_1\beta_1 +
    s_2\beta_2}(q_0)$, where $\beta_i = \beta_{\nu_i}$ is the
  cohomology class corresponding to the transverse measure $\nu_i$
  obtained by
  restricting the canonical transverse measure $(dy)_{q_0}$ to the torus $T_i$,  and the tori are glued along a horizontal
  slit of length $H_1$. 
  Let $\mathcal{U}$ denote the (\dist) $\vre$-ball
  around $q$. Our goal is to show that $\mathcal{U}$ contains 
  some $q'$ which is also a tremor of a  
  surface $q'_0 \in \EE^{(\mathrm{tor})}$, but for which the
  parameters $s_1$ and $s_2$ and the slit length
  $H$ are prescribed. More precisely $M_{q'_0}$ is
  built from two minimal tori $T'$ and $T''$ glued along a horizontal slit of length $H$, $M_{q'}$
  is obtained by applying the horocycle
  flow $u_{2a}$ to $T'$ and leaving $T''$ fixed (since
  $T'$ has area $\frac{1}{2}$ this will give a tremor of
  total variation exactly $a$), and we need to carry the construction out for all
  $H>H_0$ where $H_0$ is allowed to depend on  $\mathcal{U}$. 

We obtain $q'_0$ as follows. 
Using Lemma \ref{lem: sublemma}, we find $\sigma_2$ satisfying conditions (I--IV), for $\eta$
sufficiently small (to be determined 
below). 
Define $q'_0 \df  gq_0$ where $g \in \SL_2(\R)$ is the (unique) composition of a
small rotation and small diagonal matrix, 
satisfying 
$$
g \, \hol_{T}( \sigma_2) = (H,0).
$$
By swapping $T'$ and $T''$ if needed, we will assume 
  \eq{eq: using the}{A_2 = \mathrm{Leb}(\psi_{g^{-1}} (T') \cap T_2) \geq A_1 = \mathrm{Leb}(\psi_{g^{-1}} (T') \cap T_1),}
  where $\psi_{g^{-1}}: M_{q'_0}\to M_{q_0}$ is the comparison map. 
Note that in light of
(II) and (III), $g$ is close to the identity in the sense that we can
bound  the norm $\|g - \mathrm{Id}\|$ with a bound which goes to zero as $\eta
\to 0$, so that by choosing $\eta$ small we can make $\dist(q_0, q'_0)$ as
small as we wish. 

Recall 
$q$ is obtained from $q_0$ by shearing the two tori $T_i$ (for
$i=1,2$) by $u_{s_i}$. Define $q'$ to be the surface obtained from $q'_0$ by shearing the
torus $T'$ by $u_{2a}$.
We now show using (II) and  (IV) that by
making $\eta$ small and $H$ large we can ensure that $q' \in
\mathcal{U}$. To see this, we will work in period coordinates, which by Proposition \ref{prop: sup norm properties} gives the same topology as $\dist$. 
We will choose a marking map $\varphi: S \to M_{q_0}$ and use it to
define an
explicit basis for $H_1(S, \Sigma)$, by pulling back a basis
of $H_1(M_{q_0}, \Sigma_{q_0})$. Then we will show that for all $\eta$
small enough and $H$ large enough, when
evaluating $\hol_q$ and $\hol_{q'}$ on the elements $\alpha$ of this basis, 
the differences $\|\hol_q(\alpha) - \hol_{q'}(\alpha)\|$ can be made
as small as we wish. The basis is described 
as follows. For $i=1,2$, let 
$\alpha^{(i)}_1, \alpha^{(i)}_2$ be
straight segments in $T_i$
generating the homology, so that $\left\{\alpha^{(i)}_j: i,j=1,2
\right \} \cup   \{\bar{\sigma}_1\}$ form  a basis
  for $H_1(M_{q_0}, \Sigma_{q_0}; \Z)$. 
We now compute the holonomy vectors of these elements, corresponding
to $q$ and $q'$.

By the description of $q$ from the preceding paragraph, and since $\alpha^{(i)}_j \subset T_i,$ we have 
\eq{eq: first for q}{
\hol_{q}\left(\alpha_j^{(i)} \right) = u_{s_i} \hol_{q_0}
\left(\alpha_j^{(i)} \right) = \hol_{q_0}\left(\alpha_j^{(i)} \right)
+ s_i \left( \begin{matrix} \hol^{(y)}_{q_0} \left(\alpha_j^{(i)}
   \right )\\ 0\end{matrix} \right)} 
and
\eq{eq: second for q}{\hol_{q}\left(\bar{\sigma}_1 \right) =
  \hol_{q_0}\left(\bar{\sigma}_1 \right).} 
Now let $\nu'$ be the transverse measure given by restricting the
canonical transverse measure $(dy)_{q'_0}$ to $T'$. 
Then by the
description of $q'$ from the preceding paragraph we also have
that 
\eq{eq: first for q'}{
\hol_{q'}\left(\alpha_j^{(i)} \right) = \hol_{q'_0}
\left(\alpha_j^{(i)} \right) + 2a \left(\begin{matrix} 
    \nu'\left(\alpha_j^{(i)} \right) \\ 0 \end{matrix} \right)
}
and 
\eq{eq: second for q'}{
\hol_{q'}\left(\bar{\sigma}_1\right) = \hol_{q'_0}
\left(\bar{\sigma}_1 \right) + 2a \left(\begin{matrix} 
    \nu'\left(\bar{\sigma}_1\right) \\ 0 \end{matrix} \right).
}
We want to show that by making $\eta$ small, we can make the difference between 
\equ{eq: first for q} and \equ{eq: first for q'},
as well as the difference
 between \equ{eq: second for q} and \equ{eq: second for q'}, as small as we like. 

Let $\mu'$ be the restriction of Lebesgue measure to $T'$ so that, in
the notation of Proposition \ref{prop:trans to meas}, we have $\mu'
= \mu_{\nu'}$, a (positive) measure with total variation $\frac 12 $. 
Using the definition of the area imbalance and \eqref{eq: using the}, 
we see that the area imbalance is $4A_2-1=1-4A_1.$
This implies 
$$\mu'(T_i) = A_i 
= \frac{1}{4} \left(1 + (-1)^i \cdot \text{area imbalance} \right) \ \ \ (i=1,2).$$  
Therefore, using equation \equ{eq: formula 2a}, the choice of $c$  in \eqref{eq: definition of c}, along with (IV), we have 
$$4a\mu'(T_i) \asymp a \left(1 +(-1)^i c \right) = a \left(\frac{s_1+s_2}{2a}+ (-1)^i \frac{s_2-s_1}{2a} \right) = s_i,$$
where by $A \asymp B$ we mean that $A$ can be made arbitrarily close to $B$ by choosing $\eta$ small enough. 
 By (II), choosing $\eta$ small forces $\theta$ to be close to 0, which is a uniquely ergodic direction on $T_i.$ 
We apply Lemma \ref{lem: torus equidistribution}, with $T = T_i, \gamma = \alpha_j^{(i)}$, and with $T^{(k)}$ any sequence of $T'$ as above corresponding to $\eta \to 0.$ We obtain that 
the second summands on the right hand sides of equations \equ{eq: first for q} and \equ{eq: first for q'} can be made arbitrarily close to each other by taking $\eta$ sufficiently small.

 Furthermore, since $\dist(q_0, q'_0) \asymp 0$, we have  
 $$\left\|\hol_{q'_0}\left(\alpha_j^{(i)}
\right) - \hol_{q_0} \left(\alpha_j^{(i)} \right ) \right\| \asymp \left\|\hol_{q'_0}\left(\bar{\sigma}_1
\right) - \hol_{q_0}\left(\bar{\sigma}_1\right) \right\|.$$
Thus for $\eta $ small enough we can make the difference between the quantities
\equ{eq: first for q} and \equ{eq: first for q'} as small as we like. 
We also have 
$$
\nu'(\bar{\sigma}_1 )\leq \int_{\bar{\sigma}_1} (dy)_{q'_0} = |\sin
(\theta)| \ell(\bar{\sigma}_1), 
$$
where $\ell (\bar{\sigma}_1)$ denotes the length of
$\bar{\sigma}_1$. Thus by (II) 
and \equ{eq: second for q'},
$$\|\hol_{q'_0}(\bar{\sigma}_1) - 
 \hol_{q'}(\bar{\sigma}_1) \| \asymp 0.$$ 
 Putting these estimates together we see that the difference
 between \equ{eq: second for q} and \equ{eq: second for q'} can also
 be made as small as we like. 
\end{proof}

\begin{proof}[Proof of Lemma \ref{lem: sublemma}] 
 Let $T_0$ be the standard torus $\R^2/\Z^2$, and let 
 $$\psi: T \to T_0$$ 
 be an affine homeomorphism.  Since the horizontal direction is aperiodic on $T$,
$\psi$ maps $\sigma_1$ to a segment on $\hat  \sigma_1 \df \psi(\sigma)$ on $T_0$  with holonomy
$(x, \alpha x)$ for some $\alpha \notin \Q$ and $x>0$. Let $\xi_1,
\xi_2$ be the endpoints of $\hat  \sigma_1$ in $T_0$. We
will choose $k$ an even positive integer, and a simple  closed curve $\ell$ from
$\xi_1$ to $\xi_1$, and let $\hat  \sigma_2$ be the shortest curve homotopic
to the concatenation of $k$ copies of $\ell$, followed by one copy of $\hat  \sigma_1$. Also we will denote $\sigma_2 = \psi^{-1}(\hat  \sigma_2).$ Since $k$
is even, the curve $\sigma$ of Lemma \ref{lem: checkerboard} is
homologous to an even multiple of $\psi^{-1}(\ell)$ and thus (I) holds. The
choice of the curve $\ell$ corresponds to the choice of $(m,n) \in \Z^2$ with
$\gcd(m,n)=1$. Since $\alpha$
is irrational, the linear form $(m,n) \mapsto m\alpha -n$ assumes a
dense set of values on pairs $(m,n) \in \Z^2$ with $\gcd(m,n)=1$
(see \cite{CE} for a stronger statement). We choose $m,n$ so that 
\eq{eq: the choice of m n}{\left|x(m\alpha -n) - (1-c) \right| <\eta.
}
We can make this choice with $m,n$ large enough, so that the direction
of $\ell$ approaches the direction of slope $\alpha$. Note that for
all $k$, the direction of $\hat \sigma_2$ is closer to the direction of
$\hat \sigma_1$ than the direction of $\ell$, and this means that the direction $\theta$ of $\sigma_2$ is nearly horizontal. Hence for such
$(m,n)$ and all large $k$,  $|\theta|$ is small. 
 Because $\alpha \notin \mathbb{Q}$ the slope of $\sigma_2$
is irrational and 
so we have (II). As we incrementally increase $k  \in 2\mathbb{N}$, the length of
$\hat \sigma_2$ increases by approximately twice the length of $\ell$. So for all large enough $H$, we can find  $k$ 
so that (III) holds. 

We now verify (IV), which requires describing the region and coloring
given by $\sigma_1$ and $\sigma_2$ as in Lemma \ref{lem:
  checkerboard}. (It may be helpful to consult Figure \ref{fig: checkerboard3}, which has 15 intersections between the curves, counting the initial and terminal points, 7 dark strips and 6 white strips.)
We will work in $T_0$ instead of $T$. The holonomy of $\hat \sigma_2$ is $k(m,n)+(x,x\alpha)$. The curves
$\hat \sigma_1$ and $\hat \sigma_2$ intersect in $k+1$ points (including
$\xi_1,\xi_2$) and these intersection points divide each $\hat \sigma_i$
into $k$ equal length pieces. Consecutive pieces of the division
of $\hat \sigma_2$ bound strips of the coloring given by Lemma \ref{lem:
  checkerboard}. 
 So we obtain a region $R$ composed of $k-1$ strips of alternating
 color where each strip is a flat parallelogram with sides $\frac 1 k
 (k(m,n)+(x,x\alpha))$ and 
 $\frac 1 k(x,x\alpha)$. As $k-1$ is odd, the areas of  all but one of these strips cancel out. 
 This gives that the contribution of $R$ to the area imbalance of
 $R$ is equal to the area $A$ of one strip. We have 
 $$A = \left|\det \left( \begin{matrix} m +\frac x k & \frac x k \\
       n+\frac {x\alpha}k & \frac{x\alpha}k \end{matrix} 
  \right) \right| = \frac{\left| m \alpha - n\right|}{k}.$$ The complement of $R$ has one color and area
$1-(k-1)A$. This implies that the total area imbalance is 
  $$1-(k-1) A  - A 
 = 1-kA = 1-x|m \alpha - n|
.$$
So (IV) follows from \equ{eq: the choice of m n}, and the proof is
complete. 
\end{proof}

\begin{proof}[Proof of Lemma \ref{lem: torus equidistribution}]
    Let $\mu_0 = \frac{1}{\Leb(T)} \Leb$ be normalized Lebesgue measure on $T$. Since we have assumed that $T$ is horizontally minimal, and minimal straightline flows on tori are uniquely ergodic, $\mu_0$ is the unique Borel probability measure on $T$ invariant under horizontal straightline flow. 
    
    For each $k$ define a measure $\mu_k = \frac{1}{\Leb(T^{(k)})}$ (the normalized restriction of Lebesgue measure to $T^{(k)}$). We claim that $\mu_k$ converges weak-$*$ to $\mu_0$, as $k \to \infty.$ Indeed, let $\Upsilon^{(x)}_k(t)$ denote the image of $x \in T$ under straightline flow in direction $\theta_k$ to time $t$. We can write $\mu_k$ as a convex combination of normalized length measures along segments $$\left\{\Upsilon^{(x)}_k(t) : t \in [0, S] \right\},$$
    for $x \in \sigma_1$ and with $S$ the first return time of $x$ to $\sigma_1$ along its orbit in direction $\theta_k$ (that is, segments passing parallel to the long sides in the   parallelograms of the checkerboard pattern). The length of these segments goes to infinity and their direction becomes more and more horizontal as $k \to \infty. $ By unique ergodicity, for any continuous test function $f$ on $T$, any $\vre>0$, and any sufficiently large $S$ (independent of $x$), 
    $$
    \left| \frac{1}{S} \int_0^S f \left(\Upsilon^{(x)}_0(t) \right) \, dt - \int f d\mu_0 \right| < \frac{\vre}{2},
    $$
    and by uniform continuity of $f$, for any fixed $S$ and all large enough $k,$
    $$
     \left| \frac{1}{S} \int_0^S f \left(\Upsilon^{(x)}_0(t) \right ) \, dt - \frac{1}{S} \int_0^S f \left(\Upsilon^{(x)}_k(t) \right) \, dt \right| < \frac{\vre}{2}.
    $$
    Putting these together we get  $\mu_k \to \mu_0$. 

    We can now recover the integrals appearing in equation \eqref{eq: integrals appearing in} from $\mu_0$ and $\mu_k$, as follows. Let $\bar \gamma \subset T$ denote the image of $\gamma$, and let $r>0$ be small enough so that for all large enough $k$, the maps
    $$
    \bar \gamma \times [0, r] \to T, \ \ \ (x, t) \mapsto \Upsilon_k^{(x)}(t)
    $$
    are injective, and their image does not intersect $\sigma_1.$
 For $k \geq 0$, let 
    $$
  A_{k} \df \bigcup_{x \in \bar \gamma} \left\{\Upsilon_k^{(x)}(t): t \in [0, r] \right \}.$$
Then by Fubini's formula for Lebesgue measure, we have 
    \eq{eq: then by Fubini}{ \frac{1}{\Leb(T)}
\int_\gamma dy = \frac{1}{r} \, \mu_0(A_{0}) \ \ \ \text{ and } \ \ \frac{1}{\Leb(T^{(k)})} \int_{\gamma } dy|_{T^{(k)}} = \frac{1}{r} \, \mu_{k}(A_k).     }
The Lebesgue measure of $\partial A_0$ is zero, and hence by weak-$*$ convergence, 
$$\lim_{k\to \infty} \mu_k(A_0) =\mu_0(A_0).$$ 
Also, the symmetric difference $A_0 \triangle A_k$ satisfies $\mu_k(A_0 \triangle A_k) \to_{k \to \infty} 0,$ as can be shown by an elementary argument which we leave to the reader. This shows that 
$$\lim_{k \to \infty} \mu_k(A_k) = \mu_0(A_0).$$
    Together with equation \eqref{eq: then by Fubini}, this implies equation \eqref{eq: integrals appearing in}. 
\end{proof}

\section{Non-integer Hausdorff dimension}\name{sec: Hausdorff dim}
The purpose of this section and the following one is to prove Theorem \ref{thm: Hausdorff
  dim}. Throughout this section we use the notation of \S \ref{sec:
  spiky fish}. We briefly explain the basic idea of the proof. We can think of a neighborhood of $\EE$ as being modelled on a
neighborhood of the zero section in the total space of the normal
bundle $\mathscr{N}(\EE)$ (see Corollary \ref{prop: KZ over
  E}). Thus we can think of $\SF_{(\leq a)}$ as a subset of 
the total space of $\mathscr{N}(\EE)$.  For all $q \in \EE$, the intersection of
$\left(\mathscr{N}(\EE)\right)_q$ with $\SF_{(\leq a)}$  
 is either a point or a line segment, contained in the two-dimensional space $\left(\mathscr{N}_x(\EE)\right)_q$.
By \cite{CHM} the set of $q\in \EE$, for which this set is not a point  has
 Hausdorff dimension 4.5. 
 
 Obtaining the lower bound is easier, and we use Proposition \ref{prop: H-dim facts} to say that the Hausdorff dimension is at least 4.5 +1. Obtaining the upper bound is more involved, occupying \S \ref{sec: upper} and \S\ref{subsec: sec 8 aux}. We denote the Hausdorff dimension of a subset $A$ of a
metric space $X$ by $\dim A$. We will use the following  well-known
facts about Hausdorff dimension (see e.g. \cite{Falconer, Mattila}):

\begin{prop}\name{prop: H-dim facts}
  Let $X$ and $X'$ be metric spaces. 
  \begin{enumerate}
  \item
    If $f: X \to X'$ is a Lipschitz map then $\dim X \geq  \dim
    f(X)$. In particular, Hausdorff dimension is invariant under
    bi-Lipschitz homeomorphisms. 
  \item
 For a countable collection $X_1, X_2, \ldots$ of subsets of $X$ we
 have $\dim \bigcup X_i = \sup_i \dim X_i.$
\item
  Let $A$ and $B$ be  subsets of Euclidean space and let $X \subset
  A\times B$ be such that for all $a \in A$, $\dim \{b \in B
  : (a,b) \in X\} \geq d$. Then \eq{eq: fiber lower bound}{\dim X \geq \dim A + d.}
In
  particular $\dim 
  (A \times B) \geq \dim A + \dim B.$

    \end{enumerate}
\end{prop}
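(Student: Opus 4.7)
The plan is to verify each item using standard tools from geometric measure theory, namely the definition of Hausdorff measure via $\delta$-covers, countable subadditivity, and (for the third item) Frostman's lemma on the existence of mass distributions with prescribed growth.

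For item (1), I would argue directly from the definition of $s$-dimensional Hausdorff measure $\mathcal{H}^s$. If $f:X\to X'$ is $L$-Lipschitz and $\{U_i\}$ is a $\delta$-cover of $X$, then $\{f(U_i)\}$ is an $L\delta$-cover of $f(X)$ with $\diam f(U_i)\le L\,\diam U_i$, so $\mathcal{H}^s_{L\delta}(f(X))\le L^s\mathcal{H}^s_\delta(X)$. Letting $\delta\to 0$ gives $\mathcal{H}^s(f(X))\le L^s\mathcal{H}^s(X)$, hence $\dim f(X)\le \dim X$. The bi-Lipschitz statement then follows by applying the inequality to $f$ and $f^{-1}$.

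For item (2), the inclusion $X_j\subset \bigcup_i X_i$ and item (1) applied to the identity inclusion give $\dim\bigcup_i X_i\ge \sup_j \dim X_j$. For the opposite direction, fix any $s>\sup_i\dim X_i$. Then $\mathcal{H}^s(X_i)=0$ for every $i$, and countable subadditivity of $\mathcal{H}^s$ (immediate from the definition, since one may take $\delta$-covers of each $X_i$ with total $s$-weight less than $\varepsilon/2^i$) yields $\mathcal{H}^s(\bigcup_i X_i)=0$, so $\dim\bigcup_i X_i\le s$. Taking $s\searrow \sup_i\dim X_i$ completes the proof.

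Item (3) is the genuine content, and will be the main obstacle. The approach is the Frostman energy method. Fix $s<\dim A$ and $t<d$; it suffices to show $\dim X\ge s+t$, then let $s\nearrow \dim A$ and $t\nearrow d$. By Frostman's lemma applied to $A$, there is a nonzero finite Borel measure $\mu$ supported on $A$ with $\mu(B(a,r))\le C r^s$ for all $a,r$. For each $a\in A$, by applying Frostman to $X_a\df \{b:(a,b)\in X\}$ there is a finite measure $\nu_a$ on $X_a$ with $\nu_a(B(b,r))\le C_a r^t$; a standard measurable-selection argument (or restriction to a subset of positive $\mu$-measure on which the constants $C_a$ are uniformly bounded, say $C_a\le C'$ on $A'\subset A$ with $\mu(A')>0$, and $\nu_a$ depends measurably on $a$) allows us to define the product-type measure $\lambda$ on $A'\times \R^m$ by $\lambda(E)=\int_{A'}\nu_a(E_a)\,d\mu(a)$. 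A ball $B((a_0,b_0),r)$ is contained in $B(a_0,r)\times B(b_0,r)$, so
\[
\lambda(B((a_0,b_0),r))\le \int_{B(a_0,r)\cap A'}\nu_a(B(b_0,r))\,d\mu(a)\le C'r^t\,\mu(B(a_0,r))\le CC' r^{s+t}.
\]
Since $\lambda$ is supported on $X$ and satisfies a Frostman condition of exponent $s+t$, this forces $\mathcal{H}^{s+t}(X)>0$, hence $\dim X\ge s+t$. The particular case $X=A\times B$ is immediate, since then $X_a=B$ for every $a\in A$.

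The main technical hurdle is justifying the measurable dependence $a\mapsto \nu_a$ in item (3), so that $\lambda$ is well-defined; this requires either a measurable-selection theorem in the Frostman construction or a reduction (by Egorov/Lusin-type arguments) to a compact $A'\subset A$ of positive $\mathcal{H}^s$-measure along which the constants and measures can be chosen jointly Borel. Since the statement is only cited from the literature in the paper, I would give a brief outline of this point and refer to \cite{Mattila} (e.g.~Theorem 8.10 there) rather than working it out in detail.
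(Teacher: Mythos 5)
The paper offers no proof of this proposition; it simply cites \cite{Mattila}, so there is nothing internal to compare against. Your arguments for items (1) and (2) are correct and standard: Lipschitz images of $\delta$-covers are $L\delta$-covers, and countable subadditivity of $\mathcal{H}^s$ handles the union. For item (3) you have correctly identified the one genuine difficulty in your route, namely the measurable dependence $a\mapsto\nu_a$ of the fiber Frostman measures, which is needed to make $\lambda(E)=\int\nu_a(E_a)\,d\mu(a)$ well-defined. This is not a cosmetic issue: Frostman's lemma produces $\nu_a$ nonconstructively, and extracting a jointly measurable family requires a selection theorem or a delicate reduction. Since you end by deferring exactly this point to \cite{Mattila}, your proof of (3) is in effect the same citation the paper makes, dressed with extra detail on the easy parts.

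You can close the gap entirely, and more cheaply, by replacing the fiber measures with Hausdorff content. Fix $s<\dim A$ and $t<d$, and take a Frostman measure $\mu$ on $A$ only (here the Borel hypothesis on $A$ is used). Since $\dim X_a\ge d>t$ gives $\mathcal{H}^t(X_a)=\infty$, hence $\mathcal{H}^t_\infty(X_a)>0$ for every $a$ (no regularity of $X_a$ needed), some set $A_k=\{a:\mathcal{H}^t_\infty(X_a)>1/k\}$ has positive outer $\mu$-measure. For any cover of $X$ by balls $B\left((x_i,y_i),r_i\right)$, each fiber $X_a$ is covered by those $B(y_i,r_i)$ with $|x_i-a|\le r_i$, so
$$
\frac{1}{k}\,\mu(A_k)\le\int_{A_k}\mathcal{H}^t_\infty(X_a)\,d\mu(a)\le\sum_i(2r_i)^t\,\mu\left(B(x_i,r_i)\right)\le 2^tC\sum_ir_i^{s+t},
$$
which bounds $\mathcal{H}^{s+t}(X)$ away from zero. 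This is essentially Mattila's proof of his Theorem 8.10 and avoids any selection argument; I recommend you adopt it for (3) and keep your arguments for (1) and (2) as written.
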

Note that when stating Theorem \ref{thm: Hausdorff dim} we did not
specify a metric on $\HH(1,1)$. For concreteness one can take the
metric to be the metric 
$\dist$ defined in \S \ref{subsec: sup norm}, but note that in view of items (1) and (2) of Proposition
\ref{prop: H-dim facts}, the Hausdorff dimension of a set with respect to two different
metrics on $\HH(1,1)$ is equal, as long as they are mutually bi-Lipschitz on
compact sets. We will use this fact repeatedly.

 The next definition fixes an identification of an open set in the stratum with cohomology via period coordinates. This is helpful for working with the metric $\dist$. Formally, let $\mathcal{U} \subset \HH$ be an open set  and $\pi \colon \HHm \to
\HH$ be the forgetful map of \S \ref{subsec: strata}. In this section,
we say that 
$\mathcal{U}$ is  an {\em
 adapted neighborhood }\index{adapted neighborhood} if it is precompact, and there is a
triangulation of $S$ such that 
  a connected component of $\pi^{-1}(\mathcal{U})$ is contained in  $V_\tau$, 
 where $V_\tau$ is described in \S \ref{subsec: atlas of
   charts}. Additionally we
 will say that a relatively open $\mathcal{U} \subset \EE$ is an {\em adapted
 neighborhood (in $\EE$)} if it is the intersection of an 
  adapted neighborhood   in $\HH(1,1)$, with the locus
 $\EE$. 

\subsection{Proof of lower bound}
We use the notation introduced in \S \ref{sec: spiky fish}, and begin
with the proof of the easier 
half of the theorem.
\begin{proof}[Proof of lower bound in Theorem \ref{thm: Hausdorff
      dim}]
  For each $\delta>0$, we will define a 
  subset $X_0 
  \subset \SF_{(\leq
    a)}$, subsets $X_1 \subset \EE, \, X_2 \subset \R,$ and a surjective Lipschitz map $f: X_0 \to
  X_1
  \times X_2,$ 
  where $\dim X_1 \geq 4.5 - \delta$ and $\dim X_2=1$. The statement will then
  follow via Proposition \ref{prop: H-dim facts}.

Let $\mathcal{U} \subset \HH(1,1)$ be an adapted neighborhood,
so that 
we can identify $\mathcal{U}$ with 
 an open subset of $H^1(S, \Sigma; \R^2). $ 
Fix a  norm $\| \cdot \|$
on $H^1(S, \Sigma; \R_x)$ which is invariant under translation
equivalence arising from the orbifold group of $\EE$, as in Proposition \ref{orbifold properties}. According to Corollaries \ref{cor: pass to balanced} and 
  \ref{cor: how to reach from locus}, for any $q' \in
  \SF^{(\mathrm{min})}_{(\leq a)}$ there is a unique $q = q(q') \in
  \EE^{(\mathrm{min})}$ and a unique $\beta = \beta(q') 
  \in \tremspace_q^{(0)} $ (up to translation equivalence) such that $q' =
  \trem_{q,\beta}$. Define \begin{equation}\label{eq: def bar f}
  \bar f: \SF^{(\mathrm{min})}_{(\leq a)}
  \to \EE^{(\mathrm{min})}  \times \R_{\geq 0} \text { by }
  \bar{f}(q') \df  \big(q(q'),
  \|\beta(q')\|\big).
  \end{equation}
  Note that because translation equivalences preserve $\|\cdot \|$
  this is well-defined. By Corollaries \ref{cor:
    balanced tremors are in B-}  and \ref{prop: KZ over E}
  we have that $\beta(q') \in \mathscr{N}_x(\EE)$ for all $q'$, where
$\mathscr{N}_x(\EE)$ 
 is a flat subbundle. 
 
  We claim that by making $\mathcal{U}$ 
  small enough, $\bar
  f$ restricted to $\mathcal{U}$ is a Lipschitz 
  map (where we use the metric arising from dist on the domain and first summand of the range of $\bar f$). 
  Indeed, by the continuity of the map in \eqref{eq: for lipschitz}, and the fact that $\mathcal{U}$ is precompact, the metric dist is bi-Lipschitz to the metric $\mathrm{dist}'(q_1, q_2) \df \|\hol(\til q_1 )- \hol(\til q_2) \|$ 
  arising from period coordinates and the chosen norm $\| \cdot \|$ (when $\til q_i \in \pi^{-1}(q_i)$ belong to some fixed connected lift of $\mathcal{U}$).  Furthermore, if $\mathcal{U}$ is small enough, then for the projections introduced in \S \ref{subsec: orbifold}, we have from Corollary \ref{prop: KZ over E} that 
  $$\hol(q(q')) = P^+(\hol(q')) \ \ \text{ and } \ \ \hol(\beta(q')) = P^-(\hol(q'));$$ 
  that is, in period coordinates on $\mathcal{U}$, $\bar f$ is obtained by writing a vector in $H^1(S, \Sigma; \R^2)$ in terms of its coordinates with respect  to the two factors in a direct sum  decomposition, composed with taking the norm on the second coordinate. This is clearly a Lipschitz map with respect to $\dist'$. 
  
Fix $\eta>0$ and set 
\[
\begin{split} 
 X^{(\eta)}_1
  \df & \left\{q \in \EE^{(\mathrm{min})}: \text{ there is } \beta \in
    \tremspace_q^{(0)} 
  \text{ with } |L|_q(\beta)\leq a \text{ and } \|\beta\| = \eta \right\}, \\
X_0 \df & \left\{q' \in \SF^{(\mathrm{min})}_{(\leq a)} : q(q') \in
X^{(\eta)}_1, \ 
  \|\beta(q')\|\leq \eta \right\}, \\
 X_2 \df & [0,\eta],
\end{split}
\]
and define 
$$f:X_0 \to X_1^{(\eta)} \times X_2, \ \  f\df \bar f|_{X_0}.$$
Then $f$ is
 Lipschitz on the intersection of $X_0$ with any compact set, and the
 definitions ensure 
 that $f$ is surjective. So it remains to show that for $\eta>0$ small
 enough we have 
 \eq{eq: what we need hdim}{
   \dim X_1^{(\eta)} \geq
   4.5 -\delta.}
Let 
$$X_1 = \left \{q \in
\EE^{(\mathrm{min})} : \text{horizontal flow on } M_q
\text{ is not uniquely ergodic} 
\right \}.$$ 
Since $X_1  = \bigcup_{\eta>0} X_1^{(\eta)}$, by Proposition
\ref{prop: H-dim facts} (2)
it suffices to show that $\dim X_1 \geq 4.5.$
 This is deduced from work of Cheung, Hubert and Masur as
 follows. By the general theory of local cross-sections (see
 e.g. \cite{MSY}), the action of the group $\{ r_\theta : \theta \in 
 \mathbb{S}^1\}$ on $\EE$ admits a cross-section, that is, we can
 parameterize  a small neighborhood  
  in $\EE$ by $(q, \theta) \mapsto r_\theta q$, where $q$ ranges over
  a 4-dimensional smooth manifold $\mathcal{V}$,  $\theta$ ranges over
  an open set in $\mathbb{S}^1$, and the parameterizing map is
  Bi-Lipschitz. Thus these coordinates identify a neighborhood in 
  $\EE$ with a Cartesian product $\mathcal{V}\times I$ where $I$ is
  an interval in $
  \mathbb{S}^1$. It is shown in \cite{CHM} that $\mathcal{V}$ contains a Borel
  subset $A$ of full measure, such that for each $q \in
  A$ there is a subset $\Theta_q \subset \mathbb{S}^1$ so
  that for $q \in A$,  $\theta \in \Theta_q$ we have
  $r_\theta q \in X_1$, and $\dim \Theta_q = 0.5.$ Proposition
  \ref{prop: H-dim facts}, item (1) and formula \eqref{eq: fiber lower bound}
   now imply \equ{eq: what
    we need hdim}. 
\end{proof}
\begin{remark} 
We remark without proof that the map $\bar{f}$ introduced in \eqref{eq: def bar f} would not be Lipschitz if we defined the second coordinate to be $|L|_q(\beta)$. Indeed, if we were to define $\bar f$ in this way and extend it to tremors of surfaces in $\mathcal{SF}^{(\mathrm{tor})}_{(\leq a)}$, then Proposition \ref{prop: min H dense in min} would show that $\bar f$ is not even continuous. 
Also, it is likely that $\bar{f}$ is not bi-Lipschitz, and this is part of the challenge in proving the upper bound. 
  \end{remark}

\subsection{Proof of upper bound}\name{sec: upper}
We now begin the proof of the upper bound, starting with a brief guide to its proof. 
In order to cover 
  $\SF_{(\leq a)}$ efficiently, we will view a subset of this set as lying in a product space, namely a local trivialization of the bundle $\mathscr{N}(\EE)$ as in the proof of the lower bound.  To efficiently cover 
 $\SF_{(\leq a)}$ in this product space we find convex sets $J_i\subset \EE$ so that the fixed-size tremors of points in $J_i$ vary in a controlled way. 
 
    Proposition \ref{prop: upper bounds Hdim} gives an upper bound for the Hausdorff dimension that fits this strategy. The remainder of this section is devoted to proving the upper bound assuming this result. 
    We prove the Proposition in  \S \ref{subsec: sec 8 aux}.

\subsubsection{Preparations for the upper bound: general result for efficient covers}
We begin with our general result for exploiting efficient covers 
of convex sets. Let $Y \subset \R^d$ and let $|Y|$
    denote the Lebesgue measure  of 
     $Y$. 
     Let $\mathcal{N}^{(\vre)}(Y)$\index{n@$\mathcal{N}^{(\vre)}(Y)$} denote the
    $\vre$-neighborhood of $Y$, that is $\mathcal{N}^{(\vre)}(Y)
    = \bigcup_{y \in Y} B(y, \vre)$.
    \index{N@$\mathcal{N}^{(\vre)}(Y)$} The {\em inradius}\index{inradius} of 
 $Y \subset \R^d$ is defined to be the supremum of $r\ge0$ such that $Y$
 contains a ball of radius $r.$  
\begin{prop}\name{prop: upper bounds Hdim}
Let $P_1 \subset \R^d$, $P_2 \subset \R^2$ be balls. Let 
$Z \subset P_1 \times P_2$, and $\{Z(t) : t \in \N\}$ be a collection of subsets
of $P_1\times P_2$, such that for any $T>0$, $Z \subset
\bigcup_{t=T}^\infty Z(t).$ Assume furthermore that there are positive constants
$c_1$, $c_2$, and $\delta<1$  and that for each $t \in \N$, 
$Z(t)$ is a
finite disjoint union of sets $X_i(t) \times
Y_i(t),$ with $X_i(t) \subset P_1, \, Y_i(t) \subset P_2$, for which
the following hold: 
\begin{itemize}
\item[(i)] 
Each $X_i(t)$ is contained in a convex set $J_i(t) $ such that the
$J_i(t)$ are pairwise disjoint, and each has inradius at least
$c_1e^{-2t}$. 
\item[(ii)]
Each $Y_i(t)$ is a rectangle whose shorter side has length at most 
$c_2e^{-2t}$.  
\item[(iii)]
$ \left|\bigcup_i \mathcal{N}^{(e^{-2t})} (X_i(t)) \right| \leq c_2 e^{-\delta t}$.
\end{itemize}
Then 
\eq{eq: hausdorff upper bound}{\dim Z \leq d+1 -\frac{\delta}{5}.}

\end{prop}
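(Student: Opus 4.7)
The plan is to estimate the $s$-dimensional Hausdorff measure of $Z$ directly using the covers $\{Z(t)\}$. First I would estimate the covering number $N(Z(t), e^{-2t})$ of $Z(t)$ by balls of radius $e^{-2t}$ in $\R^{d+2}$, then invoke the hypothesis $Z \subset \bigcup_{t \geq T} Z(t)$ for all $T \in \N$ to pass to a cover of $Z$ at scale $e^{-2T} \to 0$.

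The core step is a covering estimate for each product piece $X_i(t) \times Y_i(t)$. Since $Y_i(t) \subset P_2 \subset \R^2$ is a rectangle with shorter side $\leq c_2 e^{-2t}$ and longer side bounded by $\diam(P_2)$, it can be covered by $O(e^{2t})$ balls of radius $e^{-2t}$. For $X_i(t) \subset J_i(t) \subset \R^d$, a standard volume-packing argument gives
\[
N(X_i(t), e^{-2t}) \leq C_d \cdot \frac{|\mathcal{N}^{(e^{-2t})}(X_i(t))|}{e^{-2td}},
\]
so that $N(X_i(t) \times Y_i(t), e^{-2t}) \leq C \cdot |\mathcal{N}^{(e^{-2t})}(X_i(t))| \cdot e^{2t(d+1)}$. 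Summing over $i$ is where Corollary \ref{cor: for covering convex} enters: because the $J_i(t)$ are pairwise disjoint convex sets of inradius $\geq c_1 e^{-2t}$ with $X_i(t) \subset J_i(t)$, that corollary should yield bounded overlap of the enlarged sets $\mathcal{N}^{(e^{-2t})}(X_i(t))$, so that
\[
\sum_i |\mathcal{N}^{(e^{-2t})}(X_i(t))| \leq C(d, c_1) \Big|\bigcup_i \mathcal{N}^{(e^{-2t})}(X_i(t))\Big| \leq C(d, c_1)\, c_2\, e^{-\delta t}
\]
by condition (iii). Combining, $N(Z(t), e^{-2t}) \leq C' e^{t(2(d+1) - \delta)}$.

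To finish, for any $s > 0$ and $T \in \N$, concatenating the $e^{-2t}$-ball covers of $Z(t)$ for $t \geq T$ gives a cover of $Z$ by sets of diameter $\leq 2 e^{-2T}$ whose $s$-sum is bounded by
\[
\sum_{t \geq T} N(Z(t), e^{-2t}) \cdot (2 e^{-2t})^s \leq C'' \sum_{t \geq T} e^{t(2(d+1) - \delta - 2s)},
\]
which tends to $0$ as $T \to \infty$ provided $s > d+1 - \delta/2$. Hence $\mathcal{H}^s(Z) = 0$ for all such $s$, and $\dim Z \leq d+1 - \delta/2$, a bound stronger than and in particular implying \eqref{eq: hausdorff upper bound}.

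The principal obstacle is the bounded-overlap estimate encapsulated in Corollary \ref{cor: for covering convex}: long, thin convex bodies can be pairwise disjoint while having $\epsilon$-neighborhoods that overlap arbitrarily many times when $\epsilon$ is comparable to the short dimension. The uniform inradius lower bound on the $J_i(t)$ is what forces them to look locally like balls at the scale $e^{-2t}$ of the neighborhoods, preventing such pathology and giving a multiplicity depending only on $c_1$ and $d$. Without the convexity of $J_i(t)$ (rather than merely a diameter bound on $X_i(t)$) one could not extract this geometric control, and the reduction of condition (iii) to a usable sum would fail.
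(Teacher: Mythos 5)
There is a genuine gap in the summation step, and it is the crux of the whole proposition. You invoke Corollary~\ref{cor: for covering convex} to claim that the sets $\mathcal{N}^{(e^{-2t})}(X_i(t))$ have bounded overlap, so that $\sum_i |\mathcal{N}^{(e^{-2t})}(X_i(t))| \ll |\bigcup_i \mathcal{N}^{(e^{-2t})}(X_i(t))|$. But Corollary~\ref{cor: for covering convex} concerns a \emph{single} convex body: it bounds the covering number of the low-density part $K^{(\vre,\bar c)}$ of one set $K$, and says nothing about multiplicities of neighborhoods of a family. Worse, the bounded-overlap statement you need is simply false under the stated hypotheses. Take $k$ long thin triangles in $\R^2$, all pointing toward a common point $p$ from $k$ equally spaced directions with apex angle $\pi/k$; place each apex within distance $\epsilon/2$ of $p$. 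They are pairwise disjoint, and each contains a ball of radius $c_1\epsilon$ far from its apex, so the inradius bound holds, yet all $k$ of the $\epsilon$-neighborhoods contain $p$. So the multiplicity is unbounded even with disjointness and the inradius lower bound, and the reduction of (iii) to a usable sum collapses. That your argument produces the strictly stronger exponent $d+1-\delta/2$ where the paper obtains only $d+1-\delta/5$ should have been a warning that an obstacle was being stepped over.

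The paper's proof gets around exactly this obstacle by working at a genuinely finer scale $e^{-(2+\delta/2)t} \ll c_1 e^{-2t}$ and splitting each $X_i(t)$ into two parts. For points $x$ deep inside $J_i(t)$ --- those with $|B(x, e^{-(2+\delta/2)t}) \cap J_i(t)| \asymp |B(x, e^{-(2+\delta/2)t})|$ --- the ball pieces inside $J_i(t)$ are genuinely disjoint across $i$, and a Besicovitch argument combined with (iii) controls their total number. For boundary points --- those in $J_i^{(e^{-\delta t/2},\bar c)}$ --- Corollary~\ref{cor: for covering convex} is applied with $\vre = e^{-\delta t/2} \to 0$, where it actually gives a saving of $\vre^2$ and hence a nontrivial bound. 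At your coarse scale $e^{-2t}$ one has $\vre \asymp 1/c_1$, and the corollary's bound $\bar C|K|\vre^{2-d}R^{-d}$ is no better than the trivial covering number of all of $J_i(t)$: the low-density part is a constant fraction of $J_i(t)$ and cannot be separated off. The two-scale structure and the interior/boundary dichotomy are essential; the loss from $\delta/2$ to $\delta/5$ is precisely the price paid for estimating the boundary contribution correctly.
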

To obtain an upper bound on the Hausdorff
dimension of $\SF_{(\leq a)}$, 
we will verify the assumptions of Proposition \ref{prop: upper bounds 
  Hdim}, with $d=5$. In our setup, a small  adapted neighborhood 
$\mathcal{U} \subset \EE$ 
will play the role of a neighborhood 
in $\R^5$, and the 2-dimensional 
subspace $\mathscr{N}_x(\EE)$ will play the role of $\R^2$.

\subsubsection{Preparations for upper bound: transverse systems}
In order to verify hypotheses (i) and (ii) of Proposition \ref{prop:
  upper bounds Hdim} we need to choose 
convex sets in $\EE$ so that the $\mathscr{N}_x(\EE)$ fibers intersected with
$\SF_{(\leq a)}$ vary in a controlled way. To do this,  
we now get good approximations for the cone of foliation cocycles
which will be constant on our convex subsets of $\EE$. Our strategy will be to define convex regions, on which the horizontal flow is combinatorially similar up to some fixed time. Arguments like this are standard when using Rauzy-Veech induction. In our setup it will be more convenient to use  transverse systems, which we now introduce. The advantage of transverse systems is that they have a more transparent interaction with the geodesic flow $\{g_t\}$. See  \cite[\S2]{mahler} for a related construction. 

\ignore{
We will make a construction similar in spirit to Rauzy-Veech
induction. We are interested in applying our construction to the locus
$\EE$ but initially we will describe it in greater generality. Start
with an affine invariant submanifold of a stratum which we call
$\MM$. We will build a sequence of  partitions $P_t$ of $\MM$ into
finitely many subsets, depending
on a positive real parameter $t$. \combarak{$t$ means two things?}
Each partition element $J_{t,i}$ 
will be a convex subset of $\MM$ in an appropriate sense. In each
$J_{t,i}$ we will build a cone over each
surface in $J_t$ \combarak{what is $J_t$ ?} which is the intersection
of a finite set of 
halfspaces where each halfspace corresponds to a curve in
$\Gamma_{t,i}$. \combarak{what is $\Gamma_{t,i}$ ?} For $t>t'$ the
partition $P_{t}$ refines the partition 
$P_{t'}$. If we fix a surface $q$ and intersect the partition pieces
that contain it we will get a family of surfaces with topologically
conjugate horizontal foliations. The intersection of the cones over
$q$ \combarak{which cones?} will correspond to the set $C_q^+$ of
transverse invariant 
measures \combarak{foliation cocycles?} supported on the horizontal foliation. 

Since we want to compare curves in different surfaces it is easiest to
work in the space of marked surfaces $\til\MM$. We identify the space
of transverse measures with a subset of $H^1(S,\Sigma;\R)$. It helps
understand the geometry of the situation to think about Schwartzman
asymptotic cycles. These naturally lie in $H_1(S \sm \Sigma;\R)$ since
infinite horizontal trajectories avoid singular points. There is a
natural identification of $H_1(S-\Sigma;\R)$ with $H^1(S,\Sigma;\R)$
given by Poincar\'e duality. The space $H^1(S,\Sigma;\R)$ is the
natural setting for the space of transverse measures where we can
identify this space with  
$H^1(S,\Sigma;\R_x)\subset H^1(S,\Sigma;\R^2)$.
We can take the curves $\Gamma_{t,i}$ to lie in
$H_1(S,\Sigma;\Z)$. There is a Kronecker dual pairing between 
$H_1(S,\Sigma;\R)$ and $H^1(S,\Sigma;\Z)$. This pairing associates
with each curve in $\Gamma_{t,i}$ a linear functional on
$H^1(S,\Sigma;\R)$. We will construct the curves $\Gamma_{t,i}$ so
that each transverse measure lies on one particular side of these each
hyperplane corresponding to the kernel of the linear functional. 

}

\combarak{Changes in the definition of transverse systems and the
  related structures.}
Let $\til q \in \HHm$ and let $M_q$ be the underlying translation
surface. A {\em transverse system}\index{transverse system} on $M_q$ is 
   a finite collection of disjoint arcs of finite length
which are transverse to the horizontal foliation on $M_q$, do not
contain points of $\Sigma$, and intersect every horizontal leaf (see \cite[Fig. 2.1]{mahler}).  The
arcs may contain points of $\Sigma$ in their closure. For
example, if the horizontal foliation on $M_q$ is minimal then
$\sigma$ could be any short vertical arc not passing through
singularities, and if $M_q$ is aperiodic and $\vre$ is an arbitrary
positive number, $\sigma$ could be the union
of vertical arcs of length $\vre$ intersecting the horizontal saddle connections, along with downward pointing vertical prongs of length 
$\vre$ starting at all singular points (and where the singular points
at their extremities are not considered a part of the prong).

We now define some structures 
associated with a transverse system. We mark one point on each
connected component of $\sigma$. A {\em $\sigma$-almost horizontal
  segment}\index{s@$\sigma$-almost horizontal segment} is a continuous
oriented path $\ell$ from $\sigma$ to $\sigma$, which 
starts and ends at marked points,  is a concatenation of an edge along
$\sigma$, a piece of a horizontal leaf in $M_q \sm \Sigma_q$ which does not intersect
$\sigma$ in its interior, and another edge along $\sigma$. The
orientation of a $\sigma$-almost horizontal segment is the one given
by rightward motion along horizontal leaves. Two $\sigma$-almost
horizontal segments are said to be {\em isotopy equivalent} if they
are homotopic with fixed endpoints, and where the homotopy is through
$\sigma$-almost horizontal segments. Up to isotopy equivalence there are only
finitely many $\sigma$-almost horizontal segments. A {\em $\sigma$-almost
  horizontal loop}\index{s@$\sigma$-almost horizontal segment} is a
continuous oriented loop which is a concatenation of 
$\sigma$-almost horizontal segments, where the orientation of the loop is
consistent with the orientation of each of the segments. We say
that a $\sigma$-almost horizontal loop is {\em 
  reduced}\index{reduced $\sigma$-almost horizontal loop} if it
intersects each connected component of 
$\sigma$ at most once. With each $\sigma$-almost horizontal loop
$\gamma$ we associate a
  cohomology class $\beta_\gamma \in H^1(M_q, \Sigma_q; \R)$ via
  Poincar\'e duality.

  We will need the following: 
  \begin{lem}\name{lem: basis up to finite index}
Let $M_q$ be a surface with no horizontal saddle connections. Then for any transverse system $\sigma,$ the cohomology classes corresponding to all
$\sigma$-almost horizontal loops generate
$H^1(M_q, \Sigma;\Z)$. 
    \end{lem}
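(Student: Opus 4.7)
The plan is to translate the statement via Poincar\'e--Lefschetz duality into a claim about the first homology of $M_q \setminus \Sigma_q$, and then produce enough explicit cycles using the flow-box decomposition associated to $\sigma$.

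First I would apply Poincar\'e--Lefschetz duality for the closed oriented surface $M_q$ and the finite subset $\Sigma_q$ to obtain the natural isomorphism $H^1(M_q, \Sigma_q; \Z) \cong H_1(M_q \setminus \Sigma_q; \Z)$, under which the class $\beta_\gamma$ dual to a $\sigma$-almost horizontal loop $\gamma$ corresponds to the homology class $[\gamma]$ itself. Since $\sigma$ avoids $\Sigma_q$ by assumption and the horizontal sub-arcs appearing in $\gamma$ avoid $\Sigma_q$ by definition, each such $\gamma$ is indeed a cycle in $M_q \setminus \Sigma_q$, so the lemma reduces to the claim that the classes of $\sigma$-almost horizontal loops generate $H_1(M_q \setminus \Sigma_q; \Z)$.

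Next I would use the first return map of the horizontal flow to $\sigma$ to construct a CW decomposition of $M_q$. Because $\sigma$ meets every horizontal leaf, the return map is defined off a finite set of points and partitions $\sigma$, up to this finite set, into maximal open sub-intervals $I_1, \ldots, I_N$ of constant return time $t_j$; the associated open flow-boxes $R_j = \{\Upsilon^{(p)}_s : p \in I_j,\, s \in (0, t_j)\}$ are disjoint open rectangles containing no singularities, and whose closures cover $M_q$. I would take the open 2-cells of a CW structure on $M_q$ to be the $R_j$; the open 1-cells to be the sub-arcs of $\sigma$ between consecutive 0-cells and the maximal horizontal sides of the $\bar R_j$; and the 0-cells to be the marked points on $\sigma$, the points of $\Sigma_q$, and the endpoints on $\sigma$ of the horizontal sides.

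Finally, by the long exact sequence of the pair $(M_q, M_q \setminus \Sigma_q)$ together with excision, $H_1(M_q \setminus \Sigma_q; \Z)$ is generated by classes coming from cycles in the 1-skeleton of the CW structure that avoid $\Sigma_q$, together with small loops around the points of $\Sigma_q$. Any cycle in the 1-skeleton that avoids $\Sigma_q$ can, after a small isotopy along $\sigma$ moving its intermediate vertices to marked points, be rewritten as a $\sigma$-almost horizontal loop, since the 1-cells are precisely arcs of $\sigma$ and horizontal arcs disjoint from $\Sigma_q$. A small loop around a singularity $s$ is homologous in $M_q \setminus \Sigma_q$ to the boundary of the union of those closed 2-cells $\bar R_j$ whose closure contains $s$, and this boundary is by construction a concatenation of arcs along $\sigma$ and horizontal arcs, hence (after the same kind of isotopy) a $\sigma$-almost horizontal loop. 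The main technical obstacle is the bookkeeping needed to handle horizontal 1-cells whose closures meet $\Sigma_q$: one must insert short detours along $\sigma$ using the marked points to obtain genuine $\sigma$-almost horizontal segments, a step which does not change homology classes in $M_q \setminus \Sigma_q$ but must be performed consistently.
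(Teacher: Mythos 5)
Your reduction via Poincar\'e--Lefschetz duality to showing that $\sigma$-almost horizontal loops generate $H_1(M_q \setminus \Sigma_q;\Z)$ matches the paper's first step, and the flow-box (first return) decomposition you build is essentially the cell complex the paper also uses. But the heart of the lemma is an orientation constraint that your argument does not engage with, and this is where the proposal breaks down.

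A $\sigma$-almost horizontal segment is, by definition, oriented by \emph{rightward} motion along the horizontal leaf, and a $\sigma$-almost horizontal loop is a concatenation of such segments with consistent orientation. A cycle in the 1-skeleton of your CW decomposition is an arbitrary integer combination of 1-cells with zero boundary; it may traverse a horizontal 1-cell \emph{leftward}, i.e.\ with coefficient $-1$. Such a cycle cannot, after an isotopy along $\sigma$, be ``rewritten as a $\sigma$-almost horizontal loop,'' because the direction along the horizontal pieces is not an isotopy invariant of the underlying cycle, and a $\sigma$-almost horizontal loop never goes leftward. So the sentence ``Any cycle in the 1-skeleton that avoids $\Sigma_q$ can \ldots\ be rewritten as a $\sigma$-almost horizontal loop'' is false, and there is no argument offered for expressing such a cycle as an integer combination of loops either. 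The same issue infects the treatment of small loops around singularities: the boundary of a union of flow-boxes $\bar R_j$ traverses their top edges leftward, so it is again not a $\sigma$-almost horizontal loop on the nose.

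The paper closes exactly this gap. It writes an arbitrary class $\alpha = \sum_i a_i \delta_i$ as a signed combination of $\sigma$-almost horizontal segments, then adds a suitably large $\sigma$-almost horizontal loop $\beta = \sum_i b_i\delta_i$ with $b_i \geq |a_i|$ so that $\alpha + \beta = \sum_i c_i \delta_i$ has all $c_i\geq 0$, and then runs an induction on $\sum_i c_i$ to peel off reduced $\sigma$-almost horizontal loops one at a time, using the no-boundary condition and the finiteness of the components of $\sigma$ to continue the loop at each step. That positivity-plus-induction mechanism is the missing idea in your write-up; without it, the passage from ``1-skeleton cycle'' to ``integer combination of $\sigma$-almost horizontal loops'' is unjustified.
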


    \begin{proof}
      \combarak{It took me a long time to figure this out but in
        hindsight it is trivial. Probably my proof is too long or
        there is a reference.}
The union of
$\sigma$-almost horizontal segments in one isotopy equivalence class
is the union of sub-arcs of 
$\sigma$ and a topological disc foliated by parallel horizontal
segments. The union of these topological discs gives a presentation of
$M_q \sm \Sigma$ as a 
cell complex. We call it the {\em cell complex associated with
  $\sigma$}\index{cell complex associated with a transverse
  system} (see \cite[\S 2.4]{mahler}). This generalizes the well-known Veech zippered rectangles 
construction \cite{Veech rectangles}; namely the zippered rectangle  construction arises when 
 $\sigma$ has one
connected component which intersects all the horizontal saddle connections of $M_q$, and the two endpoints of $\sigma$ are mapped by the horizontal straightline flow to singular points in forward time. In the zippered rectangle case, a proof of the Lemma is given in \cite[\S 4.5]{yoccoz survey}.

Since we have assumed that $M_q$ is horizontally minimal, any open subinterval $\sigma' \subset \sigma$ can serve as a transverse system. We choose $\sigma' \subset \sigma$
so that it satisfies the conditions mentioned above, namely, the cell complex associated with $\sigma'$ is a zippered rectangle construction. Since the $\sigma'$-almost horizontal loops are a subset of the $\sigma$-almost horizontal loops, the statement for $\sigma$ follows from the statement for $\sigma'$. 
\ignore{
leading to the Veech zippered reca

By Poincar\'e duality it suffices to show that the $\sigma$-almost
horizontal loops generate $H_1(M_q \sm  \Sigma; \Z)$. 
Since the cells of the cell complex are contractible, each 
 element of $H_1(M_q \sm \Sigma;\Z)$ can be 
written as a concatenation of $\sigma$-almost horizontal
segments. That is, for each $\alpha \in H_1 (M_q \sm \Sigma; \Z)$ we
can find $\sigma$-almost horizontal segments $\delta_1, \ldots,
\delta_k$ and integers $a_1, \ldots, a_k$ such that $\alpha = \sum_i a_i
\delta_i$ (and the $\delta_i$ are equipped with the rightward
orientation). Let $\beta$ be a $\sigma$-almost horizontal loop of the
form $\sum_i  b_i \delta_i$, where $b_i \geq |a_i|$ for each $i$. Then 
$$\beta_1 \df \alpha + \beta = \sum_i c_i \delta_i$$
has $c_i \geq 0$ for all $i$, and it suffices to show that  that 
$\beta_1$ is a finite sum of $\sigma$-almost horizontal loops. 

We show this by induction on $\sum_i c_i$. If $\sum_i
c_i =0$ then all the $c_i$ are zero and there is nothing to
prove. Otherwise, by omitting some of the $c_i$ we can assume that
$c_i >0$ for all $i$, and in particular $c_1>0$. Since $\beta_1 $ has
no boundary, either $\delta_1$ is closed, or the terminal point of $\delta_1$ is 
on a connected component of $\sigma$ on which there is an initial
point of another
$\delta_j$ with $c_j>0$. \comcol{I understand about there being another $\delta_j$ but why must it have $c_j>0$ when its orientation is rightward? Maybe the orientation induced by $\beta_1$ is opposite to the orientation of the foliation?}
Since $\sigma$ has finitely many connected
components, repeating this observation finitely many times we find a
reduced 
$\sigma$-almost horizontal loop $\beta_2$ such that $\beta_1 -\beta_2 =
\sum_i c'_i \delta_i$ where $c'_i \geq 0$ for all $i$, and we can apply
the induction hypothesis to $\beta_1-\beta_2$. }
      \end{proof}
  
  Given a marking map $S \to M_q$ we can
  think of each $\beta_\gamma$ as an element of $H^1(S, \Sigma ; \R)$. We
  denote by $C^+_q(\sigma)$ the convex cone over all of the $\beta_\gamma$,
  that is
  $$
C^+_q(\sigma) = \mathrm{conv} \left( \left\{t \beta_\gamma : \gamma
    \text{ is a }
    \sigma\text{-almost horizontal loop on } M_q \text{ and } t>0 \right\} \right).
$$


Note that $C^+_q(\sigma)$ is a finitely generated cone. Indeed, if we
let $\mathscr{L} = \mathscr{L}_{q, \sigma}$ denote 
the reduced $\sigma$-almost horizontal loops, then $C_q^+(\sigma)$ is the
convex cone generated by $\beta_{\gamma}, \ \gamma \in
\mathscr{L}$. Since $\beta_\gamma$ only depends on the homotopy class
of $\gamma$, and there are only finitely many isotopy classes of
$\sigma$-almost horizontal segments, this shows the finite generation
of $C^+_q(\sigma)$.

\ignore{
\redtext{
Let $q \in \til\MM$ and let $M_q$ be the underlying translation surface. 
  We say that an oriented
path $\gamma$ on $(M_q,\Sigma)$ is {\em $\varepsilon$-almost
  horizontal}  if it can be written as a concatenation of a vertical
segment of length less than or equal to $\varepsilon$ starting at  a
point $p\in\Sigma$, a horizontal segment traveling to the right and a
vertical segment of length less than or equal to $\varepsilon$ ending
a point $q\in\Sigma$.

 Note that there are different types of $\varepsilon$-almost
 horizontal segments depending on whether the initial and terminal
 segments are absent, or are present and head up or head down. We
 refer to these as up, down, up-up, up-down or down-down.  If smooth
 paths $\alpha$ and $\beta$ intersect transversally at a point $p$
 then the intersection number is +1 if $\alpha'(p)$ and $\beta'(p)$
 are a positively oriented basis of $\R^2$ and -1 otherwise. 
 The intersection number of a Schwartzman class with an up or up-up
 almost horizontal segment is greater than or equal to zero.  The
 intersection number of a Schwartzman class with a down or down-down
 almost horizontal segment is less than or equal to zero. We set
 $sgn(q,\gamma)=+1$ if $\gamma$ is represented by an up horizontal
 segment on $M_q$ and $sgn(q,\gamma)=-1$ if $\gamma$ is represented by
 a down horizontal segment on $M_q$. 
 
 To each coherent $\varepsilon$ path $\gamma\in H_1(S,\Sigma;\R)$ we
 associate a half-space $H_\gamma$ in $H^1(S,\Sigma;\R)$  
 
\[ H_q(\gamma)=\{\theta: sgn(q,\gamma)\cdot\langle\theta,\gamma\rangle\ge 0\}
\]  
   
}  
  
  \begin{lem} Every $\varepsilon$ path is homotopic to a coherent $\varepsilon$ path.
  \end{lem}

  there is a connected component $\sigma'$ of
$\sigma$ such that $\gamma$ is the concatenation of a piece of horizontal leaf
from $\sigma'$ to itself, and a sub-arc of $\sigma'$.

It is not hard to show that when $M_q$ has no horizontal saddle
connections, the  
$\sigma$-almost horizontal loops generate $H_1( M_q \sm \Sigma_q;
\Z)$. Indeed, by considering the intervals of continuity for the first
return map to $\sigma$ along 
horizontal leaves, one can construct a decomposition of $M$ which
generalizes Veech's zippered rectangles construction \cite{Veech rectangles};
namely a decomposition into
topological discs which are quadrilaterals, with edges alternately on
$\sigma$ and on horizontal leaves, and such that each horizontal edge
contains at most one singular point (see \cite[\S 2.2]{mahler} and 
\cite[\S 4.3, \S4.5]{yoccoz survey} for related constructions). Using
this decomposition, any
closed curve in $M_q \sm \Sigma_q$ can be isotoped onto a finite sum
of $\sigma$-almost horizontal loops. \combaraknew{a. Do we need more
  details? In the old document there was a discussion of how to deduce
  this from what Yoccoz writes but I don't think this discussion is
  helpful, and the interested reader can figure out. b. I don't think this
  is true when
$M_q$ has horizontal saddle connections. Do we care about this?}



}

 Let $C_q^+$ be the cone of foliation cocycles as in \S \ref{subsec: transverse}. Clearly, if $\sigma \subset \sigma'$ are transverse
systems then $C_q^+(\sigma) 
\subset C_q^+(\sigma')$.  We have the following
standard fact. \combarak{omitted all mention of Schwartzman asymptotic
  cycles, as this only caused confusion.}

\begin{prop}\name{prop: Schwartzman cycles}
Suppose $M_q$ has no horizontal saddle connections and let $\sigma_1
\supset \sigma_2 \supset \cdots $ be a nested sequence of 
transverse systems for the horizontal foliation on $M_q$, with total length
going to zero. Then 
\eq{eq: Schwartzman intersection}{
C^+_q \subset \bigcap_{n=1}^{\infty} C^+_q(\sigma_n).}
\end{prop}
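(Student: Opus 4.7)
The plan is to establish both inclusions in \equ{eq: Schwartzman intersection}. For the forward inclusion $C^+_q \subset \bigcap_n C^+_q(\sigma_n)$, I would fix a transverse measure $\nu$ and a transverse system $\sigma=\sigma_n$, and use a zippered-rectangles presentation of $M_q$ adapted to $\sigma$. Since $M_q$ has no horizontal saddle connections, the first return map of the horizontal flow to $\sigma$ is defined on a full measure set and partitions $\sigma$, up to measure zero, into finitely many intervals of continuity $I_1, \ldots, I_k$. Above each $I_j$ sits a flow-rectangle $R_j$ whose closure in the cell complex associated to $\sigma$ (as described in the proof of Lemma \ref{lem: basis up to finite index}) determines a reduced $\sigma$-almost horizontal loop $\gamma_j$. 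A short computation using that $\int_\eta \nu = 0$ for any horizontal segment $\eta$, together with the invariance of $\nu$ under holonomy along horizontal leaves, gives the explicit identity $\beta_\nu = \sum_{j=1}^k \nu(I_j)\,\beta_{\gamma_j}$. All coefficients $\nu(I_j)$ are non-negative, so $\beta_\nu \in C^+_q(\sigma_n)$ for each $n$.

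For the reverse inclusion $\bigcap_n C^+_q(\sigma_n) \subset C^+_q$, I would take $\beta$ in the intersection and construct a transverse measure $\nu$ on the horizontal foliation with $\beta_\nu = \beta$. For each $n$, choose a representation $\beta = \sum_j t_{n,j}\beta_{\gamma_{n,j}}$ with $t_{n,j}\geq 0$ indexed by the reduced $\sigma_n$-almost horizontal loops produced by the construction above. Declare $\mu_n(I_{n,j}) \df t_{n,j}$ and extend $\mu_n$ to a finitely additive Borel measure on $\sigma_n$ by using the first return map to distribute mass across each $I_{n,j}$. Applying Proposition \ref{prop:trans to meas} locally, each $\mu_n$ determines a candidate transverse measure on sub-arcs of $\sigma_n$.

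The main obstacle is to show that the finitely additive measures $\mu_n$ are consistent across $n$, and that they assemble into a single countably additive transverse measure. Since $\sigma_{n+1}\subset\sigma_n$, the intervals of continuity for the return map to $\sigma_{n+1}$ subdivide those of $\sigma_n$, and a Rauzy--Veech-type refinement argument shows that the restriction of $\mu_n$ to $\sigma_{n+1}$ equals $\mu_{n+1}$. To eliminate the potential non-uniqueness in the decomposition $\beta = \sum_j t_{n,j}\beta_{\gamma_{n,j}}$, the key observation is that $t_{n,j}$ is intrinsic: evaluating $\beta$ on a path that loops once around the rectangle $R_{n,j}$ (or, more precisely, on a difference of reduced loops isolating $R_{n,j}$) returns precisely $t_{n,j}$, so the measures $\mu_n$ are canonically determined by $\beta$ alone.

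Having established compatibility, I would use the hypothesis that the total length of $\sigma_n$ tends to zero to conclude. Any transverse arc $\tau\subset M_q$ can be isotoped, along horizontal leaves, onto a finite union of sub-arcs of $\sigma_n$ for $n$ sufficiently large; defining $\nu(\tau)$ as the corresponding sum $\sum_i \mu_n(\tau\cap I_{n,i})$ yields a consistent finitely additive assignment that is horizontally invariant. The coefficients are uniformly bounded in terms of $\|\beta\|$ via the pairing with $\hol_q^{(x)}$, so Carath\'eodory's extension theorem upgrades this to a countably additive, non-atomic transverse measure $\nu$. By construction $\beta_\nu$ agrees with $\beta$ on the classes $\beta_{\gamma_{n,j}}$ for every $n$, and these generate $H^1(M_q,\Sigma;\Z)$ by Lemma \ref{lem: basis up to finite index}; invoking Proposition \ref{prop:transverse measures} (or simply comparing the cochains on the generators) forces $\beta_\nu = \beta$, so $\beta\in C^+_q$.
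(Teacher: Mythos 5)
Your approach differs from the paper's in both directions. For the forward inclusion $C^+_q \subset C^+_q(\sigma_n)$ the paper takes an ergodic transverse measure, uses the Birkhoff theorem to produce longer and longer $\sigma_n$-almost horizontal loops whose renormalized duals converge to $\beta_\nu$, and concludes by closedness of the finitely generated cone; you propose an exact finite decomposition via the first-return map. For the reverse inclusion (which the paper only sketches and notes it does not use), the paper argues that normalized extreme rays $\beta_{\gamma_n}$ converge to foliation cocycles, whereas you attempt to directly construct the transverse measure from the coefficients.

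The main gap is a persistent conflation of $\sigma$-almost horizontal \emph{segments} with $\sigma$-almost horizontal \emph{loops}. The flow rectangle over $I_j$ produces a $\sigma$-almost horizontal segment $\delta_j$ from the marked point on one component of $\sigma$ to the marked point on the component containing the first return of $I_j$; this is a closed loop only when those components coincide (automatic for one-component $\sigma$, but not in general, and the paper's key transverse systems $\hat\sigma_t$ have two components). So $\beta_{\gamma_j}$ is not defined as stated and the identity $\beta_\nu = \sum_j \nu(I_j)\beta_{\gamma_j}$ is not correctly formulated. What is true is that $\sum_j \nu(I_j)[\delta_j]$ is a $1$-cycle in $M_q \sm \Sigma$ (invariance of $\nu$ under the return map makes the boundary terms cancel at each component) and is Poincar\'e dual to $\beta_\nu$. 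To land in $C^+_q(\sigma_n)$ you still need a flow-decomposition step --- the nonnegative-real analogue of the inductive argument in Lemma \ref{lem: basis up to finite index} --- expressing this cycle as a nonnegative combination of reduced loops. The same issue affects the reverse inclusion: your uniqueness claim for the $t_{n,j}$ as \emph{loop} coefficients is not correct in general (representations in a polyhedral cone by extreme rays need not be unique, and the extraction device you describe isolates a \emph{segment} coefficient, not a loop coefficient); you would need to reformulate in terms of segment coefficients, justify why the $[\delta_j]$ are independent for general transverse systems, and then carry out the Rauzy--Veech-type consistency check across $n$. Since the paper explicitly does not use the reverse inclusion, this part of the gap matters less, but the segments/loops issue in the forward inclusion is the direction the paper does prove and needs to be fixed.
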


\begin{remark}
    In fact we have equality in \eqref{eq: Schwartzman intersection}. In this paper we only need the inclusion stated above. The reverse inclusion can be proved along the lines of \cite[Proof of Thm. 1.1]{mahler}; for similar results
    in the context of interval exchange transformations  and measured foliations see 
    \cite[Lemma 1.5]{VIET} and \cite[Theorem 5.1.1]{Mosher} respectively. 
\end{remark}
\begin{proof}[Proof of Proposition \ref{prop: Schwartzman cycles}]
  \combarak{Changes in this proof.}
We need to show that $C^+_q \subset  C^+_q(\sigma_n)$ for every  $n$. We use the
Birkhoff ergodic theorem. Take an ergodic invariant probability
measure $\mu$ for the straightline flow on $M_q$, let $\nu$ be a transverse measure corresponding to $\mu$ as in Proposition \ref{prop:trans to meas}, and let $\beta_\nu$ be the corresponding foliation cocycle. Since $M_q$ has no horizontal saddle connections, $\nu$ is non-atomic, the horizontal straighline flow on $M_q$ is minimal, and $C_q^+$ is the convex cone generated by the foliation cocycles $\beta_\nu$ arising in this way. Take a horizontal 
leaf $\ell$ which lies on a generic horizontal straightline trajectory
for $\mu$. This implies that $\ell $ intersects any transverse system infinitely many times. 
Genericity means that for a transverse arc $\gamma$, $\nu(\gamma) = \lim_{S \to \infty} \frac{1}{S} \# (\gamma \cap \ell_S),$ where $\ell_S$ is a piece of the leaf starting at some fixed point on $\ell$ and of length $S$ (and the limit exists). Let $\sigma'_n$ be a connected component of $\sigma_n$
which intersects $\ell$ infinitely many times.  Then we can find a
sequence of intersections of $\ell$ and 
$\sigma'_n$ such that the horizontal lengths of subsegments of $\ell$
between consecutive intersections grow longer and longer. Closing
up these segments along $\sigma'_n$ gives longer and longer
$\sigma_n$-almost horizontal loops, and taking the Poincar\'e dual of
a renormalized sum of a large number of them gives a sequence 
approaching $\nu$ (as can be seen by evaluating these sums on closed loops $\gamma$).  This implies $\beta_\nu \in  C^+_q(\sigma_n)$. 
\end{proof}

 \combarak{Extensive edits from  here until the end of the proof of 
   the two Lemmas. }

 \subsubsection{Transverse systems in $\EE$ and $\HH(1,1)$} \label{subsubsec: transverse systems}
We now specialize to $\HH(1,1)$ and specify the collection
of transverse systems 
$\{\sigma_n\}$ explicitly. Recall our convention that singularities for a
surface in $\HH(1,1)$ are labeled. Each $q \in \HH(1,1)$ has two vertical prongs
issuing from the first singular point in a downward direction, and we
denote by $\bar \sigma_t$ the union of the corresponding vertical
segments of length $e^{-t}$. On any compact subset of $\HH(1,1)$ there
is a lower bound on the length of a shortest saddle connection, and so
for $t$ large enough the vertical prongs do not hit singular points
and so $\bar \sigma_t$ is well-defined. If $M_q$ is horizontally minimal then each
horizontal leaf intersects $\bar \sigma_t$ and in particular each
horizontal separatrix starting at a singularity has a first
intersection (as seen along the separatrix) with $\bar \sigma_t$. Denote by $\vre = \vre(q,t)$ the
maximal length, along $\bar \sigma_t$, of a segment starting at a
singularity and ending at the first intersection of some horizontal
separatrix $\xi$ with $\bar \sigma_t$. Let $\hat \sigma_t \subset \bar
\sigma_t$ be the union of 
the two vertical prongs taken of length $\vre$.
Note that $\hat
\sigma_t$ is a transverse system on $M_q$ if $M_q$ is horizontally
minimal, but some non-minimal surfaces 
have  horizontal leaves that miss $\hat \sigma_t$.

Fix an adapted neighborhood $\mathcal{U}$, and recall that by choosing
a connected component of $\pi^{-1}(\mathcal{U})$, we can equip all
$q \in \mathcal{U}$ with a marking map (up to equivalence), and this
identifies each $C^+_q$ with a cone in $H^1(S, \Sigma; \R_x)$. For
those $q \in \mathcal{U}$ for which $M_q$ has no horizontal saddle
connections, the
marking map also determines the cone 
$C^+_q(\hat \sigma_t)$ as a cone in $H^1(S, \Sigma; \R_x)$. We denote it by $\til
C_q^+(t)$\index{C@$\til C^+_q(t)$} in order to lighten the notation.  Since
$\hat \sigma_t$ is invariant under the map $\iota$, this
identification does not depend on the choice of the marking map
(within its equivalence class). As in Corollary \ref{prop: KZ over E} 
 let $H^1(S, \Sigma; \R^2) = T(\EE)\oplus \mathscr{N}(\EE)$ be the 
decomposition into $\iota$ invariant and anti-invariant
classes.
By Corollary \ref{cor: balanced tremors are in B-}, a balanced signed
foliation cocycle belongs to $\mathscr{N}_x(\EE)$.
As in the proof of Proposition \ref{prop: auxiliary}, let
$\bar{\pi}: \EE \to \HH(0)$ be the projection which maps a surface
$q \in \EE$ to the torus $M_q /\langle \iota \rangle$, and forgets the
 marked point (one of the two endpoints of the slit) corresponding to
 the second singular point of $M_q$. 

 The area-one condition in the definition of $\EE$ means that $\EE$ is
 not a linear space. For our proof we will need to cover $\EE$ by
 convex subsets, and in order to make the notion of convexity
 meaningful we work locally, as follows. 
 Recall that $\mathcal{U} \subset \EE$ is an adapted neighborhood (in
$\EE$) if it is
the intersection of $\EE$ with an adapted neighborhood in the
stratum. 
 In this case there is a triangulation $\tau$ of $S$ such that any connected component of 
 $\pi^{-1}(\mathcal{U})$ is contained in the intersection of the set $V_\tau$ (as in \S
 \ref{subsec: atlas of charts}) with the 
fixed point set of the involution described in Proposition \ref{prop:
  structure of E}, and with the locus of area-one surfaces.
Let $q \in \mathcal{U}$ and fix a marking map of
$\varphi: S \to q$ representing a surface $\til q \in V_\tau$. Let $\Phi = \Phi_q$ be the
map which sends $x \in
T_{q}(\EE)$ to the surface $q'$ satisfying 
$\hol(\til q') = c(\hol(\til q) +
x)$, where $\til q'$ is given by the marking map determined by
$\varphi$ and $\tau$ (see \S \ref{subsec: atlas of charts}) and the rescaling
factor $c$ is chosen so that the surface $q'$ 
has area one. A {\em convex 
   adapted neighborhood}\index{convex adapted neighborhood} of $q$ is
$\Phi(\mathcal{W})$ where $\mathcal{W}$ is an open convex subset of 
$T_{q}(\EE)$ so that $\Phi|_{\mathcal{W}}$ is 
a homeomorphism onto its image, which is contained in $\mathcal{U}$. When 
discussing diameters, convex sets, etc., we will do this with respect
to the linear structure on $\mathcal{W}$. We say that a
collection $\mathcal{J}$ of convex subsets of a convex 
adapted neighborhood is a {\em weak convex partition}\index{weak convex partition} if the interiors $\{ J^\circ: J \in \mathcal{J}\}$ are disjoint, and the union of closures $\bigcup_{J \in \mathcal{J}} \bar J$ covers all
horizontally minimal surfaces in $\mathcal{U}$.


It is clear from definitions that for $t \in \R,$
\eq{eq: clear from definitions that}{
\til C^+_{g_{-t}q}(0) = g_{-t}\left(\til C^+_q(t) \right).}

Let 
$\til \EE_{\mathrm{m}} = \pi^{-1}(\EE)$, and let $\til \EE_{\mathrm{m},t}$ denote the surfaces in $\til \EE_{\mathrm{m}}$ which have no vertical saddle connections of length at most $e^{-t}$, and for which every horizontal straightline leaf intersects $\hat \sigma_t$. Note that for these surfaces, the cone $\til C^+_q(t)$ is well-defined, that the set of horizontally minimal marked surfaces in $\til \EE_{\mathrm{m}}$ is contained in $\bigcup_{t>0} \til{ \EE}_{\mathrm{m},t}$, and that a collection of horizontally minimal marked surfaces belonging to a compact subset of $\til{\mathcal{E}}_{\mathrm{m}}$ is contained in $\til{\EE}_{\mathrm{m}, t}$ for all $t$ small enough. For each $t$ we define a partition $\mathcal{J}_t$ of  $\til \EE_{\mathrm{m},t}$ 
into  $t$-equivalence classes, with the property that $t$-equivalent surfaces $\til q_1, \til q_2$ have $\hat \sigma_t$-almost horizontal segments  which are homotopic and have the same intersection pattern with $\hat \sigma_t$.

Let $\xi_1(q), 
\ldots, \xi_k(q)$ be the paths made by concatenating a horizontal and vertical segment on $M_q$ as follows. The $\xi_i(q)$ begin from $\Sigma$ and move along  horizontal separatrices until the first intersection with $\hat \sigma_t$,    and are  continued vertically along $\hat \sigma_t$ so that they end at points of $\Sigma$. In the situation at hand, of surfaces in $\HH(1,1),$ we have $k=8$ since there are four horizontal prongs issuing from each of the two singularities. By choice of the orientations, we have 
\eq{eq: by choice of orientation}{\hol^{(y)}_{ q} (\xi_j( q))>0 \ \ \text{ for each } j.} 
Since $\hat \sigma_t$ and the collection of $\xi_j(\til q)$ is invariant under the involution $\iota$, there are two indices $j$ realizing the maximum in the definition of $\vre(q,t)$, and we permute indices so that $\xi_2 = \iota(\xi_1)$ and 
\eq{eq: achieves maximum}{\hol^{(y)}_{ q} (\xi_1(q)) = \hol^{(y)}_{ q} (\xi_2(q)) = \max_j \hol^{(y)}_{q} (\xi_j(q)) \leq e^{-t}.}
We add two more segments $\xi_9, \xi_{10}$ which are horizontal continuations of  $\xi_1, \xi_2$, starting from the endpoints of $\xi_1, \xi_2$ on  $\hat \sigma_t$ and end at the next intersection point with $\hat \sigma_t$, and we switch the orientation of $\xi_9, \xi_{10}$ so that \eqref{eq: by choice of orientation} continues to hold.

We choose an equivalence class of marking maps $\til q \in \pi^{-1}(q)$. By $\iota$-invariance we can think of the $\xi_j(\til q)$
as representing paths on the topological marking surface $(S, \Sigma)$. 
We say that $\til q_1$ and $\til q_2$ are {\em $t$-equivalent} \index{t@$t$-equivalent} if, possibly after permuting the indices $j$, for $i=1,2$ the paths $\xi_j(\til q_i)$ represent the same homotopy classes when pulled back to $S$, \eqref{eq: by choice of orientation} and \eqref{eq: achieves maximum} continue to hold, and the order of intersections of the $\xi_j$ with each connected component of $\hat \sigma_t$ is the same. 

Recall the cell complex associated with $\hat \sigma_t$, discussed in the proof of Lemma \ref{lem: basis up to finite index}. This complex gives a polygon decomposition of $M_q$ into rectangles, with vertical and horizontal sides being subsegments of $\hat \sigma_t$ and concatenations of some of the $\xi_j$. From this it is easy to see that any 
$\hat \sigma_t$-almost horizontal segment on $M_{q}$ is homotopic to a concatenation of some of the $\xi_j$. This implies that if $\til q_1, \til q_2$ are $t$-equivalent then there is a bijection between the homotopy classes represented by their $\hat \sigma_t$-almost horizontal segments, which preserves the order in which they intersect the transverse system.

Note that the definition of $t$-equivalence only involved the intersection pattern of certain horizontal and vertical lines on the surface. From this, and the rescaling properties of the geodesic flow, we obtain the equivariance property
 \eq{eq: naturality partition}{
\til q \in J \in \mathcal{J}_t \ \iff \ g_{-t} \til q \in g_{-t}( J) \in \mathcal{J}_0. 
  }

\begin{lem}\name{lem: Schwartzman joined1}
  Let $\mathcal{U} \subset \EE$ be a convex adapted
  neighborhood, and let $\mathcal{V} \subset \EE_{\mathrm{m}}$ be a connected component of $\pi^{-1}(\mathcal{U})$. Then for all large enough $t$, 
the partition 
\begin{equation}\label{eq: the partition of U}
\left\{\pi \left(\mathcal{V}\cap \bar J \right) : J \in \mathcal{J}_t \right\}
\end{equation}
is a weak convex partition of $\mathcal{U}$. For $J \in \mathcal{J}_t$, surfaces in  the boundary $\bar J \sm (\bar J)^\circ$ have horizontal saddle connections, and are either horizontally non-minimal, or horizontally uniquely ergodic.

\end{lem}

Lemma \ref{lem: Schwartzman joined1} gives some geometrical control over  
the elements of the partition $\mathcal{J}_t$; and in light of Proposition \ref{prop: Schwartzman cycles}, the same partition can also be used in order  to control the direction of foliation cocycles.

\begin{proof}
Since $\mathcal{U}$ is precompact, there is a lower bound on the length of a vertical saddle connection of surfaces in $\mathcal{U}$, so for all large enough $t$, $\mathcal{U} \cap \pi(\til \EE_{\mathrm{m},t})$ contains the set of horizontally minimal surfaces in $\mathcal{U}$. Since the sets $J \in \mathcal{J}_t$ give a partition of $\EE_{\mathrm{m},t}$, in order to show that the sets in \eqref{eq: the partition of U} form a weak convex partition of $\mathcal{U},$ we only need to show that 
 each of the sets in \eqref{eq: the partition of U} is convex, and that the interiors of these sets are disjoint.
 
 By construction of $\mathcal{V}$ and $\mathcal{U}$, the map $\pi|_{\mathcal{V}} : \mathcal{V} \to \mathcal{U}$ is injective, modulo the local group, and for each $q \in \mathcal{U}$ we denote by $\til q$ its pre-image in $\mathcal{V}.$ 
Then $\til q'$ belongs to the $t$-equivalence class $J$ of $\til q$ if the following hold: 
\begin{itemize} 
\item all horizontal leaves on the underlying surface $M_{q'}$ intersect the transverse system $\hat \sigma_t$;
\item formulas \eqref{eq: by choice of orientation}, \eqref{eq: achieves maximum} hold for $q'$ (possibly up to permutation);  
\item
%
 for all
$i,j$,
\begin{equation}\label{eq: possibly c}
\hol_{\til q}^{(y)} (\xi_i) > \hol_{\til q}^{(y)} (\xi_j) \ \ \iff \ \
\hol_{\til q'}^{(y)} (\xi_i) > \hol_{\til q'}^{(y)} (\xi_j).
\end{equation}
\end{itemize}
The first of these conditions holds if the horizontal foliation on $M_{q'}$ is minimal, which holds for a dense set of surfaces.  
Conditions \eqref{eq: by choice of orientation} and \eqref{eq: possibly c} involve inequalities between holonomies and thus give convex conditions in period coordinates. Therefore the set 
$(\bar J)^\circ$ is precisely the set of
surfaces satisfying the inequalities in \eqref{eq: by choice of orientation} and \eqref{eq: possibly c}. 
This implies that the sets 
$\{\bar J: J \in \mathcal{J}_t \}$ are convex, and their interiors 
$\{(\bar J)^\circ: J \in \mathcal{J}_t \}$
are disjoint. 

For the last assertion, let $q \in \bar J \sm (\bar J)^\circ$. Then on $M_q$ there are two $\xi_i, \xi_j$ with the same vertical holonomy; their concatenation gives a horizontal saddle connection. Applying the translation automorphism $\iota$ we get at least two horizontal saddle connections on $M_q,$ and now results about surfaces in eigenform loci, summarized in \cite[Thm. 7.13]{eigenform}, show that there are three possibilities for the horizontal foliation: $M_q$ could have a horizontal cylinder decomposition, could be made of two horizontally minimal tori glued along a slit, or could be horizontally uniquely ergodic. 
%
  \end{proof}

We note that the first assertion in Lemma \ref{lem: Schwartzman joined1} remains true, with a very similar proof, if $\EE$ is replaced by any 
$G$-invariant locus, and $\hat\sigma_t$ is replaced with any transverse
system satisfying the equivariance property \eqref{eq: naturality partition}. We now use the additional 
structure of $\EE$ in order to state and prove  bounds on the objects associated
with a transverse system. 

\begin{lem}\name{lem: Schwartzman joined2}
  Let $\mathcal{U} \subset \EE$ be a convex adapted neighborhood, let
  $\mathcal{J}_t$ be the partitions as in Lemma
  \ref{lem: Schwartzman joined1}, let $K_1
\subset \HH(0)$ be compact, and let $a>0$. 
If 
$q \in \mathcal{U} \cap \EE^{(\mathrm{min})}$ is horizontally minimal
then there are positive constants $c_1$ 
and $c_2$ (depending on $q$) such that if 
$t>0$ satisfies $g_{-t} \bar{\pi}(q) 
\in K_1$ (where $\bar \pi : \EE \to \HH(0)$ is the projection defined at the beginning of \S \ref{subsubsec: transverse systems}), then the following hold: 
\begin{enumerate}
\item[(a)] The length of each $\hat \sigma_t$-almost horizontal loop is at least
$c_1 e^{t}$, and the inradius of $J$ is at least $c_1 e^{-2t}$, where
$J \in \mathcal{J}_t$ is the partition element containing $q$.
\end{enumerate}
Suppose furthermore that $q$ is not horizontally uniquely ergodic, let
$P^-$ be the projection onto the orthocomplement of involution invariant classes as in \S \ref{subsec: orbifold}, and let $\til C^+_q (t)  = C_q^+(\hat \sigma_t)$ as above. Then
\begin{enumerate}
  \item[(b)]
\eq{eq: essentially 1-dim}{
P^-\left( \left\{\beta \in \til C^+_q (t): L_q(\beta) \leq a \right\}\right)
}
is contained in a rectangle with diameter in the interval $[c_1,
c_2]$. 
\item[(c)] The rectangle in (b) can be chosen so that one of its sides 
has length bounded above by $c_2 e^{-2t}$. 
\end{enumerate}

\end{lem}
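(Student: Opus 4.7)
For part (a), I would exploit the rescaling properties of the time-reversed geodesic flow $\til g_t$: since $\til g_t$ acts by $\diag(e^{-t}, e^t)$ on period coordinates, horizontal distances on $M_q$ correspond to $e^t$ times the horizontal distances on $M_{\til g_t q}$, and by construction $\til g_t$ maps $\hat\sigma_t$ on $M_q$ to $\hat\sigma_0$ on $M_{\til g_t q}$, inducing a bijection between the respective collections of almost horizontal loops. The hypothesis $\bar\pi(\til g_t q) \in K_1$ gives uniformly bounded geometry (e.g.\ a uniform systole), so the length of any $\hat\sigma_0$-almost horizontal loop on $M_{\til g_t q}$ is bounded below by a constant $c_0 = c_0(K_1) > 0$; pulling back gives the $c_1 e^t$ bound. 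For the inradius, I would invoke the naturality property \equ{eq: naturality partition}: $\Psi_t(J)$ is determined by the same combinatorial data on the surface $\til g_t q$, and bounded geometry forces $\Psi_t(J)$ to contain a sup-norm ball of radius $c_0$ about $\til g_t q$; applying $g_t$ and using the bound $\dist(\til g_t q_1, \til g_t q_2) \leq e^{2t}\dist(q_1, q_2)$ from Corollary \ref{eq: sup norm horocycle deviation} yields a sup-norm ball of radius $c_0 e^{-2t}$ inside $J$.

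For (b) and (c), the key observation is that $\mathscr{N}_x(\EE)$ is $2$-dimensional and that, by Proposition \ref{prop: involution on E and measures}, for aperiodic non-uniquely ergodic $q \in \EE$ the cone $C_q^+$ has exactly two extreme rays spanned by ergodic foliation cocycles $\beta_{\nu_1}$ and $\beta_{\nu_2} = \iota^*\beta_{\nu_1}$, normalized so that $L_q(\beta_{\nu_i}) = a$. Setting $v \df P^-(\beta_{\nu_1}) = \tfrac12(\beta_{\nu_1} - \beta_{\nu_2})$, we have $P^-(\beta_{\nu_2}) = -v$, $P^-(\hol_q^{(y)}) = 0$, and $v \neq 0$ because by Proposition \ref{prop:transverse measures} the distinct ergodic measures give distinct cohomology classes. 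I would take the rectangle in (b) aligned with $v$ and an orthogonal direction in $\mathscr{N}_x(\EE)$. The diameter lower bound $c_1$ comes from the two extreme points $\pm v \in P^-(\{\beta \in \til C^+_q(t): L_q(\beta) \leq a\})$ producing a segment of length $2|v|$; the upper bound $c_2$ and boundedness of the long side follow because the finitely many extreme rays of $\til C_q^+(t)$ approximate (for $t$ large) the two rays of $C_q^+$ by Proposition \ref{prop: Schwartzman cycles}, hence their $P^-$-images, normalized by $L_q$, remain bounded uniformly in $t$.

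The crux of (c) is the transverse thickness bound. I would establish that for every reduced $\hat\sigma_t$-almost horizontal loop $\gamma$, the distance from $P^-(\beta_\gamma)$ to the line $\R v$ is at most $C e^{-t}$, uniformly in $\gamma$: being reduced, $\gamma$ crosses each component of $\hat\sigma_t$ at most once, so $|\hol_q^{(y)}(\gamma)| \leq 2e^{-t}$ while $\hol_q^{(x)}(\gamma) = L_q(\beta_\gamma) \geq c_1 e^t$ by (a). Since the long horizontal portion of $\gamma$ lies in a single ergodic component for the horizontal flow, one obtains a bound $\|\beta_\gamma - L_q(\beta_\gamma)\beta_{\nu_i}\| \leq C |\hol_q^{(y)}(\gamma)|$ in a fixed norm on $H^1(S, \Sigma; \R)$, so the projection of the error onto the $v^\perp$ direction of $\mathscr{N}_x(\EE)$ is $O(e^{-t})$. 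Any $\beta \in \til C^+_q(t) \cap \{L_q \leq a\}$ is a nonnegative combination $\sum_j \lambda_j \beta_{\gamma_j}$ with $\sum_j \lambda_j L_q(\beta_{\gamma_j}) \leq a$, hence $\sum_j \lambda_j \leq a c_1^{-1} e^{-t}$; the centers $\pm L_q(\beta_{\gamma_j}) v$ lie in $\R v$, so linearity bounds the distance from $P^-(\beta)$ to $\R v$ by $a c_1^{-1} e^{-t}\cdot C e^{-t} = c_2 e^{-2t}$, yielding (c). The main obstacle is justifying the pointwise estimate $\|\beta_\gamma - L_q(\beta_\gamma)\beta_{\nu_i}\| \lesssim e^{-t}$ uniformly in $\gamma$; this would require choosing a basis of $H^1(S,\Sigma;\R)$ adapted to the ergodic decomposition and using bounded geometry on $M_{\til g_t q}$ (via $\bar\pi(\til g_t q) \in K_1$) to isolate the contribution of the vertical jumps across $\hat\sigma_t$ from the long horizontal stretch, which is essentially a first-return Birkhoff-type comparison.
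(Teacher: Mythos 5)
Parts (a) and (b) follow essentially the paper's route. For (b) you describe the extreme points explicitly via the ergodic decomposition $\nu_1, \nu_2 = \iota_*\nu_1$, while the paper appeals more abstractly to compactness, the semi-continuity of Proposition \ref{prop: semicontinuity}, and the existence of an essential tremor; both give the same diameter bounds.

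The gap is in (c), where your route diverges from the paper's and relies on an unestablished pointwise estimate. The paper's argument is a lattice/area-preservation trick: the map $\bar\Psi_t$ on $\mathscr{N}_x(\EE)$ (induced by the comparison map $\psi_t\colon M_q\to M_{\til g_t q}$ together with the change of marking) preserves the integer lattice $\Lambda = P^-(H^1(S,\Sigma;\Z_x))$ — because, by Lemma \ref{lem: basis up to finite index}, the classes $\beta_\gamma$ of reduced almost-horizontal loops generate $H^1(S,\Sigma;\Z)$ on both the $M_q$ and the $M_{\til g_t q}$ side — so $\bigl|\det\bigl(\bar\Psi_t|_{\mathscr{N}_x(\EE)}\bigr)\bigr|=1$ and area is preserved. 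Combined with the scaling $L_{\til g_t q}=e^{-t}L_q$ from Proposition \ref{prop: commutation relations} and bounded geometry over $K_1$, the set in question has area $\ll e^{-2t}$, and the diameter lower bound from (b) then forces the short side to be $\ll e^{-2t}$. Your route instead tries to show, for \emph{each} reduced $\hat\sigma_t$-almost horizontal loop $\gamma$, that $P^-(\beta_\gamma)$ lies within $Ce^{-t}$ of the line $\R v$. You correctly identify this as the crux, but the justification you sketch — that ``the long horizontal portion of $\gamma$ lies in a single ergodic component'' — is not correct: the horizontal foliation on $M_q$ is minimal, so there is no topologically separated ergodic component for a long horizontal trajectory to stay inside; a long return segment visits regions of high $\mu_1$-density and high $\mu_2$-density in an uncontrolled pattern, and $\beta_\gamma/L_q(\beta_\gamma)$ need only approximate \emph{some} convex combination of the two ergodic cocycles, with no obvious rate. (In fact one can show the estimate is equivalent to the lemma itself under bounded geometry, so proving it directly via a Birkhoff-type argument would require a quantitative ergodic theorem in exactly the regime where the Teichm\"uller geodesic through $q$ is divergent — which is what the paper's lattice argument is designed to sidestep.) Without a genuine proof of that pointwise bound the argument for (c) does not close.
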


\begin{proof}
In order to obtain the bounds in (a), note that the existence of a short $\sigma$-almost horizontal segment implies the existence of a short saddle connection. Note also that 
  the transverse system $\hat \sigma_t$ is the 
  preimage under $\bar{\pi}$ of a transverse system $\sigma_0$ on the torus
  $\bar{\pi}(M_q)$. Using the affine comparison map $\psi_{g_{-t}}$ corresponding to
  $g_{-t}$ as in \S\ref{subsec: G}, we
  can consider the image of this transverse 
  system on ${g}_{-t}\bar{\pi}(q)$. If ${g}_{-t}\bar{\pi}(q) \in K_1$ there
  exists $c'_1$ depending only on $K_1$ so that any almost-horizontal
  loop, with respect to a transverse system of bounded length, has
  length at least $c'_1$. Considering the effect of 
  the map $\psi_{g_{-t}}^{-1}$, we obtain the required lower bound
  on the length of a $\hat \sigma_t$-almost horizontal segment on
  $M_{q}$. Now take some lower bound $c''_1$ for the inradius of an
  element $J$ in the partition $\mathcal{J}_0$, satisfying $\bar \pi \circ \pi(J) \cap K_1 \neq \varnothing$.  Such a lower bound exists because $K_1$ is
  compact and the collection $\mathcal{J}_0$ is locally finite. By
  \equ{eq: naturality partition}, we can pull
  back to $\mathcal{J}_t$ using $g_t$ 
  and use \equ{eq:
    geodesic expansion rate} 
    to obtain the lower bound of $c''_1e^{-2t}$
  on the inradius of elements of $\mathcal{J}_t$. Taking $c_1 =
  \min(c'_1, c''_1)$ we obtain (a). 
  
We now prove assertion (b). Note that here $c_1, c_2$ are allowed to depend on $q$. 
The continuity of $L_q$, the fact that $\til C^+_q(t)$ is a finitely generated convex cone, and the fact that $L_q(\beta)>0$ when $\beta$ is a $\sigma$-almost horizontal loop, imply that the set $\left\{\beta \in \til C^+_q(t):L_q(\beta)\leq a\right \}$ is compact. Hence so is the set appearing in \eqref{eq: essentially 1-dim}. Boundedness follows from the properness of the metric dist. 
  Since $q$ admits an
essential tremor, there is $\beta_0 \in 
\til C^+_q$ for which $P^-(\beta_0) \neq 0$ and this implies the lower
bound in (b).

The proof of (c) combines the upper bound in (b), the effect of renormalization by the flow $g_t$, and the fact that the action of $g_t$ preserves the Lebesgue measure on $\mathscr{N}_x(\EE)$, the real part of the normal bundle. 
In the proof of (c) we will write $A \ll B$ if
    $A$ and $B$ are two quantities depending on several parameters
    and $A \leq
    CB$ for some constant $C$ (the implicit constant) independent of
    these parameters. 
    In this proof the implicit
    constant is allowed
    to depend on $q$ but not on $t$.

It follows from Proposition \ref{prop: tangent and normal} and Corollaries 
\ref{prop: KZ over E} and \ref{cor: balanced tremors are in
  B-}, that the projection
$P^-$ is defined over $\Q$. This implies that $P^1$ maps the lattice of $\Z$-points
$H^1(S; \Z_x)$ to a sublattice $\Lambda$ in
$\mathscr{N}_x(\EE,\Z) \df \mathscr{N}_x(\EE) 
\cap H^1(S,\Sigma; \Z)$.

 Let $M_t $ be the underlying surface
  of $g_{-t} q$ and
  denote by $\psi_t : M_q \to M_t$ the affine comparison map  
  defined in \S \ref{subsec: G}. 
Let $\mathscr{L}(q)$ and $\mathscr{L}(g_{-t}q)$ denote respectively the set of
reduced 
$\hat \sigma_t$- (resp., $\psi_t(\hat \sigma_t)$-) almost horizontal
loops on $q$ (resp., on $g_{-t}q$). 
By Lemma \ref{lem: basis up to finite index}, for $\mathscr{L}$ equal
to either of $\mathscr{L}(q)$ and $\mathscr{L}(g_{-t}q)$, we have
that $\{\beta_\gamma : \gamma
\in \mathscr{L}\}$ contains a basis of $H^1(S ; \Z)$, and hence
the projection $P^-\left(\left\{\beta_\gamma: \gamma \in \mathscr{L}
  \right \}
  \right)$ generates $\Lambda$. Let $\Psi_t$ be the map $q \mapsto g_{-t}q.$
  By choosing a marking map $\varphi: S\to M_q$ and using $\psi_t
  \circ \varphi$ as a marking map for $M_t$, this induces a map
$\bar\Psi_t: H^1(S, \Sigma; \R^2) \to  H^1(S, \Sigma; \R^2)$. Since
the map $\iota$ of Proposition \ref{prop: structure of E} commutes with the map
$\psi_t$, the map $P^-$ commutes 
with $\Psi_t$, and hence we have the following
diagram:
\[
  \begin{tikzcd}
    H^1(S, \Sigma; \R_x)  \cong T_q \mathcal{U}  \arrow{r}{\bar{\Psi}_t}
    \arrow{d}{P^-} & H^1(S, \Sigma; \R_x) \cong T_{g_{-t}q} \HH  \arrow{d}{P^-} \\
   \mathscr{N}_x(\EE)
     \arrow{r}{\bar{\Psi}_t|_{\mathscr{N}_x(\EE)}} & \mathscr{N}_x(\EE)
  \end{tikzcd}
\]
The preceding discussion shows that 
$\bar{\Psi}_t(\Lambda) = \Lambda$, and therefore
\eq{eq: volume unchanged}{
  \left| \det \left(\bar{\Psi}_t|_{\mathscr{N}_x(\EE)} \right)\right | =1.}



Similarly to \eqref{eq: clear from definitions that}, we have an equivariance relation 
$$\mathscr{L}(q) = \bar{\Psi}_t^{-1}(\mathscr{L}(g_{-t}q )).$$
 Also, as in
Proposition \ref{prop: commutation relations},  we
have that for $\beta \in \tremspace_q$, if we set $\beta' \df
\bar{\Psi}_t(\beta)$ then $L_{g_{-t}q}(\beta') = e^{-t}L_q(\beta)$. 
This gives
 \[ \begin{split}
& P^- \left( \left\{ \beta\in \til C_q^+(t) : L_q(\beta) \leq a\right\}
\right) \\=&
\bar{\Psi}_t^{-1} \circ P^- \circ \bar{\Psi}_t\left( \left\{ \beta\in
    \til C_q^+(t) : L_q(\beta) \leq a\right\} \right)  
\\ =&\bar{\Psi}_t^{-1} \circ P^- \left( \left\{ \beta'\in
    \til C_{g_{-t}q}^+(0) : L_{g_{-t}q}(\beta') \leq e^{-t}
    a\right\} \right)   \\ \subset &
\bar{\Psi}_t^{-1}\left(\left\{\beta'' \in \mathscr{N}_x(\EE) :
    \|\beta'' \| \ll e^{-t}a \right\} \right),
\end{split}
\]
where the bound in the last inclusion follows from Proposition 
\ref{prop: lipschitz tremors} and Lemma \ref{lem: automatic absolutely continuous}, and the fact that $\bar \pi (g_{-t}q )\in K_1$ and on compact sets, the metric dist is bi-Lipschitz to any norm in period coordinates.
Thus, using \equ{eq: volume unchanged}, the set in the left hand side
of \equ{eq: essentially 1-dim} is a convex subset of
$\mathscr{N}_x(\EE)$ of 
area $\ll e^{-2t}$. On the other hand, by (b), it contains a vector of
length $\gg 1$. This means that it is contained in a rectangle whose
small sidelength is $\ll e^{-2t}$, as claimed. 
  \end{proof}

\subsubsection{Preparations for proving the upper bound: Nondivergence estimates}
Masur's criterion states that if the vertical foliation on a surface $M_q$ is not uniquely ergodic 
then $g_t q \to \infty$ as $t \to \infty$. In this paper we are dealing with horizontal foliations so
we have that if the horizontal foliation on $M_q$
    is not uniquely ergodic then $g_{-t} q \to
    \infty$ as $t \to \infty$; i.e., the backward
    trajectory eventually leaves every compact  
    set. The following 
    result gives (for a fixed surface) an upper bound for the measure
    of directions in which 
    the orbit has escaped a large compact set by a fixed time.

\begin{prop}[Athreya]
\name{prop: jayadev thesis}
For any stratum $\HH$ there is 
$\delta >0 $, and a compact subset $K \subset \HH$ such that for any
compact set $Q \subset \HH$ and any  $T_0>0$ 
 there is
$C>0$ so that  
for all $q \in Q$ and all $T>0$, we have 
$$
\left|\left\{\theta \in \mathbb{S}^1 : \forall t \in [T_0, T_0+T], \,
    g_{-t}
    r_\theta q \notin K \right\} \right | \leq C e^{-\delta T}.
$$
\end{prop}
The formulation given above is stronger than the statement of
\cite[Thm. 2.2]{Jayadev thesis}. Namely, in 
\cite{Jayadev thesis}, the constant $C$ is allowed to depend on $q$,
while we claim that $C$ can be chosen uniformly over the compact 
set $Q$. One can check that the stronger 
Proposition \ref{prop: jayadev thesis} follows from the proof given in
\cite{Jayadev thesis}. Alternatively, one can  derive it from 
\cite[Prop. 3.7]{AAEKMU}.
Indeed, in the notation of \cite{AAEKMU}, set
$\delta=\frac 2 3 $,  $a< 2^{-\frac{5}{2}}\, C_1^{-\frac{3}{2}}$ and
$C=a^{-2T_0}C(x)$, and note that  $C(x)$ is uniform when $x$ ranges over a compact set, and
for $N>\frac{2\, T_0}{t}$
we have    
$$Z\left(X_{\leq M},N,1,\frac 2 3 \right)\supset \{q: \alpha(g_tq) \leq M
\text{ for all }T_0\leq t\leq N\}.$$

\begin{remark}\label{rem: divergent better} Proposition \ref{prop:
    jayadev thesis}
  is convenient for our covering arguments  
because if we take a compact set $K'$ whose interior contains $K$, and slightly larger, and if $g_{-t} q \notin K'$ for all $t\in [T_0,T+T_0]$ ,then for
$q'$ in a small neighborhood of $q$ we have 
 $g_{-t} q' \notin K$ for all $t\in [T_0,T+T_0]$.  
 Applying Proposition \ref{prop: jayadev thesis} to $K'$ 
 we have exponential decay (in $T$) of the measure of a 
 neighborhood of the set we are covering. 
\end{remark} 

\subsubsection{Proof of upper bound}
We now prove the upper bound, assuming 
Proposition \ref{prop: upper bounds Hdim}, which will be proved in the next section.

  \begin{proof}[Proof of the upper bound in Theorem \ref{thm: Hausdorff
      dim}]
      We divide the argument into steps. 
      
      {\textbf{Step 1: Reduction to $\SF^{(\min)}_{(\leq a)}\cap \mathrm{NUE}$.}}

For each $H_0>0$, the set $\overline{\bigcup_{H \leq H_0}\EE^{(\mathrm{tor}, H)}}$ is a proper submanifold of $\EE$ with boundary (in the closure we pick up surfaces made of identical periodic tori glued along an embedded slit). On 
$\bigcup_{H \leq H_0}\EE^{(\mathrm{tor}, H)}$,
if $\beta \in \tremspace_q^{(0)}$
satisfies $|L|_q(\beta)=s$, the map $(q, s)
\mapsto \trem_\beta(q)$ performs symmetric horocycle  shears in opposite directions corresponding to $u_{s}$ and $ u_{-s}$ on the two tori which are connected components of the complement of the slit. Therefore this map  is locally Lipschitz for the metric coming from any norm in period coordinates, and as in the proof of the lower bound (see the discussion of the map $\bar f$), this means it is locally Lipschitz for dist. Thus by Proposition 
\ref{prop: H-dim facts}, taking the union over all $H_0 \in \N,$ the subset of $\SF_{(\leq a)}$ consisting of tremors of surfaces in
$\EE^{(\mathrm{tor})} \cup \EE^{(\mathrm{per})}$ has Hausdorff dimension at most 5.  So we need only bound
the Hausdorff dimension of the set of surfaces $\trem_\beta(q)$ where $q$
is horizontally minimal and non-uniquely ergodic, i.e., bound the
dimension of the essential tremors in $\SF_{(\leq a)}$. Note that by Lemma \ref{lem: Schwartzman joined1}, the collections of such surfaces is covered by the sets $\{ (\bar J)^\circ: J \in \mathcal{J}_t\}$ for all sufficiently large $t.$

{\textbf{Step 2: A countable cover.}}
In light of Proposition \ref{prop: H-dim facts}(2), it is enough to cover
$\SF_{(\leq a)}$ by countably many subsets, and give a uniform upper bound on the Hausdorff dimension of each. The countable collection we will use, which is denoted below by $Z$, exhausts the set of essential tremors $\SF^{(\min)}_{(\leq a)}\cap \mathrm{NUE}$, and depends on several parameters: the adapted subset in $\EE$ containing the surface $q$ for which $\trem_\beta(q) \in SF^{(\min)}_{(\leq a)}$,  the return time under $g_{-t}$ to a certain compact set $K'$, and constants coming from Lemma \ref{lem: Schwartzman joined2}. 

To make this precise, define 
$$\EE' \df \{q \in \EE: M_q \text{ admits an essential tremor}
\},$$
and write $\HH$ for $\HH(1,1)$. Let $\delta>0$ and $K \subset \HH$
be a compact set as in Proposition \ref{prop: jayadev
  thesis}. We assume with no loss of generality that $\delta
<1$.
Let $\dist$ be the metric of \S \ref{subsec: sup norm} and let  
$$
K' \df \{ q \in \HH(1,1): \dist(q, K) \leq 1\}.
$$
By Proposition \ref{prop: sup norm properties}, $K'$ is
compact. 

We can cover
$\EE'$ with countably many convex adapted neighborhoods with compact
closures. 
Given such a convex adapted neighborhood $\mathcal{U} \subset 
\EE$, and given a parameter $T_0>0$, let $C =
C(\mathcal{U}, T_0)$ be as in Proposition \ref{prop: jayadev thesis}
with $Q \df \overline{\mathcal{U}}$. 
If $q \in \mathcal{U} \cap \EE'$ and $\beta \in 
\tremspace_q^{(0)}$, there are $c_1 = c_1(q), \, c_2 = c_2(q)$ so 
the conclusions of Lemma \ref{lem:
  Schwartzman joined2} are satisfied. 
 Masur's criterion \cite{MT} applied to the horizontal foliation of $M_q$ implies that 
the trajectory $\{g_{-t}q : t>0\} $ is divergent, and in 
particular, 
there is $
T_1(q)$  such that for all $t \geq T_1(q)$, 
$g_{-t}q\notin K'$. For each $\mathcal{U}$ in the above countable
collection, each $T_0 \in \N$, and each $c \in \N$ with $c \geq
C(\mathcal{U}, T_0)\,
e^{\delta  T_0},$ let $Z = Z(\mathcal{U}, T_0, c)$ denote the set of
tremors $\trem_\beta(q)$ where $q \in  \mathcal{U}\cap \EE'$ and $\beta 
\in \tremspace^{(0)}_q$ satisfy  the bounds 
$$
|L|_q(\beta) \leq a, \ \ \ T_1(q) \leq T_0, \ \ \ 
c_2(q) \leq c, \ \ \ c_1(q) \geq \frac{1}{c}.
$$
Then in light of Proposition \ref{prop: H-dim facts}(2) it 
suffices to show that 
\begin{equation}\label{eq:Z bound} \dim Z
\leq 6 -\frac{\delta}{5}.
\end{equation}

{\textbf{Step 3: Applying Proposition \ref{prop: upper bounds Hdim}.}}

Let $K_1 \subset \HH(0)$ be a compact set so that for each 
$q \in \HH(0)$ for which the horizontal foliation is aperiodic, the
set of return times $\{t \in \N: g_{-t} q \in K_1\}$ is
unbounded. The choice of $K_1$ ensures that for any $T>0$, 
$$
Z\subset \bigcup_{t \in \N, \, t \geq T_0} Z(t),$$
where 
$$Z(t) \df \left\{\trem_{q,\beta} \in Z : q \in \mathcal{U} \cap \EE',
  \, \beta \in
  \tremspace^{(0)}_q, \, g_{-t} \bar{\pi}(q) \in K_1 \right\}
$$
and $\bar \pi: \mathcal{E} \to \HH(0)$ is as in \S \ref{subsec: dynamics on E}. 
Let $$X(t)\df  \{q
\in  Z \cap \EE'  : g_{-t}\bar{\pi}( q) \in K_1\}.$$

We now check that all the conditions of Proposition \ref{prop: upper bounds Hdim} are satisfied. 
We first check (iii). By \equ{eq: geodesic expansion rate} and the definition of $K'$ we see 
that for any $q_0 \in \mathcal{N}^{(e^{-2t})}(q)$ and $t \geq T_1(q_0)$  we must have $g_{-t} q_0 \notin
K$. Thus if $\mu_\EE$ denotes the flat measure on $\EE$, 
Proposition \ref{prop: jayadev thesis} and a Fubini argument show that for
each $t \in \N$, 
\begin{equation}\label{eq: athreya bound}
\mu_\EE \left( \mathcal{N}^{(e^{-2t})} \left(\left\{ q \in \mathcal{U}
      \cap \EE':
      T_1(q) \leq T_0 \right \} \right) \right )
\leq C e^{\delta T_0} \, e^{-\delta t}, 
\end{equation}
where $ C = C(\mathcal{U}, T_0)$. 

We now check conditions (i) and (ii).
Using Lemmas \ref{lem: Schwartzman joined1} and \ref{lem: Schwartzman
  joined2}, for each $t$ define  
finitely many
convex sets $J_i(t)$ of inradius at least $c_1 e^{-2t}$ which cover
$X(t)$ and for which the map $q \mapsto \til C^+_q(t)$ is constant on
$J_i(t)$, and set 
$$X_i(t) \df X(t) \cap J_i(t)$$
and 
$$Y_i(t) \df  \bigcup_{q \in X_i(t)} P^-\left( \left\{\beta \in \til
C^+_q(t): L_q(\beta)\leq a \right\} \right).$$
With these definitions, it follows from Lemma \ref{lem:
  Schwartzman joined2} (with $c_2 = c = 1/c_1$)
that all conditions of Proposition \ref{prop: upper 
  bounds Hdim} are
satisfied and the result follows. 
  \end{proof}

\section{Effective covers of convex sets}\name{subsec: sec 8 aux}  
  In this section we prove Proposition \ref{prop: upper
     bounds Hdim}.  First, we briefly outline the idea of the proof. The main difficulty is to find efficient covers of  $\bigcup_i X_i(t)$ by small balls of a fixed radius. 
If the intersection of a  ball with one of the sets $J_i(t)$ appearing in (i) has significant measure, it will contribute significantly to our cover, and it follows from (iii) that the number of such balls is not too large  (see \eqref{eq: occupies definite amount}). The subset of $\bigcup_i X_i(t) $ not covered by such balls requires more work, and in particular, the key technical result Corollary \ref{cor: for covering convex}.

In this section the notation $|A|$ may mean one of several
different things: if $A\subset \R^d$ 
then  $|A|$ denotes the Lebesgue measure of 
$A$. Let
$\bS^{d-1}$ denote the $d-1$ dimensional unit sphere in $\R^d$, then for
$A \subset \bS^{d-1}$, $|A|$ denotes the measure of $A$ with respect
to the unique rotation invariant probability measure on
$\bS^{d-1}$. If $A \subset \R^{d} \times \bS^{d-1}$, then $|A|$ denotes the
measure of $A$ with respect to the product of these measures.

 The next Proposition contains the main geometric idea, and implies  Corollary \ref{cor: for covering convex} via standard covering arguments for Euclidean spaces. The Proposition provides power law savings for the measure of the subset of a convex set $K$ for which the ball centered at such a  point intersects $K$ in small measure.

\begin{prop}\name{prop: convex main}
  For any $d \geq 2$ there are positive constants $c, C$, depending only on $d$, such that for any
  compact convex set $K\subset \mathbb{R}^d$ with inradius
  $R>0$, and any $\vre\in (0,1)$, the set
 $$
K^{(\vre)} \df \left\{x \in K: |B(x,\vre R) \cap K| \leq c (\vre R)^d
\right\} 
$$
satisfies
$$
\left |K^{(\vre)} \right| \leq C \vre^2 |K|. 
$$
\end{prop}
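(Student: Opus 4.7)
By translation and scaling, assume $R=1$ and $B_0 := B(0,1) \subset K$. For $x \in K$ define the radial function $r(x,u) := \sup\{t \ge 0 : x + tu \in K\}$ and the minimal chord length $w(x) := \inf_{u \in \bS^{d-1}}\bigl(r(x,u) + r(x,-u)\bigr)$.

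\textit{Step 1: Reduction to a thin-chord estimate.} The polar-coordinates identity
\[
|B(x,\vre) \cap K| = \frac{1}{d} \int_{\bS^{d-1}} \min(r(x,u),\vre)^d\,du,
\]
combined with a pigeonhole over antipodal pairs $\{u,-u\} \subset \bS^{d-1}$, shows that if $c < \omega_d 2^{-d-1}$, then any $x \in K^{(\vre)}$ admits a single direction $u^*$ for which $r(x,u^*), r(x,-u^*) \le \vre/2$; otherwise the integral above would exceed $c\vre^d$. Consequently $K^{(\vre)} \subset T_\vre$, where $T_\eta := \{x \in K : w(x) \le \eta\}$.

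\textit{Step 2: Thin-chord bound $|T_\eta| \le C_d \eta^2 |K|$.} Since $K$ is convex, each $r(\cdot,u)$ is concave on $K$, and hence $w$ is concave as an infimum of concave functions. On the inscribed ball $B_0$ the explicit formula $w|_{B_0}(x) = 2\sqrt{1-|x|^2}$ gives $|T_\eta \cap B_0| \lesssim \eta^2$, the volume of a thin spherical annulus. To propagate this from $B_0$ to $K$, write $K = \bigcup_{y \in K}\mathrm{conv}(\{y\}\cup B_0)$ and exploit concavity of $w$ along segments from each $y$ into $B_0$: if $w(x) \le \eta$ then for any $z \in B_0$ one has $w((1-\lambda)x + \lambda z) \ge (1-\lambda)\eta + 2\lambda\sqrt{1-|z|^2}$, forcing short-chord points to transport quickly into the good region. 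Integrating in cone-polar coordinates on each cone $\mathrm{conv}(\{y\}\cup B_0)$, with the appropriate Jacobian, yields $|T_\eta| \le C_d \eta^2 |K|$. Combined with Step 1, this proves $|K^{(\vre)}| \le |T_\vre| \le C_d \vre^2 |K|$.

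The hardest piece is the globalization of Step 2. A direct application of Brunn--Minkowski to the concave function $w$ only delivers $|T_\eta| \lesssim \eta|K|$, losing a full power of $\eta$. The missing factor must come from the fact that $w|_{B_0}$ vanishes \emph{quadratically} (rather than linearly) at $\partial B_0$; quantifying this decay inside the cone decomposition, and controlling the Jacobians as one integrates cone-by-cone, is the technical core of the argument.
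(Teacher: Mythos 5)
Your Step~1 is correct as far as it goes (indeed the pigeonhole over antipodal pairs even gives a stronger \emph{pointwise} conclusion than the paper's Lemma~\ref{lem: convex 1}), but notice what it throws away. The paper's lemma is a \emph{global} statement: a single direction $\psi$ has a short chord through at least half of $K^{(\vre)}$, which is exactly what lets the rest of the paper's proof set up cylindrical coordinates with axis $\psi$ and harvest two clean factors of $\vre$ --- one from the chord height $f_\theta(r)\le\vre$, the other from the radial localization $r\ge(1-\vre)\rho_\theta$ forced by concavity of $f_\theta$ and $f_\theta(0)\ge1$. Your version only yields a short-chord direction $u^*(x)$ that depends on $x$, i.e.\ $K^{(\vre)}\subset T_\vre$, and the set $T_\vre$ is strictly larger in general (for $K=B(0,1)$ one has $K^{(\vre)}=\varnothing$ for the paper's choice of $c$, whereas $T_\vre$ is a spherical shell of volume $\asymp\vre^2$). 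Discarding the fixed direction destroys the product structure that makes the paper's Fubini computation go through, and you are left needing the genuinely stronger estimate $|T_\eta|\le C_d\eta^2|K|$.

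Step~2 is where the gap is. The displayed inequality $w((1-\lambda)x+\lambda z)\ge(1-\lambda)\eta+2\lambda\sqrt{1-|z|^2}$ does not follow from concavity of $w$: concavity gives $w((1-\lambda)x+\lambda z)\ge(1-\lambda)w(x)+\lambda w(z)$, and the hypothesis $w(x)\le\eta$ is an \emph{upper} bound on $w(x)$, so the $(1-\lambda)\eta$ term is spurious. Using only $w\ge0$ one gets $w((1-\lambda)x+\lambda z)\ge 2\lambda\sqrt{1-|z|^2}$, which confines $T_\eta$ to $\lambda\lesssim\eta$ along each segment --- the single power of $\eta$ you already know one gets from Brunn--Minkowski. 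The passage from one power of $\eta$ to two, which you yourself describe as ``the technical core of the argument,'' is not actually carried out: the cones $\mathrm{conv}(\{y\}\cup B_0)$ overlap nontrivially, the Jacobians of a cone-by-cone integration are not controlled, and it is not explained how the square-root vanishing of $w$ near $\partial B_0$ is supposed to survive the outward propagation. (Also, $w|_{B_0}(x)=2\sqrt{1-|x|^2}$ should be a ``$\ge$'' since chords of $K$ dominate chords of $B_0$, though that is harmless for your purposes.) As it stands the key claim $|T_\eta|\le C_d\eta^2|K|$ for all convex $K$ of inradius $1$ is asserted, not proved; I could not find a counterexample, but neither is there an argument here, and the paper's proof sidesteps the question entirely by working with the fixed direction $\psi$.
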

We briefly discuss the proposition and its proof. Observe that the condition of being in $K^{(\varepsilon)}$ is more restrictive than being near the boundary of $ K$. For example, if $K$ is a line segment then $K^{(\varepsilon)}$ is empty for small enough $\varepsilon$. It turns out to be useful to think of convex sets in two dimensions, and  the main idea of the proof is to reduce the problem to a two-dimensional statement via polar coordinates. The two-dimensional case is proved   by comparing the measure of a `bulk' (which is denoted by $K'$ in the proof) to a quantity that bounds $K^{(\varepsilon)}$.

Since the statement of Proposition \ref{prop: convex main}
is invariant under homotheties, we can and will assume that
$R=1$. For $\psi \in \bS^{d-1}$, and $x \in 
\R^d$, let $\tau_\psi(x) \df \{x + s \psi : s \in \R\}$ be the line
through $x$ in direction $\psi$, and let
$$
K^{(\vre)}(\psi) \df \left\{ x \in K^{(\vre)} : \left|\tau_\psi(x) \cap
    K \right| < \vre \right\}.
  $$

  \begin{lem}\name{lem: convex 1}
    For any $d  \geq 2$ there is a positive constant $c$ so
    that for any $\vre \in (0,1),$ 
    there is $\psi \in \bS^{d-1}$ such
    that
    \eq{eq: for lemma}{
\left| K^{(\vre)}(\psi)\right| \geq \frac{\left| K^{(\vre)}\right|}{2}. 
    }
  \end{lem}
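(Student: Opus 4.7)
The plan is to fix an arbitrary $x \in K^{(\vre)}$ and show that for at least half (in the rotation-invariant probability measure on $\bS^{d-1}$) of directions $\psi$, the chord $\tau_\psi(x) \cap K$ has length less than $\vre R$; once this pointwise statement is in hand, a one-line Fubini argument produces a $\psi$ as required. By rescaling we may assume $R=1$, so the hypothesis becomes $|B(x, \vre) \cap K| \leq c \vre^d$. Set
$$S(x) \df \{\psi \in \bS^{d-1} : |\tau_\psi(x) \cap K | < \vre\}, \qquad T(x) \df \bS^{d-1} \sm S(x),$$
so that $K^{(\vre)}(\psi) = \{x \in K^{(\vre)} : \psi \in S(x)\}$. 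The goal becomes showing $|S(x)| \geq 1/2$ for every $x \in K^{(\vre)}$.

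The key geometric step uses convexity. If $\psi \in T(x)$, then the full chord of $K$ through $x$ in direction $\psi$ has length $\geq \vre$, hence either the forward ray (direction $\psi$) or the backward ray (direction $-\psi$) stays inside $K$ for a distance at least $\vre/2$. Defining
$$T^+(x) \df \{\psi \in \bS^{d-1} : x + s \psi \in K \text{ for all } s \in [0, \vre/2]\},$$
this gives $T(x) \subset T^+(x) \cup (-T^+(x))$ and hence $|T(x)| \leq 2|T^+(x)|$. By convexity of $K$, the solid cone $\{x + s\psi : s \in [0, \vre/2],\ \psi \in T^+(x)\}$ is contained in $B(x, \vre) \cap K$, so in spherical coordinates about $x$
$$c \vre^d \ \geq\ |B(x,\vre) \cap K| \ \geq\ \omega_{d-1}\, |T^+(x)|\int_0^{\vre/2} s^{d-1}\, ds \ =\ \frac{\omega_{d-1}\, |T^+(x)|}{d\, 2^d}\, \vre^d,$$
where $\omega_{d-1}$ is the total surface measure of $\bS^{d-1}$. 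Therefore $|T(x)| \leq 2|T^+(x)| \leq \frac{d\, 2^{d+1}}{\omega_{d-1}} c$.

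The plan is then to choose $c = c(d)$ so small that $\frac{d\, 2^{d+1}}{\omega_{d-1}} c \leq \frac{1}{2}$, which gives $|S(x)| \geq 1/2$ uniformly in $x \in K^{(\vre)}$. Finally, Fubini's theorem with respect to the product of Lebesgue measure on $K^{(\vre)}$ and the normalized rotation-invariant measure on $\bS^{d-1}$ yields
$$\int_{\bS^{d-1}} \left|K^{(\vre)}(\psi)\right|\, d\psi \ =\ \int_{K^{(\vre)}} |S(x)|\, dx \ \geq\ \frac{\left|K^{(\vre)}\right|}{2},$$
so some $\psi \in \bS^{d-1}$ satisfies \equ{eq: for lemma}. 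There is no real obstacle here: the entire argument is bookkeeping once one writes the volume of $B(x,\vre) \cap K$ in polar coordinates about $x$ and uses convexity to turn the "many long chords" assumption into a lower bound on that volume. The only point requiring care is quantifying the constant $c$ correctly (and consistently with the $c$ appearing in Proposition \ref{prop: convex main}), which is why we record the explicit dependence $c \leq \omega_{d-1}/(d\, 2^{d+2})$.
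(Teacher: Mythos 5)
Your argument is correct and follows the same overall scheme as the paper's: rescale to $R=1$, show the pointwise bound $|S(x)|\ge 1/2$ for each $x\in K^{(\vre)}$ by computing the volume of $B(x,\vre)\cap K$ in polar coordinates about $x$, then finish with Fubini over $K^{(\vre)}\times\bS^{d-1}$. The one place you diverge is in how the pointwise bound is extracted: the paper integrates $T_\theta(x)^d$ over the sphere, bounds it by $2^{d+1}dc\,\vre^d$, and applies Markov's inequality to control the measure of $\{\theta : T_\theta(x)\ge\vre\}$; you instead observe directly that every bad direction $\psi\in T(x)$ yields (via convexity and the ``half-chord goes one way'' dichotomy) a full segment of length $\vre/2$ inside $K$, so the solid cone over $T^+(x)$ lies in $B(x,\vre)\cap K$ and its volume forces $|T^+(x)|$, hence $|T(x)|\le 2|T^+(x)|$, to be small. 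This is a genuine small simplification — it bypasses Markov entirely and goes straight from ``too many long chords'' to ``too much volume''. Incidentally you are also the more careful of the two about the $\omega_{d-1}$ normalization in the polar-coordinate formula (the paper omits it, which only affects the explicit value of $c$, not correctness).
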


  \begin{proof}
Let $c =\frac{1}{2^{d+2}d}$, and suppose $x \in
K^{(\vre)}$, so that $\left|B(x, \vre) \cap K \right| \leq 
c \vre^d.$ For each $\theta \in \bS^{d-1}$, we write $$T_\theta(x) =
\left| \tau_\theta(x) \cap K\right| \ \ \text{ and } \  
\rho(\theta) = \sup\{s > 0 : x + s\theta \in K\}.$$
Then
$\max (\rho(\theta) , \rho(-\theta)) \geq
\frac{T_\theta (x) }{2}.
$
Computing
the volume of $B(x,\vre) \cap K$ in polar coordinates, we have
\[
  \begin{split}
    c\vre^d \geq & \left|B(x, \vre) \cap K \right|
    = \int_{\bS^{d-1}}\int_0^{\rho(\theta)} r^{d-1}dr \, d\theta
    \\ \geq & \frac{1}{2} \int_{\bS^{d-1}} \int_0^{\frac{T_\theta(x)}{2}}
    r^{d-1} dr d\theta 
    \geq  \frac{1}{2^{d+1}d} \int_{\bS^{d-1}} T_\theta(x)^d d\theta.
  \end{split}
\]
So by Markov's inequality and the choice of $c$, 
\eq{eq: from Markov}{
  |\{\theta \in \bS^{d-1} :
  T_\theta(x) < \vre\}| \geq \frac{1}{2}.}
Now consider the set
$$A \df \left\{(x, \theta) \in
  K^{(\vre)}\times \bS^{d-1} : T_\theta(x) < \vre \right\}.$$
From \equ{eq: from Markov}
and Fubini we have
$$
\frac{\left|K^{(\vre)} \right|}{2} \leq |A| = \int_{\bS^{d-1}}
\left|K^{(\vre)}(\theta) \right| \, d\theta. 
$$
Thus for some $\psi \in \bS^{d-1}$ we have \equ{eq: for lemma}.
    \end{proof}

\begin{proof}[Proof of Proposition \ref{prop: convex main}]   Let $\mathbf{e}_1,
  \ldots, \mathbf{e}_d$ denote the standard basis of 
    $\R^d$ and let $p_0$ be a point
for which $B(p_0,1) \subset K$. Applying a rotation and a translation, we may assume
that $p_0=0$ and $\psi = \mathbf{e}_d$, where $\psi$ is as in Lemma
\ref{lem: convex 1}. We will make computations in cylindrical
coordinates, i.e. we will consider the sphere
$\bS^{d-2}$ as embedded in $\spa \left(\mathbf{e}_1, \ldots,
  \mathbf{e}_{d-1}\right)$ and write vectors in $\R^d$ as $ r \theta + z
  \mathbf{e}_d$. In these coordinates, $d$-dimensional Lebesgue
  measure is given by $\alpha r^{d-2} \, dr \, d\theta \, dz,$ where $d\theta$ is the
  rotation invariant probability measure on $\bS^{d-2}$ and $\alpha = \alpha_{d-1}$
  is a
  constant. For each $\theta \in \bS^{d-2}$, define
  $$\rho_\theta = \sup \{r \in \R : r\theta \in K\} \text{ and } 
    f_\theta(r)= \left|\tau_{\mathbf{e}_d}(r\theta) \cap K\right|,
$$
i.e., $f_\theta(r)$ is the length of the intersection with $K$ of
the vertical line through $r\theta$. 
Let
$$
K' = K \cap \left \{r\theta + z \mathbf{e}_d  : r \in
  \left[\frac{\rho_\theta }{3}, \frac{2\rho_\theta}{3} \right] \right
\} .
  $$
Since $K$ is convex, the function $f_\theta$ is concave, and since
$B(0,1) \subset K$, $f_\theta(0) \geq 1$. This implies that whenever
$r \theta + z \mathbf{e}_d \in K^{(\vre)}(\mathbf{e}_d)$, $r \geq
(1-\vre)\rho_\theta$. Furthermore, whenever $r\theta + z \mathbf{e}_d
\in K'$ we have $f_\theta(r) \geq \frac{1}{3}$. Clearly $f_\theta(r) \leq \vre$ whenever there
is $z$ for which $r \theta + z \in K^{(\vre)}$, and hence
\[
  \begin{split}
& \left|K^{(\vre)} (\mathbf{e}_d)\right| \leq  \alpha \int_{\bS^{d-2}}
\int_{(1-\vre)\rho_\theta}^{\rho_\theta} \vre r^{d-2} dr \, d\theta \\
\leq & \alpha \vre \int_{\bS^{d-2}}
\int_{(1-\vre)\rho_\theta}^{\rho_\theta} \rho_\theta^{d-2} dr \, d\theta  
=  C'  \alpha  \vre^2 
\int_{\bS^{d-2}}
\int_{\frac{\rho_\theta}{3}}^{\frac{2\rho_\theta}{3}}
r^{d-2}  dr \, d\theta \\
\leq & C' \alpha  \vre^2 
3
\int_{\bS^{d-2}}\int_{\frac{\rho_\theta}{3}}^{\frac{2\rho_\theta}{3}}
f_\theta(r) r^{d-2} dr \, d\theta 
=  3 C' \vre^2 |K'|,
\end{split}
\]
where
$$
C' =  \frac{3^{d-1}(d-1)}{2^{d-1}-1}.
$$
Since $K' \subset K$, we have shown
\eq{eq: have shown}{
 \left|K^{(\vre)}(\mathbf{e}_d)\right | \leq 3 C' \vre^2 |K|.
  }
Now taking $C = 6C'$, recalling that $\psi = \mathbf{e}_d$, and
combining Lemma \ref{lem: convex 1} with \equ{eq: have 
  shown} we obtain the desired result. 
\end{proof}

Let $N(A, R)$\index{N(A,R)@$N(A, R)$} denote the minimal
  number of balls of radius $R$ needed 
to cover $A \subset \R^d$.

\begin{cor}\name{cor: for covering convex}
For any $d \geq 2$ there exist positive constants $\bar{c},\bar{C}$ so that if
$K\subset \mathbb{R}^d$ is a convex set with inradius $R$ then the set
\begin{equation}\label{eq:K def}
K^{(\vre, \bar{c})} \df \left\{x\in K:|B(x,\vre R) \cap K|< \bar{c} \left|B(x,\vre R)
  \right| \right\}
  \end{equation}
satisfies
$$
N\left(K^{(\vre, \bar{c})}, \vre R \right) \leq \bar{C} \, |K| \, \vre^{2-d} R^{-d}.$$
\end{cor}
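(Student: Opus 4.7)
The plan is to first choose $\bar c$ small enough (depending on $d$ and the constant $c$ from Proposition \ref{prop: convex main}) so that $K^{(\vre,\bar c)} \subseteq K^{(\vre)}$. Indeed, if $\bar c \leq c/(2^d |B(0,1)|)$, then for $x \in K^{(\vre,\bar c)}$ we have $|B(x,\vre R)\cap K|< \bar c |B(0,1)|(\vre R)^d < c(\vre R)^d$, giving the inclusion. By Proposition \ref{prop: convex main} we then have $|K^{(\vre,\bar c)}|\leq C\vre^2|K|$.

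To turn the measure bound into a covering estimate, I would take a maximal $(\vre R)$-separated subset $\{x_1,\ldots,x_N\}\subseteq K^{(\vre,\bar c)}$, so that $N(K^{(\vre,\bar c)},\vre R)\leq N$ and the balls $B(x_i,\vre R/2)$ are pairwise disjoint. For each $y \in B(x_i,\vre R/2)\cap K$ the inclusion $B(y,\vre R/2)\subseteq B(x_i,\vre R)$ yields
\[ |B(y,\vre R/2)\cap K|\leq |B(x_i,\vre R)\cap K|<\bar c|B(x_i,\vre R)|=2^d\bar c|B(y,\vre R/2)|,\]
so our choice of $\bar c$ ensures $y\in K^{(\vre/2)}$. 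Using disjointness and applying Proposition \ref{prop: convex main} to $K^{(\vre/2)}$ gives
\[ \sum_{i=1}^{N}|B(x_i,\vre R/2)\cap K|\leq |K^{(\vre/2)}|\leq \frac{C}{4}\vre^2|K|.\]

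The final ingredient needed is a uniform lower bound $|B(x_i,\vre R/2)\cap K|\geq c^{*}(\vre R)^d$ with $c^{*}$ depending only on $d$; combined with the display above this immediately yields $N\leq (C/(4c^{*}))|K|\vre^{2-d}R^{-d}$, proving the corollary with $\bar C=C/(4c^{*})$. For points $x_i$ within a bounded multiple of $R$ of the center $p_0$ of an inscribed ball $B(p_0,R)\subseteq K$, this lower bound follows from the cone with vertex $x_i$ and base $B(p_0,R)$, which lies in $K$ by convexity. The hard part will be obtaining this lower bound uniformly: for highly eccentric $K$, a point $x_i$ may be far from $p_0$, and the solid angle of the cone at $x_i$ can degenerate. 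To handle this case I would reduce to the setting where $K$ has bounded eccentricity via John's ellipsoid theorem: after a linear change of coordinates one may assume $B(0,1)\subseteq K\subseteq B(0,d)$, in which case the lower bound is uniform, and balls in the original coordinates become ellipsoids of condition number at most $d$ in the new ones, altering all covering numbers and volumes only by dimensional constants.
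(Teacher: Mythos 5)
Your first steps are correct: choosing $\bar c$ so that $K^{(\vre,\bar c)}\subset K^{(\vre)}$, taking a maximal $\vre R$-separated set $\{x_i\}$ with disjoint half-radius balls, and noting that every $y\in B(x_i,\vre R/2)\cap K$ lies in $K^{(\vre/2)}$. You also correctly locate the crux: your argument needs the uniform lower bound $|B(x_i,\vre R/2)\cap K|\geq c^*(\vre R)^d$ with $c^*$ depending only on $d$, and this fails for eccentric $K$. The John ellipsoid reduction does not repair it. If $E$ is the John ellipsoid of $K$ with semi-axes $a_1\geq\cdots\geq a_d$, the linear map $A$ taking $E$ to the unit ball has condition number $a_1/a_d$, which is unbounded over convex bodies of fixed inradius (for a $1\times L$ rectangle in the plane it is of order $L$). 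So $A$ sends Euclidean balls to ellipsoids of arbitrarily large aspect ratio, not condition number $\leq d$ as you assert, and the covering number $N(\cdot,\vre R)$ is not, up to dimensional constants, invariant under such a map: a unit segment needs $\asymp 1/\vre$ balls of radius $\vre$, while its image under an $L$-stretch needs $\asymp L/\vre$. The bound obtained in the normalized coordinates therefore cannot be transported back, and this step is a genuine gap.

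The paper's route is arranged so that the lower bound you are missing is never needed. Rather than bounding $|B(x_i,\vre R/2)\cap K|$, one bounds $|B_i\cap K^{(\vre/2)}|$ from below by the \emph{full} ball volume $\kappa(\vre R)^d$: for $\bar c< c/\bigl(2^d|B(0,1)|\bigr)$ and \emph{any} $y\in B(x_i,\vre R/2)$ one has $B(y,\vre R/2)\subset B(x_i,\vre R)$, hence $|B(y,\vre R/2)\cap K|<c(\vre R/2)^d$, so the whole ball $B(x_i,\vre R/2)$ satisfies the defining inequality of $K^{(\vre/2)}$. The count then follows from a Besicovitch bounded-multiplicity estimate together with Proposition~\ref{prop: convex main} applied to $K^{(\vre/2)}$. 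The key change from your attempt is which set the half-radius ball is compared against: $K^{(\vre/2)}$, whose measure is exactly what Proposition~\ref{prop: convex main} controls, rather than $K$ itself. (The inclusion $B(x_i,\vre R/2)\subset K^{(\vre/2)}$ as stated in the paper does implicitly use $B(x_i,\vre R/2)\subset K$, which deserves a word of justification, but that is a far smaller issue than the John ellipsoid step in your argument.)
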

\begin{proof} Let $K^{(\vre)}, c,C$ be as in Proposition \ref{prop: convex main},
  and let $\bar{c}$ be small enough so that
  $$
\bar{c} \, \left| B(x, \vre R)\right| < c \left(\frac{\vre}{2} \, R  \right)^d.
  $$
This choice ensures that  if $x \in K^{(\vre, \bar{c})}$ and $y \in
  B\left(x,\frac \vre 2 R\right)$ then
$y \in K^{(\vre/2)}$; i.e., $B\left(x, \frac{\vre }{2} R \right) \subset
 K^{(\vre/2)}$. Let
$B_1, \ldots, B_N$ be a minimal collection of balls of radius
$\vre R$ which cover $K^{(\vre, \bar{c})}$ and have centers $x_1, \ldots,
x_N $ in $K^{(\vre, \bar{c})}$. Then for each $i$, $\left|B_i \cap
  K^{(\vre/2)} \right| \geq
\left|B\left(x_i, \frac \vre 2 R\right )\right| = \kappa \vre^d R^d$ for
a constant $\kappa$ depending on $d$. By the Besicovitch covering theorem (see e.g.
\cite[Chap. 2]{Mattila}), each point in $K^{(\vre, \bar{c})}$ is covered at most
$N_d$ times, where $N_d$ is a number depending only on $d$. Therefore,
\[\begin{split}
& N \kappa \vre^d R^d  = \sum_{i=1}^N \left| B \left (x_i, \frac{\vre}{2} R\right) \right| 
\leq \sum_{i=1}^N \left | B_i \cap K^{(\vre/2)}\right| \\ \leq &  
N_d \left| K^{(\vre/2)}\right| \leq N_d C \frac{\vre^2}{4} |K|,
\end{split}\]
where we used Proposition \ref{prop: convex main} for the last inequality.
Setting $\bar{C} = \frac{ N_d C}{4 \kappa}$ we obtain the required estimate.
\end{proof}

%
%

We are now ready for the
\begin{proof}[Proof of Proposition \ref{prop: upper bounds Hdim}]
For each $t \in \N$ we will find an efficient cover of $Z(t)$ by balls of
radius $e^{-\left(2 + \frac{\delta}{2}\right)t}$, from which we will derive the Hausdorff dimension bound. 
We will lighten the notation by writing 
$\hat N(P, t)$ \index{N(P, t)@$\hat N(P, t)$} for $N\left(P, e^{-\left(2+\frac{\delta}{2}\right)t}
\right).$ 
We will continue with the notation $A \ll B$ used in the proof of
Lemma \ref{lem: Schwartzman joined2}, and write $A \asymp B$ if $A \ll B $ and $B \ll A$. 
In this proof the implicit
    constant is allowed
    to depend on $d, c_1, c_2, \delta, P_1, P_2$. 

We claim that \eq{eq: main claim cover}{
\hat N(Z(t), t) \ll
e^{\left(\left(2+\frac{\delta}{2}\right)(d+1)-\frac{\delta}{2}\right)t}.}
To prove \equ{eq: main claim cover}, we will find an efficient cover
for each set $X_i(t)$ and each $Y_i(t)$, and combine them. By
assumption (ii),  
$\hat N(Y_i(t), t)  \ll e^{\left(2 + \frac{\delta}{2}\right)t}\,
  e^{\frac{\delta}{2}t} = e^{\left(2 + \delta\right) t}$ for each
$i$. Indeed, the first term in this product comes from covering the
long side, of length $\ll 1$, and the second term is needed for
covering the short side of length $\ll e^{-2t}$. 
So it suffices
to show 
\eq{eq: suffices for Xi}{\sum_i \hat N(X_i(t), t) 
\ll e^{\left(\left(2+\frac{\delta}{2} \right)d-\delta \right)t}.}

With the notation of  \eqref{eq:K def}  
define
$$
J'_i(t) \df J_i^{\left(e^{-\frac{\delta}{2}t}, \bar{c} \right)}. 
$$
We will consider the sets
$
\bar{X}_i(t) = X_i(t) \sm J'_i(t)$
and $X_i(t) \cap J'_i(t) $
 separately, finding efficient covers
for each. 
If $x \in \bar{X}_i(t)$ then 
\eq{eq: occupies definite amount}{
 \begin{split} & \left|B\left(x,e^{-\left(2+\frac \delta 2 \right)t} \right) \cap J_i(t)  \cap
  \mathcal{N}^{\left(e^{-2t}\right)}(X_i(t)) \right| \\ \asymp &
\left|B\left(x,e^{-\left(2+\frac \delta 2 \right)t} 
  \right)\right| \asymp e^{-d\left(2 + \frac{\delta}{2}\right)t}.
  \end{split}
}
Let $\left \{B^{(i)}_j  \right \}_j$ be a
minimal collection of balls of radius 
$e^{-\left(2+\frac{\delta}{2}\right) t}$ 
centered at points in $\bar{X}_i(t)$ needed to cover
$\bar{X}_i(t)$. By the Besicovitch covering theorem, the collection $\left\{B^{(i)}_j\right\}$
has bounded multiplicity, i.e. for each
$x$ and $i$, $\# \left\{j : x \in B^{(i)}_j \right\} \ll 1.$
Since the $J_i(t)$ are disjoint, the collection
$\mathcal{B}_t = \left\{B^{(i)}_j \cap J_i(t) \right\}_{i,j}$ is also
of bounded multiplicity.
Taking into account \equ{eq: occupies definite amount}, we have 
\eq{eq: bound non prickly part}{
\sum_i \hat N\left(\bar{X}_i(t), t \right) \ll \# \mathcal{B}_t \ll
e^{d\left(2+\frac{\delta}{2}\right)t} \, \left|\bigcup_i  \mathcal{N}^{(e^{-2t})}(X_i(t))  
\right| \stackrel{(\text{iii})}{\ll}
e^{\left(d \left(2+\frac{\delta}{2}\right) -\delta \right)t}.
}
We also have from Corollary \ref{cor: for covering convex} (with $R =
e^{-2t}$ and $\vre = e^{-\frac{\delta}{2} \, t}$) that 
\eq{eq: bound prickly part}{
  \begin{split}
& \sum_i \hat N\left(J'_i(t),t \right) \ll   \sum_i
e^{\frac{\delta}{2}(d-2)t} \, e^{2dt}  |J_i(t)| \\
\ll & e^{\left( \left(2+\frac{\delta}{2} \right) d - \delta
\right)t}
\left|\bigcup_i J_i(t)
\right| \ll e^{\left( \left(2+\frac{\delta}{2} \right) d -
    {\delta} \right)t}.
\end{split}
}
Combining the estimates \equ{eq: bound non
  prickly part} and \equ{eq: bound prickly part}, we obtain \equ{eq:
  suffices for Xi}, and thus \equ{eq: main claim cover}.

We now prove \equ{eq: hausdorff
  upper bound}. 
Let
$$s > d+1 - \frac{\delta}{5}$$
and set  
\eq{eq: def s'}{
  \begin{split} s' & \df \frac{\delta}{2} -\left( 2 + \frac{\delta}{2}
  \right) \cdot \frac{\delta}{5}
   >0
\end{split}
}
  (where we have used $\delta <1$).
We need to show that for any
$\eta>0$, we can cover $Z$ by a collection of  balls $\mathcal{B}$ of radius at
most $\eta$, so that  $\sum_{B \in \mathcal{B}} \diam(B)^s \ll 1$.
To this end, choose $T$ so that $e^{-\left(2+\frac{\delta}{2}\right) T} <
\eta$. For each $t \geq T$ let $\mathcal{B}_t$ be a collection
of $\hat N(Z(t), t)
$ balls of
radius $e^{-\left(2+\frac{\delta}{2}\right) t}$ covering $Z(t)$ and
let $\mathcal{B} = \bigcup_t 
\mathcal{B}_t.$ Then by \equ{eq: main claim cover} 
 we have 
\[\begin{split}
& \sum_{B \in \mathcal{B}} \diam(B)^s \ll  \sum_{t \geq T} \hat N(Z(t), t)
e^{-\left(2 + \frac{\delta}{2} \right) st} \\ \ll & 
\sum_{t \geq T}
e^{\left( \left(2 +\frac{\delta}{2} \right) (d+1) -\frac{\delta}{2} -
    \left(2 +\frac{\delta}{2} \right)\left (d+1 - \frac{\delta}{5}
  \right  )\right)t}
=
\sum_{t \geq T} e^{-s't} 
\to_{T \to \infty} 0.
\end{split}\]
So for large enough $T$ we have our required cover.
\end{proof}

\section{Atomic transverse measures}\name{sec: atomic tremors}
In this section we  
complete the proofs of Proposition \ref{prop:
  semicontinuity} and Corollary \ref{cor: total variation
  continuous}. We recall that in \S \ref{subsec: polygonal tremors},
these results were already 
proved in a special case (namely assuming \equ{eq: star}, that the transverse measure is absolutely continuous),
  and that this special case is sufficient for the proofs of Theorems
  \ref{thm: tremors bounded distance}, \ref{thm: spiky fish} and
  \ref{thm: Hausdorff dim}. In this section we give a more robust treatment that does not assume absolute continuity.

  We note that in the literature there are several different
  conventions regarding atomic transverse measures. Recall from 
  the second paragraph of \S \ref{subsec: transverse} that in this paper, atomic transverse measures can only  be supported on loops of a certain kind. As we will now see, these loops arise on boundaries of cylinders, but also arise as  `ghosts of departed  cylinders', that is loops comprised of finitely many horizontal saddle connections, which are not boundaries of cylinders, but might represent core curves of cylinders on nearby surfaces. We first define these loops precisely, and then give our definition of atomic transverse measures, and the associated cohomology classes. For defining the latter, we will need to introduce `decorations' of atomic transverse measures. It will become evident in the course of the proof of Proposition \ref{prop: semicontinuity} that our definition is a useful and natural one. 
  
    We say 
  that a finite, cyclically ordered collection of horizontal saddle  
connections $\delta_1, \ldots, \delta_t$  {\em
forms a loop } if the right endpoint of $\delta_i$
is the left endpoint of $\delta_{i+1}$ (addition mod $t$).
Any singular point $\xi \in \Sigma$ of degree $a$, is
contained in a
neighborhood $\mathcal{U}_\xi$ naturally parameterized by
polar coordinates $(r\cos 
\theta, r\sin \theta)$, for $0 \leq r < r_0$ and $\theta \in \R/(
2\pi(a+1)\Z)$, where $r=0$ corresponds to $\xi$ (see \cite[\S
2.5]{eigenform}). If $\xi \in
\Sigma$ is a right endpoint of 
$\delta_i$ and  a left endpoint of $\delta_{i+1}$, we can parameterize
the intersections of $\delta_i, \delta_{i+1}$ with $\mathcal{U}_\xi$ using
polar coordinates, and the {\em $i$-th turning
  angle} is the difference in angle  between $\delta_i$ and
$\delta_{i+1}$. The turning angle is well-defined modulo $2\pi(a+1)\Z$
and is an odd multiple of $\pi$. We say that the 
loop is {\em continuously extendable} 
\index{continuously extendable loop}
if for each $i$ the $i$-th turning
angle is $\pm \pi$, and we say a continuously extendable loop is {\em primitive} if whenever we
have a repetition 
$\delta_i = \delta_j, \, i\neq j,$ we must have that 
the turning angle at both of the endpoints of $\delta_i$ differs in sign from
that of $\delta_j$. Thus on each surface there are only finitely many primitive
continuously extendable loops,  and their number is bounded by a number depending only on the stratum containing the surface.

One source of continuously extendable loops, are cylinders on nearby surfaces; see Figures \ref{fig: ghost14} and \ref{fig: ghost}.
\begin{figure}[h]
\begin{tabular}{l r}
\begin{tikzpicture}[scale=0.8]

\def\u{2};
\def\v{3};
\def\w{4};

\def\ya{1};
\def\yb{3};
\def\yc{3.5};
\def\yd{5};

\node (A0) at (0,\yb) [circle,draw,fill=black,inner sep=0pt,minimum size=1.2mm] {};
\node (A1) at (\u,\yc) [circle,draw,fill=white,inner sep=0pt,minimum size=1.2mm] {};
\node (A2) at (\u,\yd) [circle,draw,fill=black,inner sep=0pt,minimum size=1.2mm] {};
\node (A3) at (\v,\ya) [circle,draw,fill=white,inner sep=0pt,minimum size=1.2mm] {};
\node (A4) at (\v,\yb) [circle,draw,fill=black,inner sep=0pt,minimum size=1.2mm] {};
\node (A5) at (\u+\v,\yc) [circle,draw,fill=white,inner sep=0pt,minimum size=1.2mm] {};
\node (A6) at (\u+\v,\yd)  [circle,draw,fill=black,inner sep=0pt,minimum size=1.2mm] {};
\node (A7) at (\v+\w,\ya)  [circle,draw,fill=white,inner sep=0pt,minimum size=1.2mm] {};
\node (A8) at (\v+\w,\yb)  [circle,draw,fill=black,inner sep=0pt,minimum size=1.2mm] {};
\node (A9) at (\u+\v+\w,\yc)  [circle,draw,fill=white,inner sep=0pt,minimum size=1.2mm] {};

\draw (A0) -- (A4) ;
\draw (A0) -- (A1);
\draw (A1) -- (A2);
\draw (A1) -- (A5);
\draw (A2) -- (A6);
\draw (A3) -- (A4);
\draw (A3) -- (A7) ;
\draw (A4) -- (A8);
\draw (A5) -- (A6);
\draw (A5) -- (A9);
\draw (A7) -- (A8);
\draw (A8) -- (A9);

\draw [dashed] ($(A0)!0.5!(A1)$) -- ($(A8)!0.5!(A9)$);
\end{tikzpicture}\ \ \ \ \ \ \ \ \

\begin{tikzpicture}
[scale=0.8,
 bdot/.style={circle,draw,fill=black,inner sep=0pt,minimum size=1.2mm},
 wdot/.style={circle,draw,fill=white,inner sep=0pt,minimum size=1.2mm},
 sdot/.style={circle,draw,fill=black,inner sep=0pt,minimum size=0.3mm}]

\def\u{2};
\def\v{3};
\def\w{4};

\def\ya{1};
\def\yb{3};
\def\yc{3.5};
\def\yd{5};

\node (A1) at (\u,\yc) [wdot] {};
\node (A2) at (\u,\yd) [bdot] {};
\node (A3) at (\v,\ya) [wdot] {};
\node (A4) at (\v,\yb) [bdot] {};
\node (A5) at (\u+\v,\yc) [wdot] {};
\node (A6) at (\u+\v,\yd)  [bdot] {};
\node (A7) at (\v+\w,\ya)  [wdot] {};
\node (A8) at (\v+\w,\yb)  [bdot] {};
\node (A10) at ($ (A2)+(A1)-(A0) $)  [wdot] {};
\node (A11) at ($ (A3)+(A8)-(A5) $)  [bdot] {};

\draw (A1) -- (A2);
\draw (A1) -- (A5);
\draw (A2) -- (A6);
\draw (A3) -- (A4);
\draw (A3) -- (A7) ;
\draw (A4) -- (A8);
\draw (A5) -- (A6);
\draw (A7) -- (A8);
\draw (A1) -- (A4);
\draw (A5) -- (A8);

\draw (A2) -- (A10);
\draw (A6) -- (A10);

\draw (A3) -- (A11);
\draw (A7) -- (A11);

\draw [dashed] ($(A1)!0.5!(A4)$) -- ($(A5)!0.5!(A8)$);
\draw [dashed] ($(A2)!0.5!(A10)$) -- ($(A6)!0.5!(A10)$);
\draw [dashed] ($(A3)!0.5!(A11)$) -- ($(A7)!0.5!(A11)$);

\end{tikzpicture}\ \ \ \ \ \ \ \ \

\end{tabular}
\caption{ In the 3-cylinder surface on the left, the dotted line represents a thin cylinder. Collapsing it gives rise to a continuously extendable loop.  The presentation of the same surface on the right helps show how the continuously extendable loop arises as a limit.  }
\label{fig: ghost14} 
\end{figure}

\begin{figure}[h]
\begin{tabular}{l r}
\begin{tikzpicture}[scale=0.8]

\def\u{2};
\def\v{3};
\def\w{4};

\def\ya{1};
\def\yb{3.5};
\def\yc{3.5};
\def\yd{5.5};

\def\sp{1.5mm};

\node (A1) at (\u,\yc) [circle,draw,fill=white,inner sep=0pt,minimum size=1.2mm] {};
\node (A2) at (\u,\yd) [circle,draw,fill=black,inner sep=0pt,minimum size=1.2mm] {};
\node (A3) at (\v,\ya) [circle,draw,fill=white,inner sep=0pt,minimum size=1.2mm] {};
\node (A4) at (\v,\yb) [circle,draw,fill=black,inner sep=0pt,minimum size=1.2mm] {};
\node (A5) at (\u+\v,\yc) [circle,draw,fill=white,inner sep=0pt,minimum size=1.2mm] {};
\node (A6) at (\u+\v,\yd)  [circle,draw,fill=black,inner sep=0pt,minimum size=1.2mm] {};
\node (A7) at (\v+\w,\ya)  [circle,draw,fill=white,inner sep=0pt,minimum size=1.2mm] {};
\node (A8) at (\v+\w,\yb)  [circle,draw,fill=black,inner sep=0pt,minimum size=1.2mm] {};
\node (A10) at (\u+\u,\yd)  [circle,draw,fill=white,inner sep=0pt,minimum size=1.2mm] {};
\node (A11) at (\v+\w-\u,\ya)  [circle,draw,fill=black,inner sep=0pt,minimum size=1.2mm] {};

\draw (A1) -- (A2);
\draw (A1) -- (A4);
\draw (A4) -- (A5);
\draw (A5) -- (A8);
\draw (A2) -- (A10);
\draw (A10) -- (A6);
\draw (A3) -- (A4);
\draw (A3) -- (A11);
\draw (A11) -- (A7);
\draw (A5) -- (A6);
\draw (A7) -- (A8);

\end{tikzpicture}\ \ \ \ \ \ \ \ \

\begin{tikzpicture}[scale=0.8]

\def\u{2};
\def\v{3};
\def\w{4};

\def\ya{1};
\def\yb{3.5};
\def\yc{3.5};
\def\yd{5.5};

\def\sp{1.5mm};

\node (A1) at (\u,\yc) [circle,draw,fill=white,inner sep=0pt,minimum size=1.2mm] {};
\node (A2) at (\u,\yd) [circle,draw,fill=black,inner sep=0pt,minimum size=1.2mm] {};
\node (A3) at (\v,\ya) [circle,draw,fill=white,inner sep=0pt,minimum size=1.2mm] {};
\node (A4) at (\v,\yb) [circle,draw,fill=black,inner sep=0pt,minimum size=1.2mm] {};
\node (A5) at (\u+\v,\yc) [circle,draw,fill=white,inner sep=0pt,minimum size=1.2mm] {};
\node (A6) at (\u+\v,\yd)  [circle,draw,fill=black,inner sep=0pt,minimum size=1.2mm] {};
\node (A7) at (\v+\w,\ya)  [circle,draw,fill=white,inner sep=0pt,minimum size=1.2mm] {};
\node (A8) at (\v+\w,\yb)  [circle,draw,fill=black,inner sep=0pt,minimum size=1.2mm] {};
\node (A10) at (\u+\u,\yd)  [circle,draw,fill=white,inner sep=0pt,minimum size=1.2mm] {};
\node (A11) at (\v+\w-\u,\ya)  [circle,draw,fill=black,inner sep=0pt,minimum size=1.2mm] {};

\draw (A1) -- (A2);
\draw (A1) -- (A4);
\draw (A4) -- (A5);
\draw (A5) -- (A8);
\draw (A2) -- (A10);
\draw (A10) -- (A6);
\draw (A3) -- (A4);
\draw (A3) -- (A11);
\draw (A11) -- (A7);
\draw (A5) -- (A6);
\draw (A7) -- (A8);
\draw [line width=1.2pt] ([shift=(0:2.5mm)] $(A4)$) arc (0:180:2.5mm);
\draw [line width=1.2pt] ([shift=(0:-2.5mm)] $(A5)$) arc (180:360:2.5mm);
\draw [line width=1.2pt] ([shift=(0:-2.5mm)] $(A10)$) arc (180:360:2.5mm);
\draw [line width=1.2pt] ([shift=(0:2.5mm)] $(A11)$) arc (0:180:2.5mm);
\draw [line width=1.2pt] ([shift=(0:2.5mm)] $(A4)$) -- ([shift=(0:-2.5mm)] $(A5)$);
\draw [line width=1.2pt] ([shift=(0:2.5mm)] $(A5)$) -- ([shift=(0:-2.5mm)] $(A8)$);
\draw [line width=1.2pt] ([shift=(0:2.5mm)] $(A1)$) -- ([shift=(0:-2.5mm)] $(A4)$);
\draw [line width=1.2pt] ([shift=(0:2.5mm)] $(A2)$) -- ([shift=(0:-2.5mm)] $(A10)$);
\draw [line width=1.2pt] ([shift=(0:2.5mm)] $(A10)$) -- ([shift=(0:-2.5mm)] $(A6)$);
\draw [line width=1.2pt] ([shift=(0:2.5mm)] $(A3)$) -- ([shift=(0:-2.5mm)] $(A11)$);
\draw [line width=1.2pt] ([shift=(0:2.5mm)] $(A11)$) -- ([shift=(0:-2.5mm)] $(A7)$);

\end{tikzpicture}\ \ \ \ \ \ \ \ \
\end{tabular}
\caption{This is the surface  obtained by collapsing the middle cylinder in Figure \ref{fig: ghost14}. The union of all horizontal saddle connections on the resulting surface is a continuously extendable loop.
The half-circles extending this curve to the punctured surface are shown. This extended curve is the `ghost of the departed cylinder' from Figure \ref{fig: ghost14}.}
\label{fig: ghost} 
\end{figure}

Recall from \S \ref{subsec: transverse} that a non-atomic transverse measure is a collection of non-atomic finite measures $\{\nu_\gamma\}$ indexed by finite-length transverse arcs $\gamma \subset M \sm \Sigma, $ satisfying the invariance and restriction condition.

By a {\em closed horizontal leaf} on $M$ we mean a loop contained in one leaf of the horizontal foliation on $M \sm \Sigma$. Given a closed horizontal leaf $\lambda$, and a finite length transverse arc $\gamma$, the number of intersection points $\#(\lambda \cap \gamma)$ is finite, and we define measures $\theta^{(\lambda)}_\gamma$ by  
$$\theta^{(\lambda)}_\gamma(A) \df \#  (\lambda \cap A).$$ 
It is clear that the collection of measures 
$\left\{\theta^{(\lambda)}_\gamma \right\}$ satisfies the invariance and restriction conditions. Now given a primitive continuously extendable loop $\ell$, obtained as a concatenation of horizontal saddle connections $\delta_1, \ldots, \delta_t$ (possibly with repetition), and a finite length transverse arc $\gamma$, the number of intersection points $\#(\delta_i \cap \gamma)$ is again finite for each $i$, and we define a collection of measures $\left\{\theta^{(\ell)}_{\gamma} \right\}$ by 
\begin{equation}\label{eq: theta extendable}
\theta^{(\ell)}_\gamma(A) = \sum_{i=1}^t \#  (\delta_i \cap A).
\end{equation}
For each $\gamma$ let $\nu_\gamma^{(\mathrm{at})}$ denote the restriction of $\nu_\gamma $ to its atoms,
and let 
$$\nu^{(\mathrm{at})} \df \left\{\nu_{\gamma}^{(\mathrm{at})} \right\}, \ \ 
 \nu_\gamma^{(\mathrm{na})}  \df \nu_\gamma - \nu_\gamma^{(\mathrm{at})} \text{ and } \nu^{(\mathrm{na})} \df \left\{\nu_{\gamma}^{(\mathrm{na})} \right\}.$$
 \index{nu@$\nu^{(\mathrm{at})}$}  
Here is the definition of transverse measures which we will use in this paper. 
\begin{dfn}\label{def: atomic transverse measure}
A {\em transverse measure (to the horizontal foliation on $M$)} is a family of measures $\{\nu_\gamma\}$, indexed by finite length transverse arcs in $M \sm \Sigma$, such that 
\begin{itemize}
    \item The non-atomic part 
$\nu^{(\mathrm{na})}$ satisfies the invariance and restriction conditions given in \S \ref{subsec: transverse};  
\item there are at most finitely many primitive continuously extendable
loops $\ell_r$, at most finitely many closed horizontal leaves $\lambda_s$, and positive weights $a_r ,b_s$ such that the atomic part $\nu^{(\mathrm{at})}$ satisfies that for each transverse arc $\gamma$,  
\begin{equation}\label{eq: does not uniquely determine}
\nu_\gamma^{(\mathrm{at})} = \sum_r a_r \theta^{(\ell_r)}_{\gamma} + \sum_s b_s \theta_\gamma^{(\lambda_s)}.
\end{equation}
\end{itemize}
\end{dfn}
Our next goal is to define the cohomology class $\beta_\nu \in H^1(M_q, \Sigma_q; \R)$ associated with a transverse measure $\nu$, extending the assignment given in Proposition \ref{prop:trans to meas} to atomic transverse measures with atoms. To this end, given a continuously extendable loop $\ell  = (\delta_1, \ldots, \delta_t)$, a {\em continuous extension} $\check \ell$ of $\ell$ is a continuous
closed curve
homotopic to $\ell$ with all its points in $S\sm \bigcup_{\xi \in \Sigma} \mathcal{U}_\xi$, which is
the same as $\ell$ outside the neighborhoods $\mathcal{U}_\xi$, and
such that for each $i$, the intersection of $\delta_i, \delta_{i+1}$
with $\mathcal{U}_{\xi} $ is replaced with a curve on $\partial
\mathcal{U}_\xi$ corresponding to $r = r_0$ and $\theta$ in an
interval of length $\pi$. See Figure \ref{fig: ghost}. If $\lambda$ is a closed horizontal leaf on $M_q$, we let $\check \lambda$ denote the corresponding curve oriented in the direction of increasing $x$ coordinate. Both $\check \ell$ and $\check \lambda$ are closed oriented loops avoiding
$\Sigma_q$, and thus represent elements of
$H_1(M \sm \Sigma)$. By Poincar\'e-Lefschetz duality, these loops represent elements of
$H^1(M, \Sigma; \R)$, which we will denote by $\left[\check \ell \right], \left[\check \lambda \right]$.

We would like to use these cohomology classes in order to define the cohomology class associated with an atomic transverse measure. 
A complication  is that the objects defined above are not uniquely determined by the measure.  

\begin{example}\label{example: below}
We list some examples which are related to lack of uniqueness in our discussion above. 
    \begin{enumerate}
        \item 
        If $\xi \in \Sigma$ is a removable singularity (singularity of order 0) and $\delta_i$, $\delta_{i+1}$ are horizontal saddle connections which meet at $\xi$ and are consecutive along an extendable loop $\ell$,  there are two possibilities for a continuous extension $\check \ell$, corresponding to taking angles $+\pi$ or $-\pi$ for the $i$-th turning angle. 

        \item 
        If $M_q$ has a horizontal cylinder $C$, $\lambda_1$ and $\lambda_2$ are two parallel closed horizontal leaves in the interior of $C$, we have $\left [ \check \lambda_1 \right]  = \left [ \check \lambda_2 \right] $ (as elements of  $H_1(M_q \sm \Sigma_q)$). Moreover, if $h>0$ is the height of $C$ and $\nu_C$ is the restriction of Lebesgue measure to $C$ and $\beta_C$ is the cohomology class corresponding to $\nu_C$ as in 
        Proposition \ref{prop:trans to meas}, then  $\left[ \lambda_i \right] = \frac{1}{h}\beta_C.$ Finally, the class $[\lambda_i]$ can also be obtained from the two continuously extendable loops forming the top and bottom  boundary of $C$.
        \item 
        Two different surfaces $M_{q_1}, M_{q_2}$, each with a horizontal cylinder $C_1 \subset M_{q_1}, \, C_2 \subset M_{q_2}$, can be deformed into a surface $M_q$ on which the height of the cylinders $C_i$ has been taken to zero. This results in the same continuously extendable loop $\ell$ on $M_q$, for which the $C_i$ on $M_{q_i}$ correspond to two different continuous extensions $\check \ell_1, \check \ell_2.$ Such examples can be found using imaginary Rel deformations of horizontally periodic surfaces, see \cite{McMullen snow}. In Figure \ref{fig: ghost2} we show an example giving the same $\ell$ as in Figure \ref{fig: ghost}, via a different cut and paste operation involving cylinders. 
       \end{enumerate}
\end{example}      

  \begin{figure}
   \begin{tabular}{l r}

\begin{tikzpicture}[scale=0.8]

\def\u{2};
\def\v{3};
\def\w{4};

\def\ya{1};
\def\yb{3.5};
\def\yc{3.3};
\def\yd{5.5};

\def\sp{1.5mm};
\node (A0) at (0,\yb)  {};
\node (A1) at (\u,\yc) [circle,draw,fill=white,inner sep=0pt,minimum size=1.2mm] {};
\node (A2) at (\u,\yd) [circle,draw,fill=black,inner sep=0pt,minimum size=1.2mm] {};
\node (A3) at (\v,\ya) [circle,draw,fill=white,inner sep=0pt,minimum size=1.2mm] {};
\node (A4) at (\v,\yb) [circle,draw,fill=black,inner sep=0pt,minimum size=1.2mm] {};
\node (A5) at (\u+\v,\yc) [circle,draw,fill=white,inner sep=0pt,minimum size=1.2mm] {};
\node (A6) at (\u+\v,\yd)  [circle,draw,fill=black,inner sep=0pt,minimum size=1.2mm] {};
\node (A7) at (\v+\w,\ya)  [circle,draw,fill=white,inner sep=0pt,minimum size=1.2mm] {};
\node (A8) at (\v+\w,\yb)  [circle,draw,fill=black,inner sep=0pt,minimum size=1.2mm] {};
\node (A10) at ($(A2)+(A1)-(A0)$)  [circle,draw,fill=white,inner sep=0pt,minimum size=1.2mm] {};

\node (A11) at ($(A3)+(A8)-(A5)$)  [circle,draw,fill=black,inner sep=0pt,minimum size=1.2mm] {};

\draw (A1) -- (A2);
\draw (A1) -- (A4);
\draw (A4) -- (A5);
\draw (A5) -- (A8);
\draw (A2) -- (A10);
\draw (A10) -- (A6);
\draw (A3) -- (A4);
\draw (A3) -- (A11);
\draw (A11) -- (A7);
\draw (A5) -- (A6);
\draw (A7) -- (A8);

\end{tikzpicture}\ \ \ \ \ \ \ \ \

\begin{tikzpicture}[scale=0.8,
 bdot/.style={circle,draw,fill=black,inner sep=0pt,minimum size=1.2mm},
 wdot/.style={circle,draw,fill=white,inner sep=0pt,minimum size=1.2mm},
 sdot/.style={circle,draw,fill=black,inner sep=0pt,minimum size=0.3mm}]

\def\u{2};
\def\v{3};
\def\w{4};

\def\ya{1};
\def\yb{3.5};
\def\yc{3.5};
\def\yd{5.5};

\def\sp{1.5mm};

\node (A1) at (\u,\yc) [wdot] {};
\node (A2) at (\u,\yd) [bdot] {};
\node (A3) at (\v,\ya) [wdot] {};
\node (A4) at (\v,\yb) [bdot] {};
\node (A5) at (\u+\v,\yc) [wdot] {};
\node (A6) at (\u+\v,\yd)  [bdot] {};
\node (A7) at (\v+\w,\ya)  [wdot] {};
\node (A8) at (\v+\w,\yb)  [bdot] {};
\node (A10) at (\u+\u,\yd)  [wdot] {};
\node (A11) at (\v+\w-\u,\ya)  [bdot] {};

\draw (A1) -- (A2);
\draw (A1) -- (A4);
\draw (A4) -- (A5);
\draw (A5) -- (A8);
\draw (A2) -- (A10);
\draw (A10) -- (A6);
\draw (A3) -- (A4);
\draw (A3) -- (A11);
\draw (A11) -- (A7);
\draw (A5) -- (A6);
\draw (A7) -- (A8);
\draw [line width=1.2pt] ([shift=(0:2.5mm)] $(A1)$) arc (0:90:2.5mm);
\draw [line width=1.2pt] ([shift=(0:2.5mm)] $(A2)$) arc (0:-90:2.5mm);
\draw [line width=1.2pt] ([shift=(0:2.5mm)] $(A3)$) arc (0:90:2.5mm);
\draw [line width=1.2pt] ([shift=(0:2.5mm)] $(A4)$) arc (0:-90:2.5mm);
\draw [line width=1.2pt] ([shift=(0:-2.5mm)] $(A5)$) arc (180:90:2.5mm);
\draw [line width=1.2pt] ([shift=(0:-2.5mm)] $(A6)$) arc (180:270:2.5mm);
\draw [line width=1.2pt] ([shift=(0:-2.5mm)] $(A7)$) arc (180:90:2.5mm);
\draw [line width=1.2pt] ([shift=(0:-2.5mm)] $(A8)$) arc (180:270:2.5mm);
\draw [line width=1.2pt] ([shift=(0:-2.5mm)] $(A10)$);
\draw [line width=1.2pt] ([shift=(0:2.5mm)] $(A11)$); 
\draw [line width=1.2pt] ([shift=(0:2.5mm)] $(A4)$) -- ([shift=(0:-2.5mm)] $(A5)$);
\draw [line width=1.2pt] ([shift=(0:2.5mm)] $(A5)$) -- ([shift=(0:-2.5mm)] $(A8)$);
\draw [line width=1.2pt] ([shift=(0:2.5mm)] $(A1)$) -- ([shift=(0:-2.5mm)] $(A4)$);
\draw [line width=1.2pt] ([shift=(0:2.5mm)] $(A2)$) -- ([shift=(0:-2.5mm)] $(A10)$);
\draw [line width=1.2pt] ([shift=(0:2.5mm)] $(A10)$) -- ([shift=(0:-2.5mm)] $(A6)$);
\draw [line width=1.2pt] ([shift=(0:2.5mm)] $(A3)$) -- ([shift=(0:-2.5mm)] $(A11)$);
\draw [line width=1.2pt] ([shift=(0:2.5mm)] $(A11)$) -- ([shift=(0:-2.5mm)] $(A7)$);
\end{tikzpicture}
\end{tabular}
\caption{The figure on the left is another 3-cylinder surface obtained by deforming the surface in Figure \ref{fig: ghost}. This deformation  creates another cylinder. The solid line in the figure on the right represents the corresponding continuously extendable loop. }
\label{fig: ghost2} 
\end{figure}

In order to deal with the lack of uniqueness exhibited in these examples, we will make the following definition. 

\begin{dfn}\label{def: decoration}
    Let $\nu = \nu^{(\mathrm{na})}+\nu^{(\mathrm{at})}$ be a transverse measure, let $(\ell_r), (\lambda_s), (a_r), (b_s)$ be as in \eqref{eq: does not uniquely determine}, and for each $\ell_r$, let $\check \ell_r$ be a continuous extension of $\ell_r$. We refer to the quintuple 
     $\left[\nu, (\check \ell_r), (\lambda_s), (a_r), (b_s) \right]$ as a {\em decorated transverse measure}.
   The quadruple   $\left[(\check \ell_r), (\lambda_s), (a_r), (b_s) \right]$
    will be referred to as a {\em decoration of $\nu$.} \index{decoration (of a transverse measure)}
\end{dfn}

We now define the cohomology class $\beta_{\bar \nu} \in H^1(M_q, \Sigma_q; \R)$ associated with a decorated transverse measure $\bar \nu = \left[\nu, (\check \ell_r), (\lambda_s), (a_r), (b_s) \right].$ 
 The reader should note that whereas the definition of $\nu^{(\mathrm{na})}$ involves constructing an explicit cochain, we will only give $ \beta_{\bar \nu^{(\mathrm{at})}}$  as a
cohomology class.

Let $\nu = \nu^{(\mathrm{na})} + \nu^{(\mathrm{at})}$ be the decomposition of $\nu$ into its non-atomic and atomic parts. 
As
explained in \S\ref{subsec: transverse}, $\nu^{(\mathrm{na})}$ determines a 1-cochain $\beta_{\nu^{(\mathrm{na})}} \in H^1(M_q, \Sigma_q)$. 
We define 
$$\beta_{\bar \nu^{(\mathrm{at})}} \df \sum a_r \left[ \check \ell_r \right]+\sum b_s \left[ \check \lambda_s \right] \ \ \text{ and } \ \ \beta_{\bar \nu} \df \beta_{\nu^{(\mathrm{na})}} +\beta_{\bar \nu^{(\mathrm{at})}}. $$ 
\index{b@$\beta_{\bar \nu}$}

As reflected by the notation, the reader will notice that this  depends not only on $\nu$ but also on the choice of its decoration $\bar \nu$. Nevertheless we will sometimes abuse notation by writing $\beta_\nu$ instead of $\beta_{\bar \nu}$. Clearly we have equality when $\nu$ is non-atomic or when the atomic part  $\nu^{(\mathrm{at})}$ is supported only on closed horizontal leaves, and not on continuously extendable loops.

Recalling from \S \ref{subsec: length} that  $L_{q} (\beta_{\nu})$ is our notation for the evaluation of the cup
product $\hol_{ q}^{(x)} \cup \beta_{\bar \nu}$ on 
the fundamental class of $M_q$, the reader can check that 
\begin{equation}\label{eq: intersection length}
L_q (\beta_{\bar \nu^{(\mathrm{at})}}) = \sum_r  a_r |\ell_r| + \sum_s  b_s |\lambda_s|,
\end{equation}
where $|\ell_r|, \, |\lambda_s|$ denote respectively the horizontal length of $\ell_r$ and $\lambda_s.$
In particular, this number does not depend on the decoration $\bar \nu$ of $\nu.$ In addition, the positivity property $L_q(\beta_{\nu})>0$ (see
the first paragraph of \S \ref{subsec: length}) extends to foliation cocycles arising from 
atomic transverse measures, and we have a continuity property 
\begin{equation}\label{eq: a continuity property}
\nu_n \stackrel{\mathrm{weak-*}}{\longrightarrow} \nu_\infty \ \implies \ L_q(\beta_{\nu_n}) \stackrel{n \to \infty}{\longrightarrow} L_q(\beta_{\nu_\infty}),
\end{equation}
where by weak-$*$ convergence we mean weak-$*$ convergence of the corresponding measures on each closed transverse finite length arc. 

Let  $\alpha \mapsto \beta_\nu({\alpha})$ be the evaluation map. 
It is clear from the definition in \S \ref{subsec: transverse}, that if $\nu$ is
non-atomic and $\alpha$ is represented by a concatenation of
horizontal saddle connections, then $\beta_\nu(\alpha)=0.$
With our definition of $\beta_{\nu}$, we also have $ \beta_{\nu}(\alpha) 
=0$ if $\nu$ is atomic and $\alpha$ is a cycle
represented by a closed horizontal leaf, 
 because the leaf may be homotoped away from horizontal saddle connections. However it is possible to have  continuously extendable loops $\alpha$ and $\ell$ such that for the decorated atomic foliation cocycle $\beta_\nu$ associated with $\check \ell$ we have $\beta_\nu(\alpha) \neq 0.$



In case 
$\alpha$ is represented by an oriented horizontal saddle connection on $M_q$, $\check \ell$ is the continuous extension of a continuously extendable loop $\ell$ on $M_q$ which has a nontrivial intersection with $\alpha $, 
and $\bar \nu$ is the decorated transverse measure corresponding to $\check \ell,$ 
then the 
tremor 
$q_s \df \trem_{s\beta_{\bar \nu}}(q)$ will not be defined for all $s$. Indeed,  using \equ{eq: tremor}, for $s_0 = -\frac{L}{\left[ \check \ell \right](\alpha) }$ , where $L$ is the (oriented) length of $\alpha$,
we would have
$\hol_{q_{s_0}}(\alpha) = (0,0)$,
which is impossible. 
For instance, in Figure \ref{fig: ghost}, this situation will arise if $\alpha$ is the class represented by the horizontal saddle connections in the middle of the diagram. This shows why the
requirement in Proposition \ref{prop: tremor domain of defn}
that the tremor is non-atomic, is essential.  

\begin{remark}
    We do not define a version of a TCH for atomic transverse measures (note the assumption of non-atomicity in Proposition \ref{prop: tremor comparison}). There is a natural surgery, associated with a {\em decorated} atomic transverse measure, in which the surface is cut along a continuously extendable loop $\ell$ and reglued after a twist. One can show that for limits as in Figures \ref{fig: ghost} and \ref{fig: ghost2} this discontinuous map is the pointwise limit of cylinder twists, which are the corresponding TCH's on nearby surfaces. Furthermore, in some cases, including those shown in the figures, for small enough values of the twist parameter, this map coincides with a real Rel deformation.  
\end{remark}

\subsection{Refining an APD}
Our discussion of atomic transverse measures will rely on
the construction in \S \ref{subsec: polygonal tremors}. 
Recall from \S \ref{subsec: polygonal tremors}
that an APD for $q$ is a polygon decomposition of the underlying
surface $M_q$, into 
triangles and quadrilaterals, without horizontal edges, and such that the quadrilaterals contain a
horizontal diagonal.   We
consider all edges of an APD as open, i.e., they do not contain their
endpoints. In order to pay attention to atomic measures, we
further subdivide each edge of an APD into finitely many subintervals
by removing the points that lie on horizontal saddle connections. We
will denote by $J_i$ these open intervals lying on edges of an
APD. We will refer to an APD whose edges have been additionally
subdivided as above, as a {\em refined APD}. 
For each $i$, each polygon $P$ with $J_i \subset \partial P$, and each $x
\in J_i$, we define the opposite point $\mathrm{opp}_P(x)$ as in \S \ref{subsec: polygonal tremors}.  

Let $J=J_{i_0}$ for some $i_0$, $J \subset \partial
P$, and let $J' = \mathrm{opp}_P(J).$ Then $J'$ is
a union of either one or two of the intervals $J_i,$ for $
i \neq i_0$, depending on whether a point of $J$ has an opposite point
in $\Sigma$. In the former case we set $J_0=J$ and in the latter case
we set $J_0$ to be one of the two components of $ J \sm \mathrm{opp}_P^{-1}(\Sigma)$ and we replace $J'$ with $ \mathrm{opp}_P(J_0)$. With these
definitions 
$\mathrm{opp}_P|_{J_0}: J_0\to J'$ is a bijection. Note that each endpoint of
$J$ lies on a horizontal saddle connection or in $\Sigma$, and each endpoint
of $J_0$ is either an endpoint of some $J_i$ or lies on an
infinite critical leaf. 

We say that 
a transverse measure $\nu$ on $M_q$ {\em does not charge extendable loops} if all of the atoms of $\nu$ lie on 
closed horizontal leaves. That is, in \eqref{eq: does not uniquely determine}, the collection $(\ell_s)$ is empty. We now extend Proposition \ref{prop: they define} to such measures: 
\begin{prop}
\label{prop: they define2} Let $M_q$ be a translation surface equipped with a refined APD. The map which sends a transverse measure on $M_q$ to its restriction to the edges of the refined APD, is a bijection between a system of finite measures $\nu_J$ on the edges of the refined APD, satisfying the invariance property \eqref{eq: invariance
  property}, and transverse measures which do not charge extendable loops. 
\end{prop}
\begin{proof} If $\nu$ is a transverse measure which does not charge extendable loops, then it assigns a measure to each of the
intervals $J, J', J_0$,
and by our condition that any atoms lie on closed horizontal leaves, the restriction to
$J$ has the same mass as the restriction to $J_0$. The measures will
be denoted by $\nu_J, \nu_{J'}, \nu_{J_0}$. Their non-atomic part satisfies the
invariance property \eqref{eq: invariance
  property} by Proposition \ref{prop: they define}, and their atomic part is a finite combination of measures $\theta^{(\lambda)}_\gamma$, and these measures are easily seen to also satisfy \eqref{eq: invariance
  property}.

Conversely, suppose we are given a collection 
of finite measures $\nu_J$ on the edges $J$ as above, satisfying the invariance
property. Since an
infinite leaf has an accumulation point in one of the 
$J$, by the invariance property, any atoms of the measures $\nu_J$ lie
on closed horizontal leaves. The points of $M_q$ lying on horizontal saddle connections are
not in any of the $J$'s, and thus we can reconstruct from the $\nu_J$ a
transverse measure which does not charge extendable loops. 
\end{proof}

\subsection{Beginning the proof of  Proposition \ref{prop: semicontinuity}}

\color{black}
We will use the same proof strategy as in  \S \ref{subsec: polygonal tremors}. Namely, we will use refined APD's to describe transverse measures as measures on the edges of the APD, and discuss what happens to measures when taking limits. In this section, we will have to be more careful in treating limits of atomic measures. We will give the proof in three stages, each dealing with a more general case.
\begin{proof}[Proof of modified  Proposition \ref{prop: semicontinuity}]
We will prove the Proposition under two extra hypotheses given below as equations \eqref{eq: simplifying 1} and \eqref{eq: simplifying 2}. 
Let $ \til q_n \to \til q$ and $\beta_n \to \beta$ be as in
the statement of the Proposition, let $q_n  = \pi(\til q_n),  q = \pi(q)$
be the projections to $\HH$, and let $M_{q_n}, M_q$ be the underlying
surfaces. 
As in \S \ref{subsec: polygonal tremors}, 
we can assume that $\til q_n$ and $\til q$ are represented by marking
maps $\varphi_n \to M_{q_n}, \, \varphi: S \to M_q$ such that $\varphi_n
\circ \varphi^{-1}$ is piecewise affine with derivative tending to
$\mathrm{Id}$ as $n \to \infty$. Choose a refined APD for $q$, and let $K \subset M_q$ denote any one of the intervals
$J, J', J_0$.  We will sometimes use the same notation $K$ to refer to the corresponding arc on $M_{q_n}$ given by $\varphi_n \circ \varphi^{-1}(K)$. By our choice of marking maps, $K$ is a straight segment which is a subset of an edge of the same triangulation $\varphi_n(\tau)$, on each of the surfaces $M_{q_n}$; thus this inaccuracy should cause no confusion. Clearly we can pass to convergent subsequences in the course of the proof, and we will do so several times below.

Our first simplifying assumption is 
\begin{equation} \label{eq: simplifying 1} \begin{split} & \text{ each
 } \beta_n \text{  is equal to } \beta_{\nu_n} \text{  for some transverse measure
} \nu_n  \\ & \text{ which does not charge extendable loops}.
\end{split}
\end{equation}

Let $\nu_{K}^{(n)}$ denote the measure on $K$ given by the pushforward
of $\nu_n|K$ under $\varphi \circ \varphi_n^{-1}$, and 
denote the total variation of $\nu_{K}^{(n)}$ by $m_{K}^{(n)}$. This number can be
expressed as the evaluation of $\beta_n$ on a path $\sigma = \sigma_K$
from singular 
points to singular points that is a concatenation of $K$ with segments contained in horizontal leaves.  Since $\beta_n \to \beta$, we have
$m_{K}^{(n)} \to_{n  \to \infty} m_{K} = \beta(\sigma).$ Let $\til K =
\varphi^{-1}(K) \subset S$. Since $K$
is open and not horizontal, $\til K$ has a natural compactification
$\bar{K}$ in
which we add bottom and top endpoints $x_K^{\mathrm{b}}, x_K^{\mathrm{t}}$
to $\til K$. Note that we consider $\bar{K}$ abstractly, and not as a
subset of $S$. Because the $\nu^{(n)}_K$ do not charge extendable loops, each measure $\nu^{(n)}_{K}$ can be viewed as a measure
on the compact interval $\bar{K}$, assigning mass zero to
endpoints. Passing to further
subsequences, we can assume each sequence $\left(\nu_{K}^{(n)} \right)_n$ converges
to a measure $\nu_{\bar{K}}$ on $\bar{K}$ such that $\nu_K = \nu_{\bar{K}}|_K$. 
We have \eq{eq: gets used}{
m_{K} = \nu_{\bar{K}}(\bar{K})=\nu_{K}(\til K) +
\mathrm{e}_K^{\mathrm{b}} + \mathrm{e}_K^{\mathrm{t}}, }
where we call the numbers 
$$\mathrm{e}_K^{\mathrm{b}} \df \nu_{\bar{K}}(\{x_K^{\mathrm{b}}\}), \ \ 
\mathrm{e}_K^{\mathrm{t}} \df 
\nu_{\bar{K}}(\{x_K^{\mathrm{t}}\})
$$
{\em  the escape of mass parameters} to endpoints. \index{escape of mass parameters} We can concretely express the
$\mathrm{e}_{K}^{\mathrm{b,t}}$ by subdividing $K$ into two half-intervals
$K^{\mathrm{b}}, 
K^{\mathrm{t}}$ whose common endpoint is an interior point of $K$
which has zero measure under $\nu_K$. In these terms 
\eq{eq: loss of mass explicit}{
\mathrm{e}^{\mathrm{b}}_K
= \lim_{n \to \infty} \nu_K^{(n)}(K^{\mathrm{b}}) -
\nu_K(K^{\mathrm{b}})
}
(and this limit does not depend on the decomposition $K =
K^{\mathrm{b}} \cup K^{\mathrm{t}}$).

Since the collection of measures $\{\nu_K\}$ satisfies the invariance
property, where all atoms that appear lie on closed horizontal leaves, it defines  a 
transverse measure which does not charge extendable loops, and we let $\beta'$ be the corresponding cohomology
class. 

Our second  simplifying assumption is 
 that there is no escape of mass, i.e.
\begin{equation}\label{eq: simplifying 2}
   \text{ all the numbers } \mathrm{e}_K^{\mathrm{b, t}} \text{ are equal to 0}. 
\end{equation}

Using the fact that $\beta_n$ does not charge extendable loops, for
each edge $E$ of the APD we have: 
$$\beta(E) \leftarrow \beta_n(E) = \sum_{K }
m^{(n)}_{K}\to \sum_{K} m_{K} = \sum_{K} \nu_{K}(\til K) =
\beta'(E),$$ 
where the sum ranges over open intervals $K \subset E$ covering all but
finitely many points of $E$, and the first equality follows from formulas \equ{eq: gets used} and \eqref{eq: simplifying 2}. In this case we have shown that $\beta =
\beta'$ corresponds to a transverse measure, and we are done. 
This establishes the statement under our simplifying assumptions \eqref{eq: simplifying 1} and \eqref{eq: simplifying 2}. 
\end{proof}

\subsection{Using boundary-marked surfaces}\label{subsec: using marked prongs}
We continue under assumption \eqref{eq: simplifying 1} but without assuming \eqref{eq: simplifying 2}. That is, the measures $\nu_n$ do not charge extendable loops, but some of the 
$\mathrm{e}_K^{\mathrm{b, t}} $ are positive, and the limit measures have atoms on the horizontal saddle connections at the endpoints of $K$. In order to treat this case, we will need to
record additional information about the 
invariance property satisfied by the measures $\nu_{\bar{K}}$. Informally, if we have escape of mass to a point $\xi$ which is either a singularity, or  the intersection of a horizontal saddle connection  with an edge of the refined APD,  we will want to record the angular sector of length $\pi$ at $\xi$, bounded by horizontal sides, to which the mass escaped. 
 Recording this additional information will give rise to continuous extensions of extendable loops. More precisely, after passing to subsequences, the information encoded in the numbers $\mathrm{e}_K^{\mathrm{b,t}}$ will yield a limiting  transverse measure as in Definition \ref{def: atomic transverse measure}  and a decoration as in Definition \ref{def: decoration}.

In order to formalize this, 
it will be useful to use boundary-marked surfaces \index{boundary marked surfaces} (see
\cite[\S 2.5]{eigenform}). 
Let $\check S \to \check q$ be a blown-up \index{blown-up surfaces} marked version of the marked
surface $S\to q$. Let $\xi \in
\Sigma$ and recall that $\check q$ replaces $\xi$ with a circle
parameterized by an angular variable $\theta$ taking values in $\R /
(2(a+1)\pi\Z) $, where $a$ is 
the order of $\xi$. Each $\theta$ will be called a {\em prong at $\xi$} which
can be thought of as the tangent direction of an infinitesimal line
segment of angle $\theta \mod 2\pi\Z$ ending at $\xi$. The
infinitesimal line is horizontal if and only if $\theta \in\pi\Z$. In a similar
way we can blow up nonsingular points of $S$, replacing them with a
circle parameterized by $\R/2\pi\Z$, and thus talk about the prongs at
a regular point (this corresponds to a singularity of order $a=0$). For each $k \in
\Z/(2(a+1)\pi \Z)$, and each $\xi$, 
two prongs at $\xi$ are
called {\em 
  bottom-adjacent} (resp. {\em top-adjacent}) if their angular
parameter belongs to the same interval $[k\pi, (k+1)\pi]$ with $k$
even (resp. odd), and {\em adjacent} \index{adjacent prongs} if they are either bottom- or
top-adjacent. For example, two horizontal prongs corresponding to two saddle connections meeting at a singular point $\xi$ on a bottom component of the boundary of a horizontal cylinder are bottom-adjacent to each other, and are also bottom-adjacent  to any prong moving upward from $\xi$ into the interior of the cylinder.

By definition of an APD, at each $\xi$
and each $k$, there is at least one edge $E$ with an endpoint in $(k
\pi, (k+1)\pi)$. 
We have compactified the line segments $K$ corresponding to $J, J_0,
J'$ as above by abstract points $x_K^{\mathrm{b}},x_K^{\mathrm{t}}$,
and these points map to points in $\check S$ by continuously extending the
embedding $\til K \to \check S$.  We will denote these points in
$\check S$ by their angular parameters $\theta_K^{\mathrm{b,t}} $ and call them {\em
  prongs of the APD}. Any point which is a regular point on the surface $\til q$, and which is on the interior of an edge $J$ (in the above notation, points of $J \sm J_0$),  will only be the endpoint of one top prong and one bottom prong, and the adjacency classes of these prongs will be singletons. In order to keep the notation consistent we will still refer to these endpoints as prongs, although we do not need to mark these points or blow up $\til q$ at these points.   
  
Since the APD contains no horizontal segments,
$\theta_K^{\mathrm{b,t}} \notin \pi \Z.$
Note that for $k$ even (resp. odd), 
all prongs of the APD with angular parameter in $(k\pi, (k+1)\pi)$ are
of form $\theta_K^{\mathrm{b}}$ (resp. $\theta_K^{\mathrm{t}}$).
In the preceding discussion (see formula \eqref{eq: loss of mass explicit}), we have associated to
each of these prongs an 
`escape of mass' quantity $\mathrm{e}_K^{\mathrm{b,t}}$. 

\medskip

\noindent

\begin{claim}\name{claim: claim 1}
\begin{enumerate}
\item
The weights of prongs of the APD only depend on their adjacency
class. More precisely, if $K, K'$ are edges of the APD with bottom-
(resp. top-) adjacent prongs
$\theta_{K}^{\mathrm{b}}, \theta_{K'}^{\mathrm{b}}$
(resp. $\theta_{K}^{\mathrm{t}}, \theta_{K'}^{\mathrm{t}}$) 
then $\mathrm{e}_K^{\mathrm{b}} = \mathrm{e}_{K'}^{\mathrm{b}}$
(resp. $\mathrm{e}_K^{\mathrm{t}} = \mathrm{e}_{K'}^{\mathrm{t}}$). 

\item
For any horizontal saddle connection $\sigma$, let
$\xi_1,
\xi_2$ in $S$ be consecutive points of $\sigma$ lying on edges of the
APD (the $\xi_i$ could either be singular points or interior points of edges of
the APD which are endpoints of subintervals $K$). For $i=1,2$, let 
$\theta_i^{(\sigma)}$ represent the two prongs of
$\sigma$ at $\xi_i$, and let $K_i$ (resp. $ L_i$) be intervals with
prongs at $\xi_i$ which are part of the APD, such that $\theta_{K_i}$
(resp. $\theta_{L_i}$) is bottom-
(resp. top-) adjacent to $\theta^{(\sigma)}_i$. See Figure \ref{fig: adjacency1}. Then 
\eq{eq: invariance endpoints}{
\mathrm{e}_{K_1}^{\mathrm{b}}+
\mathrm{e}_{L_1}^{\mathrm{t}} = 
\mathrm{e}_{K_2}^{\mathrm{b}}+ 
\mathrm{e}_{L_2}^{\mathrm{t}}.
}
\item
If a horizontal prong adjacent to $\theta_K^{\mathrm{b,t}}$ is on an
infinite critical leaf then $\mathrm{e}_K^{\mathrm{b,t}}=0.$
\end{enumerate}

\end{claim}

 \begin{figure}[h]
\begin{tikzpicture}[scale=1.0]

\def\xa{-1};
\def\xb{1};
\def\xc{2.6};

\def\ya{-0.5};
\def\yb{1.0};
\def\yc{2.2};
\def\yd{3};

\node (P0) at (\xb,\ya) [circle,draw,fill=black,inner sep=0pt,minimum size=1.2mm] {};
\node (P1) at (\xa,\yc) [circle,draw,fill=black,inner sep=0pt,minimum size=1.2mm,label=180:$ $] {};
\node (P2) at (\xc,\yd) [circle,draw,fill=black,inner sep=0pt,minimum size=1.2mm,label=0:$ $] {};

\draw (P0) -- node[left, pos=0.76] {$K_1$} (P1);
\draw (P0) -- node[left, pos=0.24] {$K_2$} (P1);
\draw (P0) -- node[right, pos=0.7] {$L_1$} (P2);
\draw (P0) -- node[right, pos=0.15] {$L_2$} (P2);
\draw (P1) -- (P2);

\path [name path=horizontal line]  (-1,\yb) -- (3,\yb);
\path [name path=first line]  (P0) -- (P1);
\path [name path=second line]  (P0) -- (P2);
\path [name intersections={of=horizontal line and first line, by=xx}]; 
\path [name intersections={of=horizontal line and second line, by=yy}]; 

\draw (-1,\yb) -- node[below]{$\sigma$} (3,\yb);
\node [left] (L) at (-1,\yb) {};

\node (X) at (xx) [circle,draw,fill=black,inner sep=0pt,minimum size=0.7mm, label=-100:$\xi_1$] {};
\node (Y)  at (yy) [circle,draw,fill=black,inner sep=0pt,minimum size=0.7mm, label=-70:$\xi_2$] {};

\end{tikzpicture}\ \ \ \ \ \ \ \ \

\caption{If $\sigma$ is a horizontal saddle connection passing through a polygon $P$ in a refined APD, then the total mass lost to the intersection points of $\sigma$ with edges of $P$ is the same. }
\label{fig: adjacency1} 
\end{figure}

\noindent {\em Proof of Claim \ref{claim: claim 1}.}
 Because adjacent prongs are in the same $(k\pi,(k+1)\pi)$ interval of
 direction, they are exchanged by $\opp_P$ and so statement (1)
 follows from \equ{eq: loss of mass explicit} and the invariance 
property \eqref{eq: invariance
  property} which the measures $\nu_K^{(n)}$ satisfy.
To see (2), note that the assumption
that $\xi_i$ are consecutive along $\sigma$ means that
$K_1, L_1$ are both
subintervals of an edge of the APD, and similarly for $K_2, L_2,$ where the two edges are edges of one polygon $P$, and with
$$\mathrm{opp}_P(K_1) = K_2 \ \ \text{ and } \ \mathrm{opp}_P(L_1) = L_2.$$ 
\combarak{I drew a figure in my notebook to explain the notation here
  and put a photo of it in
  the dropbox folder.}
By \equ{eq: loss of mass explicit} we have 
$$\mathrm{e}^{\mathrm{b}}_{K_i} + \mathrm{e}^{\mathrm{t}}_{L_i} = \lim_{n\to \infty}
\left(\nu^{(n)}_{K_i}(K_i^{\mathrm{b}})+\nu^{(n)}_{L_i}(L_i^{\mathrm{t}})
  \right)
- \left(\nu_{K_i}(K^{\mathrm{b}}_i)+\nu_{L_i}(L^{\mathrm{t}}_i) \right)$$ 
for each $i$, and \equ{eq:
  invariance endpoints} follows from the invariance property of each
of the $\nu^{(n)}_K$ on $K^{\mathrm{b}}_1, L^{\mathrm{t}}_1,
K^{\mathrm{b}}_1 \cup L^{\mathrm{t}}_1$. 

For (3), any critical leaf $\ell$ 
intersects some  interval $J$ of the APD
in its interior infinitely many times. If
$\mathrm{e}_K^{\mathrm{b,t}}\neq 0$ for a prong 
$\theta_K^{\mathrm{b, t}}$ adjacent to a prong defined by an endpoint
of $\ell$, we obtain 
infinitely many atoms in the
interior of $J$, and by the invariance property,
they all have the same $\nu_J$-mass. This contradicts the finiteness of the
measure $\nu_J$.  \hfill $\triangle$

\vspace{3mm}

We can now 
interpret extendable loops for boundary-marked
surfaces using our notion of adjacency:
an extendable loop $\ell$ is 
a loop formed as a concatenation of saddle connections which are either bottom-adjacent or top-adjacent at each of their endpoints. Moreover, if one of these saddle connections passes through a singular point of order zero,  a continuous extension $\check \ell$ of $\ell$ specifies a particular adjacency class. Thus each meeting point 
$\xi$ of consecutive horizontal saddle connections $\delta_i, \delta_{i+1}$ along a continuous extension $\check \ell $ of extendable loop $\ell$ represents an adjacency class at $\xi$  and we say
that $\check \ell$ {\em represents} this class. A primitive extendable loop can represent a given adjacency class at most once. By item (1) of Claim \ref{claim: claim 1}, the escape of mass parameters $\mathrm{e}_K^{\mathrm{b,t}}$ 
assign numbers $\mathrm{e}_{\mathcal{A}}$ to each bottom/top adjacency class
$\mathcal{A}$. 
The following claim shows that these numbers can be expressed in terms of extendable loops (and in fact, explicit continuous extensions of these loops).

\begin{claim}\name{claim: claim 2}
There is a finite collection $(\check \ell_s)$ of continuous extensions of primitive extendable loops and
finitely many positive real numbers $b_s$ such that for
each adjacency class $\mathcal{A}$, 
\begin{equation}\label{eq: sum}
\mathrm{e}_{\mathcal{A}} = 
\sum_{\check \ell_s \text{ represents } \mathcal{A}} b_s. 
\end{equation}

\end{claim}
\noindent {\em Proof of Claim \ref{claim: claim 2}.} 
The proof is by induction on the number of adjacency classes $\mathcal{A}$ for which $\mathrm{e}_{\mathcal{A}} \neq 0$. When this number is zero, we can take $s=0$ and the claim holds vacuously.
Choose the adjacency class $\mathcal{A}_1$ for which 
$$\mathrm{e}_{\mathcal{A}_1}  = \min\{\mathrm{e}_{\mathcal{A}}:
\mathrm{e}_{\mathcal{A}}>0\}. $$
For the induction step,
 we will  show
that $M_q$ contains a primitive extendable loop, such that all the adjacency
classes $\mathcal{A}$ represented by this loop satisfy
$\mathrm{e}_{\mathcal{A}} \geq \mathrm{e}_{\mathcal{A}_1}$. To see
this, let $\delta_1$ be an outgoing (i.e., right-pointing) prong in 
$\mathcal{A}_1$. According to item (3) of Claim \ref{claim: claim 1}, $ \delta_1$ is the initial point of a horizontal
saddle connection $\delta_1$. Let $\mathcal{A}_2^{\mathrm{b,t}}$ be the two adjacency classes of the
terminal point of $\delta_1$. Then according to \equ{eq: invariance
  endpoints}, at least one of
$\mathrm{e}_{\mathcal{A}^{\mathrm{b,t}}_2}^{\mathrm{b,t}}$ is
positive, and hence is
bounded below by $\mathrm{e}_{\mathcal{A}_1}.$ We label this adjacency class $\mathcal{A}_2$,  choose $\delta_2$ to lie on an
outgoing prong representing $\mathcal{A}_2$. Continuing, we find consecutive saddle connections 
$\delta_1, \delta_2, \ldots$, with turning angles $\pm \pi,$ whose endpoints represent adjacency classes $\mathcal{A}_i$ for which $\mathrm{e}_{\mathcal{A}_i} \geq \mathrm{e}_{\mathcal{A}_1}$. Eventually an adjacency class must be represented twice along this sequence, which means that some subset $\delta_{i_0}, \ldots, \delta_{j_0}$ of consecutive loops in  $\delta_1, \delta_2, \dots$  forms an extendable loop $\ell$, with $\mathrm{e}_{\mathcal{A}_i}\geq
\mathrm{e}_{\mathcal{A}_1}$ for each $i$. The adjacency classes $\mathrm{e}_{\mathcal{A}_i}$ equip $\ell$ with a continuous extension $\check \ell.$
We define 
$$b \df \min\{\mathrm{e}_{\mathcal{A}_i} : i_0 \leq i \leq j_0\}.$$ 
Replacing $\mathrm{e}_{\mathcal{A}_i}$ with $\mathrm{e}_{\mathcal{A}_i}-b$ for each $i \in \{i_0, \ldots, j_0\}$ we have a new collection with a smaller number of adjacency classes for which $\mathrm{e}_{\mathcal{A}} \neq 0.$ We can apply the induction hypothesis to this new collection, and obtain our statement by induction.  
\hfill $\triangle$

\medskip

\begin{proof}[Proof of Proposition \ref{prop: semicontinuity} under one simplifying assumption]
We continue with the notation used above, and we assume \eqref{eq: simplifying 1} but not \eqref{eq: simplifying 2}. We have that $\beta = \lim_{n\to \infty} \beta_n$ is a limit of cohomology classes corresponding to transverse measures which do not charge extendable loops, and $\beta'$ is the cohomology class corresponding to the  limiting transverse measure on the interior of edges of the refined APD (see the paragraph before equation \eqref{eq: simplifying 2}). 

We now show
\eq{eq: finally}{
\beta - \beta' = \sum_s b_s \left[\check \ell_s \right],
} 
where the $\check \ell_s$ and $b_s$ are provided by Claim \ref{claim: claim 2}. 
Indeed, it is enough to check this identity by evaluating 
on the paths $\alpha = \sigma_K$ introduced in the paragraph above \eqref{eq: gets used}, since such paths represent cycles which generate $H_1(M_q,
\Sigma_q)$. For such paths, \equ{eq: finally} is immediate from \equ{eq:
  gets used} and \equ{eq:
  sum}. 
Equation \equ{eq: finally} completes the proof of Proposition
\ref{prop: semicontinuity}, under the assumption that the $\beta_n$ do not charge extendable loops. 
\end{proof}

\subsection{Limits of extendable loops}
Our next goal is to remove assumption \eqref{eq: simplifying 1}. To this end we prove the following Proposition, which is another special case of Proposition \ref{prop: semicontinuity}: 
\begin{prop}\label{prop: one more simplification}
Suppose $\til q_n \to \til q$ is a convergent sequence of marked translation surfaces, corresponding to marking maps $\varphi_n, \varphi$. Suppose  $\nu_n$ is an atomic transverse measure of the form $a_n \theta^{(\ell_n)}$ on $M_{q_n},$ where $a_n>0, \, \ell_n$ is a primitive continuously extendable loop, and $\theta^{(\ell_n)}$ is a collection of measures on transverse arcs as in \eqref{eq: theta extendable}. For each $n$ let $\check \ell_n$ be a continuous extension of $\ell_n$ and let $\beta_n \in H^1(S, \Sigma; \R^2)$ be the  cohomology classes corresponding to $a_n \check \ell_n$. Assume that $\beta_n \to \beta.$ Then there is a decorated transverse measure $\bar \nu$ on $M_q$ such that $\beta = \beta_{\bar \nu}$.
\end{prop}

\begin{proof}
Let $\til q_n \to \til q$ be marked translation surfaces as in the preceding discussion, corresponding to marking maps $\varphi_n, \varphi$, which are chosen so that the transitions $\varphi_n \circ \varphi^{-1}$ are piecewise affine and map the edges of an APD on $M_{q}$ to edges of a triangulation on each $M_{q_n}$. Let $\check M_{q_n}, \check M_q$ be their boundary-marked versions; as before, these blow-ups allow us to speak of each of the adjacency classes represented by $\check \ell_n$, on all of the surfaces $\check M_{q_n}, \check M_q. $ Passing to a subsequence we can assume that the cyclically ordered list of adjacency classes represented by  $\check \ell_n$ is the same for all $n.$ We can also assume that $a_n \to a$ for some $a\geq 0.$

{\bf Case 1.}
Along a subsequence, the total horizontal length of $\ell_n$ is bounded on $M_{q_n}$. 
In this case, we will show that after passing to a subsequence: 
\setlist[enumerate,1]{label=(\roman*)}
\begin{enumerate}
    \item \label{item: i boundary}
for all  $n$, the continuous extensions $\varphi_n^{-1}(\check \ell_n)$ are homotopic to each other rel $\Sigma$; 
\item \label{item: iii boundary} if $a=0$ then the measures $\nu_n$ converge to $0. $
\item \label{item: ii boundary} there is a decorated  atomic transverse measure $\bar \nu$ on $M_q$, supported on a continuously extendable loop whose extension is also homotopic to $\check \ell_n$, such that $\beta = \beta_{\bar \nu}$.
\end{enumerate}

On $M_q$ there are only finitely many saddle connections of a bounded length. Using the blown-up translation surface structure, each of them is uniquely determined up to orientation with its initial prong. 
Each of the saddle connections $\delta^{(n)}_i$ comprising $\ell_n$ is a horizontal saddle connection of bounded length on each $M_{q_n}$, and the corresponding prongs converge to those on the boundary-marked surface $\check M_q$. This implies that up to taking subsequences, we can assume that for all large enough $n_1, n_2, $ the number of saddle connections comprising $\ell_{n_1}$ is the same as that for $\ell_{n_2},$ and for every $i$, the segments $ \varphi_{n_1}^{-1}(\delta^{(n_1)}_i), \ \varphi_{n_2}^{-1}(\delta^{(n_2)}_i)$ are homotopic to each other on $S$ rel $\Sigma.$ This means that for all large $n,$ the $\varphi_n^{-1}(\ell_n)$ are homotopic to each other, and after passing to a subsequence, \ref{item: i boundary} holds. We denote the homology classes represented by the eventual value of $\varphi_n^{-1}(\check \ell_n)$ by $\check \ell_\infty$. 

Given our refined APD on $M_q,$ we see that in Case 1, the number of intersection points of $\varphi \circ \varphi_n^{-1}(\ell_n)$ with each edge of each polygon is bounded above by a number independent of $n$. 
It follows from \eqref{eq: theta extendable} that the total mass of the pushforward $(\varphi \circ \varphi_n^{-1})_* \theta^{(\ell_n)}$ to each edge of the APD is bounded above, uniformly in $n$. From this 
\ref{item: iii boundary} follows. 

In particular, in proving \ref{item: ii boundary}, we can assume $a>0,$ and we can replace each $\nu_n$ by $\frac{1}{a_n}\, \nu_n $ to assume that $\nu_n = \theta^{(\ell_n)}$. We see from \ref{item: i boundary} that $\beta_n = [\check \ell_\infty]$ is a  constant sequence. 
We now show that on $M_q$, the loop $\varphi (\check \ell_\infty)$ is homotopic to a continuous extension of an extendable loop. Note that for each $i$, the path $\delta'_i \df \varphi \circ \varphi_n^{-1}(\delta_i)$ is a limit of horizontal saddle connections of bounded length on nearby surfaces, so is either homotopic to a horizontal saddle connection, or to a concatenation of several horizontal saddle connections. 
Passing to subsequences we can assume that on $M_q$,  $\delta'_i$ is homotopic rel $\Sigma$ to a concatenation of horizontal saddle  connections $\delta'_{i,1}, \ldots, \delta'_{i,j}$ for some $j = j(i)\geq 1$, and we need to show that the turning angle at the terminal endpoints of each of the saddle connections $\delta_{i,r}$ is $\pm \pi$. This is clear if $r=j(i)$, 
because 
 the terminal prong at $\delta_{i,j(i)}$ is the terminal prong of $\delta'_i$ and is represented by the  extendable loop $\varphi_n^{-1}(\ell_n)$. 
 If $1 \leq r < j(i)$ then on the surface $M_{q_n}$, the terminal endpoint of $\varphi_n \circ \varphi^{-1}(\delta_{i,r})$ is nearly on the interior of $\delta^{(n)}_i$, is either slightly below it or slightly above it, and is not very close to other singular points. Passing to subsequences we can assume that for all $i,j$, the direction from which $\varphi_n \circ \varphi^{-1}(\delta_{i,r})$ approaches the interior of $\delta_{i}^{(n)}$ is the same for all $n.$ This shows that $\varphi(\check \ell_\infty)$ is homotopic to the continuous extension of an extendable loop on $M_q$, which we denote by $\check \ell$.


 Define $\nu = \theta^{(\ell)},$ and let $\bar \nu$ be its decoration by $\check \ell.$ We find that 
$$\nu = \lim_{n \to \infty} (\varphi \circ 
\varphi_n^{-1})_* \nu_n \ \ \text{ and} \ \ \beta_{\bar \nu} = [\check \varphi^{-1}(\ell)] = [\check  \ell_\infty] = \lim_{n\to \infty} \beta_n. $$ 
This completes the proof in Case 1.  

\medskip

  {\bf Case 2.} The total length of $\ell_n$ is a sequence tending to infinity as $n\to \infty.$

For each edge $K$ of the refined APD on $M_q$ fixed above, we continue to denote its image under $\varphi_n \circ \varphi^{-1}$ by $K$, repeating our plea to the reader to overlook this inaccuracy. With this notation the measures $(\nu_n)|_K$ can all be considered as measures on the same interval $K.$ Let 
$$N_n(K) \df \#(\ell_n \cap K), \ \ \text{ and } \ N_n \df \max_K N_n(K).$$ 
Then in Case 2 we have $N_n \to \infty.$ Since the cohomology classes $\beta_n$ converge, the sequence of numbers $L_{q_n}(\beta_n)$ is bounded, and using \eqref{eq: intersection length} we find that \begin{equation}\label{eq: b to zero}
a= \lim_{n \to \infty} a_n =0.
\end{equation}
Fix an edge $K$ of the refined APD and simplify notation by writing $\eta_n \df (\nu_n)|_K$.
If $N_n(K)$ is a bounded sequence, then the measure $\theta_K^{(\ell_n)}$ is bounded, and hence by \eqref{eq: b to zero}, the sequence of measures $\eta_n$ tends to 0.

Now suppose 
\begin{equation} \label{eq: now suppose}
N_n(K) \to \infty.
\end{equation}
Passing to a subsequence (same subsequence for all $K$), we have that the measures $(\eta_n)$ converge to a limit measure $\eta_{\infty} = \eta_\infty(K)$ (perhaps with a smaller mass than the liminf of the masses of $\eta_n$), and the measures $(\eta_n)$ also determine the escape of mass parameters $\mathrm{e}_K^{\mathrm{b,t}}$ via formula \eqref{eq: loss of mass explicit}. In order to complete the proof, following the strategy used in Case 1, it suffices to prove the following:
\setlist[enumerate,1]{label=(\alph*)}
\begin{enumerate}
    \item \label{item: do not charge} The measures $\eta_\infty$ do not charge extendable loops, and the corresponding system of measures $(\eta_\infty(K))_K$ satisfies the invariance property.
    \item \label{item: adjacency only} 
    The numbers $\mathrm{e}_K^{\mathrm{b,t}}$ satisfy the conclusions of Claim \ref{claim: claim 1}. In particular, they depend only on the adjacency class represented by the bottom and top prongs of $K$ respectively, and thus all adjacency classes $\mathcal{A}$ of the refined APD are assigned numbers $\mathrm{e}_{\mathcal{A}}$. 
    \item \label{item: as in Claim 2} the collection $(\mathrm{e}_{\mathcal{A}})_{\mathcal{A}}$ satisfies the conclusion of Claim \ref{claim: claim 2}. 
\end{enumerate}

Here $\eta_\infty$ and $\mathrm{e}_J^{\mathrm{b,t}}$ correspond respectively to the non-atomic and atomic part of the limiting transverse  measure. 

To see that the measures $\eta_\infty$ do not charge extendable loops, recall that the interval $K$ is open and does not intersect horizontal saddle connections. This means that any measure supported on $K$ does not charge extendable loops.  

For the invariance property we argue similarly to the proof of Claim \ref{claim: claim}. Namely, for a compact subinterval $J$ of $K$ we let $K' = \mathrm{opp}_P(K)$ be the opposite interval on the refined APD on $M_q$, and let $\mathrm{opp}_P^{(n)}: J \to K'$ denote the map 
$$\mathrm{opp}_P^{(n)}(x) \df  \varphi \circ \varphi_n^{-1}(\mathrm{opp}_{P,n}(x)),$$
where $\mathrm{opp}_{P,n}(x)$ is the intersection (on $M_{q_n}$) of the horizontal line through $x$ with the edge opposite to $K.$ Note that this map might not be defined for given $n$, for some $x$ near the endpoints of $K$, but is defined for all $x \in J$ and all large enough $n,$ depending on $J$. 
With this notation we need to show that if $f$ is a continuous compactly supported function on $K'$, then 
\begin{equation}\label{eq: need Portmanteau}
\int f \circ \opp_P \, d\eta_\infty = \lim_{n \to \infty} \int f \circ \opp_P^{(n)} d\eta_n, 
\end{equation}
 where the map on the right-hand side is well-defined for all large enough $n$ depending on $\mathrm{opp}_P^{-1}(\mathrm{supp}(f)).$ The left hand side of \eqref{eq: need Portmanteau} is equal to $\lim_{n \to \infty} \int f \circ \mathrm{opp}_P d\eta_n$ by definition of $\eta_\infty$, and, using the fact that the maps $f \circ \mathrm{opp}_P^{(n)}$ converge to $f \circ \mathrm{opp}_P$ uniformly on $K$, this is equal to the right-hand side of \eqref{eq: need Portmanteau}. We have proved \ref{item: do not charge}. 

For the proof of Claim \ref{claim: claim 1}, we used the invariance property of the measures $\nu_K^{(n)}.$ The measures $\eta_n$ do not satisfy the invariance property but they almost do so. Namely, let $K$ be an edge of the refined APD, and let $K' = \mathrm{opp}_P(K)$ be the opposite edge for some polygon $P$. Any connected component of $\ell_n \cap P$ gives rise to two intersection points with $K$ and $K'$, which are images of each other under $\mathrm{opp}_{P}^{(n)}$, unless the connected component ends at a singular point at one of the endpoints of $K$ or $K'$. Thus, up to possible removing a bounded number of points from $K \cup K'$, corresponding to endpoints and their image under $\mathrm{opp}^{(n)}_P$,  the map $\mathrm{opp}_P^{(n)}$ induces a matching between points of $\ell_n \cap K$ and points of $\ell_n \cap K'. $ Removing the contributions of these points from the formula \eqref{eq: theta extendable} we modify $\eta_n$ slightly to obtain a new sequence of measures $\eta'_n$. In view of \eqref{eq: b to zero} and \eqref{eq: now suppose}, this new sequence $(\eta'_n)$ has the same limit $\eta_\infty$ and defines the same numbers $\mathrm{e}_K^{\mathrm{b,t}}.$ Thus, we can replace $\eta_n$ by $\eta'_n$ and the proof of Claim \ref{claim: claim 1} goes through to prove \ref{item: adjacency only}. Finally the proof of Claim \ref{claim: claim 2} only uses the conclusions of Claim \ref{claim: claim 1}, so we get \ref{item: as in Claim 2}.
 \end{proof}

\begin{proof}[Completing the proof of Proposition \ref{prop: semicontinuity}]
For each $n$ let $\bar \nu_n$ be a decorated version of $\nu_n$ so that $\beta_n = \beta_{\bar \nu_n}$. We write each $\nu_n$ as a sum $\nu'_n + \nu''_n,$ where $\nu'_n$ does not charge extendable loops, and $\nu''_n$ is a finite linear combination, with positive coefficients, of measures $\theta^{(\ell_{n,k})}$ supported on primitive continuously extendable paths $\ell_{n,k}$.
For each $\ell_{n,k}, $ the decoration $\bar \nu_n$ induces  a decoration of $\theta^{(\ell_{n,k})}$. This  amounts to choosing a continuous extension $\check \ell_{n,k}$ of each $\ell_{n,k}$. Decompose $\beta_n = \beta'_n + \beta''_n$ where $\beta'_n$ and $\beta''_n$ are the cohomology classes corresponding to $\nu'_n$ and $\nu''_n$. Further decompose 
$$\beta''_n =\sum_{k=1}^{m_n} \beta''_{n,k}, \ \ \ \text{ where } \ \ \beta''_{n,k} \df a_{n,k} \left[\check \ell_{n,k} \right],$$ 
for positive coefficients $a_{n,k}$ and where $m_n$ is the number of summands. The sequence  $m_n$ is bounded since the number of primitive countinuously extendable loops is bounded, and we will show below that  the cohomology classes $\beta'_n$ and $\beta''_{n,k}$ are all bounded. Assuming this, by passing to further subsequences we can assume that $m_n =m$ is constant, $\beta'_n \to \beta'$ and $ \beta''_{n,k} \to \beta_{k}''$ for $k=1, \ldots, m,$ where $\beta'+\sum_k \beta_k'' = \beta.$  The measures $\nu'_n$ satisfy \eqref{eq: simplifying 1}, and by the special case of Proposition \ref{prop: semicontinuity} established in \S \ref{subsec: using marked prongs} we have $\beta' \in C^+_{\til q}.$  The measures $\nu''_n$ are finite linear combinations of measures, each of which satisfies the conditions of Proposition \ref{prop: one more simplification}. By linearity, we obtain Proposition \ref{prop: semicontinuity}.

It remains to show that the sequences $(\beta'_n), \, (\beta''_{n,k})$ are bounded in $H^1(S, \Sigma; \R^2)$. For this it suffices to find a basis $v_1, \ldots, v_N$ of $H_1(S, \Sigma)$ such that the  sequences of evaluations
\begin{equation}\label{eq: sequence of evaluations}
    \left(\beta'_n(v_i)\right)_{n \in \N}, \ \ \ \left(\beta''_{n,k} (v_i)\right)_{n \in \N}
\end{equation}
are bounded, for each $i$ and each $k$. The basis we will use consists of the edges of a triangulation obtained from an APD, by adding horizontal diagonals to quadrilaterals. From continuity of $q \mapsto L_q$,  and from the convergence $\beta_n \to \beta$, we have that the terms appearing in \eqref{eq: intersection length} are bounded. In particular, if $v_i$ is an edge of the APD then the sequence $\left(\beta'_n(v_i) \right)_{n \in \N}$
is bounded. 
By definition, $\nu'_n$ assigns mass zero to the horizontal diagonals of the APD, and thus the sequence  $\left(\beta'_n(v_i)\right)_{n \in \N}$ is bounded for every edge $v_i$ of our triangulation. For the sequence 
$\left(\beta''_{n,k}(v_i) \right)_{n \in \N},$ as in the discussion in Case 2 of the proof of Proposition \ref{prop: one more simplification}, we have that the number of intersections of $\check \ell_{n,k}$ with an edge of the triangulation is bounded above by $C \ell_{n,k}$ for some $C>0$. 
Thus the boundedness of \eqref{eq: intersection length} implies that 
$\left(\beta'_n(v_i)\right)_{n \in \N}$ is bounded. 
\end{proof}

\begin{proof}[Completing the proof of Corollary \ref{cor: total variation continuous}]
The proof is almost identical to the one we gave in \S \ref{subsec: polygonal tremors}, with the following modifications. 
In \S \ref{subsec: polygonal tremors},  assumption \eqref{eq: star} was used in order to be able to apply Proposition \ref{prop: semicontinuity}; we now have Proposition \ref{prop: semicontinuity} without this assumption. Additionally, we invoked \eqref{eq: each edge} and non-atomicity  in order to say that  the limiting transverse measures $\nu^\pm_\infty$ 
satisfy $ \lim_{n \to \infty} \beta_{\nu^\pm_n} = \beta_{\nu_\infty^\pm},$ implying
\begin{equation}\label{eq: this continuity}
\lim_{n \to \infty} L_q(\beta_{\nu_n^\pm}) = L_q(\beta_{\nu_\infty^\pm}),
 \end{equation}
which we needed in \eqref{eq: from that}.
 In our case the limiting measures $\nu^\pm_\infty$ might have atoms, but \eqref{eq: this continuity} still holds by \eqref{eq: a continuity property}. 
 Finally, the minimization property of the Hahn decomposition was used in connection with formula \eqref{eq: formula for L}. 
Here we use the same minimization property, in connection with formula \eqref{eq: intersection length}. 
\end{proof}

\printindex


\begin{thebibliography}{99}




\bibitem[AK]{orbifolds} A. Adem and M. Klaus, {\em Lectures on
    orbifolds and group cohomology,}
in {\bf Transformation Groups and
  Moduli Spaces of Curves (L. Ji and S.-T. Yau, eds.)}, Advanced
Lectures in Mathematics {\bf 16}  (2010), 1--17, Higher Education
Press.  



\bibitem[AAEKMU]{AAEKMU}A. Al-Saqban, P. Apisa, A. Erchenko,
  O. Khalil, S. Mirzadeh and C. Uyanik,  
{\em Exceptional directions for the Teichmüller geodesic flow and 
  Hausdorff dimension} Preprint. arXiv:1711.10542 

\bibitem[AGY]{AGY} A. Avila, S. Gou\"ezel and J.-C. Yoccoz, {\em
  Exponential mixing for the Teichm\"uller flow}, Publ. Math. IHES {\bf 104}
(2006) 143--211. 

\ignore{


\bibitem[AMY]{AMY} A. Avila, C. Matheus Santos, and J.-C. Yoccoz, {\em
    On the Kontsevich-Zorich cocycle for McMullen's family of
    symmetric translation surfaces}, in 
  preparation. 

}
\bibitem[At]{Jayadev thesis} J. Athreya, {\em Quantitative  recurrence
    and  large  deviations  for  Teichm\"uller  geodesic  flow},
  Geometriae  Dedicata 
(2006) {\bf  119}  121--140.

\ignore{
\bibitem[B]{Matt} M. Bainbridge, {\em Billiards in L-shaped tables
  with barriers,} Geom. Funct. Ana. {\bf  
 20} (2010), 299--356. 
}

\bibitem[B]{Bainbridge thesis} M. Bainbridge, {\em Euler
    characteristics of Teichm\"uller curves in genus two},
  Geom. Topol. (2007) {\bf 11} 1887--2073.
  
\bibitem[BSW]{eigenform} M. Bainbridge, J. Smillie and B. Weiss, {\em
 Horocycle dynamics: new invariants and eigenform loci in the stratum $\HH(1,1)$, }  
(2016), to appears in Mem. AMS. 

\ignore{

\bibitem[B]{Boissy} C. Boissy, {\em Labeled Rauzy classes and framed
    translation surfaces}, preprint (2010) {\tt 
http://arxiv.org/pdf/1010.5719v1.pdf
}




\bibitem[Bo]{Bonahon} F. Bonahon, {\em Geodesic laminations on
    surfaces}, in {\bf  Laminations and Foliations in Dynamics,
    Geometry and Topology }  (M, Lyubich, J. Milnor, Y. Minsky eds.),
  Cont.  Math. {\bf  269} American Math. Society.












\bibitem[C]{Calta} K. Calta, {\em Veech surfaces and complete
periodicity in genus 2}, J. Amer. Math. Soc. {\bf 17} (2004) 871--908.



}
\bibitem[CMW]{CMW} J. Chaika, H. Masur and M. Wolf, {\em Limits in PMF
    of Teichm\"uller geodesics}, to appear in J. Reine Angew. Math. 

\bibitem[CE]{CE} J. H. H. Chalk and P. Erd\H{o}s, {\em 
On the distribution of primitive lattice points in the plane,}
Canad. Math. Bull. {\bf 2} (1959) 91--96. 
\ignore{
\bibitem[CW]{CW} K. Calta and C. Wortman, {\em On unipotent flows in 
$\mathcal{H}(1,1)$,} 
 Erg. Th. Dyn. Sys.  {\bf 30}  (2010),  no. 2, 379--398. 




\bibitem[Ch]{Ch} Y. Cheung Annals
}

\bibitem[CHM]{CHM}   Y. Cheung, P. Hubert, and H. Masur, {\em 	
Dichotomy for the Hausdorff dimension of the set of nonergodic
directions, } 
Invent. Math. {\bf 183} (2011) 337--383.



\bibitem[CM]{CM} Y. Cheung and H. Masur, {\em 
Minimal nonergodic directions on genus 2 translation surfaces},
Erg. Th. Dyn. Sys. {\bf 26} (2006), 341--351.

\ignore{
 Mike Davis on orbifolds: https://math.osu.edu/sites/math.osu.edu/files/08-05-MRI-preprint.pdf
Lectures on orbifolds and reflection groups, Transformation Groups and Moduli Spaces of Curves (eds, L. Ji, S-T Yau) International Press, 2010, pp. 63--93. 

MR1299730 (95j:57011) Reviewed 
Ratcliffe, John G.(1-VDB)
Foundations of hyperbolic manifolds. (English summary) 
Graduate Texts in Mathematics, 149. Springer-Verlag, New York, 1994. xii+747 pp. ISBN: 0-387-94348-X 
57M50 (20H10 30F40 51M10) 

See Chapter 13.



Zhi Lu Lectures on elements of transformation groups and orbifolds
}


\ignore{









\bibitem[EG]{EG} C. Earle and F. Gardiner, {\em Teichm\"uller disks
    and Veech's F-structures,} Extremal Riemann surfaces (San
  Francisco, CA, 1995), Contemp. Math., {\bf 201}, Amer. Math. Soc.,
  Providence, RI, (1997) 165--189. 




\bibitem[EW]{EW} M. Einsiedler and T. Ward, {\bf Ergodic theory with a
    view towards number theory}, Grad. Texts in Math. {\bf 259},
  Springer (2010). 


\bibitem[EKZ]{EKZ} A. Eskin, M. Kontsevich, and A. Zorich, {\em
    Lyapunov spectrum of square-tiled cyclic covers}, {\tt 
http://arxiv.org/abs/1007.5330
} 


\bibitem[EMM]{EMarklofMorris} A. Eskin, J. Marklof and D. W. Morris, {\em
Unipotent flows on the space of branched covers of Veech surfaces},
Erg. Th. Dyn. Sys. {\bf 26} (2006) 129--162.

}
\bibitem[EMS]{EMasurSchmoll} A. Eskin, H. Masur, and M. Schmoll, {\em
 Billiards in rectangles with barriers}, Duke Math. J. {\bf 118}
(2003), 427--463. 

\ignore{
\bibitem[EMZ]{EMZ} A. Eskin, H. Masur and A. Zorich, {\em Moduli
spaces of abelian differentials: the principal boundary, counting
problems, and the Siegel-Veech constants } Publ. I. H. E. S. {\bf 97}
(2003) 631--678.
}
\bibitem[EM]{EM} A. Eskin and M. Mirzakhani, {\em Invariant and
    stationary measures for the $\SL_2(\R)$-action on moduli space}.
 Publ. Math. IHES. {\bf 127} (2018), 95--324. 


\bibitem[EMM]{EMM} A. Eskin, M. Mirzakhani and A. Mohammadi, {\em
    Isolation, equidistribution, and orbit closures for the
    $\SL_2(\R)$-action on moduli space, } Ann. Math. (2) {\bf 182}
  (2015), no. 2, 673--721.  

\ignore{


}

\bibitem[Fa]{Falconer}
  K. Falconer, {\bf 
The geometry of fractal sets},
Cambridge Tracts in Math. {\bf 85} (1986). 
\ignore{
\bibitem[FM]{FarbMargalit} B. Farb and D. Margalit, {\bf A primer on
    mapping class groups}, Princeton University Press (2012). 

\bibitem[F]{Forni survey} G. Forni, {\em 
On the Lyapunov exponents of the Kontsevich-Zorich cocycle,} in {\bf
Handbook of dynamical systems}, (B. Hasselblatt and A. Katok, eds)
Vol. 1B  549--580. Elsevier (2006).

\bibitem[FM1]{FM1} G. Forni and C. Matheus, {\em
An example of a Teichm\"uller disk in genus 4 with degenerate
Kontsevich-Zorich spectrum}, preprint (2008), {\tt http://arxiv.org/abs/0810.0023
 } 
}
\bibitem[FM]{FM}  G. Forni and C. Matheus, {\em
Introduction to Teichm\"uller theory and its applications to dynamics
of interval exchange transformations, flows on surfaces and
billiards,} J. Mod. Dyn. (2014) {\bf 8} 271--436.

\ignore{
\bibitem[FMZ1]{FMZ1} G. Forni, C. Matheus and A. Zorich, {\em Lyapunov
    spectrum of invariant subbundles of the Hodge bundle}, preprint
  (2012), to appear in {\em Erg. Th. Dyn. Sys. }
{\tt
http://arxiv.org/pdf/1112.0370v3.pdf
}


\bibitem[FMZ2]{FMZ} G. Forni, C. Matheus and A. Zorich, {\em
Zero Lyapunov exponents of the Hodge bundle,
 }  preprint (2012) 
to appear in {\em Comm. Math. Helv.} {\tt 
http://arxiv.org/pdf/1201.6075v3.pdf
}














}

\bibitem[H]{Hedlund} G.  A.  Hedlund, {\em  Fuchsian  groups  and
    transitive  horocycles,} Duke  Math. J. {\bf 2(3)} 530--542 (1936).

\ignore{


\bibitem[HS]{Herrlich-Schmithuesen} 
F. Herrlich and G. Schmith\"usen, {\em An extraordinary origami
  curve},  Mathematische Nachrichten {\bf 281}  (2008), 219--237.





\bibitem[Iv]{Ivanov} N. V. Ivanov, {\bf Subgroups of Teichm\"uller
  modular groups}, Amer. Math. Soc. Trans. {\bf 115}, 1992. 



}


\bibitem[K]{transverse measures}
A. B. Katok, 
{\em Invariant measures of flows on orientable surfaces,} 
Dokl. Akad. Nauk SSSR 211 (1973), 775--778 (Russian),
Soviet Math. Dokl. Vol. 14 (1973), no. 4, 1104--1108 (English).


\bibitem[KS]{KS} A. B. Katok and A. M. Stepin, 
{\em
Approximation of ergodic dynamical systems by periodic
transformations,} (Russian) Dokl. Akad. Nauk SSSR {\bf 171} (1966)
1268--1271. 







\bibitem[KSS]{KSS handbook} D. Kleinbock, N. Shah and A. Starkov, {\em
    Dynamics of subgroup actions on homogeneous spaces of Lie groups
    and applications to number theory }, in {\bf  
Handbook on Dynamical Systems}, Volume 1A, Elsevier Science, North
Holland (2002). 

\ignore{





\bibitem[KZ]{KZ} M. Kontsevich and A. Zorich, {\em Connected
components of the moduli spaces of Abelian differentials with
prescribed singularities,} Invent. Math. 153 (2003), no. 3, 631--678.





}

\bibitem[LM]{LM}E. Lindenstrauss and M. Mirzakhani, 
{\em Ergodic theory of the space of measured laminations,}
Int. Math. Res. Not. {\bf 4} (2008). 








\bibitem[MS]{MS} H. Masur and J. Smillie, {\em Hausdorff dimension
of sets of nonergodic measured foliations}, Ann. Math. {\bf 134}
(1991) 455--543.

\ignore{
}

\bibitem[MaTa]{MT} H. Masur and S. Tabachnikov, {\em Rational
billiards and flat structures}, in {\bf Handbook of dynamical systems,
Enc. Math. Sci. Ser.} (2001).

\ignore{
\bibitem[MY]{MY} C. Matheus and J.-C. Yoccoz, {\em The action of the
    affine diffeomorphisms on the relative homology group of certain
    exceptionally symmetric origamis}, J. Mod. Dyn. {\bf 4} (2010)
  453--486. 
}

\bibitem[Mat]{Mattila} P. Mattila, {\bf Geometry of sets and measures
    in Euclidean spaces}, Camb. Studies in Math. {\bf 44}, Cambridge
  Univ. Press (1995). 

\ignore{




}

\bibitem[McM1]{McMullen-SL(2)} C. T. McMullen, {\em Dynamics of
$\SL_2(\R)$ over moduli space in genus two}, Ann. Math. (2) {\bf 165}
  (2007),  no. 2, 397--456.



  
\bibitem[McM2]{McM Dio} C. T. McMullen, {\em Diophantine and ergodic
    foliations on surfaces}, J. Topol. {\bf 6} (2013), no. 2,
  349--360. 


\bibitem[McM3]{McMullen snow}
C. T. McMullen, {\em Moduli spaces of isoperiodic forms on Riemann surfaces,} Duke Math. J. {\bf 163} (12) 2271--2323 (2014).

\ignore{
\bibitem[McM3]{McMullen decagon} C. T. McMullen, {\em 
Teichm\"uller curves in genus two: Torsion divisors and ratios of
sines}, Inv. Math. {\bf 165} (2006) 651--672.

%







\bibitem[McM2]{McMullen discriminant} 
C. T. McMullen, {\em Teichm\"uller curves in genus two: discriminant
  and spin}, Math. Ann. {\bf 333} (2005) 87--130. 

}


\bibitem[MW1]{MW} Y. Minsky and B. Weiss, 
{\em
 Non-divergence of horocyclic flows on moduli spaces,} J. Reine
Angew. Math. {\bf 552} (2002) 131--177.

\bibitem[MW2]{mahler} Y. Minsky and B. Weiss, {\em Cohomology classes
represented by measured foliations, and Mahler's
question for interval exchanges}, Annales Sci. de L'ENS
{\bf 2} (2014) 245--284. 




\bibitem[MSY]{MSY} D. Montgomery, H. Samelson, and C. T. Yang, {\em
    Exceptional orbits of highest dimension,}
Ann. of Math. (2) {\bf 64} (1956), 131--141. 

\bibitem[M]{morris book} D. Witte Morris, {\bf Ratner's theorems on
    homogeneous flows}, Univ. of Chicago Press (2005).
    
\bibitem[Mos]{Mosher} L. Mosher, {\bf Train track expansions of measured foliations} (preprint). 
\ignore{











\bibitem[R]{Ratner} M. Ratner, {\em Raghunathan's topological
    conjecture and distribution of unipotent flows,} Duke
  Math. J. {\bf 63}  (1991) 235--280.  









\bibitem[SW1]{calanque} J. Smillie and B. Weiss, {\em Minimal sets for
flows on moduli space}, Isr. J. Math, {\bf 142} (2004) 249--260.
}


\bibitem[SW2]{examples} J. Smillie and B. Weiss, {\em 
Examples of horocycle-invariant measures on the moduli space of
translation surfaces,} in
  preparation. 

  \bibitem[SSWY]{SSWY} J. Smillie, P. Smillie, B. Weiss and F. Ygouf, {\em Horospherical dynamics in invariant subvarieties}, preprint (2023) {\tt https://arxiv.org/abs/2303.07188}

  





\bibitem[T1]{Thurston notes}  W. Thurston, {\bf  The Geometry and Topology
    of 3-Manifolds}, available at:  
{\tt http://www.msri.org/communications/books/gt3m/PDF/3.pdf }



\bibitem[T2]{Thurston earthquake} W. Thurston, 
{\em Earthquakes in two-dimensional hyperbolic geometry}, in {\bf
  Low-dimensional topology and  Kleinian groups } (Coventry / Durham 1984),
Cambridge University Press (1986) 91--112. 

\ignore{






}


\bibitem[Ve1]{Veech Kronecker} W. A. Veech, {\em Strict ergodicity in
    zero dimensional dynamical systems and the Kronecker-Weyl
    theorem mod 2}, Trans. AMS {\bf 140} (1969) 1--34. 
    
    \bibitem[Ve2]{VIET} W. A. Veech, {\em Interval exchange
        transformations}, J. Analyse Math. {\bf 33} (1978), 222--272. 

      \bibitem[Ve3]{Veech rectangles} W. A. Veech, {\em Gauss measures
        for transformations on the space of interval exchange maps,}
      Ann. Math. {\bf 115} (1982) 201--242.
      \ignore{
\bibitem[Ve4]{Veech_geodesic_flow} W. A. Veech, {\em The Teichm\"uller
  geodesic flow}, Ann. Math. {\bf 124} (1986) 441--530.
}


\bibitem[Ve4]{Veech - alternative} W. A. Veech, {\em
Teichm\"uller curves in moduli space, Eisenstein series and an
application to triangular billiards}, 
Invent. Math. {\bf 97} (1989), no. 3, 553--583.

\ignore{







\bibitem[W]{wright} A. Wright, {\em 
Schwarz triangle mappings and Teichm\"uller curves: abelian square-tiled surfaces},
 J. Mod. Dyn. {\bf 6} 405--426 (2012).
}

\bibitem[Wr1]{Wright cylinders} A. Wright, {\em 
 Cylinder deformations in orbit closures of translation surfaces,
}
Geom. Top. {\bf 19} (2015) 413--438. 

\bibitem[Wr2]{Wright survey}
A. Wright, {\em Translation surfaces and their orbit-closures: an introduction for a broad audience, }
EMS Surv. Math. Sci. 2, {\bf 1} 63--108 (2015). 



\ignore{
A. Wright, {\em  From rational billiards to
    dynamics on moduli spaces}, Bull. Amer. Math. Soc. {\bf 53}  (2016)
 41--56.



\bibitem[Wu]{chenxi} C. Wu, {\em The action of affine diffeomorphisms
    on the relative cohomology of abelian covers of the flat
    fillowcase, } preprint (2014).  
 
}
\bibitem[Y]{yoccoz survey} J.-Ch. Yoccoz, {\em Interval exchange maps
    and translation surfaces,} in {\bf Homogeneous flows, moduli
    spaces and arithmetic}, Clay Math. Proc. Vol. 10. (2010) pp. 1--70. 
 
\ignore{

\bibitem[Z]{Zimmer} R. J. Zimmer, {\bf Ergodic theory and semisimple
  groups}, Birkha\"user (1984). 
}
\bibitem[Zo]{zorich survey} A. Zorich, {\em Flat surfaces}, in {\bf
Frontiers in number theory, physics and geometry,} P. Cartier,
B. Julia, P. Moussa and P. Vanhove (eds), Springer (2006). 

\ignore{
}
\end{thebibliography}
\end{document}